\def\C{{\mathbb C}}
\def\N{{\mathbb N}}
\def\R{{\mathbb R}}
\def\Z{{\mathbb Z}}
\def\Q{{\mathbb Q}}
\def\A{{\mathbb A}}
\def\BA{{\mathbb A}}
\def\BR{{\mathbb R}}
\def\BC{{\mathbb C}}
\def\back{{\backslash}}
\newcommand{\calF}{\mathcal{F}}
\newcommand{\calP}{\mathcal{P}}
\newcommand{\calV}{\mathcal{V}}
\newcommand{\ago}{\mathfrak{a}}
\newcommand{\hDelta}{\widehat \Delta}
\newcommand{\htau}{\widehat \tau}
\newcommand{\sbs}{\subset}
\newcommand{\smin}{\smallsetminus}
\newcommand{\al}{\alpha}
\newcommand{\la}{\lambda}
\newcommand{\La}{\Lambda}
\newcommand{\bsl}{\backslash}
\newcommand{\calA}{\mathcal{A}}
\newcommand{\Rel}{\mathrm{Re}}
\newtheorem{thm}{Theorem}[section]
\newtheorem{cor}[thm]{Corollary}
\newtheorem{lem}[thm]{Lemma}
\newtheorem{prop}[thm]{Proposition}
\newtheorem*{conj*}{Conjecture}
\newtheorem{lemma}[thm]{Lemma}
\newtheorem{proposition}[thm]{Proposition}
\newtheorem{corollary}[thm]{Corollary}
\newtheorem{definition}[thm]{Definition}
\newtheorem{rmk}[thm]{Remark}
\newtheorem{remark}[thm]{Remark}
\newcommand{\mm}[4]{\left(\begin{smallmatrix} #1 & #2\\ #3 & #4\end{smallmatrix}\right)}
\DeclareMathOperator{\tr}{tr}
\DeclareMathOperator{\SO}{SO}
\DeclareMathOperator{\Spin}{Spin}
\DeclareMathOperator{\SL}{SL}
\DeclareMathOperator{\GL}{GL}
\DeclareMathOperator{\G_2}{G_2}
\DeclareMathOperator{\F_4}{F_4}
\DeclareMathOperator{\E_6}{E_6}
\DeclareMathOperator{\diag}{diag}
\DeclareMathOperator{\Ann}{Ann}
\def\NullU{{\Omega}}
   \def\MR#1{}
\begin{document}
\renewcommand{\theequation}{\arabic{equation}}
\numberwithin{equation}{section}

\title{A $\G_2$-period of a Fourier coefficient of an Eisenstein series on $\E_6$}
\author{Aaron Pollack}
\address{Department of Mathematics\\ Institute for Advanced Study\\ Princeton, NJ USA}\email{apollack@math.ias.edu}

\author{Chen Wan}
\address{Department of Mathematics\\ Institute for Advanced Study\\ Princeton, NJ USA}\email{wanxx123@math.ias.edu}

\author{Micha\l{} Zydor}
\address{Department of Mathematics\\ Institute for Advanced Study\\ Princeton, NJ USA}\email{mzydor@math.ias.edu}

\begin{abstract} We calculate a $\G_2$-period of a Fourier coefficient of a cuspidal Eisenstein series on the split simply-connected group $\E_6$, and relate this period to the Ginzburg-Rallis period of cusp forms on $\GL_6$. This gives us a relation between the Ginzburg-Rallis period and the central value of the exterior cube L-function of $\GL_6$.\end{abstract}
\maketitle

\section*{Introduction}

Let $F$ be a number field and $\A$ its ring of adeles.
Let $\pi$ be a cuspidal automorphic representation of $\GL_6(\BA)$ with trivial central character. For $\varphi \in \pi$, we define the Ginzburg-Rallis period as
$$
\mathcal{P}_{GR}(\varphi) =
\int\limits_{\GL_2(F) \bsl \GL_2(\A)^1} \int\limits_{(M_{2}(F) \bsl M_{2}(\A))^3} \varphi\left( \begin{pmatrix}I_2&x&z\\ 0&I_2&y\\ 0&0&I_2\end{pmatrix} \begin{pmatrix}h&0&0\\ 0&h&0\\ 0&0&h\end{pmatrix} \right)
 \psi(\tr(x)+\tr(y))\, dxdydz dh
$$
where $\GL_2(\A)^1$ is the subgroup of elements with norm of determinant $1$,
$M_{2}$ is the space of $2 \times 2$ matrices and $\psi : F \bsl \A \to \C^*$ is a non-trivial character.

In fact, one can also define the quaternion version of the Ginzburg-Rallis period for cusp forms on $\GL_3(D)(\BA)$ where $D/F$ is a quaternion algebra.
Let us denote this period by $\calP^{D}_{GR}$.
In \cite{ginRal}, Ginzburg-Rallis made the following conjecture about the relation between this period and the central value $L\left(\frac{1}{2}, \pi, \La^3\right)$ of the exterior cube L-function $L(s,\pi,\La^3)$.
\begin{conj*}[Ginzburg-Rallis] Let $\pi$ be a cuspidal automorphic representation of $\GL_6(\A)$ with trivial central character.
 The following are equivalent:
\begin{enumerate}
\item $L(\frac{1}{2},\pi,\La^3)\neq 0$.
\item There exists a unique quaternion algebra $D$ over $F$ (which may be split) and a cuspidal automorphic representation $\pi_D$
of $\GL_3(D)$, equivalent to $\pi$ at almost all places of $F$ (i.e., the global Jacquet-Langlands correspondence of $\pi$ from $\GL_6(\BA)$ to $\GL_3(D)(\BA)$), such that $\calP^{D}_{GR}$
does not vanish on the space of $\pi_D$.
\end{enumerate}
\end{conj*}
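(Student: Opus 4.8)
The plan is to reduce the Ginzburg-Rallis conjecture to two inputs: (i) a global period identity on the split simply-connected group $\E_6$, which is essentially the content of the present paper, and (ii) a local Gan-Gross-Prasad-type dichotomy for the Ginzburg-Rallis model. For (i), let $P=MN$ be the maximal parabolic of $\E_6$ whose Levi $M$ is isogenous to $\GL_6$ and whose unipotent radical $N$ is two-step, with $N/[N,N]$ the space on which $\GL_6$ acts through $\La^3$ of its standard representation and with one-dimensional center; this is exactly the Langlands-Shahidi situation attaching the exterior cube $L$-function to $\pi$. For $\varphi\in\pi$, form the cuspidal Eisenstein series $E(g,\varphi,s)$ on $\E_6(\A)$ induced from $\pi\otimes|\det|^{s}$ on $M$, let $\calF_\psi(E)$ be a suitable unipotent Fourier coefficient of it with character built from $\psi$, and fix an embedding $\G_2\hookrightarrow\E_6$. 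The heart of the argument is to compute the $\G_2$-period
$$
\mathcal{I}(\varphi,s)\;=\;\int\limits_{\G_2(F)\bsl\G_2(\A)}\calF_\psi\big(E(g,\varphi,s)\big)\,dg
$$
in two different ways.

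First I would unfold $\mathcal{I}(\varphi,s)$ by opening the Eisenstein series and collapsing the $\G_2$-integration against the remaining unipotent data; for a factorizable $\varphi$ this should produce an Eulerian integral whose unramified local factors evaluate, by a Casselman-Shalika / Gindikin-Karpelevich computation, to $L_v(s,\pi_v,\La^3)$ divided by the standard normalizing product of local Dedekind $\zeta$-factors. Separately, a different manipulation of the \emph{same} integral --- realizing the Ginzburg-Rallis unipotent and the diagonal copy of $\GL_2$ inside $\E_6$, and using cuspidality of $\pi$ to discard constant terms --- should match $\mathcal{I}(\varphi,s)$ with $\calP_{GR}(\varphi)$ up to elementary factors. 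Comparing the two evaluations and specializing to the central point (normalized to be $s=\tfrac{1}{2}$) then yields an identity of the schematic form
$$
\calP_{GR}(\varphi)\;\sim\;L\!\left(\tfrac{1}{2},\pi,\La^3\right)\;\prod_{v\in S}\calP_{GR,v}(\varphi_v),
$$
valid for factorizable $\varphi$, where $S$ is a finite set of places containing the ramified and archimedean ones, all nonzero normalizing constants are absorbed, and a residue of $E(g,\varphi,s)$ handles the case in which $L(s,\pi,\La^3)$ has a pole. From this, the implication $(2)\Rightarrow(1)$ is immediate when $D=M_2$: nonvanishing of $\calP_{GR}$ on $\pi$ forces $L(\tfrac{1}{2},\pi,\La^3)\neq 0$. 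For a non-split $D$ one runs the same construction with $\GL_3(D)$ in place of $\GL_6$, using the inner form of $\E_6$ whose Levi of type $A_5$ is $\GL_3(D)$, the equality $L(s,\pi,\La^3)=L(s,\pi_D,\La^3)$ under Jacquet-Langlands, and unramified matching at almost all places.

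The converse $(1)\Rightarrow(2)$, together with the uniqueness of $D$, does not follow formally from this identity: one must show that at each place the local Ginzburg-Rallis functional can be made nonzero by a suitable choice of vector on exactly one of the two local inner forms $\GL_6(F_v)$ and $\GL_3(D_v)$, and that the local form supporting the model at $v$ is prescribed by a local root number, so that these choices are consistent with a \emph{single} global quaternion algebra; via the product formula for $\varepsilon$-factors, this consistency gives both the existence and the uniqueness of $D$. Concretely this requires local multiplicity at most one for the Ginzburg-Rallis model together with an $\varepsilon$-criterion, namely that $\dim\mathrm{Hom}_{H_v}(\pi_v,\psi_v)=1$ if and only if $\varepsilon_v(\tfrac{1}{2},\pi_v,\La^3)$ matches the Hasse invariant of $D_v$, where $H_v$ is the local analogue of the integration subgroup appearing in $\calP_{GR}$. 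I expect this local dichotomy --- together with the global step of showing that the central $L$-value is the \emph{only} obstruction, so that the global period is nonzero as soon as all the local functionals are --- to be the main difficulty; the global $\G_2$-period identity above is precisely what reduces the Ginzburg-Rallis conjecture to these local and archimedean statements.
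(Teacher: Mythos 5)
There is a fundamental mismatch here: the statement you are proving is stated in the paper as a \emph{conjecture}, and the paper does not prove it. What the paper actually establishes (Theorems \ref{main} and \ref{main 2}) is a single implication in the split case only --- nonvanishing of $\calP_{GR}$ on $\pi$ implies $L(\tfrac{1}{2},\pi,\La^3)\neq 0$ --- and even that is proved under the auxiliary hypothesis $L(\tfrac{3}{2},\pi,\La^3)\neq 0$, needed because holomorphy of the completed $L$-function at the edge is not known in general. Your proposal, read as a proof of the full equivalence plus uniqueness of $D$, has genuine gaps in both directions. For $(2)\Rightarrow(1)$, your central claim --- that unfolding the $\G_2$-period of the Eisenstein series produces an Eulerian integral whose unramified factors are $L_v(s,\pi_v,\La^3)$ by a Casselman--Shalika computation, yielding $\calP_{GR}(\varphi)\sim L(\tfrac12,\pi,\La^3)\prod_{v\in S}\calP_{GR,v}(\varphi_v)$ --- is asserted, not proved, and it is not how this period behaves. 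The paper's orbit analysis (Propositions \ref{prop:G2Norbits} and \ref{prop:truncIdentity}) shows that the $\G_2$-period of the \emph{truncated} Eisenstein series collapses to the global Ginzburg--Rallis period of $\pi$ itself, with no Euler product appearing; the $L$-value enters only indirectly, through the Langlands--Shahidi normalization of the intertwining operator governing the residue at $s=\tfrac12$ (the corollary after Theorem \ref{main}). Your sketch also ignores the convergence problem: the period of the Eisenstein series is not absolutely convergent, which is why the paper introduces the truncation operator $\La^T$ and devotes all of section \ref{sec:convergence} to norm estimates and good pairs; without some substitute for this, the "two evaluations of the same integral" step is not defined.

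For $(1)\Rightarrow(2)$ and the uniqueness of $D$, you candidly defer to a local multiplicity-one statement, an $\varepsilon$-dichotomy for the Ginzburg--Rallis model, and a global assertion that the central $L$-value is the only obstruction so that local nonvanishing of all the $\calP_{GR,v}$ forces global nonvanishing of $\calP_{GR}$ (respectively $\calP^{D}_{GR}$). That last global step is precisely the open content of the conjecture; it does not follow from any identity of the schematic form you wrote, and nothing in the paper addresses it. So the proposal is best read as a plausible program (whose first half, suitably corrected to the truncation-plus-Langlands--Shahidi route, is what the paper carries out for split $D$), not as a proof of the stated equivalence.
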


The purpose of this paper is to prove the following theorem which confirms
one direction of the Ginzburg-Rallis conjecture in the split case.
\begin{thm}\label{main 2}
Let $\pi$ be a cuspidal automorphic representation of $\GL_6(\BA)$ with trivial central character. Assume
that the exterior cube L-function $L(s,\pi,\La^3)$ is nonzero at $s=\frac{3}{2}$; this is always the case if $\pi$ is everywhere tempered. If there exists $\varphi \in \pi$ with $\mathcal{P}_{GR}(\varphi)\neq 0$
then $L(\frac{1}{2},\pi,\La^3)\neq 0$.
\end{thm}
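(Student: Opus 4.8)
The strategy is to realize the exterior-cube $L$-function of $\GL_6$ as a period on the split simply-connected $\E_6$, and to read off the Ginzburg--Rallis period from the same integral at a distinguished value of the spectral parameter. Let $P=MN$ be the Heisenberg parabolic of $\E_6$, so that $M\cong\GL_6$ and $N$ is two-step unipotent with centre $[N,N]=Z(N)\cong\mathbb{G}_a$ and $N/Z(N)\cong\Lambda^3(\mathrm{std})$ as an $M$-module; this is exactly the configuration in which the Langlands--Shahidi method produces $L(s,\pi,\Lambda^3)$. From $\varphi\in\pi$ I form the cuspidal Eisenstein series $E(g,\varphi,s)$, absolutely convergent for $\Re s\gg0$, meromorphically continued, with constant term along $P$ governed by $L(s,\pi,\Lambda^3)$ and an abelian $\GL_1$-factor. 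There is a distinguished value $s_0$ of the parameter --- the one at which the inducing section degenerates, lying on the unitary axis, so that $E(g,\varphi,s)$ is holomorphic there; with the standard normalization the relevant shift of the exterior-cube $L$-function at $s_0$ is its central value $L(\tfrac12,\pi,\Lambda^3)$, while the shift appearing in the denominator of the normalized intertwining operator at $s_0$ is $L(\tfrac32,\pi,\Lambda^3)$ --- this is the source of the technical hypothesis.

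Next I take a (degenerate) Fourier coefficient of $E$ along a unipotent subgroup $V\subset N$ against a character $\psi$ of $[V]:=V(F)\backslash V(\BA)$ that is non-trivial on $Z(N)$,
\[
F_{\psi}(E)(g,\varphi,s)=\int_{[V]}E(vg,\varphi,s)\,\psi(v)^{-1}\,dv ,
\]
and I integrate the restriction of $F_\psi(E)$ to $\G_2(\BA)$ over $[\G_2]$, for a fixed embedding $\G_2\hookrightarrow\E_6$ which --- since $\G_2$ has no faithful $6$-dimensional representation --- cannot lie in the Levi $M$; one takes it, say, through the chain $\G_2\subset\F_4\subset\E_6$, normalizing $V$ and fixing $\psi$, so that $F_\psi(E)|_{\G_2(\BA)}$ is left $\G_2(F)$-invariant. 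This yields a period $\mathcal J(\varphi,s)$, which I define for all $s$ (off the poles of $E$) via the regularization of periods of moderate-growth automorphic forms, and which is given by a convergent integral for $\Re s\gg0$. The technical heart is to unfold it: substituting the definition of $E$ and decomposing $P(F)\backslash \E_6(F)$ into orbits under the (non-reductive) period group $V(F)\rtimes\G_2(F)$ expresses $\mathcal J(\varphi,s)$ as a finite sum of orbital integrals, and --- using the cuspidality of $\varphi$ on $\GL_6$ together with the non-triviality of $\psi$, which kills any orbital integral over a unipotent on which $\psi$ restricts non-trivially --- one shows that all orbits except a single open one contribute zero. The open orbit yields an Eulerian integral $\mathcal Z(\varphi,s)=\prod_v Z_v(\varphi_v,s)$ of $\varphi$ over an explicit subgroup of $\GL_6$.

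Two computations then finish the proof. First, for unramified data a direct evaluation --- via the Casselman--Shalika formula for the spherical Whittaker function of $\pi_v$ --- identifies $Z_v(\varphi_v,s)$ with $L_v(s+\tfrac12,\pi_v,\Lambda^3)$ divided by an explicit product of abelian and $\Lambda^3$ local factors, whence
\[
\mathcal Z(\varphi,s)=\frac{L(s+\tfrac12,\pi,\Lambda^3)}{D(s)}\prod_{v\in S}Z_v(\varphi_v,s),
\]
where $D(s)$ is a finite product of completed $\GL_1$- and $\Lambda^3$-$L$-functions (and $\Gamma$-factors), and where $D(s_0)$ is \emph{finite and nonzero} precisely because $L(\tfrac32,\pi,\Lambda^3)\neq0$, the remaining factors of $D(s_0)$ being abelian values at points where they are finite and nonzero. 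Second, an explicit manipulation of the open-orbit representative at $s=s_0$, where the $s$-dependent section becomes balanced, shows that $\mathcal Z(\varphi,s_0)$ equals a nonzero volume constant times the Ginzburg--Rallis integral $\mathcal P_{GR}(\varphi)$: the $\GL_2$ enters through its block-diagonal embedding $h\mapsto\diag(h,h,h)$ into $M=\GL_6$, and the restriction to $\GL_2(\BA)^1$ and the character $\psi(\tr(x)+\tr(y))$ emerge from the block structure of $N$ together with the triviality of $\omega_\pi$ (the extra $\GL_1$-direction integrating against a character that, for trivial central character, forces the norm-one condition).

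Combining these: if $\mathcal P_{GR}(\varphi)\neq0$ for some factorizable $\varphi\in\pi$, then
\[
(\text{nonzero constant})\cdot\mathcal P_{GR}(\varphi)=\mathcal Z(\varphi,s_0)=\frac{L(\tfrac12,\pi,\Lambda^3)}{D(s_0)}\prod_{v\in S}Z_v(\varphi_v,s_0);
\]
the left side is nonzero, $D(s_0)$ is finite and nonzero by the hypothesis $L(\tfrac32,\pi,\Lambda^3)\neq0$, and the local factors at $s_0$ are finite, so $L(\tfrac12,\pi,\Lambda^3)\neq0$. (When $\pi$ is everywhere tempered the hypothesis is automatic, since the Euler product for $L(s,\pi,\Lambda^3)$ converges absolutely for $\Re s>1$.) The step I expect to be the main obstacle is the unfolding together with the explicit open-orbit analysis: proving that exactly one orbit survives for the non-reductive period group $V\rtimes\G_2$, and then identifying the surviving orbital integral precisely enough both to carry out the unramified computation and to recognize $\mathcal P_{GR}$, requires a careful study of the $P$-orbits on the relevant partial flag variety of $\E_6$ and of the chosen embedding $\G_2\hookrightarrow\E_6$. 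Secondary technical points are the convergence/regularization and holomorphy of $\mathcal J(\varphi,s)$ near $s_0$, the interchange of summation and integration in the unfolding, and the holomorphy at $s_0$ of the ramified local integrals $Z_v(\varphi_v,s)$.
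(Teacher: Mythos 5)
Your overall philosophy (detect the exterior-cube $L$-value through a $\G_2$-period attached to the cuspidal Eisenstein series on $\E_6$ induced from the $A_5$ parabolic) matches the paper, but the mechanism you propose has a genuine gap, indeed two claims that are both unproved and in tension with each other. You assert (a) that the $\G_2$-period of a Fourier coefficient of $E(\varphi,s)$ unfolds to an Eulerian integral whose unramified factors are computed by Casselman--Shalika and give $L_v(s+\tfrac12,\pi_v,\La^3)$ up to normalizing factors, and (b) that at a distinguished point $s_0$ where $E$ is holomorphic this same integral equals a nonzero constant times $\mathcal{P}_{GR}(\varphi)$. Neither is substantiated, and they cannot both hold in the way you use them: when a period of this type is actually unfolded (the paper does this for $H=N\rtimes\G_2$ with $N$ the unipotent radical of the $D_4$-parabolic and the character $\psi(\tr(x+y))$ --- note this is a different unipotent group and character than your $V\subseteq N_P$ nontrivial on the center, and you give no argument that your choice has the required orbit structure), the single surviving orbit is \emph{not} Eulerian: its contribution contains the global Ginzburg--Rallis period of the cusp form as an inner integral over $[\GL_2]\times[N\cap M]$, so no Whittaker/unramified computation applies and the $L$-function never appears in the unfolding at all. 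Conversely, if your integral really were an Euler product equal to $L(s+\tfrac12,\pi,\La^3)$ divided by explicit factors, then claim (b) would be an exact identity between $\mathcal{P}_{GR}(\varphi)$ and the central value --- essentially a refined form of the Ginzburg--Rallis conjecture itself --- which cannot be obtained by ``an explicit manipulation of the open-orbit representative.'' Finally, by insisting on a point $s_0$ of holomorphy of $E$ you remove the only route by which the $L$-value is actually reached: it enters through the constant term, i.e.\ through the intertwining operator, and one must detect a \emph{pole} at $s=\tfrac12$.

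For comparison, the paper's proof runs as follows: apply Arthur truncation $\La^T$ (your appeal to ``regularization of periods of moderate-growth forms'' is not a secondary point --- absolute convergence of each orbital integral is established by a norm/``good pair'' analysis on the quasi-affine quotients $H_V\backslash H$, occupying all of Section \ref{sec:convergence}); classify the $17$ orbits of $H$ on the $A_5$ flag variety and kill $16$ of them by nontriviality of $\xi$ on the stabilizer or by cuspidality of $\varphi$; the last orbit yields the exact identity of Proposition \ref{prop:truncIdentity}, whose two terms carry the factors $\frac{e^{(s-1/2)T}}{s-1/2}$ and $\frac{e^{(-s-1/2)T}}{-s-1/2}$ and whose inner integrals are Ginzburg--Rallis periods of $\phi$ and of $M(s)\phi$. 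Taking the residue at $s=\tfrac12$ shows that nonvanishing of $\mathcal{P}_{GR}$ on $\pi$ forces $Res_{s=1/2}E(\phi,s)\neq 0$ for some $\phi$ (Theorem \ref{main}), hence a pole of the intertwining operator at $s=\tfrac12$; by Kim's normalization $\frac{L(s,\pi,\La^3)\zeta_F(2s)}{L(s+1,\pi,\La^3)\zeta_F(2s+1)}$ and holomorphy of the normalized operator, the hypothesis $L(\tfrac32,\pi,\La^3)\neq 0$ (denominator) together with the simple pole of $\zeta_F(2s)$ at $s=\tfrac12$ forces $L(\tfrac12,\pi,\La^3)\neq 0$. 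This is where the auxiliary hypothesis genuinely enters --- not as a finiteness statement about a normalizing factor $D(s_0)$ of a zeta integral, but through the constant term of the Eisenstein series.
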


\subsection{The strategy of the proof}

Our proof relies on the fact that the exterior cube $L$-function for $\GL_6$
is a Langlands-Shahidi $L$-function for the split, simply connected group of type $\E_6$. The idea is to relate the period $\calP_{GR}$
to a suitable period of a residual representation on $\E_6$ whose existence relies on the non-vanishing
of $L(1/2, \pi, \La^{3})$. This idea has been used in multiple works of Ginzburg, Jiang, Rallis
and Soudry, see \cite{jiang, gjr1, gjr2, gjr3} to cite a few.
The reference \cite{gjs} contains a comprehensive account of the method and results that can and have been obtained through it.
See also \cite{ichYam, ginLap}
for different approaches.

Let us describe the method in more detail.
Let $Q$ be the parabolic subgroup of the split, simply connected $\E_6$ whose Levi subgroup is
of $D_4$ type. In section \ref{ssec:periodDef} we define a certain generic character $\xi$
of the unipotent radical $N$ of $Q$ and we define a subgroup $H$ of $Q$ to which the character extends.
This group turns out to be isomorphic to $N \rtimes \G_2$, where $\G_2$ is the split exceptional group of type $\G_2$.
We can thus define the period
\[
\calP_{\G_2}(f) = \int_{H(F) \bsl H(\A)}f(h)\xi(h)\,dh
 \]
where $f$ is an automorphic form on $\E_6(\A)$. Let now $P$ denote the maximal parabolic subgroup of
$\E_6$ with Levi factor of type $A_5$. Given a cuspidal representation $\pi$ of $GL_6(\A)$
we can consider the Eisenstein series $E(\phi, s)$ which realizes the induction from
$\pi \otimes |\det |^{s}$
to $\E_6(\A)$ (see \ref{ssec:eis} for unexplained notation).
By the Langlands-Shahidi theory, the non-vanishing of the residue at $s=1/2$ of $E(\phi, s)$ implies that $L(\frac{1}{2},\pi,\La^3)\neq 0$.
The idea is to study $\calP_{\G_2}(Res_{s=1/2}E(\phi, s))$ and relate it to the Ginzburg-Rallis period. In fact,
we essentially show the two are equal.

More specifically, one starts by computing the orbits $P (F) \bsl \E_6(F) / H(F)$. This problem is challenging
when working with exceptional groups and we do this using the explicit realization of $\E_6$
as determinant preserving linear transformations of the $27$-dimensional exceptional Jordan algebra.
These computations allow us to compute $\calP_{\G_2}(\La^{T}E(\phi, s))$ explicitly. Here $\La^T$ is the Arthur-Langlands
truncation operator \cite{arthur}. The truncation operator brings us to the next difficulty in this approach, namely convergence issues. In fact, these can be quite daunting, especially when dealing with non-reductive periods. Indeed, unless we're dealing with compact periods, then even for cusp forms, convergence of 
periods requires a non-trivial argument, see for example \cite{jacShal}. In \cite{ichYam}, Ichino and Yamana handle this using a truncation procedure adapted
to their period. However, their case is reductive. Here, we do not introduce any new truncation. Instead,
we extend the analysis introduced in the appendix of \cite{B17} which is based on norms on adelic points of linear algebraic groups
and their quotients. In fact, as noticed in \emph{loc. cit.}, the key property is for the quotient to be quasi-affine. We believe this approach
should generalize to other periods.

The computation of $\calP_{\G_2}(\La^{T}E(\phi, s))$ allows us to easily deduce the following theorem.

\begin{thm}\label{main}
Let $\pi$ be a cuspidal automorphic representation of $\GL_6(\BA)$ with trivial central character.
Suppose that the Ginzburg-Rallis period $\mathcal{P}_{GR}(\varphi)$ is non-zero on the space of $\pi$.
Then, there exists $\phi$ in the induced space of $\pi$ such that $Res_{s=\frac{1}{2}} E(\phi,s) \neq 0$.
\end{thm}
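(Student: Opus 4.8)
The plan is to unwind both periods against the same integral, namely $\calP_{\G_2}$ applied to the truncated Eisenstein series $\La^T E(\phi,s)$, and to match the resulting expressions. First I would fix a cusp form $\varphi$ in $\pi$ with $\calP_{GR}(\varphi)\neq 0$ and build the section $\phi$ in the space induced from $\pi\otimes|\det|^s$ on the $A_5$-parabolic $P$ out of $\varphi$ in the usual way, so that $E(\phi,s)$ is the associated Eisenstein series. The core input, supplied by the computations described in the strategy paragraph, is the orbit decomposition $P(F)\bsl\E_6(F)/H(F)$ obtained from the Jordan-algebra model; unfolding $E(\phi,s)$ against the character $\xi$ on $H(F)\bsl H(\A)$ produces a sum over these orbits of local and global integrals, and I would show that exactly one orbit — the ``open'' one — contributes a nonzero term, with the remaining orbits either killed by the genericity of $\xi$ (the relevant unipotent integral vanishes) or by cuspidality of $\pi$. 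On the open orbit, the stabilizer computation must cut the $H$-integral down precisely to the $\GL_2\bsl\GL_2(\A)^1$ integral over the triple of $M_2\bsl M_2(\A)$'s appearing in $\calP_{GR}$, with the character $\psi(\tr x+\tr y)$ emerging from $\xi$; this is where the explicit embedding $H\cong N\rtimes\G_2$ and the $27$-dimensional model do the real work, and I expect this orbit-and-stabilizer bookkeeping to be the main obstacle.

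Having matched the open-orbit contribution with $\calP_{GR}(\varphi)$ (up to a nonzero constant and a ratio of standard $L$-factors that is finite and nonzero at the relevant point), I would then pass to the residue. The truncation operator $\La^T$ is introduced only to make $\calP_{\G_2}$ converge; using the quasi-affineness of the relevant quotient and the norm estimates extended from the appendix of \cite{B17}, one shows that $\calP_{\G_2}(\La^T E(\phi,s))$ is meromorphic in $s$, that its dependence on the truncation parameter $T$ is via a polynomial-exponential whose purely-polynomial part is the ``true'' period, and that one may take the residue at $s=1/2$ inside the integral. Thus $\Res_{s=1/2}\calP_{\G_2}(\La^T E(\phi,s))$ equals, up to the nonzero constant above, $\calP_{GR}(\varphi)$, which is nonzero by hypothesis.

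Since the residue of a period of $E(\phi,s)$ is (a period of) the residue $\Res_{s=1/2}E(\phi,s)$ — the truncation and the contour/residue operations commute once convergence is under control — it follows that $\Res_{s=1/2}E(\phi,s)$ is not identically zero as an automorphic form on $\E_6(\A)$, which is the assertion. The only subtlety I would flag beyond the orbit computation is making sure the $L$-factor ratio produced in the unfolding is genuinely holomorphic and nonvanishing at $s=1/2$ so that it cannot accidentally cancel the residue; this is where the hypothesis that $L(s,\pi,\La^3)$ does not vanish at $s=3/2$ in Theorem \ref{main 2} enters, although for Theorem \ref{main} itself one only needs the weaker statement that the unfolded expression is not identically zero in $s$, which follows directly from $\calP_{GR}(\varphi)\neq 0$.
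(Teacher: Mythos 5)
Your overall strategy is the one the paper follows: unfold the truncated period $\calP_{\G_2}(\La^TE(\phi,s))$ over the orbits of $H$ on $P\bsl\E_6$, kill all but one orbit using nontriviality of $\xi$ on the unipotent stabilizers and cuspidality of $\pi$, identify the surviving contribution with the Ginzburg--Rallis period, and take the residue at $s=1/2$. However, two of your steps would fail as stated. First, the surviving orbit does not give ``$\calP_{GR}(\varphi)$ up to a nonzero constant and a ratio of standard $L$-factors'': no $L$-functions appear anywhere in this computation (the period is not Eulerian and no unramified local computation is made). What the unfolding actually yields (Proposition \ref{prop:truncIdentity}) is
\[
\calP_{\G_2}(\La^TE(\phi,s))=\frac{e^{(s-1/2)T}}{s-1/2}\,A(\phi)+\frac{e^{(-s-1/2)T}}{-s-1/2}\,A\bigl(M(s)\phi\bigr),
\quad
A(\phi)=\int_{K_{\G_2}}\int_{\overline{V}(\A)}\int_{[H_0\cap M]^1}\int_{[N\cap M]}e^{\langle \frac{1}{2}\varpi,H_P(vk)\rangle}\phi(n_mhvk)\xi(n_m)\,dn_m\,dh\,dv\,dk,
\]
so the Ginzburg--Rallis period appears only as the \emph{inner} integral, integrated further over $\overline{V}(\A)\times K_{\G_2}$ against right-translates of $\phi$. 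That $A(\phi)\neq 0$ for some section does not follow ``directly from $\calP_{GR}(\varphi)\neq 0$''; one needs the standard choice-of-section argument (as in Ichino--Yamana, Lemma 5.8) to rule out cancellation in the outer integrals. (Also, the contributing orbit is the closed one $V=V(\NullU)$, not the open one, though this is cosmetic.)

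Second, your final inference --- that $Res_{s=1/2}\calP_{\G_2}(\La^TE(\phi,s))$ equals $c\,\calP_{GR}(\varphi)\neq 0$, hence the period of $Res_{s=1/2}E(\phi,s)$ is nonzero, hence the residue itself is nonzero --- ignores the second term above, which contributes to the residue through $Res_{s=1/2}M(s)\phi$. After interchanging residue and period (justified as in Arthur), the correct identity is
\[
A(\phi)=\calP_{\G_2}\bigl(\La^T Res_{s=1/2}E(\phi,s)\bigr)+e^{-T}A\bigl(Res_{s=1/2}M(s)\phi\bigr),
\]
and from $A(\phi)\neq 0$ you may only conclude that \emph{one} of the two right-hand terms is nonzero; the identity you invoke is false whenever $Res_{s=1/2}M(s)\phi\neq 0$. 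The paper closes this gap by observing that $Res_{s=1/2}E(\phi,s)\neq 0$ for some $\phi$ if and only if $Res_{s=1/2}M(s)\neq 0$, so either alternative yields the theorem; alternatively one could exploit that $A(\phi)$ is independent of $T$ while the second term decays like $e^{-T}$. Your remark that $L(3/2,\pi,\La^3)\neq 0$ is not needed for Theorem \ref{main} is correct, but not because some ``$L$-factor ratio'' in the unfolding is nonzero: that hypothesis enters only in the corollary, when the nonvanishing of the residue is converted into $L(\frac{1}{2},\pi,\La^3)\neq 0$ via the normalized intertwining operator.
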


\begin{corollary}
Theorem \ref{main} implies Theorem \ref{main 2}.
\end{corollary}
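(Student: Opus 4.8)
The plan is to derive Theorem~\ref{main 2} from Theorem~\ref{main} using the Langlands--Shahidi description of the constant term of $E(\phi,s)$. Assume the hypotheses of Theorem~\ref{main 2}: $\pi$ is cuspidal on $\GL_6(\BA)$ with trivial central character, $L(3/2,\pi,\La^3)\neq 0$, and $\mathcal{P}_{GR}(\varphi)\neq 0$ for some $\varphi\in\pi$. The last condition is exactly the hypothesis of Theorem~\ref{main}, so there is a section $\phi$ in the space inducing $\pi$ with $Res_{s=1/2}E(\phi,s)\neq 0$. Since $P$ is maximal and the inducing datum is cuspidal, $E(\phi,s)$ is holomorphic at a point precisely when its constant term $E_P(\phi,s)=\phi_s+M(s)\phi_s$ along $P$ is; as $\phi_s$ is entire in $s$, a pole of $E(\phi,s)$ at $s=1/2$ forces a pole of $M(s)$ at $s=1/2$.

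Next I would recall the shape of $M(s)$. The Levi of $P$ is of type $A_5$ and its unipotent radical $N$ is two-step, with $N/[N,N]$ the $20$-dimensional representation $\La^3(\mathrm{std})$ and $[N,N]$ the $1$-dimensional determinant representation; accordingly the representations $r_i$ of the $L$-group of the Levi appearing in the Langlands--Shahidi method are $r_1=\La^3$ at level one and $r_2=\mathbf 1$ at level two (the latter because $\omega_\pi$ is trivial). In the normalization for which the relevant point is $s=1/2$, Langlands--Shahidi theory then gives
\[
M(s)=\frac{L(s,\pi,\La^3)}{L(s+1,\pi,\La^3)}\cdot\frac{\zeta_F(2s)}{\zeta_F(2s+1)}\cdot N(s),
\]
where the $\varepsilon$-factors (entire and nowhere vanishing) have been absorbed and $N(s)$ is the product of the normalized local intertwining operators, which is holomorphic at $s=1/2$.

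Now read off the behavior at $s=1/2$: $\zeta_F(2s+1)=\zeta_F(2)$ is finite and nonzero; $L(s+1,\pi,\La^3)=L(3/2,\pi,\La^3)$ is finite and nonzero by hypothesis --- this is precisely the role of that assumption, since a zero there would produce a pole of $M(s)$ unrelated to the central value; $L(s,\pi,\La^3)$ is holomorphic at $s=1/2$, a Langlands--Shahidi $L$-function of a cuspidal $\pi$ having poles at most at $s=0,1$; and $\zeta_F(2s)$ has a simple pole at $s=1/2$. Therefore
\[
Res_{s=1/2}M(s)\phi_s=\frac{L(1/2,\pi,\La^3)}{L(3/2,\pi,\La^3)\,\zeta_F(2)}\cdot\big(\tfrac12\,Res_{s=1}\zeta_F(s)\big)\cdot N(1/2)\phi_{1/2}.
\]
The left-hand side is nonzero because $Res_{s=1/2}E(\phi,s)\neq 0$, so $L(1/2,\pi,\La^3)\neq 0$, which is the conclusion of Theorem~\ref{main 2}. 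For the supplementary assertion, if $\pi$ is everywhere tempered then the Satake parameters of each $\La^3\pi_v$ have absolute value $1$, so the Euler product of $L(s,\pi,\La^3)$ converges absolutely, hence is nonzero, for $\Rel(s)>1$; in particular $L(3/2,\pi,\La^3)\neq 0$ and the extra hypothesis is automatic.

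The only step requiring genuine care is the Langlands--Shahidi normalization used above: identifying $r_1$ and $r_2$, pinning down the exact shifts in the $L$- and $\zeta$-factors, and --- most importantly --- knowing that the normalized operator $N(s)$ is holomorphic at $s=1/2$, so that when $L(1/2,\pi,\La^3)=0$ there is genuinely no pole of $E(\phi,s)$ at $s=1/2$. For the maximal parabolic of $\E_6$ with Levi of type $A_5$ this is exactly the input recalled in the introduction, and I would cite it rather than reprove it.
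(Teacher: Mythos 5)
Your proof is correct and follows essentially the same route as the paper: a pole of $E(\phi,s)$ at $s=1/2$ forces a pole of $M(s)$, Kim's normalization $L(s,\pi,\La^3)\zeta_F(2s)/\bigl(L(s+1,\pi,\La^3)\zeta_F(2s+1)\bigr)$ together with holomorphy of the normalized intertwining operator at $s=1/2$ puts the pole into the numerator, and the hypothesis $L(3/2,\pi,\La^3)\neq 0$ then yields $L(1/2,\pi,\La^3)\neq 0$. The only caveat is your explicit residue formula, which presumes the completed $L(s,\pi,\La^3)$ is holomorphic at $s=1/2$ --- as the paper's remark notes, this is not known unless $\pi$ is everywhere tempered --- but this is harmless, since the conclusion only needs that a zero of $L(s,\pi,\La^3)$ at $s=1/2$ would cancel the simple pole of $\zeta_F(2s)$ and leave $M(s)$ holomorphic there.
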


\begin{proof}
By Theorem \ref{main} the intertwining operator associated to $P$ has a pole at $s=1/2$.
By 2.5.3 in \cite{kim}, the normalizing factor of the intertwining operator is
\[
\dfrac{L(s, \pi, \La^3 )\zeta_F(2s)}{L(s+1, \pi, \La^3)\zeta_F(2s+1)}
\]
where $\zeta_F(s)$ is the Dedekind zeta function. By Theorem 4.11 of \cite{kim}, the normalized intertwining operator is holomorphic at $s=1/2$.
Since we have assumed that $L(3/2,\pi,\La^3)\neq 0$ it follows that the numerator $L(s, \pi, \La^3)\zeta_F(2s)$ has a pole at $s=1/2$, which implies that $L(\frac{1}{2},\pi,\La^3)\neq 0$.
\end{proof}

\begin{remark}
The $L$-function $L(s, \pi, \La^{3} )$ is the completed $L$-function as defined by Shahidi \cite{shah}.
Ginzburg-Rallis \cite{ginRal} prove that the partial $L$-function $L(s, \pi, \La^{3} )$ is holomorphic at $s=1/2$.
Unless $\pi$ is assumed everywhere tempered, it is unknown if the completed $L$-function is holomorphic at $s=1/2$.
The non-vanishing statement makes sense in either case nonetheless.
\end{remark}

\subsection{Structure of the paper}

In section \ref{sec:exceptional} we introduce the groups we are working with together with their natural realizations. The following section computes the orbits of $H$ on a flag variety of $\E_6$ associated to the parabolic subgroup of type $A_5$. Then in section \ref{sec:Eis} we set up notations relating to Eisenstein series and truncation. In section \ref{sec:PG2} we prove the main theorem (i.e. Theorem \ref{main}) by assuming the absolute convergence of certain integrals (i.e. Proposition \ref{prop:absConv}). Finally in section \ref{sec:convergence} we prove Proposition \ref{prop:absConv}.

\subsection{Acknowledgments} Throughout the course of this work, we have benefited from the warm hospitality of the Institute of Advanced Study.  We thank the IAS for providing an atmosphere conducive to research, and for making this collaboration possible.  While at the IAS, A.P. has been supported by the Schmidt fund at the IAS, C.W. has been supported by the NSF at the IAS, and M.Z. has been supported by the Bell companies fellowship fund and the NSF at the IAS.

\section{Preliminaries on exceptional groups and their embeddings}\label{sec:exceptional}  In this section we define the various reductive groups and parabolic subgroups involved in our period calculation.  We begin with the definitions of the octonions $\Theta$ and $\G_2$.  We then discuss the exceptional cubic norm structure $J = H_3(\Theta)$ and the group $\E_6$.  After defining $\E_6$, we discuss parabolic subgroups of $\G_2$ and $\E_6$.  We then define the formal period that we study.  Throughout this section, $F$ denotes an arbitrary ground field of characteristic $0$.

\subsection{The octonions and $\G_2$} In this subsection we recall the split octonions $\Theta$ and $\G_2$.  The reader may see \cite{springerVeldkampBook} for more background information pertinent to this subsection.  We first recall the Zorn model of the octonions, and then the Cayley-Dickson construction.  The group $\G_2$ is defined as the automorphisms of $\Theta$.

\subsubsection{Zorn model}
We recall the Zorn model of the octonions $\Theta$ over $F$.  Denote by $V_3$ the standard three-dimensional representation of $\SL_3$, and $V_3^\vee$ the dual representation.  So, $V_3$ comes equipped with an isomorphism $\wedge^3 V_3 \rightarrow F$
where $F$ is the ground field.  We fix a standard basis $e_1, e_2, e_3$ in $V_3$, so that $e_1 \wedge e_2 \wedge e_3 \mapsto 1$ under the identification $\wedge^3 V_3 \simeq F$, and denote by $e_1^*, e_2^*, e^*_3$ the dual basis in $V_3^\vee$.  The indentification $\wedge^3 V_3 \simeq F$ induces an isomorphism $\wedge^2 V_3 \simeq V_3^\vee$ and $\wedge^2 V_3^\vee \simeq V_3$.  Under these identifications, $e_1 \wedge e_2 \mapsto e_3^*$, $e_2 \wedge e_3 \mapsto e_1^*$, $e_1^* \wedge e_2^* \mapsto e_3$, etc.

In the Zorn model, an octonion $x$ is represented by a $2\times 2$ matrix $x = \mm{a}{v}{\phi}{d}$ where $a,d \in F$, $v \in V_3$ and $\phi \in V_3^\vee$.  One sets $x^* = \mm{d}{-v}{-\phi}{a}$ and $n(x) = ad- \phi(v)$ the conjugate and norm of $x$, respectively.  The trace of $x$, $\tr(x)$, is defined to be $\tr(x) = a+d$.  One defines the symmetric bilinear form $(\;,\;)$ on $\Theta$ by $(x,y) = n(x+y) - n(x) - n(y)$.  This form is non-degenerate.

The multiplication on $\Theta$ is given by the formula
\[x \cdot x' = \left(\begin{array}{cc} a & v \\ \phi & d\end{array}\right) \cdot \left(\begin{array}{cc} a' & v' \\ \phi' & d' \end{array}\right) = \left(\begin{array}{cc} aa'+ \phi'(v) & av'+d'v -\phi\wedge \phi' \\ a'\phi + d\phi'+ v\wedge v' & \phi(v')+dd' \end{array}\right).\]
This multiplication is neither commutative nor associative, but satisfies the following three important identities:
\begin{enumerate}
\item $n(x \cdot x') = n(x) n(x')$;
\item $\tr(x_1 (x_2 x_3)) = \tr((x_1 x_2) x_3)$;
\item $x^* (x y) = (x^* x) y = n(x) y$.
\end{enumerate}
In fact, the subalgebra generated by any two elements $x, y$ of $\Theta$ is associative.

For later reference, we denote $\epsilon_1 = \begin{pmatrix}1&0\\0&0\end{pmatrix}$ and $\epsilon_2 = \begin{pmatrix}0&0\\0&1\end{pmatrix}$ in $\Theta$, so that $1 = \epsilon_1 + \epsilon_2$.  We sometimes abuse notation and write $e_j$ or $e_k^*$ for the corresponding element $\mm{0}{e_j}{0}{0}$ or $\mm{0}{0}{e_k^*}{0}$ of $\Theta$.

\subsubsection{Cayley-Dickson construction}\label{subsubsec:CD} One can also construct the octonions by ``doubling" a quaternion algebra.  This is called the Cayley-Dickson construction.

In general, if $D$ is a quaternion algebra and $\gamma \in \GL_1(F)$, then $\Theta_{D,\gamma} = D \oplus D$ is an Octonion algebra, with addition defined component-wise and multiplication given by
\[(x_1,y_1) \cdot (x_2,y_2) = (x_1 x_2 + \gamma y_2^* y_1, y_2 x_1 + y_1 x_2^*).\]
The conjugate of $(x,y)$ is $(x^*,-y)$ and the trace of $(x,y)$ is $\tr_{D}(x)$.  The norm of $(x,y)$ is $n_D(x) -\gamma n_D(y)$.  Here $n_D$ and $\tr_D$ are the reduced norm and trace on $D$.

The octonions $\Theta_{D,\gamma}$ are split precisely when $\gamma$ is a (reduced) norm from $D^\times$. In particular, if either $D = M_2(F)$ is split, or if $\gamma = 1$, then $\Theta_{D,\gamma}$ is split.

\subsubsection{The group $\G_2$} The linear algebraic group $\G_2$ is by definition the automorphisms of the octonion algebra $\Theta$.  That is, if $g \in \G_2(F)$, then $g1 = 1$, and for all $x,y \in \Theta$, $g (x^*) = (gx)^*$, $(gx) \cdot (gy) = g (x \cdot y)$.  In fact, it can be shown that the first two statements follow from the last one.

\subsection{The exceptional cubic norm structure $J$ and the group $\E_6$}\label{ssec:E6}  In this subsection we recall the definition of the exceptional cubic norm structure $J = H_3(\Theta)$ and the split simply-connected group $\E_6$.  We will also recall some facts about the Lie algebra of $\E_6$, which we will need later.

\subsubsection{The exceptional cubic norm structure $J$} Define $J = H_3(\Theta)$ to be the $F$ vector space of elements
\[X = \left(\begin{array}{ccc} c_1& a_3 & a_2^* \\a_3^* &c_2&a_1\\a_2&a_1^*&c_3\end{array}\right)\]
where $c_1, c_2, c_3 \in F$ and $a_1, a_2, a_3 \in \Theta$.  Thus, $J$ is $27$-dimensional.

The space $J$ comes equipped with a cubic norm map $n: J \rightarrow F$ and a quadratic adjoint map $\#: J \rightarrow J$.  The norm of $X$, $n(X)$, is defined to be
\begin{equation}\label{eqn:nX}n(X) = c_1 c_2 c_3 - c_1 n(a_1) - c_2 n(a_2) - c_3n(a_3) + \tr(a_1a_2a_3).\end{equation}
Polarizing the norm form, one obtains a symmetric trilinear form $J \otimes J \otimes J \rightarrow F$, normalized by the identity $(X,X,X) = 6n(X)$.

This symmetric trilinear form gives rise to a bilinear map, denoted $\times$, from $J \otimes J$ to $J^\vee$.  Namely, if $x,y, z \in J$, then $x\times y \in J^\vee$ is by definition the linear form given by $(x\times y)(z) = (x,y,z)$.  For $x \in J$, one sets $x^\# = \frac{1}{2} x \times x$. If one puts on $J$ the pairing $(\;,\;): J \otimes J\rightarrow F$ given by
\begin{equation}\label{Jpairing}(X,X') = c_1c_1'+c_2c_2'+c_3c_3' + (a_1,a_1') +(a_2,a_2') + (a_3,a_3'),\end{equation}
then this pairing induces an identification $J^\vee \simeq J$.  Under this identification, the quadratic map $\#: J \rightarrow J$ and the bilinear map $\times$ can be written simply in the coordinates of $J$.  Namely, one obtains
\[X^\# = \left(\begin{array}{ccc} c_2c_3 - n(a_1) & a_2^*a_1^*-c_3 a_3 & a_3a_1 - c_2 a_2^* \\ a_1 a_2 - c_3 a_3^* &c_3 c_1 - n(a_2) & a_3^*a_2^*-c_1a_1\\ a_1^* a_3^* - c_2 a_2 & a_2 a_3 - c_1 a_1^* &c_1 c_2 - n(a_3)\end{array}\right)\]
and $X \times Y = (X+Y)^\# - X^\# - Y^\#$.

\subsubsection{The algebraic group $\E_6$} The linear algebraic group $\E_6$ is by definition the linear automorphisms of $J$ that preserve the norm form.  We let $\E_6$ act on the right of $J$.  That is,
\[\E_6 = \{g \in \GL(J): n(X g) = n(X) \text{ for all } X \in J\}.\]
Equivalently, $\E_6$ is the subgroup of $\GL(J)$ fixing the symmetric trilinear form on $J$:
\[\E_6 = \{g \in \GL(J): (xg,yg,zg) = (x,y,z) \forall x,y,z \in J\}.\]
Note that $\E_6$ \emph{does not} fix the pairing \eqref{Jpairing}; the subgroup of $\E_6$ that also fixes this pairing is $\F_4$.  If $g \in \GL(J)$, denote by $\widetilde{g} \in \GL(J)$ the unique map satisfying $(Xg,Y) = (X,Y\widetilde{g})$ for all $X,Y$.  If $g \in \E_6$, one has $(Xg) \times (Yg) = (X \times Y) \widetilde{g}^{-1}$ for all $X, Y \in J$.  In other words, $g \mapsto \widetilde{g}^{-1}$ is an involution on $\E_6$, whose fixed point set is $\F_4$.

\subsubsection{The Lie algebra of $\E_6$} Below we will need a few facts about the Lie algebra of $\E_6$, specifically some nilpotent elements of it.  The Lie algebra of $\E_6$ consists of the elements $\phi \in End(J)$ satisfying
\[(\phi(x),y,z) + (x,\phi(y),z) + (x,y,\phi(z)) = 0\]
for all $x,y,z \in J$.
If $\gamma \in J^\vee$, and $v \in J$, define $\Phi_{\gamma,v} \in End(J)$ as
\[\Phi_{\gamma,v}(z) = -\gamma \times (v \times z) + (\gamma,z)v + (\gamma,v)z\]
and $\Phi'_{\gamma,v} = \Phi_{\gamma,v} - \frac{2}{3}(\gamma,v)$.
One has the following well-known proposition.
\begin{proposition}Suppose $\gamma \in J^\vee$ and $v \in J$. We have the following facts.
\begin{enumerate}
\item For $\gamma \in J^\vee$ and $v \in J$, one has
\[(\Phi_{\gamma,v}(x),y,z) + (x,\Phi_{\gamma,v}(y),z) + (x,y,\Phi_{\gamma,v}(z)) = 2(v,\gamma) (x,y,z)\]
for all $x,y,z \in J$.  Consequently, $\Phi'_{\gamma,v} \in Lie(\E_6)$.
\item If $\gamma^\# = 0$ and $(\gamma,v) = 0$, then $\Phi'_{\gamma,v} = \Phi_{\gamma,v}$ satisfies $\Phi_{\gamma,v}^2(z) = -2(\gamma,z) \gamma \times v^\#$ and $\Phi_{\gamma,v}^3 = 0$. Consequently, $\exp(\Phi_{\gamma,v})$ is a unipotent element of $\E_6$.
\item Similarly, if $v^\# = 0$ and $(\gamma,v) = 0$, then $\Phi_{\gamma,v} = \Phi'_{\gamma,v}$ satisfies $\Phi_{\gamma,v}^2(z) = -2(\gamma^\# \times v,z)v$ and $\Phi_{\gamma,v}^3 = 0$.  Consequently, $\exp(\Phi_{\gamma,v})$ is a unipotent element of $\E_6$.
\end{enumerate}\end{proposition}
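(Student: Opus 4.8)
The plan is to deduce everything by direct computation from the two structural identities of the cubic norm structure $J$: the adjoint identity $(x^\#)^\#=n(x)\,x$, and the fact that $\tfrac{d}{dt}\big|_{t=0}n(x+ty)=(x^\#,y)$, equivalently $(a,b,c)=(a\times b,c)$ under the identification $J^\vee\simeq J$ furnished by \eqref{Jpairing}.

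For part (1) I would rewrite $(\Phi_{\gamma,v}(x),y,z)$ as $(\Phi_{\gamma,v}(x),y\times z)$ (and likewise for the other two slots), expand $\Phi_{\gamma,v}(x)=-\gamma\times(v\times x)+(\gamma,x)v+(\gamma,v)x$, and sum. The only non-elementary contribution is $-\big(\gamma,\ (v\times x)\times(y\times z)+(v\times y)\times(x\times z)+(v\times z)\times(x\times y)\big)$, the rest being an explicit combination of terms of the form $(\gamma,x)(v,y,z)$ and $(\gamma,v)(x,y,z)$. Fully polarizing the adjoint identity yields the multilinear identity
\[
(v\times x)\times(y\times z)+(v\times y)\times(x\times z)+(v\times z)\times(x\times y)=(v,y,z)\,x+(v,x,z)\,y+(v,x,y)\,z+(x,y,z)\,v,
\]
and contracting it against $\gamma$ makes the whole sum collapse to $2(v,\gamma)(x,y,z)$. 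The consequence $\Phi'_{\gamma,v}\in Lie(\E_6)$ is then immediate, since for a scalar operator $c\cdot\mathrm{id}$ the same sum equals $3c\,(x,y,z)$, so subtracting $\tfrac23(\gamma,v)\,\mathrm{id}$ cancels the right-hand side and leaves precisely the defining equation of $Lie(\E_6)$.

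For parts (2) and (3) I would first note that the adjoint of $\Phi_{\gamma,v}$ with respect to the pairing \eqref{Jpairing} is $\Phi_{v,\gamma}$ — a one-line check — so that taking adjoints turns part (2) into part (3); it thus suffices to prove (2). Assuming $\gamma^\#=0$ and $(\gamma,v)=0$, the identity $v\times v=2v^\#$ gives $\Phi_{\gamma,v}(v)=-2\,\gamma\times v^\#$, and since $(\gamma,\gamma\times(v\times z))=2(\gamma^\#,v\times z)=0$ a short computation reduces $\Phi_{\gamma,v}^2(z)$ to $\gamma\times(v\times(\gamma\times(v\times z)))-2(\gamma,z)\,\gamma\times v^\#$. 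Hence both assertions of part (2) — the formula for $\Phi_{\gamma,v}^2$ and the vanishing $\Phi_{\gamma,v}^3=0$ — follow from the single identity
\[
\gamma\times\big(v\times(\gamma\times(v\times z))\big)=0\qquad\text{for all }z,\ \text{provided }\gamma^\#=0\text{ and }(\gamma,v)=0.
\]
Here $\gamma^\#=0$ enters decisively: setting $x=\gamma$ in the twice-polarized adjoint identity gives $(\gamma\times a)\times(\gamma\times b)=(\gamma\times a,b)\,\gamma$ for all $a,b$, so any two-fold $\times$-product built from $\gamma$ lies on the line $F\gamma$; together with $(\gamma,v)=0$ and $(\gamma^\#,\cdot)=0$ this forces the nested expression into $F\gamma$ and then to vanish. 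Alternatively one can verify the identity in explicit coordinates: by the $\F_4$-action one reduces to $\gamma=\diag(1,0,0)$, for which $\gamma\times(-)$ is the composition of the projection onto the Peirce-zero space $J_0(\gamma)$ with the standard conjugation, and the vanishing becomes a brief check.

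Finally, $\Phi:=\Phi'_{\gamma,v}$ is a nilpotent element of $Lie(\E_6)$ with $\Phi^3=0$, so $\exp(\Phi)=1+\Phi+\tfrac12\Phi^2$ is a polynomial endomorphism of $J$; since $\charf F=0$ and $\E_6$ is an algebraic group with Lie algebra $Lie(\E_6)$, the exponential of a nilpotent element of $Lie(\E_6)$ lies in $\E_6$ and is unipotent. I expect the quartic identity in parts (2)--(3) to be the main obstacle: the obvious manipulations via the structure identities merely relate the nested expression to itself (since $\gamma$ and $v$ each occur twice), so one must genuinely exploit $\gamma^\#=0$ (resp.\ $v^\#=0$) to collapse the relevant products onto a line, or else descend to the Peirce calculus on $H_3(\Theta)$.
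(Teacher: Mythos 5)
Your argument is correct, and it is genuinely different from what the paper does: the paper disposes of this proposition by citation (part (1) to Rumelhart's equation (9), parts (2)--(3) to Springer's Lemma 1 and Proposition 5), whereas you give a self-contained computation from the adjoint identity $(x^\#)^\#=n(x)x$. Your two key inputs check out: the full linearization of the adjoint identity is exactly the four-variable identity you state, and contracting it with $\gamma$ does collapse the sum in part (1) to $2(\gamma,v)(x,y,z)$; and the once-more-polarized identity $(x\times a)\times(x\times b)+x^\#\times(a\times b)=(x^\#,a)b+(x\times a,b)x+(x^\#,b)a$ specialized at $x=\gamma$ with $\gamma^\#=0$ gives your rank-one collapse $(\gamma\times a)\times(\gamma\times b)=(\gamma,a,b)\gamma$. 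The adjoint duality $\Phi_{\gamma,v}^{t}=\Phi_{v,\gamma}$ reducing (3) to (2) is also a genuine economy not present in the paper's sources as used here. The only place where your wording outruns the argument is the sentence claiming the collapse ``forces the nested expression into $F\gamma$'': the quartic expression $\gamma\times\bigl(v\times(\gamma\times(v\times z))\bigr)$ is not itself of the form $(\gamma\times a)\times(\gamma\times b)$, so the collapse does not apply to it directly. The gap closes in one line by dualizing first: for any $t$,
\[
\bigl(\gamma\times\bigl(v\times(\gamma\times(v\times z))\bigr),\,t\bigr)
=\bigl(v,\;\gamma\times(v\times z),\;\gamma\times t\bigr)
=\bigl((\gamma\times(v\times z))\times(\gamma\times t),\,v\bigr)
=(\gamma,v\times z,t)\,(\gamma,v)=0,
\]
using your collapse identity and $(\gamma,v)=0$; your alternative Peirce-coordinate check at $\gamma=e_{11}$ also works (the image of $e_{11}\times(-)$ is the lower $2\times 2$ block, and $v\times(\text{that block})$ has no components there when $(e_{11},v)=0$), provided you add the routine density/homogeneity remark needed to pass from trace-nonzero rank-one $\gamma$ to all $\gamma$ with $\gamma^\#=0$. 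With either completion, both the formula for $\Phi^2$ and $\Phi^3=0$ follow as you say, since the cubic term is the same identity at $z=v$. In short: the paper buys brevity by outsourcing to the literature; your route buys a self-contained, checkable proof whose only nontrivial ingredients are the two polarizations of the adjoint identity.
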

\begin{proof} The first item is essentially \cite[Equation (9)]{rumelhart}.  The second and third items are essentially contained in \cite[Lemma 1 and Proposition 5]{springer}.\end{proof}

\subsection{Parabolic subgroups of $\G_2$}  We will need some facts about parabolic subgroups of $\G_2$ and $\E_6$. We begin with parabolic subgroups of $\G_2$. Let $V_7 \subseteq \Theta$ be the subspace of traceless elements of $\Theta$, i.e.,
\[V_7 = \{x \in \Theta: \tr(x) = 0\}.\]
Equivalently, $V_7$ is the perpendicular space to $1 \in \Theta$.

The parabolic subgroups of $\G_2$ can be defined as the stabilizers of certain subspaces or flags in $V_7$.  To setup the statement of this fact, we require a definition pertaining to two-dimensional isotropic subspaces of $V_7$.

Thus, suppose $\NullU \subseteq \Theta$ is a two dimensional isotropic subspace.  Note that if $x,y$ is a basis of $\NullU$, then the line spanned by $x^*\cdot y$ is independent of the choice of basis.  Similarly, the line spanned by $x \cdot y^*$ is independent of the choice of basis.
\begin{definition} Suppose that $\NullU \subseteq \Theta$ is a two-dimensional isotropic subspace, and $x, y$ is a basis of $\NullU$.  Call $\NullU$ \emph{left null}  if $x^* \cdot y = 0$ for a basis $x,y$ of $\NullU$. Similarly, call $\NullU$ \emph{right null} if $x \cdot y^*  = 0$ for a basis $x,y$ of $\NullU$.  If $\NullU \subseteq V_7$, then $\NullU$ is left-null if and only if it is right-null.  We say $\NullU$ is \emph{null} (with no-modifier) if it is both left and right null.\end{definition}
For general isotropic $\NullU$, it could be left-null but not right-null, right-null but not left-null, both, or neither. We now have the following well-known proposition.

\begin{proposition}\label{prop:G2parab} The group $\G_2$ has three conjugacy classes of parabolic subgroups: two maximal ones and a Borel.  These parabolic subgroups and their flag varieties are characterized as follows.
\begin{enumerate}
\item The group $\G_2$ acts transitively on the set of isotropic lines $\ell \subseteq V_7$.  The stabilizer $P(\ell)$ of such a line is a maximal parabolic subgroup of $\G_2$, with reductive quotient isomorphic to $\GL_2$, and whose unipotent radical $N(\ell)$ is a $3$-step unipotent group.
\item The group $\G_2$ acts transitively on the set of null isotropic two spaces $\NullU \subseteq V_7$.  The stabilizer $P(\NullU)$ of such a two-space is a maximal parabolic subgroup of $\G_2$, with reductive quotient isomorphic to $\GL_2$, and whose unipotent radical $N(\NullU)$ is a $2$-step unipotent group.
\item The group $\G_2$ acts transitively on the set of pairs $(\ell,\NullU)$ with $\ell \subseteq \NullU \subseteq V_7$ an isotropic line contained in a null isotropic two-space of $V_7$.  The stabilizer $B(\ell,\NullU)$ of such a pair is a Borel subgroup of $\G_2$.\end{enumerate}\end{proposition}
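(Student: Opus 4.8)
The plan is to derive the classification from the structure theory of split reductive groups and then realize each of the three parabolic classes inside the octonion model. Since $\G_2$ is split and semisimple of rank $2$, it has exactly three conjugacy classes of parabolic subgroups: two maximal ones $P_1,P_2$ and a Borel $B$, which can be chosen so that $B=P_1\cap P_2$. Reading off the root system of type $\G_2$ (positive roots $\alpha,\beta,\alpha+\beta,2\alpha+\beta,3\alpha+\beta,3\alpha+2\beta$, with $\alpha$ short) one sees that each $P_i$ has reductive quotient $\GL_2$, that the unipotent radical of the maximal parabolic whose Levi is the long-root $\GL_2$ breaks, under the grading by the coefficient of the short simple root, into layers of dimensions $2,1,2$ — hence is a $3$-step nilpotent group — and that the unipotent radical of the other maximal parabolic breaks, under the grading by the coefficient of the long simple root, into layers $4,1$ — hence is a $5$-dimensional Heisenberg group, in particular $2$-step. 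So the content of the proposition is: produce for each of the three geometric stabilizers in the statement a parabolic of the predicted type, and establish the three transitivity statements.

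For \textup{(1)}: the isotropic lines in $V_7$ form a smooth quadric $Q^5\subseteq\mathbb{P}(V_7)\cong\mathbb{P}^6$, irreducible of dimension $5$ (the restriction of the octonion norm to $V_7$ is non-degenerate of rank $7$). Since the $\G_2$-invariant symmetric form pairs the $\mu$-weight space of $V_7$ only with the $(-\mu)$-weight space, any highest-weight line of $V_7$ (its weight being nonzero) is isotropic; hence the unique closed $\G_2$-orbit in $\mathbb{P}(V_7)$ is contained in $Q^5$, and as it has dimension $\dim\G_2-\dim P_i=5$ it equals $Q^5$. This shows $\G_2$ is transitive on isotropic lines and that $P(\ell)$ is a maximal parabolic; identifying $V_7$ with the fundamental representation $V(\varpi)$ for $\varpi$ dual to the short simple root shows $P(\ell)=\mathrm{Stab}_{\G_2}([v_\varpi])$ is the maximal parabolic whose Levi is the long-root $\GL_2$, so $N(\ell)$ is $3$-step — alternatively one computes $\mathrm{Stab}_{\G_2}(\ell)$ directly in the Zorn model with $\ell=F\mm{0}{e_1}{0}{0}$ and finds its unipotent radical graded with layers $2,1,2$. (The same transitivity holds on $F$-points for any field $F$ of characteristic zero, since $\G_2$ is split.)

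For \textup{(2)} and \textup{(3)}: the heart is a short computation in $\Theta$. For an isotropic $x\in V_7$ one has $x^*=-x$, so the identity $x^*(xy)=n(x)y$ gives $x\cdot(x\cdot y)=0$, i.e. $\operatorname{im}L_x\subseteq\ker L_x$; moreover $\operatorname{im}L_x=x\Theta$ is totally isotropic ($n(xy)=n(x)n(y)=0$), and a rank count (done for $x=\mm{0}{e_1}{0}{0}$ and transported using \textup{(1)}) gives $\operatorname{im}L_x=\ker L_x$, a $4$-dimensional totally isotropic subspace of $\Theta$. Then $W(x):=\ker L_x\cap V_7$ is a $3$-dimensional totally isotropic subspace containing $x$; a two-plane $Fx+Fz$ is null if and only if $x\cdot z=0$, i.e. if and only if $z\in W(x)$, and conversely \emph{every} two-plane $Fx\subseteq\Pi\subseteq W(x)$ is null — all of this is transparent for $x=\mm{0}{e_1}{0}{0}$, where $W(x)=\langle e_1,e_2^*,e_3^*\rangle$. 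Thus the null two-spaces containing the isotropic line $\ell=Fx$ form a $\mathbb{P}^1=\mathbb{P}(W(x)/Fx)$, on which $\mathrm{Stab}_{\G_2}(\ell)$ acts transitively — already its Levi $\GL_2$ does, acting on $W(x)/Fx$ through the standard representation. Combined with \textup{(1)}, this yields transitivity of $\G_2$ on pairs $\ell\subseteq\NullU$, and hence on null two-spaces. The pairs form a $\mathbb{P}^1$-bundle over $Q^5$, hence a projective $\G_2$-homogeneous variety of dimension $6$; therefore $B(\ell,\NullU)$ is a parabolic of dimension $8$, i.e. a Borel, and one checks it equals $P(\ell)\cap P(\NullU)$. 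Forgetting $\ell$ exhibits the null two-spaces as a projective $\G_2$-homogeneous variety of dimension $5$, so $P(\NullU)$ is a maximal parabolic; since $P(\ell)$ was just shown to have a $3$-step unipotent radical, and $P(\NullU)$ is not conjugate to it (verified, e.g., by computing $\mathrm{Stab}_{\G_2}(\NullU)$ in the Zorn model), $P(\NullU)$ is the maximal parabolic with the $2$-step unipotent radical.

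The main obstacle is items \textup{(2)}–\textup{(3)}, and within them the octonion computation isolating $W(x)$ together with the fact that it is \emph{null} — not merely isotropic — two-spaces that form a single $\G_2$-orbit: the variety of isotropic $2$-planes in $V_7$ is $7$-dimensional and carries several $\G_2$-orbits, so one genuinely has to cut out the $5$-dimensional locus of null planes, the cleanest route being the reduction (via \textup{(1)}) to the $\mathbb{P}^1$ of null planes through a fixed isotropic line and the analysis of the Levi action on it. By contrast item \textup{(1)} is essentially formal, and the remaining dimension counts, the projectivity of the incidence varieties, and the bookkeeping deciding which maximal parabolic is which are routine once the explicit model is in hand.
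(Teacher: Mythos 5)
The paper offers no proof of this proposition at all --- it is quoted as well known, with only the general reference to Springer--Veldkamp for background --- so there is no internal argument to compare yours against; I can only assess your proof on its own terms, and it is essentially correct. The skeleton is sound: the count of parabolic classes and the layer structure $(2,1,2)$ versus $(4,1)$ of the two unipotent radicals read off from the $\G_2$ root system; the identification of the variety of isotropic lines in $V_7$ with the closed orbit in $\mathbb{P}(V_7)$ (highest weight the highest short root, Levi of the stabilizer the long-root $\GL_2$), giving item (1); and the key octonion computation that for isotropic $x \in V_7$ one has $\ker L_x = \mathrm{im}\, L_x$ four-dimensional, so that the null two-planes through $Fx$ are exactly the planes between $Fx$ and the three-space $W(x) = \ker L_x \cap V_7$, a $\mathbb{P}^1$ on which the Levi $\GL_2$ of $P(\ell)$ acts through its standard two-dimensional representation. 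This correctly isolates \emph{null} (rather than merely isotropic) planes and yields transitivity on pairs, hence on null two-spaces, and the dimension counts for the Borel and for maximality are fine. Two steps are asserted rather than argued and deserve a sentence each. First, the parenthetical that transitivity in (1) holds on $F$-points ``since $\G_2$ is split'': splitness is not by itself the reason; what you need is that the stabilizer of any $F$-rational isotropic line is a parabolic defined over $F$ of the fixed type (which your geometric argument gives), together with the standard fact that $F$-rational parabolics of the same type are $G(F)$-conjugate, equivalently that $G(F) \to (G/P)(F)$ is surjective. Second, you defer the statement that $P(\NullU)$ is not conjugate to $P(\ell)$ --- which is what forces it to be the two-step, Heisenberg one, since both maximal parabolics have dimension $9$ --- to an unperformed Zorn-model computation; a cleaner route using only what you already proved: $P(\ell)$ acts transitively on the $\mathbb{P}^1$ of null planes through $\ell$, so $P(\ell) \neq P(\NullU)$ whenever $\ell \subseteq \NullU$, and two distinct parabolic subgroups containing the common Borel $B(\ell,\NullU)$ are never conjugate; hence $P(\NullU)$ lies in the other maximal class, whose unipotent radical has layers $(4,1)$ and is two-step. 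With these two points filled in, your argument is a complete and reasonably economical proof of the proposition.
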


\subsection{Singular subspaces of $J$ and parabolics of $\E_6$}
We now recall facts pertaining to two of the conjugacy classes of parabolic subgroups of $\E_6$ that we will compute with below.
For more on parabolics of $\E_6$ and their flag varieties, see \cite{garibaldi}.
If $V \subseteq J$ is a subspace, following \cite{garibaldi} one says that $V$ is \emph{totally singular} if $v^\# = 0$ for all $v \in V$, or equivalently, if $v_1 \times v_2 = 0$ for all $v_1, v_2 \in V$.

\subsubsection{The $A_5$ parabolics}\label{sssec:A5parab} We now describe the flag variety of the maximal parabolics $P \subseteq \E_6$ whose reductive quotient are of Dynkin type $A_5$.

\begin{proposition}[\cite{springerVeldkampHM},\cite{garibaldi}]\label{prop:A5parab} One has the following facts regarding the conjugacy class of maximal parabolic subgroups of $\E_6$ of type $A_5$.
\begin{enumerate}
\item The group $\E_6$ acts transitively on the totally singular $6$-dimensional subspaces $V$ of $J$, and the stabilizer $P(V)$ of such a $V$ is a maximal parabolic subgroup of $\E_6$.
\item The reductive quotient $M(V)$ of $P(V)$ is isomorphic to the group $\{(\lambda, g) \in \GL_1 \times \GL(V): \lambda^3 = \det(g)\}$.
\end{enumerate}\end{proposition}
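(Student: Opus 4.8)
The plan is to proceed in four steps: first produce a $6$-dimensional totally singular subspace $V\sbs J$ and identify its stabilizer in $\E_6$ with a maximal parabolic $P$; second, deduce the transitivity in (1) from the completeness of $\E_6/P$ together with knowledge of the variety of such subspaces; and third, compute the Levi quotient of $P$ from its action on the graded pieces of $J$. I expect the transitivity to be the only real difficulty — computing orbits of an exceptional group on a Grassmannian is delicate — and for that one ultimately leans on \cite{springerVeldkampHM,garibaldi}; the rest is bookkeeping with the cubic norm.

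\emph{The stabilizer.} Let $\mu\colon\GL_1\to\E_6$ be a cocharacter defining a maximal parabolic $P$ whose Levi $M$ has semisimple type $A_5$ (the fundamental coweight at the branch node in Bourbaki's labelling of $E_6$). It grades the minuscule module $J=J_{(-1)}\oplus J_{(0)}\oplus J_{(1)}$ with $\dim J_{(\pm1)}=6$, $\dim J_{(0)}=15$, and the adjoint module $\mathfrak e_6=\bigoplus_{k=-2}^{2}\mathfrak g_{(k)}$ with $\dim\mathfrak g_{(\pm2)}=1$, $\dim\mathfrak g_{(\pm1)}=20$. Since $\mu$ preserves the norm, its only nonzero homogeneous components are those on $J_{(0)}^{\otimes3}$ and on $J_{(1)}\!\otimes\!J_{(0)}\!\otimes\!J_{(-1)}$; in particular no component involves two factors from a single $J_{(\pm1)}$, so $V:=J_{(1)}$ is a $6$-dimensional totally singular subspace (one can also write such subspaces down concretely — e.g.\ in the Cayley--Dickson model $\Theta=M_2(F)\oplus M_2(F)$, taking the matrices in $J$ with zero diagonal and off-diagonal octonion entries of the form $(0,y)$ with $y$ in a fixed minimal right ideal of $M_2(F)$, using $(0,y)(0,y')=0$ there). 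Clearly $P\sbs\mathrm{Stab}_{\E_6}(V)$, as $\mathfrak g_{(\ge0)}$ preserves $J_{(1)}$. Conversely, writing $X=\sum_kX_k$ with $X_k\in\mathfrak g_{(k)}$, the condition $X\cdot J_{(1)}\sbs J_{(1)}$ forces $X_{-1}|_{J_{(1)}}=0$ and $X_{-2}|_{J_{(1)}}=0$; but $\mathfrak g_{(-2)}$, the lowest root space, acts nontrivially on $J_{(1)}$ (the map $J_{(1)}\to J_{(-1)}$ is the only place it can act, and $J$ is faithful), while $\mathfrak g_{(-1)}\otimes J_{(1)}\to J_{(0)}$ is a nonzero multiple of the contraction $\Lambda^3U^\vee\otimes U\to\Lambda^2U^\vee$ — nonzero since otherwise $\mathfrak g_{(-2)}=[\mathfrak g_{(-1)},\mathfrak g_{(-1)}]$ would annihilate $J_{(1)}$ — which is injective in the first variable. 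Hence $\mathrm{Lie}(\mathrm{Stab}_{\E_6}(V))=\mathfrak g_{(\ge0)}=\mathrm{Lie}(P)$, and since a parabolic is connected and self-normalizing, $\mathrm{Stab}_{\E_6}(V)=P$.

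\emph{Transitivity.} This gives a closed immersion $\E_6/P\hookrightarrow\mathcal S$ onto the $\E_6$-orbit of $V$, where $\mathcal S\sbs\mathrm{Gr}(6,27)$ is the variety of $6$-dimensional totally singular subspaces of $J$, and $\dim\E_6/P=78-57=21$. By \cite{springerVeldkampHM,garibaldi}, $\mathcal S$ is irreducible of dimension $21$, so the orbit — being closed of full dimension — is all of $\mathcal S$; this is (1). (One can also argue directly, bypassing the cited structure theory: every nonzero vector in a totally singular subspace has rank one, hence is moved by $\E_6$ to the idempotent $E_{11}=\diag(1,0,0)$; from $E_{11}\times w=\left(\begin{smallmatrix}0&0&0\\0&c_3&-a_1\\0&-a_1^*&c_2\end{smallmatrix}\right)$ in the coordinates of \S\ref{ssec:E6} one sees that a totally singular subspace through $E_{11}$ lies in $FE_{11}\oplus J_{1/2}(E_{11})$, which reduces the statement to the analogous one for $\Spin_{10}$ on its $16$-dimensional half-spin representation, completed by induction.)

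\emph{The Levi.} For (2), $M$ acts on $V=J_{(1)}=:U$ through $\rho\colon M\to\GL(U)\cong\GL_6$, and, as $\GL(U)$-modules twisted by characters of $M$, one has $J_{(0)}\cong\Lambda^2U^\vee\otimes\chi$ and $J_{(-1)}\cong U\otimes\chi^{-1}$, the latter two being forced by the $M$-invariance and non-vanishing of the norm component $U\otimes\Lambda^2U^\vee\otimes U\to F$. Since $\E_6=[\E_6,\E_6]$ acts through $\SL(J)$, for $m\in M$ we get
\[
1=\det(m|_{J_{(1)}})\,\det(m|_{J_{(0)}})\,\det(m|_{J_{(-1)}})=(\det\rho(m))\cdot(\det\rho(m))^{-5}\chi(m)^{15}\cdot(\det\rho(m))\,\chi(m)^{-6}=(\det\rho(m))^{-3}\chi(m)^{9},
\]
so $\chi(m)^9=(\det\rho(m))^3$. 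Thus $m\mapsto(\rho(m),\chi(m))$ maps $M$ into $\{(g,\lambda):\lambda^9=(\det g)^3\}$; it is injective because $J$ is faithful and its three graded pieces are recorded by $\rho$ and $\chi$, and since $M$ is connected its image lies in the identity component $\{(g,\lambda):\lambda^3=\det g\}$. Comparing dimensions, $\dim M=36=\dim\{(g,\lambda):\lambda^3=\det g\}$, the map is an isomorphism, which is assertion (2). As indicated, the crux of the whole argument is the transitivity in (1), for which the orbit analysis in \cite{springerVeldkampHM,garibaldi} does the essential work.
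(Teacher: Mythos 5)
Your argument is correct, but it is a genuinely different (and much more self-contained) route than the paper's, whose ``proof'' of this proposition is purely a citation: item (1) is quoted from \cite[Theorem 7.2]{garibaldi} and \cite[Proposition 3.14]{springerVeldkampHM}, and item (2) from \cite[2.5.3]{kim}. You instead rederive the structure from the $3$-grading $J=J_{(-1)}\oplus J_{(0)}\oplus J_{(1)}$ attached to the cocharacter defining the $A_5$-parabolic: the degree argument showing $J_{(1)}$ is totally singular, the Lie-algebra computation $\mathrm{Lie}(\mathrm{Stab}(V))=\mathfrak{g}_{(\ge 0)}$ (using faithfulness of $J$, $\mathfrak g_{(-2)}=[\mathfrak g_{(-1)},\mathfrak g_{(-1)}]$, and injectivity of the contraction $\Lambda^3 U^\vee\otimes U\to\Lambda^2 U^\vee$ in the first variable), and the determinant bookkeeping on the graded pieces giving $\chi^9=(\det\rho)^3$ and hence, by connectedness and a dimension count, $M\simeq\{(\lambda,g):\lambda^3=\det g\}$ -- all check out, and the last computation is exactly the content the paper outsources to \cite{kim}. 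Only the transitivity still leans on the references, repackaged as ``the variety of totally singular $6$-spaces is irreducible of dimension $21$,'' which is essentially equivalent to what \cite{springerVeldkampHM,garibaldi} prove; your parenthetical reduction to $\Spin_{10}$ is only a sketch, so it does not yet replace that input. Two minor points you should fix or remark on: the relevant simple root is the one \emph{adjacent} to the trivalent node ($\al_2$ in Bourbaki's labelling, $\al_6$ in the paper's), not the branch node itself -- your stated dimensions $1,20,36,20,1$ and $6,15,6$ show you mean the right one -- and your closed-orbit/dimension argument gives transitivity over $\bar F$, whereas the paper uses transitivity of $\E_6(F)$ on $F$-rational subspaces (e.g.\ to choose $\gamma_V\in\E_6(F)$); this is repaired by one standard line, since the stabilizer of an $F$-rational $V$ is an $F$-parabolic of fixed type, such parabolics are $\E_6(F)$-conjugate in the split group, and $V$ is recovered from $P(V)$ as its unique stable $6$-dimensional subspace of $J$.
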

\begin{proof} The first item is \cite[Theorem 7.2]{garibaldi}, for which \emph{loc. cit.} references \cite[Proposition 3.14]{springerVeldkampHM}.  The second item follows from, for instance, \cite[2.5.3]{kim}.\end{proof}
For example, suppose $\NullU \subseteq V_7 \subseteq \Theta$ is a null-isotropic two space.  Then
\begin{equation}\label{eqn:V(U)}V(\NullU) = \left\{ x = \left(\begin{array}{ccc} 0& u_3 & u_2^* \\ u_3^* & 0 & u_1 \\ u_2 & u_1^* & 0 \end{array}\right): u_1, u_2, u_3 \in \NullU \right\}\end{equation}
is a six-dimensional totally singular subspace of $J$.

\subsubsection{The $D_4$ parabolics}\label{subsec:D4parab} We now describe the flag variety of the (non-maximal) parabolic subgroups $Q \subseteq \E_6$ whose reductive quotients are of type $D_4$.  If $\ell \subseteq J$ is a one-dimensional totally singular subspace, and $\ell' \in J^\vee$ is a one-dimensional totally singular subspace, one says that $\ell$ and $\ell'$ are \emph{incident} if $\Phi'_{\gamma, v } = 0$ for $v \in \ell$ and $\gamma \in \ell'$.
\begin{proposition}[\cite{springerVeldkampHM},\cite{garibaldi}] The group $\E_6$ acts transitively on pairs $(\ell,\ell')$ of incident one-dimensional singular subspaces of $J$, $J^\vee$.  The stabilizer $Q(\ell,\ell')$ is a parabolic subgroup of $\E_6$ whose reductive quotient is of type $D_4$.\end{proposition}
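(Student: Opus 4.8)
The plan is to bootstrap from the classical transitivity of $\E_6$ on the one-dimensional totally singular subspaces of $J$ --- equivalently, on the lines spanned by rank-one elements, which form the closed orbit in $\mathbb{P}(J)$, cf. \cite{springerVeldkampHM}. The stabilizer of such a line is a maximal parabolic $P_1 \subseteq \E_6$ whose Levi subgroup is, up to a central $\GL_1$, the split group $\Spin_{10}$ of type $D_5$; it acts on the Peirce decomposition $J = J_1 \oplus J_{1/2} \oplus J_0$, of dimensions $1$, $16$ and $10$, through the trivial, a half-spin, and the vector representation respectively. Since $\E_6$ is likewise transitive on singular lines of $J^\vee$, it will suffice to fix $\ell_0 = FE_1$ with $E_1 = \diag(1,0,0) \in J$, to describe the set $\mathcal{I}(\ell_0)$ of singular lines $\ell' \subseteq J^\vee$ incident to $\ell_0$, and to prove that $P_1 = \mathrm{Stab}_{\E_6}(\ell_0)$ acts transitively on $\mathcal{I}(\ell_0)$. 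Transitivity of $\E_6$ on incident pairs is then immediate, and the stabilizer of a pair is the $P_1$-stabilizer of the corresponding point of $\mathcal{I}(\ell_0)$.

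The heart of the argument is an explicit computation in the $27$-dimensional model that identifies $\mathcal{I}(\ell_0)$. Relative to $E_1$ one has $J_1 = FE_1$, $J_{1/2}$ the span of the $a_2$- and $a_3$-slots, and $J_0 = H_2(\Theta)$ the span of $c_2$, $c_3$ and the $a_1$-slot, together with the analogous decomposition of $J^\vee$. I would first show that $\Phi'_{\gamma, E_1} = 0$ if and only if $\gamma \in J_0^\vee$. If the Peirce-$1$ component of $\gamma$ is nonzero, then $(\gamma, E_1) \neq 0$, so already the scalar summand $-\tfrac{2}{3}(\gamma, E_1)$ of $\Phi'_{\gamma, E_1}$ is nonzero. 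If that component vanishes, then for $z \in J_{1/2}$ one computes $\Phi_{\gamma, E_1}(z) = (\gamma, z) E_1$ (using $E_1 \times z = 0$ on $J_{1/2}$ and $(\gamma, E_1) = 0$), so $\Phi_{\gamma, E_1} \neq 0$ unless the Peirce-$\tfrac12$ component of $\gamma$ also vanishes; and for $\gamma \in J_0^\vee$ one checks directly, using the Peirce multiplication rules and the identity $X \times Y = B_q(X, Y)\, E_1$ for $X, Y \in J_0$ (with $q$ the restriction of the cubic norm to $J_0$ and $B_q$ its polarization), that $\Phi_{\gamma, E_1}$ vanishes on all of $J$. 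Next one checks that $q$ is the split rank-$10$ quadratic form --- a hyperbolic plane on $(c_2, c_3)$ together with the split octonion norm on the $a_1$-slot --- and that, since $Y^\# = q(Y) E_1$ for $Y \in J_0$, a line $F\gamma \subseteq J_0^\vee$ is singular in $J^\vee$ exactly when $q(\gamma) = 0$. Therefore $\mathcal{I}(\ell_0)$ is the $8$-dimensional quadric of $q$-isotropic lines in $J_0^\vee$; a concrete base point is $\ell_0' = FE_3$ with $E_3 = \diag(0,0,1)$, for which one verifies directly that $\Phi'_{E_3, E_1} = 0$.

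To conclude, the Levi $\Spin_{10}$ of $P_1$ acts on $J_0^\vee$ through $\SO(q) = \SO_{10}$ (split), hence, by Witt's theorem, transitively on $q$-isotropic lines; this yields the transitivity of $\E_6$ on incident pairs. (Over a non-closed ground field this can be read either as transitivity on $F$-points --- legitimate here since $\E_6$ is split and, by Borel--Tits, $\E_6(F)$ is transitive on parabolic $F$-subgroups of any fixed type --- or simply as transitivity of the algebraic group.) The stabilizer of $(\ell_0, \ell_0')$ emerges from this analysis as the preimage in $P_1$ of $\mathrm{Stab}_{\Spin_{10}}(\ell_0')$, the maximal ``vector'' parabolic of $\Spin_{10}$, whose Levi is $\GL_1 \times \Spin_8$. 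Being the preimage in $P_1$ of a parabolic of a Levi, it is a parabolic subgroup $Q$ of $\E_6$, and its Levi is $\Spin_8$ times a two-dimensional central torus --- of semisimple type $D_4$. Equivalently, $Q = P_1 \cap P_6$, the intersection of the two maximal parabolics of type $D_5$ stabilizing $\ell_0$ and $\ell_0'$; these share a Borel precisely because $\ell_0$ and $\ell_0'$ are incident, which is why their intersection is again parabolic.

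The step I expect to be the main obstacle is the Peirce-decomposition computation of the second paragraph: pinning down $\{\gamma \in J^\vee : \Phi'_{\gamma, E_1} = 0\}$ as exactly $J_0^\vee$ and recognizing $(J_0^\vee, q)$ as the split $10$-dimensional quadratic space. Everything else is formal once the classical inputs are granted --- transitivity on singular lines, the $D_5$-Levi structure of $P_1$, Witt's theorem --- and in fact the whole statement can alternatively be imported from the structure theory of the Cayley plane and its dual in \cite{springerVeldkampHM, garibaldi}.
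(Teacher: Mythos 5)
Your argument is correct in substance, but it is genuinely different from what the paper does: the paper gives no proof at all, simply importing the statement from \cite[Theorem 7.2]{garibaldi} and \cite[Section 3]{springerVeldkampHM}, with only a dictionary remark identifying the singular line $\ell'\subseteq J^\vee$ with a ``$6$-space'' $\ell'\times J^\vee\subseteq J$ in Garibaldi's parlance. Your route is a self-contained computation in the $27$-dimensional model: reduce to a fixed singular line $\ell_0=FE_1$ using transitivity on singular lines, identify the incident singular lines with the isotropic lines of the split $10$-dimensional quadratic space $(J_0^\vee,q)$ via the Peirce decomposition and the identities $E_1\times J_{1/2}=0$, $X\times Y=B_q(X,Y)E_1$ and $Y^{\#}=q(Y)E_1$ on $J_0$ (all of which check out), and then get transitivity from Witt's theorem applied through the $D_5$ Levi of $P_1$, with the rational-points issue handled by $G(F)$-transitivity on $(G/Q)(F)$. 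What the citation buys the paper is brevity and consistency with how it treats the other classical facts about flag varieties of $\E_6$; what your argument buys is explicit information that is actually close to how the paper later uses $Q$ (its description via the flag $V_1\subseteq V_{10}$), namely a concrete model of the incidence variety at a base point and the identification $Q=P(\ell_0)\cap P(\ell_0')$ with Levi $\Spin_8$ times a two-dimensional torus.

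Two steps deserve tightening, though neither is a genuine obstruction. First, ``the scalar summand $-\tfrac23(\gamma,E_1)$ is nonzero'' does not by itself rule out $\Phi_{\gamma,E_1}$ being that scalar; instead evaluate at $E_1$: since $E_1\times E_1=0$, one gets $\Phi'_{\gamma,E_1}(E_1)=\tfrac43(\gamma,E_1)E_1$, which forces $(\gamma,E_1)=0$ whenever $\Phi'_{\gamma,E_1}=0$. Second, your claim that the stabilizer of the pair is the full preimage in $P_1$ of the vector parabolic of $\Spin_{10}$ (equivalently, that $P(\ell_0)$ and $P(\ell_0')$ share a Borel) needs a one-line justification rather than being attributed to incidence in the abstract: the unipotent radical $U_1$ of $P_1$ acts trivially on $J_0^\vee$, because $J_0^\vee$ is the annihilator of $FE_1\oplus J_{1/2}$ and $U_1$ acts trivially on the associated graded of the filtration $FE_1\subseteq FE_1\oplus J_{1/2}\subseteq J$; hence $U_1$ fixes $\ell_0'=FE_3$, the stabilizer of the pair is the preimage in $P_1$ of a parabolic of the Levi, and is therefore a parabolic of $\E_6$ with reductive quotient of type $D_4$, as you assert.
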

\begin{proof} This proposition is contained in \cite[Theorem 7.2]{garibaldi}.  See also \cite[Section 3]{springerVeldkampHM}. To compare with \cite{garibaldi}, the singular line $\ell' \subseteq J^\vee$ gives a subspace $\ell' \times J^\vee \subseteq J$ which is a ``$6$-space'', in the parlance of \emph{loc. cit.}.\end{proof}

For $i =1,2,3$, denote $e_{ii}$ the element of $J$ with a $1$ in the $(i,i)$-th place.  Then, if $\gamma = e_{11}$ is considered as an element of $J^\vee$ via the pairing $(\,,\,)$ on $J$, and $v = e_{33} \in J$, then the lines $F e_{11}$ and $F e_{33}$ are incident.  Denote by $Q$ the parabolic subgroup that stabilizes these lines.  Defined in terms of flags of $J$, $Q$ is the subgroup of $\E_6$ that fixes the chain of subspaces
\[V_1=\left(\begin{array}{ccc} 0 & 0&0\\ 0&0&0\\0&0&*\end{array}\right) \subseteq \left(\begin{array}{ccc} 0 & 0&0\\ 0&*&*\\0&*&*\end{array}\right) = V_{10}.\]

Below we will use the fact that $Q$ stabilizes a larger flag of $J$.  We now describe this flag.  We begin with a piece of notation.  Suppose $V$ is a subspace of $J$, and $I$ is a subset of the indices $\{c_1,c_2,c_3,a_1,a_2,a_3\}$.  We define $V(I)$ to be the subspace of $V$ consisting of elements $x$ so that all the nonzero coordinate entries of $x$ are labelled by indices in $I$.  Put another way, $c_1, c_2, c_3$ define linear functionals $J \rightarrow F$, and $a_1, a_2, a_3$ define linear functions $J \rightarrow \Theta$.  Then
\[V(I) := \{x \in V: \alpha(x) = 0 \text{ for all } \alpha \notin I\}.\]

With this notation, we define two increasing filtrations on $J$:
\[\mathcal{F}^{1}_{\bullet}: 0 \subseteq J(c_3) \subseteq J(c_3,a_1) \subseteq J(c_3,a_1,a_2) \subseteq J(c_3, a_1,a_2, c_2) \subseteq J(c_3,a_1,a_2,c_2,a_3) \subseteq J\]
and
\[\mathcal{F}^{2}_{\bullet}: 0 \subseteq J(c_3) \subseteq J(c_3,a_1) \subseteq J(c_3,a_1,c_2) \subseteq J(c_3, a_1,c_2, a_2) \subseteq J(c_3,a_1,c_2,a_2,a_3) \subseteq J.\]
That is, define $\mathcal{F}^{1}_{0}(J) = 0$, $\mathcal{F}^{1}_{1}(J) = J(c_3)$, $\mathcal{F}^{1}_{2}(J) = J(c_3, a_1)$ etc, and similarly for $\mathcal{F}^{2}_i(J)$.  For a subspace $V$ of $J$, set $\mathcal{F}^{?}_{i}(V) = \mathcal{F}^{?}_{i}(J) \cap V$ for $? = 1,2$.

The parabolic $Q$ fixes both of these flags on $J$.  To ses this, note that since $Q$ fixes $V_1$, it fixes
\[V_{17}=\left(\begin{array}{ccc} 0 & 0&* \\ 0&0&*\\ *&*&*\end{array}\right)\]
since $V_{17}$ is the set of elements $X \in J$ with $X \times V_{1} = 0$.  Since $Q$ fixes $V_{10}$, it fixes
\[V_{26} = \left(\begin{array}{ccc} 0 & *&*\\ *&*&*\\ *&*&*\end{array}\right)\]
since $V_{26}$ consists of the $X \in J$ such that $(V_{10},V_{10},X) = 0.$  One finds that $Q$ fixes the flags above by taking intersections of the spaces $V_{ij}$ above.

As a Levi subgroup $L$ of $Q$, we take the subgroup of $g \in \E_6$ so that $g$ fixes each of the spaces $J(c_1)$, $J(c_2)$, $J(c_3)$, $J(a_1)$, $J(a_2)$ and $J(a_3)$.

\subsection{The unipotent radical of $Q$} In this subsection, we describe explicitly the unipotent radical $N$ of $Q$.  The material in this subsection is surely well-known.  However, we are unaware of a reference, so we include some sketches of the proofs.

First, denote by $N(\Theta)$ the ``Heisenberg'' group of $\Theta$.  By this we mean that elements of $N(\Theta)$ consist of triples $(x,y;z)$ with $x,y,z \in \Theta$ and multiplication is given by
\[(x,y;z)(x',y';z') = (x+x', y+y'; z+z' + xy').\]
Then the inverse of $(x,y;z)$ is
\[(x,y;z)^{-1} = (-x,-y;xy-z).\]

The unipotent radical $N$ is isomorphic with $N(\Theta)$, as we now explain.  First, we explain how $N(\Theta)$ acts (on the right) of $J$.  Namely,
\begin{equation}\label{eqn:Ntheta} \left(\begin{array}{ccc} c_1& a_3 & a_2^* \\a_3^* &c_2&a_1\\a_2&a_1^*&c_3\end{array}\right)\mapsto \left(\begin{array}{ccc} 1&  &  \\ x^*&1&\\z^*&y^*&1\end{array}\right) \left\{ \left(\begin{array}{ccc} c_1& a_3 & a_2^* \\a_3^* &c_2&a_1\\a_2&a_1^*&c_3\end{array}\right) \left(\begin{array}{ccc} 1& x & z \\ &1&y\\&&1\end{array}\right)\right\}.\end{equation}
Since the multiplication in $\Theta$ is not associative, it is not \emph{a priori} clear that the element above is in $H_3(\Theta)$, that this defines an action of $N(\Theta)$ on $J$, or that this action preserves the norm.  However, all these facts are true, as we now explain.

First, multiplying the second and third terms, and then multiplying with the first term, one finds by simple explicit computation that the resulting matrix is still Hermitian, i.e., is in $H_3(\Theta)$.  We denote by $n(x,y;z)$ the element of $\GL(J)$ prescribed by \eqref{eqn:Ntheta}.  To see that this matrix multiplication defines an action, we require the following lemma, which describes the matrix product \eqref{eqn:Ntheta} in coordinates.
\begin{lemma}\label{HeisTransform} Under the matrix product \eqref{eqn:Ntheta}, the $c_i$ and $a_i$ transform as follows:
\begin{itemize}
\item $c_1 \mapsto c_1$.
\item $c_2 \mapsto c_2 + (x,a_3) + c_1n(x)$.
\item $c_3 \mapsto c_3 + (a_1,y) + c_2n(y) + (a_2^*,z) + c_1n(z) + \tr(z^*a_3y)$.
\item $a_1 \mapsto a_1 + c_2y + a_3^*z + x^*a_2^* + x^* (a_3 y) + x^*c_1 z$.
\item $a_2 \mapsto a_2 + y^*a_3^* + z^*c_1.$
\item $a_3 \mapsto a_3 + c_1 x$.
\end{itemize}
\end{lemma}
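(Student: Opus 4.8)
The plan is to prove the lemma by a direct, if somewhat lengthy, computation of the transformation \eqref{eqn:Ntheta}, carried out in the two steps indicated there. First I would form the intermediate matrix $Y := X U$, where $U$ is the upper-triangular matrix with $x,y,z$ in its three strictly-upper slots, computing each entry $Y_{ij} = \sum_{k} X_{ik}\cdot U_{kj}$ from the multiplication rule of $\Theta$; then I would left-multiply by $U^{*}$, the conjugate-transpose of $U$ (which is the first matrix in \eqref{eqn:Ntheta}), to obtain $Z := U^{*} Y$, again entry by entry. Since $U$ is unipotent, the entries of $Y$, and then of $Z$, are short expressions in the $c_i$, the $a_i$ and $x,y,z$; reading off the diagonal entries of $Z$ gives the new $c_1,c_2,c_3$ and the super-diagonal entries give the new $a_3,a_2^{*},a_1$. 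Equivalently, and perhaps more cleanly, one can factor $n(x,y;z) = n(x,0;0)\,n(0,y;0)\,n(0,0;z')$ inside $N(\Theta)$, where $z' = z - xy$ is chosen to make this identity hold in the Heisenberg group, compute the (unambiguous) effect on $J$ of each of the three one-sided elementary unipotents on its own, and then compose the three transformations via $(X\cdot g_{1})\cdot g_{2} = X\cdot(g_{1}g_{2})$. The term $xy$ in $z'$ is exactly what cancels the cross terms produced when the three are composed, and one recovers the formulas in the statement; an advantage of this route is that it never multiplies three matrices with non-associative entries at once.

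The algebraic simplifications involved are all elementary. A diagonal scalar $c_i$ lies in $F\cdot 1$, hence is central and associates with everything, so terms carrying a $c_i$ collapse at once; this already gives $c_1\mapsto c_1$, $a_3\mapsto a_3 + c_1x$ and $a_2\mapsto a_2 + y^{*}a_3^{*} + z^{*}c_1$ with essentially no work. For the remaining terms one uses $u^{*}u = n(u)\cdot 1$ and $u + u^{*} = \tr(u)\cdot 1$ (both immediate from the identities recalled above), the polarization $u^{*}v + v^{*}u = (u,v)\cdot 1$ of the former, and the relation $\tr(uv^{*}) = (u,v)$. For instance, the new $(2,2)$-entry comes out as $c_2 + x^{*}a_3 + a_3^{*}x + c_1 n(x)$, and $x^{*}a_3 + a_3^{*}x = (x,a_3)\cdot 1$, yielding the stated formula for $c_2$; the formulas for $c_3$ and $a_1$ are obtained the same way, with more terms to collect. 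One also uses identity (2) from the list above, which says the trace of a triple octonion product is independent of its bracketing, so that the symbol $\tr(z^{*}a_3 y)$ in the $c_3$-formula is well defined; the bracketings in the $a_1$-formula, such as $x^{*}(a_3 y)$, must be kept exactly as written.

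I expect the only real difficulty to be the bookkeeping forced by the non-associativity of $\Theta$: one must never silently move or drop parentheses, and it should be verified, not assumed, that the off-diagonal entries of $Z$ come out as conjugate pairs and the diagonal entries as scalars, so that $Z$ again lies in $J$. The standing facts that any two elements of $\Theta$ generate an associative subalgebra, and the Moufang identities, are exactly the tools one uses to collapse those triple products that do simplify; the ones that genuinely do not simplify are precisely the ones recorded with explicit parentheses, or inside a trace, in the statement. Being disciplined about which products may be re-bracketed and which may not is where essentially all the care goes.
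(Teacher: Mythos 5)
Your primary route --- forming $Y=XU$ entrywise, then $Z=U^{*}Y$, and collapsing terms with $u^{*}u=n(u)\cdot 1$, $u+u^{*}=\tr(u)\cdot 1$, the polarization $u^{*}v+v^{*}u=(u,v)\cdot 1$, $\tr(uv^{*})=(u,v)$ and trace-associativity --- is exactly the direct computation the paper invokes (its proof is the one line ``This is a direct computation''), and the formulas come out as stated; e.g.\ the $(2,2)$ entry is $c_2+x^{*}a_3+a_3^{*}x+c_1n(x)=c_2+(x,a_3)+c_1n(x)$, and for the $(3,3)$ entry the only mildly delicate point is the one you flag, namely checking that $z^{*}(a_3y)+y^{*}(a_3^{*}z)$ is scalar and equals $\tr(z^{*}a_3y)$, which is part of verifying that $Z$ is again Hermitian.

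One caution about the variant you label ``equivalently, and perhaps more cleanly'': composing the three elementary factors via $(X\cdot g_1)\cdot g_2=X\cdot(g_1g_2)$ presupposes that \eqref{eqn:Ntheta} defines a right action of $N(\Theta)$ on $J$, and that composition law is precisely the \emph{next} lemma in the paper, proved there \emph{using} Lemma \ref{HeisTransform}; so as written this variant is circular. To make it honest you would have to verify directly that the single matrix product with the full unipotent matrix agrees with the successive products by the three elementary matrices (non-associativity makes this a genuine check), which is essentially the same bookkeeping the factorization was meant to avoid. Stick with your first route.
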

\begin{proof} This is a direct computation. \end{proof}

Using this lemma, one can check that the map $N(\Theta) \rightarrow \GL(J)$ given by $(x,y;z) \mapsto n(x,y;z)$ defines a right-action of $N(\Theta)$ on $J$:
\begin{lemma} If $Z \in J$, then $(Z \cdot n(x,y;z)) \cdot n(x',y';z') = Z \cdot n(x+x',y+y';z+z' + xy')$.\end{lemma}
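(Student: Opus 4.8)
The plan is to verify the cocycle identity $Z \cdot n(x,y;z) \cdot n(x',y';z') = Z \cdot n(x+x', y+y'; z+z'+xy')$ by a direct computation, using Lemma \ref{HeisTransform} to reduce everything to bookkeeping in the coordinates $c_1,c_2,c_3,a_1,a_2,a_3$. Concretely, I would apply the coordinate formulas of Lemma \ref{HeisTransform} first with parameters $(x,y;z)$ to pass from $Z$ to $Z' = Z\cdot n(x,y;z)$, then apply them again with parameters $(x',y';z')$ to pass from $Z'$ to $Z'' = Z' \cdot n(x',y';z')$, and finally compare the result coordinate-by-coordinate with the formulas applied once with parameters $(x+x', y+y'; z+z'+xy')$. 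The only subtlety is that $\Theta$ is non-associative, so products of three or more octonions must be grouped exactly as written, and simplifications must use only the three identities recorded after the multiplication formula (norm multiplicativity, the trace associativity $\tr(x_1(x_2x_3)) = \tr((x_1x_2)x_3)$, and $x^*(xy) = (x^*x)y = n(x)y$), together with the fact that any two octonions generate an associative subalgebra.

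I would organize the check from the ``easy'' coordinates to the ``hard'' ones. The $c_1$ coordinate is fixed throughout, so there is nothing to do. For $c_2$: after the first transformation $c_2 \mapsto c_2 + (x,a_3) + c_1 n(x)$ and $a_3 \mapsto a_3 + c_1 x$; applying the second transformation gives $c_2 + (x,a_3) + c_1 n(x) + (x', a_3 + c_1 x) + c_1 n(x')$, and since $(x',c_1 x) + n(x) + n(x') \cdot 1$... more precisely $(x,a_3)+(x',a_3) = (x+x',a_3)$ and $c_1 n(x) + c_1(x',x) + c_1 n(x') = c_1 n(x+x')$ by the definition of the bilinear form $(x,y) = n(x+y)-n(x)-n(y)$; this matches $c_2 + (x+x',a_3) + c_1 n(x+x')$, as desired. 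The $a_3$ and $a_2$ coordinates are similarly short: $a_3 \mapsto a_3 + c_1 x \mapsto a_3 + c_1 x + c_1 x' = a_3 + c_1(x+x')$, and for $a_2$ one combines the $y^* a_3^*$, $z^* c_1$ terms using bilinearity of octonion multiplication and the transformation of $a_3$ and $c_1$.

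The main obstacle will be the $a_1$ and $c_3$ coordinates, which involve genuine triple octonion products and where non-associativity could in principle create obstructions. For $a_1$, after the first step one has a sum of six terms including $x^*(a_3 y)$ and $x^* c_1 z$; applying the second transformation substitutes the already-transformed $c_2, a_3, a_2^*$ and introduces new terms like $(x')^*\big((a_3 + c_1 x) y'\big)$, and one must check that the ``cross terms'' reorganize into $(x+x')^*\big(a_3 (y+y')\big) + (x+x')^* c_1 (z + z' + xy') + \cdots$. Here the identity $x^*(xy) = n(x)y$ and its linearization $x^*(x'y) + (x')^*(xy) = (x,x')y$ will be the key tools, along with the Moufang-type consequences of two-generated associativity. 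For $c_3$, the term $\tr(z^* a_3 y)$ and its analogues must be massaged using $\tr(x_1(x_2 x_3)) = \tr((x_1 x_2)x_3)$ and the cyclic invariance of the trace form; the appearance of $xy'$ inside the new $z$-slot is exactly what is needed to absorb the mismatch. I expect that each such simplification is a one- or two-line application of the listed identities, so the whole verification, while lengthy, is mechanical; I would present it as ``a direct but somewhat tedious computation'' and spell out only the $a_1$ and $c_3$ cases in detail, indicating precisely which of the three octonion identities is invoked at each step.
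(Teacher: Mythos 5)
Your proposal is correct and follows essentially the same route as the paper: a direct coordinate check via Lemma \ref{HeisTransform}, handling the only nontrivial coordinates $c_3$ and $a_1$ with exactly the two identities the paper cites, namely $\tr((x_1x_2)x_3)=\tr(x_1(x_2x_3))$ and the linearization $x^*(a_3y')+a_3^*(xy')=(x,a_3)y'$ of $x^*(xy)=n(x)y$. No further comment is needed.
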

\begin{proof} This is almost entirely a simple direct check, using Lemma \ref{HeisTransform}.  For the reader checking this themselves, note that it is easy to see that $c_1, c_2, a_2$ and $a_3$ transform as required.  To see that $c_3$ transforms in the right way, one must use the identity $\tr((x_1 x_2) x_3) = \tr(x_1 (x_2 x_3)) = \tr(x_3 (x_1 x_2))$ for $x_1, x_2, x_3 \in \Theta$.  To see that $a_1$ transforms in the right way, one must use the identity $x^* (a_3 y') + a_3^* (xy') = (x,a_3) y'$, which comes from $(x^* + a_3^*) ((x+a_3) y') = n(x+ a_3)y'$ by linearizing. \end{proof}

We now explain why the $n(x,y;z)$ are in $\E_6$, and make up the unipotent radical $N$ of $Q$.  For $x,y, z \in \Theta$, write
\[ Y(x,y,z) = \left(\begin{array}{ccc} 0& x & z\\ x^* & 0 & y \\ z^* & y^* & 0 \end{array}\right) \in J.\]
Recall the elements $e_{11}, e_{33}$ defined above.
\begin{lemma}\label{lem:n0} One has $n(0,y;z) = \exp(\Phi'_{V(0,y,z),e_{33}})$ and $n(x,0;z) = \exp(\Phi'_{e_{11},V(x,0,z)})$.\end{lemma}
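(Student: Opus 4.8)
The plan is to exploit the fact that the commutator term $xy'$ in the group law of $N(\Theta)$ vanishes when $x=0$, so that $\{n(0,y;z):y,z\in\Theta\}$ is an \emph{abelian} subgroup of $\GL(J)$, and likewise $\{n(x,0;z):x,z\in\Theta\}$ is abelian. Consequently $t\mapsto n(0,ty;tz)$ is a homomorphism $\mathbb{G}_a\to\GL(J)$, hence equals $t\mapsto\exp(tX)$ for a unique nilpotent $X\in\mathrm{End}(J)$, namely $X=\tfrac{d}{dt}\big|_{t=0}n(0,ty;tz)$; the nilpotence can also be read off directly from Lemma~\ref{HeisTransform}, which shows that $n(0,y;z)-1$ maps $J$ into $J(c_3,a_1,a_2)$, maps $J(c_3,a_1,a_2)$ into $J(c_3)$, and annihilates $J(c_3)$. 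Evaluating at $t=1$, the lemma is reduced to the two operator identities $X=\Phi'_{Y(0,y,z),e_{33}}$ and, for the family $n(tx,0;tz)=\exp(tX')$, $X'=\Phi'_{e_{11},Y(x,0,z)}$. In particular this also reproves that $n(0,y;z),n(x,0;z)\in\E_6$, since by item~(1) of the proposition on the operators $\Phi_{\gamma,v}$ one has $\Phi'_{\gamma,v}\in\mathrm{Lie}(\E_6)$.

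I would then carry out the two computations in coordinates. For $\gamma=Y(0,y,z)\in J^\vee$ and $v=e_{33}\in J$ one first checks the hypotheses of item~(3) of that proposition, namely $(\gamma,v)=0$ (as $\gamma$ has vanishing $c_3$-coordinate and $e_{33}$ has only a $c_3$-coordinate) and $e_{33}^{\#}=0$; thus $\Phi'_{\gamma,v}=\Phi_{\gamma,v}$ and $\Phi_{\gamma,v}^{3}=0$. Differentiating the formulas of Lemma~\ref{HeisTransform} at $t=0$ after specializing $(x,y,z)$ to $(0,ty,tz)$ annihilates every term quadratic in $t$ --- the $n(ty)$, $n(tz)$ and $\tr(z^{*}a_{3}y)$ terms --- and shows that $X$ sends a general $w=\left(\begin{smallmatrix}c_{1}&a_{3}&a_{2}^{*}\\a_{3}^{*}&c_{2}&a_{1}\\a_{2}&a_{1}^{*}&c_{3}\end{smallmatrix}\right)$ to the element of $J$ whose $c_{1},c_{2},a_{3}$-coordinates vanish, whose $a_{1}$-coordinate is $c_{2}y+a_{3}^{*}z$, whose $a_{2}$-coordinate is $y^{*}a_{3}^{*}+z^{*}c_{1}$, and whose $c_{3}$-coordinate is $(a_{1},y)+(a_{2}^{*},z)$. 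On the other hand, expanding $\Phi_{\gamma,v}(w)=-\gamma\times(v\times w)+(\gamma,w)v$ using the coordinate formulas for $X^{\#}$ and for $X\times Y=(X+Y)^{\#}-X^{\#}-Y^{\#}$ from Section~\ref{ssec:E6}, one finds $v\times w=e_{33}\times w=\left(\begin{smallmatrix}c_{2}&-a_{3}&0\\-a_{3}^{*}&c_{1}&0\\0&0&0\end{smallmatrix}\right)$, that $-\gamma\times(v\times w)$ contributes exactly the $a_{1}$- and $a_{2}$-coordinates listed above and nothing else, and that $(\gamma,w)v=\bigl((a_{1},y)+(a_{2}^{*},z)\bigr)e_{33}$ supplies the $c_{3}$-coordinate; comparing term by term gives $X=\Phi_{\gamma,v}=\Phi'_{Y(0,y,z),e_{33}}$. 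The identity for $X'$ is entirely parallel, now with $\gamma=e_{11}$ (so that $\gamma^{\#}=0$, $(\gamma,v)=0$) and $v=Y(x,0,z)$, appealing to item~(2) of the proposition instead; alternatively, one might try to deduce it from the first identity by applying the involution $g\mapsto\widetilde{g}^{-1}$ of $\E_6$, which interchanges the two arguments of $\Phi$ on $\mathrm{Lie}(\E_6)$, once its effect on $N$ has been identified.

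The step I expect to be the main obstacle is the coordinate expansion of $\Phi_{\gamma,v}$: since $\Theta$ is non-associative one must keep careful track of the order of octonion products and of conjugates (using, e.g., $(a,b)=(a^{*},b^{*})$ and $(pq)^{*}=q^{*}p^{*}$), and one must be consistent about the fact that $\gamma$ and $v\times w$ are being treated as elements of $J^{\vee}$ whereas $\gamma\times(v\times w)$ lands back in $J$, all under the identification $J\simeq J^{\vee}$ furnished by~\eqref{Jpairing}. A more pedestrian alternative, avoiding one-parameter subgroups entirely, is to use $\Phi_{\gamma,v}^{3}=0$ to write $\exp(\Phi'_{\gamma,v})=1+\Phi_{\gamma,v}+\tfrac12\Phi_{\gamma,v}^{2}$, to compute $\Phi_{\gamma,v}^{2}(w)=-2(\gamma^{\#}\times v,w)v$ (respectively $-2(\gamma,w)\,\gamma\times v^{\#}$) directly from items~(3) and~(2) of the proposition, and to match $w+\Phi_{\gamma,v}(w)+\tfrac12\Phi_{\gamma,v}^{2}(w)$ against Lemma~\ref{HeisTransform}; this is longer but completely mechanical.
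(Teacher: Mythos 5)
Your proposal is correct, and in substance it is the paper's own argument: the paper's (terse) proof is exactly your ``pedestrian alternative'' --- compute how the coordinates $c_i,a_i$ transform under $\Phi'_{Y(0,y,z),e_{33}}$ (resp.\ $\Phi'_{e_{11},Y(x,0,z)}$), exponentiate using the nilpotence supplied by items (2)--(3) of the proposition on $\Phi_{\gamma,v}$, and compare with Lemma~\ref{HeisTransform}. Your primary route differs only in packaging: since $\{n(0,y;z)\}$ and $\{n(x,0;z)\}$ are abelian, you match derivatives of one-parameter subgroups instead of group elements, which spares you the quadratic terms and the computation of $\Phi_{\gamma,v}^2$, and your coordinate identifications (e.g.\ $e_{33}\times w$ and $-\gamma\times(e_{33}\times w)$) are accurate.
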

\begin{proof} One computes easily how the coordinates $a_i$ and $c_j$ transform under the map $\Phi'_{V(0,y,z),e_{33}}$, and the same for $\Phi'_{e_{11},V(x,0,z)}$.  That $n(0,y;z) = \exp(\Phi'_{V(0,y,z),e_{33}})$ again follows directly from this computation, by comparing with Lemma \ref{HeisTransform}, and similarly for $n(x,0;z)$.\end{proof}

We thus have the following corollary, which explicitly describes the unipotent radical $N$ of $Q$.
\begin{corollary} The elements $n(x,y;z)$ are in $N \subseteq \E_6$, and the map $N(\Theta) \rightarrow N$ is an isomorphism. \end{corollary}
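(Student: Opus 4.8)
The plan is to build each $n(x,y;z)$ out of the two families $n(0,y;z)$ and $n(x,0;0)$, which Lemma~\ref{lem:n0} has already identified with exponentials, to locate the resulting image inside the parabolic $Q$ by a direct computation with Lemma~\ref{HeisTransform}, and finally to match it with the unipotent radical $N$ by a dimension count. For the first point: by Lemma~\ref{lem:n0} one has $n(0,y;z)=\exp(\Phi'_{V(0,y,z),e_{33}})$ and $n(x,0;0)=\exp(\Phi'_{e_{11},V(x,0,0)})$, and since $e_{11}^\#=e_{33}^\#=0$ while $(e_{11},V(x,0,0))=(e_{33},V(0,y,z))=0$ by the pairing \eqref{Jpairing}, parts~(2) and~(3) of the Proposition on the operators $\Phi_{\gamma,v}$ in \S\ref{ssec:E6} apply: in each case $\Phi'$ equals $\Phi$, is nilpotent with cube zero, and its exponential is a unipotent element of $\E_6$. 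Because $(x,y;z)\mapsto n(x,y;z)$ is a homomorphism $N(\Theta)\to\GL(J)$ (the composition law proved just above) and every $(x,y;z)$ equals $(0,y;z)(x,0;0)$, the image of $N(\Theta)$ lies in the subgroup $\E_6\subseteq\GL(J)$.

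Next, using Lemma~\ref{HeisTransform} one checks directly that $n(x,y;z)$ fixes $e_{33}$ and carries $V_{10}$ into itself, so it stabilizes the chain $V_1\subseteq V_{10}$ and therefore lies in $Q$; the same formulas give injectivity, since — the map being a homomorphism — it suffices that $n(x,y;z)=\mathrm{id}$ forces $x=y=z=0$, and this is read off from the image of $e_{11}$ (whose $a_3$-coordinate is $x$ and whose $a_2$-coordinate is $z^*$) and of $e_{22}$ (whose $a_1$-coordinate is $y$). Finally — the one step that uses the flags of \S\ref{subsec:D4parab} rather than merely ``$n(x,y;z)\in Q$'' — one checks from Lemma~\ref{HeisTransform} that $n(x,y;z)$ preserves every step of $\mathcal{F}^1_\bullet$ and acts trivially on the associated graded, so that $n(x,y;z)$ lies in the kernel of the map from $Q$ to $\prod_i\GL(\mathcal{F}^1_i(J)/\mathcal{F}^1_{i-1}(J))$, which is a normal unipotent subgroup of $Q$ and hence contained in $N$. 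Thus the image $\overline N$ of $N(\Theta)$ is a closed connected subgroup of $N$ of dimension $\dim N(\Theta)=3\dim\Theta=24$, whereas $\dim N=\tfrac12(\dim\E_6-\dim L)=\tfrac12(78-30)=24$ since the Levi $L$ has derived group of type $D_4$ and a two-dimensional center. As $N$ is connected this forces $\overline N=N$, which proves the corollary.

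I expect the only genuine subtlety to be this last step — passing from ``$n(x,y;z)$ is a unipotent element of $Q$'' to ``$n(x,y;z)\in N$''. These are truly different statements (the Levi contains unipotent elements too), so one must actually exhibit the action of $n(x,y;z)$ on the graded of $\mathcal{F}^1_\bullet$ as trivial — or, alternatively, argue on Lie algebras, noting that the tangent directions $\Phi'_{V(0,y,z),e_{33}}$ and $\Phi'_{e_{11},V(x,0,0)}$ lie in $\mathrm{Lie}(N)$ and, by the dimension count, span it. Everything else is the routine but bookkeeping-heavy business of chasing coordinates through Lemma~\ref{HeisTransform}, which is why I would carry out the $Q$-membership, injectivity, and graded-triviality checks in a single pass through that transformation formula.
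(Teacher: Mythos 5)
Your proof is correct, and it differs from the paper's at the one genuinely delicate point, namely why the elements $n(x,y;z)$ land in the unipotent radical $N$ rather than merely being unipotent elements of $Q$. The paper uses the same factorization $n(x,y;z)=n(0,y;z)\,n(x,0;0)$ and Lemma \ref{lem:n0}, but then works on the Lie algebra: writing down the dual action $\Phi^\vee_{\gamma,v}$, it checks that $\Phi'_{V(0,y,z),e_{33}}$ annihilates $e_{33}\in J$ and $e_{11}\in J^\vee$ (and similarly for $\Phi'_{e_{11},V(x,0,z)}$) and concludes that these nilpotents lie in $\mathrm{Lie}(N)$; injectivity via Lemma \ref{HeisTransform} and the dimension count ($24=24$) then finish as in your write-up. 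You instead argue at the group level: after checking from Lemma \ref{HeisTransform} that $n(x,y;z)$ preserves $V_1\subseteq V_{10}$, hence lies in $Q$, you verify that it preserves the filtration $\mathcal{F}^1_\bullet$ and acts trivially on its graded pieces, so it lies in the kernel of $Q\to\prod_i\GL\bigl(\mathcal{F}^1_i(J)/\mathcal{F}^1_{i-1}(J)\bigr)$, a normal unipotent (hence, in characteristic $0$, connected) subgroup of $Q$, and therefore in the unipotent radical $N$; you also make the dimension count $\dim N=\tfrac12(78-30)=24$ explicit where the paper simply asserts that $N$ has dimension $24$. Your route costs more coordinate bookkeeping in Lemma \ref{HeisTransform} but renders completely explicit the passage from ``unipotent element of $Q$'' to ``element of $N$'', which the paper handles tersely by asserting the membership of the operators $\Phi'$ in $\mathrm{Lie}(N)$ from the two annihilation conditions; the paper's Lie-algebra route is shorter and reuses the operators $\Phi'_{\gamma,v}$ already introduced. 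Your closing remark that one could alternatively check the tangent directions lie in $\mathrm{Lie}(N)$ is in fact essentially the paper's argument.
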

\begin{proof} Since $n(x,y;z) = n(0,y;z)n(x,0,0)$, to check that the $n(x,y;z)$ are in $N$ it suffices to check that the $n(0,y;z)$ and $n(x,0;z)$ are in $N$.  By Lemma \ref{lem:n0}, these elements are exponentials of nilpotent elements of $Lie(\E_6)$. Hence it suffices to check the corresponding Lie algebra statement.  Now, one immediately has $\Phi'_{V(0,y,z),e_{33}}(e_{33}) = 0$.  Furthermore, if $\Phi^\vee_{\gamma,v}$ denotes how $\Phi_{\gamma,v}$ acts on the dual representation, then
\[\Phi^\vee_{\gamma, v}(\mu) = v \times (\gamma \times \mu) - (v,\mu)\gamma - (v,\gamma)\mu\]
for $\mu \in J^\vee$.  Using the fact that $e_{33}\times (V(x,y,z) \times e_{11}) = 0$, one sees that $\Phi^\vee_{V(0,y,z),e_{33}}(e_{11}) = 0$.  Thus, $\Phi'_{V(0,y,z),e_{33}}$ is in the Lie algebra of $N$.  One similarly finds that $\Phi'_{e_{11},V(x,0;z)}$ is in the Lie algebra of $N$.  Thus the map $N(\Theta) \rightarrow \E_6$ lands in $N$, as claimed.

To see that $N(\Theta) \rightarrow N$ is an isomorphism, note that it is clear from Lemma \ref{HeisTransform} that $N(\Theta) \rightarrow \E_6$ is injective.  But $N(\Theta)$ and $N$ are each unipotent groups of dimension $24$, so the isomorphism follows. \end{proof}

\subsection{The period}\label{ssec:periodDef}
In this subsection we define the period that we study in this paper.
Recall the Levi subgroup $L$ of $Q$, which acts on $J$ fixing the ``coordinate spaces'' $J(a_i)$, $J(c_i)$, $i = 1,2,3$.
We map $\G_2 \rightarrow L \subseteq \E_6$ by letting $\G_2$ act diagonally on the spaces $J(a_i)$ and trivially on the $J(c_i)$.  That is, if $g \in \G_2$ then
\[
\left(\begin{array}{ccc} c_1& a_3 & a_2^* \\a_3^* &c_2&a_1\\a_2&a_1^*&c_3\end{array}\right) \cdot g =
\left(\begin{array}{ccc} c_1& g^{-1}(a_3) & g^{-1}(a_2^*) \\ g^{-1}(a_3^*) &c_2&g^{-1}(a_1)\\g^{-1}(a_2)&g^{-1}(a_1^*)&c_3\end{array}\right).
\]
It is clear that this action preserves the norm on $J$, and thus $\G_2 \subseteq L \subseteq \E_6$.

Fix an additive character $\psi: F\backslash \BA \rightarrow \BC^\times$.
We put on $N \simeq N(\Theta)$ the character $\xi: N(F)\backslash N(\BA) \rightarrow \BC^\times$ defined by $\xi(n(x,y;z)) = \psi(\tr(x+y))$.
Note that if $g \in \G_2$, then
\begin{equation}\label{eqn:gconjN}g n(x,y;z) g^{-1} = n(gx, gy; gz).\end{equation}
It follows that $\G_2$ stabilizes this character on $N(\Theta)$.
In fact, one can show that $\G_2$ is the full stabilizer in $L$ of this character on $N$, but we do not need this fact, so we omit it.

If $\varphi$ is an automorphic function on $\E_6(F)\back \E_6(\BA)$, then the period we study is
\begin{equation}
\label{eqn:Period} \mathcal{P}_{\G_2}(\varphi)= \int_{\G_2(F)\backslash \G_2(\A)}\int_{N(F)\backslash N(\A)}{\xi(n)\varphi(nr)\,dn\,dr},
\end{equation}
assuming that the integral converges. To simply the notation, we will sometimes use $H=H_0N$ to denote the group $\G_2N$ with $H_0=\G_2$ and we will extend the character $\xi$ to $H$ by making it trivial on $H_0$.

\section{Orbits of $H$ on a flag variety of $\E_6$}\label{sec:orbits}
Let $\calV$ be the variety of six-dimensional totally singular subspaces of $J$.
The group $\E_6$ acts on the right on $\calV$ and Proposition \ref{prop:A5parab}
shows that this action is transitive with stabilizers being the $A_5$ parabolic subgroups of $\E_6$.
As explained in the introduction, we want to compute the period $\calP_{\G_2}$ of a truncated Eisenstein series
of $\E_6$ induced from an  $A_5$ parabolic subgroup. To this end, we study in this section the
orbits of $H=\G_2N$ on $\calV$ and we analyze the stabilizers of each orbit.

\subsection{The $Q$ orbits on $\calV$}\label{ssec:}

Since $Q$ preserves the filtration $\mathcal{F} = \mathcal{F}^{1}$, it is easy to write down an invariant of the orbits of $Q$ on $\calV$.  Namely, if $V$ is a six-dimensional totally singular subspace of $J$, then the dimensions $\mathcal{F}_i(V)/\mathcal{F}_{i-1}(V)$ are $Q$-invariants.  Here recall that $\mathcal{F}^{1}_i(V) = \mathcal{F}^{1}_i(J) \cap V$; see subsection \ref{subsec:D4parab}.  We will see that there are $7$ orbits, and that these dimensions completely characterize the orbits.

We will first produce representatives for these seven orbits, and then explain why they are only orbits.  Now, and below, we will use the following notation.  If $W$ is a subspace of $\Theta$, set
\[\Ann_{R}(W) = \{x \in \Theta: w \cdot x = 0 \text{ for all } w \in W\}\]
and similarly
\[\Ann_{L}(W) = \{x \in \Theta: x \cdot w = 0 \text{ for all } w \in W\}\]
the right and left annihilators of $W$. Note that the identity $x^* (x w) = n(x) w$ implies that if $W$ is nonzero, then $\Ann_{L}(W)$ is isotropic.  Similarly, if $W$ is nonzero, then $\Ann_{R}(W)$ is isotropic.  If $W \subseteq V_7$, then $w^* = -w$ for every $w \in W$.  It follows that $\Ann_{R}(W) = \Ann_{L}(W)^*$ when $W$ is traceless.  If $W$ is not contained in $V_7$, then $\Ann_{R}(W)$ need not be related to $\Ann_{L}(W)$.

Suppose $(V_1,V_2,V_3)$ in $\Theta^3$ satisfies $V_i \cdot V_{i+1} = 0$ (indices taken modulo $3$) and each $V_i$ is two-dimensional.  Furthermore, suppose that $\ell \subseteq \Theta$ is an isotropic line.  In this case, $\Ann_{R}(\ell) = \ell^* \Theta$, $\Ann_{L}(\ell) = \Theta \ell^*$, and both are four-dimensional. With this notation, the seven orbits of $Q$ on the totally singular six-dimensional subspaces of $J$ are represented as follows.

\begin{align}
\label{item222} V &= \left(\begin{array}{ccc} 0 & V_3 & * \\ * & 0 & V_1 \\ V_2 & * & 0 \end{array}\right)\\
\label{item411R1}V &= \left(\begin{array}{ccc} 0 & 0& * \\ 0 & 0& \ell \\ \Ann_{R}(\ell) & * & * \end{array}\right)\\
\label{item411R2} V &= \left(\begin{array}{ccc} * & \Ann_{R}(\ell) & * \\ * & 0 &0 \\ \ell & 0 & 0 \end{array}\right)\\
\label{item411R3} V &= \left(\begin{array}{ccc}  0 & \ell & 0 \\ * & * & \Ann_{R}(\ell) \\ 0 & * & 0 \end{array}\right)\\
\label{item411L1} V &= \left(\begin{array}{ccc}  0 & \Ann_{L}(\ell) & 0 \\ * & * & \ell \\ 0 & * & 0 \end{array}\right)\\
\label{item411L2} V &= \left(\begin{array}{ccc} 0 & 0&  *\\ 0 & 0 & \Ann_{L}(\ell) \\ \ell & * & * \end{array}\right)\\
\label{item411L3} V &= \left(\begin{array}{ccc} * & \ell & * \\ * & 0 &0 \\ \Ann_{L}(\ell) & 0 & 0 \end{array}\right)
\end{align}

It is clear by considering the dimensions $\mathcal{F}_{i}(V)/\mathcal{F}_{i-1}(V)$ that the above $V$ are in different $Q$-orbits.  We call spaces of the form \eqref{item222} a ``$(2,2,2)$'' orbit, spaces of the form \eqref{item411R1}, \eqref{item411R2}, and \eqref{item411R3} ``right $(4,1,1)$'' orbits, and spaces of the form \eqref{item411L1}, \eqref{item411L2}, and \eqref{item411L3} ``left $(4,1,1)$'' orbits.

We now show that these are the only orbits.  Note that the $V$ above are all direct sums of their intersections with the coordinate space $J(a_i)$, $J(c_i)$.  The Bruhat decomposition implies that every $Q$ orbit on $\calV$ has such a representative.

\begin{lemma}\label{lem:Bruhat}
Suppose $V$ is totally singular six-dimensional subspace of $J$.  Then there exists $q \in Q$ so that $V q$ is a direct sum of its intersections with with the coordinate spaces $J(a_i)$ and $J(c_i)$, $i = 1,2,3$.
\end{lemma}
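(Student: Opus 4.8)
The plan is to deduce the lemma from the Bruhat decomposition, exactly as announced in the text: I will exhibit, in every $Q$-orbit on $\calV$, a representative that is a sum of weight spaces for a fixed maximal torus, and observe that such a subspace is automatically the direct sum of its intersections with the coordinate spaces.

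First I would fix a maximal torus $T$ of the Levi subgroup $L$ of $Q$. Since $L$ is a Levi subgroup of $\E_6$, the torus $T$ is a maximal torus of $\E_6$, and since $J$ is a minuscule representation of $\E_6$ it decomposes as a direct sum of $27$ distinct one-dimensional $T$-weight spaces. Because $L$ preserves each coordinate space $J(c_i)$ and $J(a_i)$, each of these spaces is $T$-stable, hence a sum of $T$-weight lines; by a dimension count the six coordinate spaces partition the $27$ weight lines into groups of sizes $1,1,1,8,8,8$. Multiplicity one then forces every $T$-weight line of $J$ to lie in exactly one coordinate space, so any $T$-stable subspace $W \subseteq J$ satisfies
\[
W = \bigoplus_{i=1}^{3} \bigl(W \cap J(c_i)\bigr)\ \oplus\ \bigoplus_{i=1}^{3} \bigl(W \cap J(a_i)\bigr).
\]
Therefore it will be enough to find $q \in Q$ such that $Vq$ is a sum of $T$-weight spaces.

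To produce such a $q$, I would choose a Borel subgroup $B$ of $\E_6$ with $T \subseteq B \subseteq Q$ — for instance $B = B_L N$, where $B_L$ is a Borel of $L$ containing $T$ and $N$ is the unipotent radical of $Q$ — and let $P_0 \supseteq B$ be a standard maximal parabolic of $\E_6$ of type $A_5$. By Proposition \ref{prop:A5parab} the group $\E_6$ acts transitively on $\calV$ with stabilizers the $A_5$-parabolics, so $P_0$ is the stabilizer of a unique $V_0 \in \calV$; being $B$-stable, $V_0$ is $T$-stable, hence a sum of $T$-weight spaces. Given an arbitrary $V \in \calV$, transitivity lets me write $V = V_0 g$ for some $g \in \E_6$. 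Since $P_0$ and $Q$ are standard parabolic subgroups relative to $B$, the Bruhat decomposition gives
\[
\E_6(F) = \bigsqcup_{\sigma} P_0(F)\,\sigma\,Q(F),
\]
where $\sigma$ runs over a set of representatives, which may be taken in $N_{\E_6}(T)(F)$, of the double cosets $W_{P_0}\backslash W/W_Q$; the $F$-rationality of the representatives uses that $\E_6$ is split. Writing $g = p\,\sigma\,q^{-1}$ with $p \in P_0$ and $q \in Q$, I get $Vq = V_0 p\,\sigma = V_0\sigma$ (using $V_0 p = V_0$ since $p \in \mathrm{Stab}(V_0)$), and since $\sigma$ normalizes $T$ the subspace $V_0\sigma$ is again a sum of $T$-weight spaces. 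By the previous paragraph $V_0\sigma$, and hence $Vq$, is the direct sum of its intersections with the coordinate spaces. This proves the lemma, and in fact shows that the finitely many subspaces $V_0\sigma$ form a complete set of representatives for the $Q$-orbits on $\calV$.

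I do not expect a serious obstacle here: once the torus and the two parabolics are in place the argument is formal. The one point that genuinely needs checking is the structural input of the second paragraph, namely that a maximal torus of $L$ acts on $J$ with $27$ pairwise distinct weights, so that the coordinate decomposition of $J$ refines into one-dimensional weight spaces; this is precisely the fact that $J$ is minuscule. The existence of $F$-rational Weyl-group representatives and the Bruhat decomposition for a pair of standard parabolic subgroups are standard for split reductive groups over a field of characteristic $0$.
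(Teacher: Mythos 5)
Your argument is correct and follows essentially the same route as the paper's own proof: both rest on the Bruhat decomposition for the pair consisting of $Q$ and an $A_5$-parabolic, with a Weyl-group representative normalizing a maximal torus, plus the observation that a torus-stable subspace of $J$ is the direct sum of its intersections with the coordinate spaces. The only cosmetic difference is that you place the torus in the Levi $L$ of $Q$ and invoke the minuscule (multiplicity-one) structure of $J$, while the paper takes a torus in a Levi of the stabilizer of $V$ and uses the explicit subtorus $T_0$ whose eigenspaces are the coordinate spaces; both justifications are fine.
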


\begin{proof}
Let $P$ be the stabilizer of $V$ in $\E_6$.
Denote by $T'$ the diagonal maximal torus of $\GL(V)$, and set $T = \{(\lambda, t) \in \GL_1 \times T': \lambda^3 = \det(t)\}$.
It follows from Proposition \ref{prop:A5parab} that $T$ is a maximal torus of a Levi subgroup of $P$, and thus a maximal torus of $\E_6$.  By the Bruhat decomposition, there exists $q \in Q$ and $w \in N(T)$, the normalizer of $T$, so that $V q =  V(\NullU) w := V'.$  Here $V(\NullU)$ was the singular space used to define $P$. Suppose $t \in T$.  Since $w$ normalizes $T$ and $T$ fixes $V(\NullU)$, $V' t = V'$.  It follows that $V'$ is a direct sum of its $T$-eigenspaces.  But the subtorus $T_0 \subseteq T$ consisting of the $(1, \diag(t_1,t_1,t_2,t_2,t_3,t_3))$ with $t_1 t_2 t_3 = 1$ has the coordinate spaces $J(a_i)$, $J(c_i)$ as its eigenspaces.  The lemma follows.
\end{proof}

Using Lemma \ref{lem:Bruhat}, the fact that the seven spaces $V$ above represent all orbits of $Q$ on $\calV$ is now an easy exercise using the following two well-known lemmas.
\begin{lemma} Suppose $V_1, V_2 \subseteq \Theta$ are nonzero subspaces, and $V_1 \cdot V_2 = 0$.  Then both $V_i$ are isotropic, and (at least) one of the following three things are true:
\begin{enumerate}
\item $V_1$ is one-dimensional, and $V_2 \subseteq \Ann_{R}(V_1)$, which is four-dimensional.  In particular, $\dim(V_1) + \dim(V_2) \leq 5$.
\item $V_2$ is one-dimensional, and $V_1 \subseteq \Ann_{L}(V_2)$, which is four-dimensional.  In particular, $\dim(V_1) + \dim(V_2) \leq 5$.
\item Neither $V_1$ nor $V_2$ is one-dimensional.  In this case, $V_1$ is two-dimensional, $V_2 = \Ann_{R}(V_1)$ is two-dimensional, and there is a unique two-dimensional subspace $V_3 \subseteq \Theta$ so that $V_i \cdot V_{i+1} = 0$ for $i = 1,2,3$ (indices taken modulo $3$).  In particular, $\dim(V_1) + \dim(V_2) = 4$.\end{enumerate}
\end{lemma}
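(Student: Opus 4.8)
The plan is to reduce everything to statements about left and right annihilators in $\Theta$ and to the structure of maximal totally isotropic subspaces of the eight-dimensional split quadratic space $(\Theta,n)$. First, the isotropy of $V_1,V_2$ is immediate: $V_1\cdot V_2=0$ means $V_2\subseteq\Ann_{R}(V_1)=\bigcap_{0\ne v\in V_1}\Ann_{R}(v)$ and $V_1\subseteq\Ann_{L}(V_2)=\bigcap_{0\ne w\in V_2}\Ann_{L}(w)$, and, as recalled in the excerpt, the identity $x^*(xy)=n(x)y$ forces $\Ann_{L}(W)$, and symmetrically $\Ann_{R}(W)$, to be totally isotropic whenever $W\ne0$. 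So every nonzero element $v$ of $V_1\cup V_2$ has $n(v)=0$, i.e.\ is of ``rank one'', and for such $v$ the excerpt tells us $\Ann_{R}(v)=v^*\Theta$ is four-dimensional (and likewise $\Ann_{L}(v)=\Theta v^*$); being four-dimensional and totally isotropic, $\Ann_{R}(v)$ is a \emph{maximal} totally isotropic subspace of $\Theta$.

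Next I would record the bookkeeping identity coming from the adjunction $(xy,z)=(y,x^*z)=(x,zy^*)$ for the nondegenerate form on $\Theta$: one has $vw=0$ for all $v\in V_1,w\in V_2$ if and only if $V_2\perp\sum_{v\in V_1}v^*\Theta$, hence $\Ann_{R}(V_1)=\bigl(\sum_{v\in V_1}\Ann_{R}(v)\bigr)^{\perp}$ and $\dim\Ann_{R}(V_1)=8-\dim\sum_{v\in V_1}\Ann_{R}(v)$ (and symmetrically for $\Ann_{L}(V_2)$). If $\dim V_1=1$ this gives $\dim\Ann_{R}(V_1)=4$, so $V_2\subseteq\Ann_{R}(V_1)$ and $\dim V_1+\dim V_2\le5$, which is case (1); the case $\dim V_2=1$ is identical with $\Ann_{L}$ in place of $\Ann_{R}$ and gives case (2).

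It remains to treat $\dim V_1\ge2$ and $\dim V_2\ge2$. Here the key input is that the map $v\mapsto\Ann_{R}(v)$ sends rank-one lines of $\Theta$ injectively into a \emph{single} family of maximal totally isotropic $4$-spaces of $(\Theta,n)$. Injectivity: the only nonzero $x$ with $x\cdot\Ann_{R}(v)=0$ lie on $Fv$, since such an $x$ cannot be invertible (else $\Ann_{R}(v)=0$), hence is rank one with $\Ann_{R}(x)\supseteq\Ann_{R}(v)$, hence $\Ann_{R}(x)=\Ann_{R}(v)$ by equality of dimensions, and $\Ann_{L}(\Ann_{R}(v))=Fv$ is checked in the Zorn model after reducing to the two types of rank-one line, $v=\epsilon_1$ and $v=e_1$. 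The ``single family'' assertion holds because the variety of rank-one lines is a smooth quadric, hence geometrically connected, and maps algebraically onto a connected, hence irreducible, subset of the space of maximal isotropics, whose two families are its connected components. Granting this, two maximal isotropics in the same family of an $8$-dimensional split quadratic space meet in a subspace of even codimension, i.e.\ of dimension $0$, $2$ or $4$, so two \emph{distinct} ones meet in dimension $\le2$ (these dimensions may be computed after base change to $\overline F$). Applying this to $\Ann_{R}(v_1),\Ann_{R}(v_2)$ for linearly independent $v_1,v_2\in V_1$ gives $\dim V_2\le\dim\Ann_{R}(V_1)\le\dim\bigl(\Ann_{R}(v_1)\cap\Ann_{R}(v_2)\bigr)\le2$, so $\dim V_2=2$; symmetrically $\dim V_1\le\dim\Ann_{L}(V_2)\le2$, so $\dim V_1=2$; and then $V_2=\Ann_{R}(V_1)$ since $V_2\subseteq\Ann_{R}(V_1)$ with both $2$-dimensional. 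Finally, put $V_3:=\Ann_{R}(V_2)\cap\Ann_{L}(V_1)$; by construction $V_2V_3=0$ and $V_3V_1=0$, and any $V_3'$ with these two properties lies in $V_3$, so the uniqueness of a $2$-dimensional $V_3$ reduces to $\dim V_3=2$, i.e.\ to $\Ann_{R}(V_2)=\Ann_{L}(V_1)$ with both $2$-dimensional. I would get this by bringing $(V_1,V_2)$ to the normal form $V_1=\langle e_1,e_2\rangle$, $V_2=\langle\epsilon_1,e_3^*\rangle$ of the Zorn model (all pairs as in case (3) being equivalent under $\mathrm{Aut}(\Theta)$ together with scalars), where a short direct computation yields $\Ann_{R}(V_2)=\Ann_{L}(V_1)=\langle\epsilon_2,e_3^*\rangle$.

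The main obstacle is this last step: the ``single family'' property of $v\mapsto\Ann_{R}(v)$ (a form of triality for $\Theta$) and the ensuing control of the $(2,2,2)$ configuration, including the normal-form reduction needed for the existence and uniqueness of $V_3$. Everything else is dimension bookkeeping together with facts already recalled in the excerpt, and can safely be left as ``direct computation in the Zorn model'' or referred to Springer--Veldkamp.
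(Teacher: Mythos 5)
The paper itself offers no proof of this lemma (it is stated as well known, "an avatar of triality for $\Spin_8$"), so your write-up is judged on its own. Most of it is correct: the isotropy claim, cases (1) and (2), and the dimension count in case (3) are fine. In particular your route to $\dim V_1=\dim V_2=2$ — the map $\ell\mapsto\Ann_R(\ell)$ is a morphism from the (geometrically irreducible) quadric of isotropic lines to the orthogonal Grassmannian, hence lands in a single family of maximal isotropic $4$-spaces, it is injective because $\Ann_L(\Ann_R(v))=Fv$, and two distinct maximal isotropics in one family of an $8$-dimensional split space meet in dimension at most $2$ — is a valid argument, and the reduction of existence/uniqueness of $V_3$ to showing $\dim\bigl(\Ann_R(V_2)\cap\Ann_L(V_1)\bigr)=2$ is also fine.

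The genuine gap is the very last step, which you flag as the main obstacle: the claim that all case-(3) pairs $(V_1,V_2)$ are equivalent under $\mathrm{Aut}(\Theta)=\G_2$ (scalars do nothing to subspaces) is false, so checking $\Ann_R(V_2)=\Ann_L(V_1)$ only on the normal form $\bigl(\langle e_1,e_2\rangle,\langle \epsilon_1,e_3^*\rangle\bigr)$ does not prove it. Since $V_2=\Ann_R(V_1)$ is determined by $V_1$, the $\G_2$-orbit of the pair is determined by the $\G_2$-orbit of $V_1$, and by Lemma \ref{G2D4orbits} there are five orbits of isotropic two-planes; these give the five distinct $(2,2,2)$ configurations of Proposition \ref{prop:G2Norbits}. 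A decisive counterexample: for the traceless null plane $W=Fe_1+Fe_3^*$ one has $W\cdot W=0$ and $\Ann_R(W)=W$, so $(V_1,V_2)=(W,W)$ is a case-(3) pair with $V_1=V_2$, which no linear action can carry to your normal form, where $V_1\neq V_2$. The assertion you need is nevertheless true in all five cases (your parity argument already gives $\dim\Ann_R(V_2)\le 2$ and $\dim\Ann_L(V_1)\le 2$; what is missing is the equality $\Ann_R(V_2)=\Ann_L(V_1)$ of dimension $2$ in every orbit), and there are two standard repairs: either verify it on a representative of each of the five $\G_2$-orbits of Lemma \ref{G2D4orbits} (this is exactly the five-case check the paper invokes for the closely related Lemma \ref{lem:perp*}), or replace $\G_2$ by the triality action of $\Spin_8$ recorded in the lemma following this one: $\Spin_8$ acts transitively on isotropic two-planes through each projection and preserves the relations $V_i\cdot V_{i+1}=0$, so every case-(3) pair can be moved to your normal form, and the identity $\Ann_R(V_2)=\Ann_L(V_1)$ is equivariant for this action because both sides transform by the third component $g_3$.
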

The lemma is an avatar of triality for the group $\Spin_8$.  To setup the second lemma, recall that $\Spin_8$ can be defined to be the subgroup of triples $(g_1,g_2,g_3) \in \SO(\Theta)^3$ such that $\tr((g_1x_1) (g_2 x_2) (g_3 x_3)) = \tr(x_1 x_2 x_3)$ for all $x_1, x_2, x_3 \in \Theta$.  As such, $\Spin_8$ sits naturally inside the Levi subgroup $L$ of $Q$, by acting trivially on the coordinate spaces $J(c_i)$ and by $g_i^{-1}$ on the coordinate spaces $J(a_i)$.
\begin{lemma} One has the following facts regarding the $\Spin_8$ orbits on isotropic subspaces of $\Theta$.
\begin{enumerate}
\item Via any of the three projections $\Spin_8 \rightarrow \SO(\Theta)$, the group $\Spin_8$ acts with one orbit on the isotropic lines $\ell \subseteq \Theta$.
\item Via any of the three projections $\Spin_8 \rightarrow \SO(\Theta)$, the group $\Spin_8$ acts with one orbit on the isotropic two-dimensional subspaces of $\Theta$.
\item If $i \in \Z/3$ is an index, $V_i, V_{i+1}$ are subspaces of $\Theta$ with $V_i \cdot V_{i+1} = 0$, and $g = (g_1, g_2, g_3) \in \Spin_8$, then $(g_i V_i) \cdot (g_{i+1} V_{i+1}) = 0$.\end{enumerate} \end{lemma}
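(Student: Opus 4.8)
The three items are handled separately. For (1) and (2) the plan is to reduce to classical facts about the split orthogonal group. Since $\Theta$ is split, its norm form $n$ is the split quadratic form in eight variables, so $\SO(\Theta)\cong\SO_8$ and $\Spin_8$ is the split simply connected cover of $\SO(\Theta)$. By Witt's extension theorem, $\SO(\Theta)$ acts transitively on the isotropic lines of $\Theta$ and on the two-dimensional isotropic subspaces of $\Theta$ — for the latter there is a single orbit rather than two, since a two-space is not a maximal isotropic subspace of an eight-dimensional quadratic space of Witt index $4$. (To pass from $\mathrm{O}(\Theta)$ to $\SO(\Theta)$ one composes with the reflection in a suitable anisotropic vector orthogonal to the given subspace, which fixes it pointwise and has determinant $-1$.) Each of these two sets is the set of $F$-points of a partial flag variety of $\SO(\Theta)$; via the first projection $p_1\colon\Spin_8\to\SO(\Theta)$, which is a central isogeny, the corresponding flag varieties of $\Spin_8$ and $\SO(\Theta)$ are identified, and since $\Spin_8$ is $F$-split it still acts transitively on the $F$-points. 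This proves (1) and (2) for $p_1$.

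To obtain (1) and (2) for the other projections $p_2,p_3$ I would use the cyclic symmetry of the triality description of $\Spin_8$. Octonion identity (2) together with the standard relation $\tr(uv)=\tr(vu)$ shows that the trilinear form $(x_1,x_2,x_3)\mapsto\tr((x_1x_2)x_3)$ is invariant under cyclic permutations of its arguments; hence $\sigma\colon(g_1,g_2,g_3)\mapsto(g_2,g_3,g_1)$ is an automorphism of $\Spin_8$, and $p_2=p_1\circ\sigma$, $p_3=p_1\circ\sigma^2$. Since $\sigma$ is bijective, transitivity for $p_1$ transfers to $p_2$ and $p_3$.

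For (3), the one step that needs an idea is to upgrade the defining relation of $\Spin_8$, which constrains only \emph{traces} of triple products, to a statement that a genuine product vanishes. Two remarks suffice. First, for $a,b\in\Theta$ one has $ab=0$ if and only if $\tr((ab)c)=0$ for all $c\in\Theta$: linearizing $uu^*=n(u)$ gives $\tr(uw)=(u,w^*)$, and since conjugation is a bijection of $\Theta$ and $(\,,\,)$ is non-degenerate, vanishing of $\tr((ab)c)$ for all $c$ forces $ab=0$. Second, given $g=(g_1,g_2,g_3)\in\Spin_8$, every element of $\Theta$ is $g_{i+2}v_{i+2}$ for some $v_{i+2}$, as $g_{i+2}$ is invertible. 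Now let $v_i\in V_i$, $v_{i+1}\in V_{i+1}$ satisfy $v_iv_{i+1}=0$, and let $c=g_{i+2}v_{i+2}\in\Theta$ be arbitrary; using identity (2) and the defining relation of $\Spin_8$ with the indices in the cyclic order $i,i+1,i+2$ (legitimate by the cyclic invariance above),
\[\tr\big(\big((g_iv_i)(g_{i+1}v_{i+1})\big)c\big)=\tr\big((g_iv_i)(g_{i+1}v_{i+1})(g_{i+2}v_{i+2})\big)=\tr\big(v_iv_{i+1}v_{i+2}\big)=\tr\big((v_iv_{i+1})v_{i+2}\big)=0.\]
Since $c$ is arbitrary the first remark gives $(g_iv_i)(g_{i+1}v_{i+1})=0$, and letting $v_i,v_{i+1}$ vary yields $(g_iV_i)\cdot(g_{i+1}V_{i+1})=0$.

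I do not expect a real obstacle: the lemma merely repackages well-known facts, and items (1)--(2) could alternatively be cited (e.g.\ from \cite{springerVeldkampBook}). The only part requiring an argument beyond a reference is the trace-to-product passage in (3), which reduces to the one-line observation that $ab=0$ if and only if $\tr((ab)c)=0$ for every $c\in\Theta$.
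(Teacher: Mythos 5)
Your proposal is correct. The paper offers no proof of this lemma at all -- it is invoked as a well-known avatar of triality -- so there is no argument of the authors to compare yours against; what you give is a sound substantiation of exactly the facts the paper takes for granted. Two comments on the details. For (1) and (2), the genuinely delicate point is that transitivity of $\SO(\Theta)(F)$ (Witt plus a reflection) does not by itself suffice, since $p_1(\Spin_8(F))$ is only the kernel of the spinor norm and is in general a proper subgroup of $\SO(\Theta)(F)$; you correctly sidestep this by observing that the isotropic lines and the isotropic two-planes are the $F$-points of projective homogeneous varieties whose point stabilizers pull back to parabolic subgroups of $\Spin_8$ under the central isogeny $p_1$, so that $F$-rational conjugacy of parabolics of a given type gives transitivity of $\Spin_8(F)$ -- note that splitness is not actually needed here (Borel--Tits conjugacy suffices), and once you have this argument the Witt-plus-reflection step is redundant. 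Your reduction of $p_2,p_3$ to $p_1$ via the cyclic-shift automorphism is legitimate, since the cyclic invariance of $(x_1,x_2,x_3)\mapsto\tr((x_1x_2)x_3)$ follows from identity (2) together with $\tr(ab)=\tr(ba)$, and this automorphism is defined over $F$. For (3), the passage from the trace identity defining $\Spin_8$ to the vanishing of the actual product, via the nondegeneracy of $(x,y)$ and the identity $\tr(uw)=(u,w^*)$ obtained by linearizing $uu^*=n(u)$, is exactly the standard triality argument and is complete as written.
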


\subsection{The $H$-orbits on $\calV$}\label{ssec:Vorbits}
In this subsection, we discuss the $H=\G_2 N$ orbits on the flag variety $\calV$.
Note that the Levi subgroup $L$ (see subsection \ref{subsec:D4parab}) of $Q$ takes each of the types of orbits above to themselves, but moves around the isotropic two-dimensional subspaces $V_i$ in item \eqref{item222} and moves around the isotropic line $\ell$ in items (2) to (7).  Thus to compute the $\G_2 N$ orbits on $\calV$, we need to compute the $\G_2$ orbits on isotropic two-dimensional subspaces $\NullU \subseteq \Theta$ and on isotropic lines $\ell \subseteq \Theta$.

\begin{proposition}\label{prop:G2lines}
The group $\G_2$ acts with two orbits on the set of isotropic lines in $\Theta$.  These orbits are characterized by whether the line $\ell$ is traceless or not, i.e., whether $\ell \subseteq V_7$ or not.  In the Zorn model, the two orbits are represented by the lines $\ell = F\mm{0}{v}{0}{0}$ with $v \neq 0$, and $F\epsilon_1 = \mm{F}{0}{0}{0}$.  The stabilizer of the line $\ell$ is the parabolic subgroup $P(\ell)$ of $\G_2$, while the stabilizer of the line $F \epsilon_1$ is the subgroup $\SL_3$ acting component-wise on the Zorn model.
\end{proposition}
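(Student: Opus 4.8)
The plan is to split into a ``traceless'' and a ``non-traceless'' case, dispatch the first by Proposition~\ref{prop:G2parab}, and reduce the second to the transitivity of $\G_2$ on nontrivial idempotents of $\Theta$. Since every $g\in\G_2$ preserves both $n$ and $\tr$, it preserves the set of isotropic lines as well as the subspace $V_7$, so the condition $\ell\subseteq V_7$ is a $\G_2$-invariant of an isotropic line $\ell$; hence it is enough to show that each of the two sets --- traceless isotropic lines, non-traceless isotropic lines --- forms a single orbit. The lines $F\mm{0}{v}{0}{0}$ (any $v\ne0$) and $F\epsilon_1$ are isotropic, the first traceless and the second not, and will serve as representatives. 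The traceless case is immediate: an isotropic line contained in $V_7$ is precisely an isotropic line of $V_7$, so Proposition~\ref{prop:G2parab}(1) yields transitivity and identifies the stabilizer with the maximal parabolic $P(\ell)$.

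For the non-traceless case, take $\ell=Fx$ with $n(x)=0$ and $\tr(x)\ne0$, and rescale the generator so that $\tr(x)=1$. Putting $y=1$ in the identity $x^*(xy)=n(x)y$ and using $x^*=\tr(x)\cdot1-x$ gives the quadratic relation $x\cdot x=\tr(x)\,x-n(x)\cdot1$, so $x^2=x$; as $x\notin F\cdot1$ (because $n(1)=1$), $x$ is a nontrivial idempotent and $\ell$ is the line it spans. Conversely every nontrivial idempotent spans a non-traceless isotropic line, so it remains to prove that $\G_2$ acts transitively on nontrivial idempotents, with $\epsilon_1$ a representative. Here I would invoke the Peirce theory of composition algebras (see \cite{springerVeldkampBook}): a nontrivial idempotent $e$ determines the decomposition of $\Theta$ into joint eigenspaces of left and right multiplication by $e$, namely $Fe\oplus Fe^*$ together with two three-dimensional spaces $W_{10},W_{01}$, and for $e=\epsilon_1$ this is exactly the Zorn grading with $W_{10}=V_3$ and $W_{01}=V_3^\vee$. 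The restricted octonion multiplication equips $(W_{10},W_{01})$ with the same structure --- a perfect pairing into $Fe$, together with isomorphisms $\wedge^2 W_{10}\cong W_{01}$ and $\wedge^2 W_{01}\cong W_{10}$ --- as in the Zorn model, so transporting a linear identification of these data produces $g\in\G_2$ with $ge=\epsilon_1$.

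Finally, for the stabilizer of $F\epsilon_1$: if $g\in\G_2$ fixes this line, then $g\epsilon_1=c\epsilon_1$, and applying $g$ to $\epsilon_1^2=\epsilon_1$ forces $c^2=c$, hence $c=1$, $g\epsilon_1=\epsilon_1$, and also $g\epsilon_2=g(1-\epsilon_1)=\epsilon_2$. Thus $g$ commutes with left and right multiplication by $\epsilon_1$, so it preserves $V_3$ and $V_3^\vee$; writing $h=g|_{V_3}$ and $k=g|_{V_3^\vee}$, compatibility of $g$ with the products $V_3\cdot V_3^\vee\to F\epsilon_1$ and $V_3\cdot V_3\to V_3^\vee$ forces $k$ to be the inverse transpose of $h$ and also $k=\wedge^2 h$ under $V_3^\vee\simeq\wedge^2 V_3$, whence $\det h=1$. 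So $g$ lies in the copy of $\SL(V_3)\cong\SL_3$ acting component-wise on the Zorn model, and conversely every such $g$ is an automorphism of $\Theta$ fixing $\epsilon_1$. The one genuinely nontrivial ingredient is the transitivity on idempotents; everything else is a direct computation in the Zorn model or an appeal to Proposition~\ref{prop:G2parab}. If one prefers not to cite composition-algebra structure theory, the hard part becomes checking by hand that the Peirce datum of an arbitrary nontrivial idempotent is isomorphic to that of $\epsilon_1$ and transporting the isomorphism into $\G_2$.
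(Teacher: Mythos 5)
Your proof is correct and follows the same skeleton as the paper's: the trace condition is the invariant separating the two orbits, the traceless case is dispatched by Proposition~\ref{prop:G2parab}, and the stabilizer of $F\epsilon_1$ is pinned down by a direct Zorn-model computation. The one place you diverge is the transitivity on non-traceless isotropic lines: you normalize a generator to trace $1$, observe via $x^*(xy)=n(x)y$ that it is a nontrivial idempotent, and appeal to Peirce-decomposition/structure theory of split composition algebras to conjugate it to $\epsilon_1$; the paper instead normalizes to trace $2$, writes the generator as $1+y_1$ with $y_1\in V_7$, $n(y_1)=-1$, and invokes transitivity of $\G_2$ on isomorphic quadratic \'etale subalgebras to move $y_1$ to $\mathrm{diag}(1,-1)$. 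These are two faces of the same standard fact (a nontrivial idempotent generates a split quadratic \'etale subalgebra), so neither route is deeper than the other; your version requires knowing that the Peirce spaces of any nontrivial idempotent reproduce the Zorn data (dimensions $1,1,3,3$, perfect pairing, wedge products), which is exactly the content of the cited structure theory, so delegating it to \cite{springerVeldkampBook} is fine. Your stabilizer argument (pairing forces inverse-transpose, $V_3\cdot V_3\to V_3^\vee$ forces $\wedge^2$, hence $\det=1$) is a clean variant of the paper's argument via annihilators, the bilinear form, and the restriction of the trilinear form to the determinant; both are complete.
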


\begin{proof}
It is clear that the two lines $\ell$ and $F\epsilon_1$ are in different $\G_2$ orbits, since $\ell \subseteq V_7$.  That all the isotropic lines in $V_7$ are in one $\G_2$ orbit, and their stabilizer is a parabolic subgroup was stated in Proposition \ref{prop:G2parab}.  Thus we must only check that all the isotropic lines not contained in $V_7$ are in one orbit, and the stabilizer statement for the line $F\epsilon_1$.

We first check that isotropic lines $\ell$ that are not traceless are in one $\G_2$-orbit.  For this, suppose $\ell$ is such a line, and take $y \in \ell$ with $\tr(y) =2$.  Then $y = 1 + y_1$, with $y_1 \in V_7$ and $n(y_1) = -1$.  Since $\G_2$ acts transitively on isomorphic quadratic \'etale subalgebras of $\G_2$, we may move $y_1$ to $\mm{1}{}{}{-1}$.  Thus we can assume $\ell$ is spanned by $\epsilon_1$, as desired.

It is clear that $\SL_3$ is contained in the stabilizer $S(\ell)$.  To see that it is exactly the stabilizer, note that $S(\ell)$ must fix $\epsilon_1$ (since it preserves the trace), and must furthermore fix $\Ann_{R}(\epsilon_i)$, $\Ann_{L}(\epsilon_i)$ for $i = 1,2$, since it commutes with conjugation on $\Theta$.  Taking the intersection of various of these subspaces shows that $S(\ell)$ fixes the components $V_3$ and $V_3^\vee$ of the Zorn model.  Since $S(\ell)$ preserves the bilinear form on $\Theta$, the action of $S(\ell)$ on $V_3^\vee$ is determined by that on $V_3$, and thus $S(\ell) \subseteq \GL_3 = \mathrm{Aut}(V_3)$.  Finally, the trilinear form $\tr(x_1(x_2x_3))$ on $\Theta$ restricted to $V_3$ is the determinant: $(v_1,v_2,v_3) \mapsto v_1 \wedge v_2 \wedge v_3$.  Since $S(\ell)$ must stabilize this, we get $S(\ell)\simeq \SL_3$, as desired.
\end{proof}

We now discuss the $\G_2$-orbits on the isotropic two-dimensional subspaces $V_2 \subseteq \Theta$.
\begin{lemma}\label{G2D4orbits}
There are five orbits of $\G_2$ on isotropic two-dimensional subspaces $V_2$ of $\Theta$, which are characterized as follows:
\begin{enumerate}
\item $V_2$ traceless and null
\item $V_2$ traceless and not null
\item $V_2$ not traceless and is left-null, but not right-null
\item $V_2$ not traceless and is right-null, but not left-null
\item $V_2$ not traceless, and neither right nor left-null.
\end{enumerate}
\end{lemma}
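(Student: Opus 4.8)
\emph{The plan.} The five conditions are manifestly $\G_2$-invariant: each $g \in \G_2$ fixes $1 \in \Theta$, preserves the trace, and satisfies $g(x^*) = (gx)^*$ and $g(x\cdot y) = (gx)\cdot(gy)$, so whether a two-dimensional isotropic $\NullU$ is traceless, left-null, or right-null is unchanged by $g$. Since, as recalled in the excerpt, for $\NullU \subseteq V_7$ left-null is equivalent to right-null, the traceless case divides only into (1) null and (2) not null, and for $\NullU \not\subseteq V_7$ there are a priori four types; part of the work is to show the ``left-null and right-null'' type does not occur, leaving (3), (4), (5). It then remains to produce one $\NullU$ of each listed type and prove transitivity on it. If $\NullU \subseteq V_7$ is null, Proposition \ref{prop:G2parab}(2) already gives that all such $\NullU$ form a single orbit (type (1)); a representative is $\mathrm{span}(\mm{0}{e_1}{0}{0}, \mm{0}{0}{e_2^*}{0})$.

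\emph{Traceless, not null.} If $\NullU \subseteq V_7$ is isotropic but not null, pick $0 \neq x \in \NullU$; then $Fx$ is an isotropic line of $V_7$, which by Proposition \ref{prop:G2parab}(1) we move to a fixed standard isotropic line $\ell_0$ whose stabilizer is the maximal parabolic $P(\ell_0)$ with reductive quotient $\GL_2$. Total isotropy of $\NullU$ forces $\NullU \subseteq \ell_0^\perp$, so $\NullU/\ell_0$ is an isotropic line in the five-dimensional quadratic space $\ell_0^\perp/\ell_0$, on which $P(\ell_0)$ acts through $\GL_2$. A direct computation in the Zorn model shows $P(\ell_0)$ has exactly two orbits on these isotropic lines, distinguished by whether the associated $\NullU$ is null; hence the non-null ones form one $\G_2$-orbit (type (2)), represented by $\mathrm{span}(\mm{0}{e_1}{0}{0}, \mm{0}{e_2}{0}{0})$.

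\emph{Not traceless (the main case).} Suppose $\NullU$ is two-dimensional, isotropic, and $\NullU \not\subseteq V_7$. Then $\NullU$ contains a non-traceless vector, spanning an isotropic line not inside $V_7$, which by Proposition \ref{prop:G2lines} we move so that $\epsilon_1 \in \NullU$. Total isotropy and $(\epsilon_1,\epsilon_1) = 0$ force $\NullU \subseteq \epsilon_1^\perp$, and a direct computation gives $V_7 \cap \epsilon_1^\perp = \{\mm{0}{v}{\phi}{0} : v \in V_3,\ \phi \in V_3^\vee\}$ with $n(\mm{0}{v}{\phi}{0}) = -\phi(v)$. Thus $\NullU = F\epsilon_1 \oplus Fw$ with $w = \mm{0}{v}{\phi}{0}$, $\phi(v) = 0$, and $w$ determined up to scalar. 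By Proposition \ref{prop:G2lines} the stabilizer of $F\epsilon_1$ in $\G_2$ is the subgroup $\SL_3$ acting component-wise on the Zorn model, acting on $(v,\phi)$ in the evident way; one checks it has exactly three orbits on the lines $Fw$ with $\phi(v) = 0$, namely those with $\phi = 0$, those with $v = 0$, and those with $v, \phi$ both nonzero, the last using transitivity of $\SL_3$ on $V_3 \smin 0$ together with transitivity of the stabilizer of $e_1$ in $\SL_3$ on the nonzero $\phi$ with $\phi(e_1) = 0$. Taking the representatives $w = \mm{0}{e_1}{0}{0}$, $w = \mm{0}{0}{e_1^*}{0}$, $w = \mm{0}{e_1}{e_2^*}{0}$ and evaluating $x^*\cdot y$ and $x\cdot y^*$ on the basis $x = \epsilon_1$, $y = w$ via the multiplication formula identifies these three orbits as types (3), (4), (5) respectively; in particular no non-traceless $\NullU$ is both left- and right-null, which completes the list.

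\emph{The main obstacle.} The bulk of the work is the bookkeeping in the non-traceless case: pinning down the three $\SL_3$-orbits on $\{(v,\phi) : \phi(v) = 0\}$ and matching each to its null-type through the non-associative product on $\Theta$. A secondary point is the non-null traceless case, since Proposition \ref{prop:G2parab} supplies transitivity only for null two-spaces of $V_7$, so the two-orbit count for $P(\ell_0)$ on the isotropic lines of $\ell_0^\perp/\ell_0$ has to be established by a separate computation.
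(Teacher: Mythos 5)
Your non-traceless analysis is correct and complete: the reduction via Proposition \ref{prop:G2lines} to the $\SL_3$-action on lines $Fw$ with $w = \mm{0}{v}{\phi}{0}$, $\phi(v)=0$, the three-orbit count, and the identification of the null types through $\epsilon_1^*\cdot w = \mm{0}{0}{\phi}{0}$ and $\epsilon_1\cdot w^* = \mm{0}{-v}{0}{0}$ all check out, and this is genuinely more than the paper does (the paper only exhibits the five types and invokes Lemma 2.1 of \cite{jiang} for the fact that there are no others). The traceless null case is correctly quoted from Proposition \ref{prop:G2parab}.

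The gap is in the traceless non-null case, and it is exactly the step you never carry out. You assert that $P(\ell_0)$ has ``exactly two orbits'' on the isotropic lines of the five-dimensional space $(\ell_0^\perp\cap V_7)/\ell_0$, justified only by ``a direct computation in the Zorn model,'' and you preface this with the claim that $P(\ell_0)$ acts on this space ``through $\GL_2$.'' That claim is false: if the unipotent radical $N(\ell_0)$ acted trivially on $\ell_0^\perp/\ell_0$ it would lie in the (abelian) unipotent radical of the corresponding parabolic of $\SO(V_7)$, contradicting the fact that $N(\ell_0)$ is $3$-step (Proposition \ref{prop:G2parab}(1)). Worse, the Levi $\GL_2$ alone has more than two orbits on these isotropic lines: writing the five-space as $(Fe_1+Fe_2)\oplus F(\epsilon_1-\epsilon_2)\oplus(Fe_1^*+Fe_2^*)$ for $\ell_0 = Fe_3^*$, one already sees at least four $\GL_2$-orbits (line in the $V_3$-part, line in the $V_3^\vee$-part, a mixed line with zero $(\epsilon_1-\epsilon_2)$-component, and one with nonzero $(\epsilon_1-\epsilon_2)$-component). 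So the two-orbit count is true but depends essentially on the image of $N(\ell_0)$ in $\SO_5$, i.e.\ on the root groups of $\G_2$ outside the Levi (equivalently outside $\SL_3$), and this computation — comparable in length to your non-traceless case — is missing. Until you either perform it, or replace it by a different argument for transitivity of $\G_2$ on non-null traceless two-spaces (or cite \cite[Lemma 2.1]{jiang} as the paper does), the proof of the lemma is incomplete.
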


Suppose $v\in V_3$, $\phi \in V_3^\vee$ and $\phi(v) = 0$.  Then examples of such spaces are, in order,
\begin{enumerate}
\item $V_2$ spanned by $\phi$ and $v$;
\item $V_2$ spanned by $e_1$ and $e_2$;
\item $V_2$ spanned by $\mm{1}{0}{0}{0}$ and $v$;
\item $V_2$ spanned by $\mm{1}{0}{0}{0}$ and $\phi$;
\item $V_2$ spanned by $\mm{1}{0}{0}{0}$ and $v+\phi$. \end{enumerate}
\begin{proof}[Proof of Lemma \ref{G2D4orbits}]
It is clear that the examples are spaces of each kind, and thus the five types of spaces $V_2$ exist.  Furthermore, it is clear that the characterizing features of these orbits, e.g., ``traceless and null'', etc are $\G_2$-invariants.  Thus there are at least five orbits of $\G_2$ on the two-dimensional isotropic subspaces of $\Theta$.  That these are the only five orbits can be checked directly.  However, that there are only five orbits follows from \cite[Lemma 2.1]{jiang}, so we omit this aspect.
\end{proof}

The stabilizers in $\G_2$ of these two-dimensional isotropic subspaces $V_2 \subseteq \Theta$ were also computed in \cite[Lemma 2.1]{jiang}, so we simply state the result.  Recall that $e_1, e_2, e_3$ denotes our standard basis of $V_3$, and $e_1^*, e_2^*, e_3^*$ the dual basis of $V_3^\vee$.  For the next proposition, define
\[V_1(e_3^*) = \left(\begin{array}{cc} 0 & F e_1, F e_2 \\ 0 & 0 \end{array}\right), V_2(e_3^*) = \left(\begin{array}{cc} F & 0 \\ F e_3^*& 0 \end{array}\right), V_3(e_3^*) = \left(\begin{array}{cc} 0 & 0 \\ Fe_3^* & F \end{array}\right).\]
\begin{proposition}\label{prop:G2Ustab}
Denote $V_2(closed)$ the space spanned by $F e_3^*, F e_1$ and $V_2(open)$ the subspace spanned by $\epsilon_1$ and $e_1 + e_3^*$.  We have the following stabilizers:
\begin{enumerate}
\item The stabilizer in $\G_2$ of $V_2(closed)$ is the Heisenberg parabolic $P(Fe_1 + Fe_3^*)$.
\item One has $V_j(e_3^*) \cdot V_{j+1}(e_3^*) = 0$ (indices taken modulo $3$), and thus each has the same stabilizer.  Denote by $P(e_3^*)$ the parabolic subgroup of $\G_2$ stabilizing the line spanned by $e_3^*$.  Then $P(e_3^*) = M(e_3^*) U(e_3^*)$, with $M(e_3^*) \simeq \GL_2$ the Levi subgroup fixing the decomposition
\[V_7 = F e_3^* \oplus (F e_1 + Fe_2) \oplus F(\epsilon_1 - \epsilon_2) \oplus (Fe_1^* + F e_2^*) \oplus Fe_3\]
and $U(e_3^*) \supseteq U(e_3^*)' \supseteq U(e_3^*)''$ the three-step unipotent radical.  The stabilizer of the $V_j(e_3^*)$ is $M(e_3^*) U(e_3^*)'$.
\item The stabilizer of $V_2(open)$ is contained in the Heisenberg parabolic $P(Fe_1 + Fe_3^*)$.  Define $U(e_1,e_3^*)$ to be the unipotent radical of $P(Fe_1 + Fe_3^*)$, and set $M(e_1,e_3^*)\simeq \GL_2$ the Levi subgroup fixing the decomposition
\[V_7 = (Fe_1 + Fe_3^*) \oplus (Fe_2 + Fe_2^* + F(\epsilon_1 - \epsilon_2)) \oplus (Fe_1^* + Fe_3).\]
The unipotent radical $U(e_1,e_3^*)$ is spanned by $3$ long roots and two short roots.  The stabilizer of $V_2(open)$ is $\GL_1 U^0(e_1,e_3^*)$ where $U^0(e_1,e_3^*)$ is a certain subgroup of $U(e_1, e_3^*)$ defined by a linear relation on the two short roots in $U(e_1,e_3^*)$ and $\GL_1 \subseteq M(e_1,e_3^*)$ is the subgroup of $\GL_2$ acting as $e_1 \mapsto t e_1$, $e_3^* \mapsto t e_3^*$, $t \in \GL_1$.
\end{enumerate}
\end{proposition}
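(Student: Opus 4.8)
The result is \cite[Lemma 2.1]{jiang}, and the plan is to recover it directly, item by item. The organizing principle is that every $g \in \G_2$ fixes $1$, preserves the bilinear form $(\,,\,)$ (hence $V_7 = 1^\perp$), commutes with conjugation $x \mapsto x^*$, and is an algebra automorphism of $\Theta$; therefore the stabilizer in $\G_2$ of a subspace $W \subseteq \Theta$ automatically preserves $W \cap V_7$, the annihilators $\Ann_{R}(W)$ and $\Ann_{L}(W)$, and the line spanned by $\{u^* \cdot v : u, v \in W\}$. For item (1), I would first check by a direct Zorn-model computation that $V_2(closed) = Fe_3^* + Fe_1$ satisfies $e_1 \cdot e_3^* = e_3^* \cdot e_1 = 0$, so that it is a null isotropic two-space inside $V_7$; Proposition \ref{prop:G2parab}(2) then identifies its stabilizer with the Heisenberg parabolic $P(Fe_1 + Fe_3^*)$, and there is nothing more to do.

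For item (2), I would begin by verifying, again by direct computation, that $V_j(e_3^*) \cdot V_{j+1}(e_3^*) = 0$ with $\Ann_{R}(V_j(e_3^*)) = V_{j+1}(e_3^*)$ two-dimensional (indices modulo $3$). Because $g\,\Ann_{R}(W) = \Ann_{R}(gW)$ for $g \in \G_2$, any $g$ fixing $V_j(e_3^*)$ also fixes $\Ann_{R}(V_j(e_3^*)) = V_{j+1}(e_3^*)$, so $V_1(e_3^*)$, $V_2(e_3^*)$ and $V_3(e_3^*)$ have a common stabilizer $S$. Now $M(e_3^*)$ preserves the coordinate space $Fe_1 + Fe_2 = V_1(e_3^*)$ by construction, so $M(e_3^*) \subseteq S$; and $V_3(e_3^*) \cap V_7 = Fe_3^*$ gives $S \subseteq P(e_3^*) = M(e_3^*)U(e_3^*)$, whence $S = M(e_3^*)\bigl(S \cap U(e_3^*)\bigr)$. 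Since $U(e_3^*)$ fixes both $e_3^*$ and $1$, and $\epsilon_2 = \tfrac12\bigl(1 - (\epsilon_1 - \epsilon_2)\bigr)$ with $\epsilon_1 - \epsilon_2$ of weight $0$ under the cocharacter defining $P(e_3^*)$, any $u \in U(e_3^*)$ sends $\epsilon_2$ to $\epsilon_2$ plus an element of $(Fe_1 + Fe_2) \oplus Fe_3^*$; such $u$ lies in $S$ exactly when the $(Fe_1 + Fe_2)$-component of $u\epsilon_2 - \epsilon_2$ vanishes. The assignment of this component to $u$ is a homomorphism $U(e_3^*) \to Fe_1 + Fe_2$; once one checks it is surjective, its kernel — which contains the commutator subgroup and agrees with it in dimension ($5 - 2 = 3$) — is exactly $U(e_3^*)'$, so $S = M(e_3^*)U(e_3^*)'$.

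For item (3), write $y = e_1 + e_3^*$, so that $V_2(open) = F\epsilon_1 + Fy$ and $V_2(open) \cap V_7 = Fy$. A direct computation (using $\epsilon_2 \cdot y = y \cdot \epsilon_1 = e_3^*$ and $\epsilon_2 \cdot \epsilon_1 = y \cdot y = 0$) gives $u^* \cdot v \in Fe_3^*$ for all $u, v \in V_2(open)$, with $Fe_3^*$ spanned. Hence $\mathrm{Stab}(V_2(open))$ preserves $Fe_3^*$ and $Fy$, and therefore the null two-space $\NullU := Fe_3^* + Fy = Fe_1 + Fe_3^*$, so $\mathrm{Stab}(V_2(open)) \subseteq P(Fe_1 + Fe_3^*) = M(e_1,e_3^*)U(e_1,e_3^*)$. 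Pairing with $1$ forces $g\epsilon_1 \in \epsilon_1 + Fy$ for $g$ in the stabilizer; applying $g(u^* \cdot v) = (gu)^* \cdot (gv)$ with $u = \epsilon_1$ and $v = y$ then shows $g$ scales $e_3^*$ and $y$ by a common factor, i.e. acts on $\NullU$ by a scalar. So the image of $\mathrm{Stab}(V_2(open))$ in $M(e_1,e_3^*) \simeq \GL_2$ lies in the central $\GL_1$ acting by $e_1 \mapsto te_1$, $e_3^* \mapsto te_3^*$, and $\mathrm{Stab}(V_2(open)) \subseteq \GL_1 U(e_1,e_3^*)$. Conversely this $\GL_1$ fixes $\epsilon_1$ and scales $y$, hence stabilizes $V_2(open)$; and $u \in U(e_1,e_3^*)$ stabilizes $V_2(open)$ iff $u\epsilon_1 - \epsilon_1 \in Fy$, which — writing $u\epsilon_1 - \epsilon_1 = \tfrac12(a e_1 + b e_3^*)$ — is the single equation $a = b$. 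Matching $(a, b)$ with scalar multiples of the two short-root coordinates of $U(e_1,e_3^*)$, while the long-root generators act trivially on $\epsilon_1 - \epsilon_2$, this is precisely the defining relation of $U^0(e_1,e_3^*)$, and so $\mathrm{Stab}(V_2(open)) = \GL_1 U^0(e_1,e_3^*)$.

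The main obstacle is the root-theoretic bookkeeping inside the two $\GL_2$ Levi subgroups: the surjectivity of the homomorphism $U(e_3^*) \to Fe_1 + Fe_2$ in item (2), and, in item (3), determining which generators of $U(e_1,e_3^*)$ act nontrivially on $\epsilon_1$ so as to match the equation $a = b$ with the ``short root'' description of $U^0(e_1,e_3^*)$. Each is an explicit finite computation in the Zorn model of $\Theta$, and all of them already appear in \cite[Lemma 2.1]{jiang}, which one may alternatively simply cite.
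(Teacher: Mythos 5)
Your proposal is correct in outline, but it takes a genuinely different route from the paper: the paper's entire proof of this proposition is the single sentence that the stabilizers are ``essentially \cite[Lemma 2.1]{jiang}'', whereas you reconstruct the statement directly in the Zorn model. Your item (1) is exactly the intended reduction to Proposition \ref{prop:G2parab}(2) once one computes $e_1\cdot e_3^*=e_3^*\cdot e_1=0$. Your item (2) argument is sound: $\Ann_R(V_j(e_3^*))=V_{j+1}(e_3^*)$ together with $g\,\Ann_R(W)=\Ann_R(gW)$ and the uniqueness clause of the triality lemma gives the common stabilizer $S$; $V_3(e_3^*)\cap V_7=Fe_3^*$ gives $S\subseteq P(e_3^*)$; and the map $u\mapsto (Fe_1+Fe_2)$-component of $u\epsilon_2-\epsilon_2$ is indeed a homomorphism killing $[U(e_3^*),U(e_3^*)]=U(e_3^*)'$ (dimension $3$), so surjectivity pins down $S\cap U(e_3^*)=U(e_3^*)'$ (in characteristic $0$ the kernel is automatically connected). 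Your item (3) is likewise correct: the basis-independent line spanned by $x^*\cdot y$ is $Fe_3^*$, which together with $V_2(open)\cap V_7=Fy$ forces the stabilizer into $P(Fe_1+Fe_3^*)$, and the computation $g(\epsilon_1^*\cdot y)=(g\epsilon_1)^*\cdot(gy)$ correctly shows the Levi component acts by a scalar on $\Omega$. What each approach buys is clear: the citation is short but opaque, while your argument makes visible exactly which structural facts (conjugation-equivariance, preservation of $V_7$, annihilators, the weight grading under the two parabolics, and the fact that the long-root subgroups lie in the $\SL_3$ fixing $\epsilon_1,\epsilon_2$) drive the answer, at the cost of length.

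The only places where your write-up is not yet a complete proof are the two checks you yourself flag: the surjectivity of $U(e_3^*)\to Fe_1+Fe_2$ in item (2), and in item (3) the nondegeneracy of the functional $u\mapsto a(u)-b(u)$ (equivalently, that it genuinely involves both short-root coordinates, so that $U^0(e_1,e_3^*)$ has codimension one in $U(e_1,e_3^*)$). These are not pure bookkeeping --- if either map were trivial the stabilizers would be strictly larger --- but they are finite Zorn-model computations, and in item (2) you can avoid computation altogether: the map factors through the irreducible two-dimensional $M(e_3^*)$-module $U(e_3^*)/U(e_3^*)'$ and is $M(e_3^*)$-equivariant, so it is either zero or surjective, and it cannot be zero since otherwise $P(e_3^*)$ would stabilize the non-null plane $Fe_1+Fe_2$, whose $\G_2$-orbit is the $7$-dimensional open orbit in the isotropic Grassmannian rather than a $5$-dimensional flag variety. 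With those two points closed (or simply outsourced to \cite[Lemma 2.1]{jiang}, as the paper does), your argument is a valid substitute for the citation.
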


\begin{proof}
That these are the stabilizers is essentially \cite[Lemma 2.1]{jiang}.
\end{proof}

Putting it all together, we have the following proposition.
\begin{proposition}\label{prop:G2Norbits} The group $\G_2 N$ acts on the flag variety $\calV$ with $17$ orbits.  The $17$ orbits are as follows:
\begin{enumerate}
\item Five $(2,2,2)$ orbits, with representatives given by the isotropic two-space $V_1$ equal to each of the five isotropic two-spaces in Lemma \ref{G2D4orbits}.
\item Six right $(4,1,1)$ orbits, with representatives given by the isotropic line $\ell$ in each of the coordinate spaces $J(a_1), J(a_2)$ and $J(a_3)$, and $\ell$ equal to each of the two isotropic lines in Proposition \ref{prop:G2lines}.
\item Six left $(4,1,1)$ orbits, with representatives given by the isotropic line $\ell$ in each of the coordinate spaces $J(a_1), J(a_2)$ and $J(a_3)$, and $\ell$ equal to each of the two isotropic lines in Proposition \ref{prop:G2lines}.
\end{enumerate}
Denote by $V$ one of the totally singular spaces above, and $H_V$ its stabilizer in $H = \G_2N = H_0 N$.  Then $H_V = (H_V \cap \G_2)(H_V \cap N)$, and the stabilizers $H_V \cap \G_2 = H_V \cap H_0$ are given by Proposition \ref{prop:G2Ustab} in the case of the case of the five $(2,2,2)$ orbits, and given by $P(v)$ and $\SL_3$ in the case of the $(4,1,1)$ orbits.
\end{proposition}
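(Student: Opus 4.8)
The plan is to assemble the statement from the pieces already in hand. We have, by Lemma~\ref{lem:Bruhat} and the subsequent discussion, a complete list of the seven $Q$-orbit representatives \eqref{item222}--\eqref{item411L3} on $\calV$, each chosen so that the stabilizer in $Q$ decomposes compatibly with the Levi--unipotent factorization $Q = LN$. Since $H = \G_2 N$ with $\G_2 \subseteq L$, and since $N \subseteq Q$, every $H$-orbit on $\calV$ lies inside a single $Q$-orbit; so the $H$-orbits refine the $Q$-orbits. Within a given $Q$-orbit, the subgroup $N$ acts trivially on the relevant ``moduli'' (the isotropic two-space $V_i$ in the $(2,2,2)$ case, or the isotropic line $\ell$ and its placement among $J(a_1), J(a_2), J(a_3)$ in the $(4,1,1)$ cases), because by Lemma~\ref{HeisTransform} $N$ only modifies the $c_i$-entries and the ``off-diagonal'' corrections built from $x,y,z$, while it preserves the coordinate subspaces up to the filtration data that already pins down the $Q$-orbit. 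Hence the $H$-orbit inside each $Q$-orbit is governed precisely by the $\G_2$-action on this moduli datum.

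First I would treat the $(2,2,2)$ orbit \eqref{item222}. Here $L$ (and a fortiori $\G_2 \subseteq L$) permutes the triple $(V_1,V_2,V_3)$ of isotropic two-spaces, and by the triality lemma for $\Spin_8$ the whole triple is determined by any one of its members, say $V_1$; moreover the $\Spin_8$-action (hence any of the three $\SO(\Theta)$-projections) is transitive on isotropic two-spaces, so the $L$-orbit is single. Restricting to $\G_2$, Lemma~\ref{G2D4orbits} says there are exactly five $\G_2$-orbits on isotropic two-dimensional subspaces of $\Theta$, giving five $H$-orbits, with representatives the five spaces listed there placed in the $V_1$-slot. Second, I would treat the right $(4,1,1)$ orbits \eqref{item411R1}--\eqref{item411R3} and symmetrically the left ones \eqref{item411L1}--\eqref{item411L3}. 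Each such orbit is pinned down by an isotropic line $\ell \subseteq \Theta$ sitting in one of the three coordinate spaces $J(a_1), J(a_2), J(a_3)$; the Levi $L$ acts transitively on isotropic lines in each slot, but $\G_2$ (acting diagonally) does not, and by Proposition~\ref{prop:G2lines} has exactly two orbits on isotropic lines in $\Theta$ (traceless or not). This yields $3 \times 2 = 6$ orbits on the right side and $6$ on the left, for a total of $5 + 6 + 6 = 17$.

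For the stabilizer assertion, fix a representative $V$ as above and let $H_V = \{h \in H : Vh = V\}$. I would argue the semidirect decomposition $H_V = (H_V \cap \G_2)(H_V \cap N)$ as follows. Given $h = g n$ with $g \in \G_2$, $n \in N$ fixing $V$, project to the moduli datum: since $n$ acts trivially on that datum and $g$ must therefore fix it, $g$ lies in the $\G_2$-stabilizer of the datum, which by Proposition~\ref{prop:G2Ustab} (for the $(2,2,2)$ case) or Proposition~\ref{prop:G2lines} (for the $(4,1,1)$ case, giving $P(v)$ for the traceless line and $\SL_3$ for the other) is a known subgroup of $\G_2$. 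One then checks that this $g$ by itself already stabilizes the full space $V$ (not just the modulus) --- this is immediate for the representatives, since $\G_2$ acts diagonally on the $J(a_i)$ and trivially on the $J(c_i)$, so it carries $\Ann_R(\ell), \Ann_L(\ell)$ and the $*$-blocks of the representatives to themselves once it fixes $\ell$ (resp.\ the $V_i$). Consequently $n = g^{-1}h \in H_V \cap N$ as well, proving the factorization, and identifying $H_V \cap \G_2$ with the stated subgroup.

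The main obstacle is the bookkeeping in the $(2,2,2)$ case: one must verify that once $g \in \G_2$ fixes the two-space $V_1$, it automatically fixes the entire six-dimensional space $V$ of \eqref{item222} --- i.e.\ that the partners $V_2 = \Ann_R(V_1)$ and $V_3$, together with the $*$-positions, are themselves $\G_2$-stable. This follows because $\G_2$ embeds in $\Spin_8$ via the diagonal, $\Spin_8$ preserves the relations $V_i \cdot V_{i+1} = 0$ by part (3) of the $\Spin_8$-orbit lemma, and the triple is reconstructed uniquely from $V_1$; but making this precise, and checking in parallel that the $N$-part genuinely exhausts $H_V \cap N$ rather than a proper subgroup, requires carefully tracking the coordinate transformations of Lemma~\ref{HeisTransform} against the block shape of each representative. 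The $(4,1,1)$ cases are analogous but easier, since the modulus is a line and the reconstruction of $\Ann_R(\ell)$ or $\Ann_L(\ell)$ from $\ell$ is transparent.
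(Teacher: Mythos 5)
Your proposal is correct and follows essentially the same route as the paper: the $H$-orbits are obtained by refining the seven $Q$-orbits, with $N$ acting trivially on the filtration-graded moduli so that the count reduces to the five $\G_2$-orbits on isotropic two-spaces (Lemma \ref{G2D4orbits}) and the two $\G_2$-orbits on isotropic lines (Proposition \ref{prop:G2lines}) in the three slots, giving $5+6+6=17$; and the factorization $H_V=(H_V\cap\G_2)(H_V\cap N)$ follows, exactly as in the paper, from the fact that the representatives are direct sums of their intersections with the coordinate spaces $J(c_i)$, $J(a_i)$, so that the $\G_2$-part of any stabilizing element must fix the moduli and hence $V$ itself. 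Your worry about the $N$-part "exhausting" $H_V\cap N$ is unnecessary — once $g\in H_V\cap\G_2$, the element $n=g^{-1}h$ automatically lies in $H_V\cap N$, which is all the proposition asserts.
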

\begin{proof} The only thing that we have not yet proved is that $H_V = (H_V \cap \G_2)(H_V \cap N)$ for the above spaces $V$.  But this is clear by considering the fact that the $V$ above are a direct sum of their intersections with the coordinate spaces $J(c_i)$, $J(a_i)$, $i = 1,2,3$.\end{proof}

\subsection{$N$-stabilizers of totally singular spaces}

We will require the following lemma.
\begin{lemma}\label{lem:perp*} Suppose $V_1, V_2, V_3$ are a triple of $(2,2,2)$ spaces with $V_j \cdot V_{j+1} = 0$.  Then $V_1 \cdot V_2^\perp \subseteq V_3^*$ and $V_2^\perp \cdot V_3 \subseteq V_1^*$ (and cyclic permutations of these).\end{lemma}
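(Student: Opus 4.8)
The statement is a purely octonionic identity about a triality triple $(V_1,V_2,V_3)$ of two-dimensional "$(2,2,2)$" subspaces, meaning each $V_i$ is two-dimensional isotropic and $V_j\cdot V_{j+1}=0$ for $j\in\Z/3$. I want to show $V_1\cdot V_2^\perp\subseteq V_3^*$, and then by the cyclic symmetry of the triple the other inclusions follow formally. The natural tool is $\Spin_8$-triality: by Lemma~\ref{G2D4orbits} and the $\Spin_8$-orbit lemma stated just above, the group $\Spin_8\subseteq L$ acts transitively (through each of its three projections to $\SO(\Theta)$) on two-dimensional isotropic subspaces, and carries triples $(V_1,V_2,V_3)$ with $V_j\cdot V_{j+1}=0$ to triples of the same kind. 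Moreover $\Spin_8$ preserves the bilinear form $(\,,\,)$ on each copy of $\Theta$, hence commutes with $\perp$, and it intertwines conjugation $x\mapsto x^*$ appropriately across the three copies. So it suffices to verify the inclusion for one convenient representative triple and then transport it.

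\medskip

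First I would fix the standard representative. Take $V_1, V_2, V_3$ to be the three "coordinate" isotropic two-spaces built from the Zorn model, e.g. $V_1 = Fe_1+Fe_2$, and then $V_2 = \Ann_R(V_1)$, $V_3$ the unique two-space making the cycle close (as in item (3) of the triality avatar lemma). Using $e_1\wedge e_2\mapsto e_3^*$ and the multiplication rule for the Zorn model, one computes $\Ann_R(Fe_1+Fe_2)$ explicitly: $e_j\cdot$ kills exactly the elements $\mm{a}{v}{\phi}{d}$ with the appropriate entries vanishing, and one finds $V_2 = Fe_3 + F(\epsilon_1-\epsilon_2)$ or a similar concrete two-space, with $V_3$ then $Fe_1^*+Fe_2^*$ (up to relabeling). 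These are all direct sums of coordinate lines in the Zorn model, so computing $V_2^\perp$ is immediate from the explicit bilinear form $(x,y)=n(x+y)-n(x)-n(y)$: it pairs $\epsilon_1$ with $\epsilon_2$, $e_i$ with $e_i^*$, and everything else trivially, so $V_2^\perp$ is again a coordinate subspace (of dimension $6$). Then $V_1\cdot V_2^\perp$ is computed term by term from the multiplication table, and one checks every product lands in $V_3^* = \{x^*: x\in V_3\}$, which is again an explicit coordinate subspace. Similarly $V_2^\perp\cdot V_3\subseteq V_1^*$ on the same representative.

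\medskip

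With the identity verified on one triple, the transport argument runs as follows. Given an arbitrary $(2,2,2)$ triple $(V_1,V_2,V_3)$, pick $g=(g_1,g_2,g_3)\in\Spin_8$ with $g_i$ sending the standard $V_i$ to the given $V_i$ (possible by the transitivity in the orbit lemma, first choosing $g_1$, then adjusting within the fiber so that the induced images of $V_2$ and $V_3$ match — here I need the precise compatibility of the three projections, which is exactly the content of item (3) of the $\Spin_8$ lemma and of the definition of $\Spin_8$ via $\tr((g_1x_1)(g_2x_2)(g_3x_3))=\tr(x_1x_2x_3)$). Since each $g_i\in\SO(\Theta)$ preserves $(\,,\,)$, we get $g_2(V_2^\perp)=(g_2V_2)^\perp=V_2^\perp$ as transported; and the defining trace identity together with nondegeneracy of $(\,,\,)$ forces the relation $g_3(x\cdot y)$ expressed via $g_1,g_2$ and conjugation to match $(g_1x)\cdot(g_2y)$ in the correct copy, so products and the $*$-operation transport correctly. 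Applying $g$ to the standard inclusion $V_1^{\mathrm{std}}\cdot (V_2^{\mathrm{std}})^\perp\subseteq (V_3^{\mathrm{std}})^*$ yields $V_1\cdot V_2^\perp\subseteq V_3^*$, and cyclic permutation of the indices (which permutes the three projections of $\Spin_8$) gives the remaining inclusions.

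\medskip

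The main obstacle, and the only place requiring genuine care rather than bookkeeping, is pinning down the exact way the three $\SO(\Theta)$-factors of $\Spin_8$ interact with the non-associative multiplication and with conjugation — i.e., getting the "correct copy" statements $g_i(x\cdot y) = (g_{i-1}x)\cdot(g_{i+1}y)$ (in the right indexing, possibly with $*$'s) out of the defining identity $\tr((g_1x_1)(g_2x_2)(g_3x_3))=\tr(x_1x_2x_3)$. This is standard triality but easy to get off by a shift or a conjugate; I would either cite the relevant formulas from \cite{springerVeldkampBook} or derive them cleanly from the nondegeneracy of the trace form, and then the rest is the explicit Zorn-model computation on the single representative, which is routine.
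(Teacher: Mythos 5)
Your argument is correct in substance, but it takes a different route from the paper. The paper disposes of the lemma by observing that the statement is $\G_2$-invariant (since $\G_2$ preserves the multiplication, the conjugation and the bilinear form, and a $(2,2,2)$ triple is determined by $V_1$ via $V_2=\Ann_R(V_1)$, $V_3=\Ann_R(V_2)$), and then checking the inclusion directly for each of the five orbit representatives of Lemma~\ref{G2D4orbits}; no triality input beyond $\G_2$ is used. You instead enlarge the symmetry group to $\Spin_8$, which acts transitively on isotropic two-spaces through each projection, so a single representative triple suffices --- but at the price of needing the precise local triality relation, namely $(g_1x)(g_2y)=\bigl(g_3((xy)^*)\bigr)^*$ for $(g_1,g_2,g_3)\in\Spin_8$, which does follow, as you anticipate, from the defining identity $\tr((g_1x_1)(g_2x_2)(g_3x_3))=\tr(x_1x_2x_3)$ together with nondegeneracy of the trace pairing and the fact that each $g_i$ preserves $(\,,\,)$; with this formula the transported inclusion lands in $V_3^*$ exactly as needed, so the conjugation bookkeeping you flag as the main obstacle works out. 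Two small corrections to your sketch: (i) no ``adjusting within the fiber'' is needed (or possible --- the fibers of $\Spin_8\to\SO(\Theta)$ are essentially central); once $g_1V_1^{\mathrm{std}}=V_1$, item (3) of the $\Spin_8$ lemma plus the uniqueness $V_2=\Ann_R(V_1)$, $V_3=\Ann_R(V_2)$ forces $g_2V_2^{\mathrm{std}}=V_2$ and $g_3V_3^{\mathrm{std}}=V_3$ automatically; (ii) your guessed representative is off: for $V_1=Fe_1+Fe_2$ one computes $V_2=\Ann_R(V_1)=F\epsilon_1+Fe_3^*$ (your $Fe_3+F(\epsilon_1-\epsilon_2)$ is not even isotropic) and $V_3=Fe_3^*+F\epsilon_2$; these are coordinate subspaces, so the one explicit check is indeed routine. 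In exchange for the extra triality formula, your approach buys a single verification instead of five; the paper's approach needs only $\G_2$-invariance but five case checks.
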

\begin{proof} This is surely well-known, so we don't give a detailed proof.  However, the reader wishing to check this themselves can simply check it for each of the five cases of Lemma \ref{G2D4orbits}, since the statement of the lemma is $\G_2$-invariant.\end{proof}

If $(x,y;z) \in N(\Theta)$, recall that we denote $n(x,y,z)$ the corresponding element in $\E_6$ as defined above.  Also recall that if $V$ is one of the totally singular six-dimensional spaces in $J$, $V(a_3) = V \cap J(a_3)$, $V(a_1) = V \cap J(a_1)$, etc.
\begin{lemma}\label{lem:222Stab} The conditions for $n(x,y,z)$ to stabilize one of the $(2,2,2)$ orbits are the following:
\begin{itemize}
\item $x \in V(a_3)^\perp$,
\item $y \in V(a_1)^\perp$,
\item $z^* \in V(a_2)^\perp$.
\end{itemize}
If $x \in V(a_3), y \in V(a_1)$ and $z^* \in V(a_2)$, then $n(x,y,z)$ acts trivially on the $(2,2,2)$ space.
\end{lemma}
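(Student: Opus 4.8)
The plan is to reduce everything to the explicit coordinate formulas of Lemma~\ref{HeisTransform}, using as the main organizational device the fact that $N(\Theta)$ is generated by the three abelian subgroups $\{n(x,0,0)\}$, $\{n(0,y,0)\}$ and $\{n(0,0,z)\}$; indeed, the group law on $N(\Theta)$ gives the factorization $n(x,y,z) = n(0,0,z)\,n(0,y,0)\,n(x,0,0)$. It therefore suffices to understand how each of these three families acts on a fixed $(2,2,2)$ space and then to assemble the pieces. Throughout I fix $V$ of the form~\eqref{item222}, so that $V(c_i) = 0$, $V(a_3) = V_3$, $V(a_1) = V_1$, $V(a_2) = V_2$, where the $V_i \subseteq \Theta$ are two-dimensional isotropic subspaces with $V_1 V_2 = V_2 V_3 = V_3 V_1 = 0$, and $V$ is the direct sum of its three coordinate pieces. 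A general element of $V$ thus has $c_i = 0$, $a_3 = u_3 \in V_3$, $a_1 = u_1 \in V_1$, $a_2 = u_2 \in V_2$. Since $n(x,y,z)$ is invertible and $V$ is finite dimensional, ``$n(x,y,z)$ stabilizes $V$'' is equivalent to ``$X \cdot n(x,y,z) \in V$ for every $X \in V$''.

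First I would analyze the three elementary factors. Substituting $c_i = 0$ into Lemma~\ref{HeisTransform} and specializing to $(x,0,0)$, $(0,y,0)$ and $(0,0,z)$ (where almost every term vanishes), one finds that $X \cdot n(x,0,0)$ differs from $X$ only in the coordinates $c_2 \mapsto (x,u_3)$ and $a_1 \mapsto u_1 + x^* u_2^*$; that $X \cdot n(0,y,0)$ differs only in $c_3 \mapsto (u_1,y)$ and $a_2 \mapsto u_2 + y^* u_3^*$; and that $X \cdot n(0,0,z)$ differs only in $c_3 \mapsto (u_2^*,z)$ and $a_1 \mapsto u_1 + u_3^* z$. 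Requiring the new $c_i$ to vanish for all choices of $u_j$ forces, respectively, $x \in V_3^\perp$, $y \in V_1^\perp$ and $z^* \in V_2^\perp$ (for the last, using that conjugation is an isometry of the norm form, so $z \perp V_2^*$ iff $z^* \in V_2^\perp$). I claim these conditions already guarantee that the modified $a_i$ land back in $V$: writing $x^* u_2^* = (u_2 x)^*$, $y^* u_3^* = (u_3 y)^*$ and $u_3^* z = (z^* u_3)^*$ (via $(ab)^* = b^* a^*$), the relevant cyclic forms of Lemma~\ref{lem:perp*} give $u_2 x \in V_2 V_3^\perp \subseteq V_1^*$, $u_3 y \in V_3 V_1^\perp \subseteq V_2^*$ and $z^* u_3 \in V_2^\perp V_3 \subseteq V_1^*$, so the three correction terms lie in $V_1$, $V_2$ and $V_1$ respectively. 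Moreover, when $x \in V_3$ (resp. $y \in V_1$, resp. $z^* \in V_2$) these same products lie in $V_2 V_3 = 0$ (resp. $V_3 V_1 = 0$, resp. $V_2 V_3 = 0$), so the corresponding factor restricts to the identity on $V$. This pins down each elementary family: $n(x,0,0)$ stabilizes $V$ iff $x \in V_3^\perp$ and acts trivially on $V$ when $x \in V_3$, and similarly for $n(0,y,0)$ and $n(0,0,z)$.

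Next I would assemble the pieces. For the ``if'' half of the characterization: if $x \in V_3^\perp$, $y \in V_1^\perp$ and $z^* \in V_2^\perp$ then each of $n(x,0,0)$, $n(0,y,0)$, $n(0,0,z)$ preserves $V$ setwise, hence so does their product $n(x,y,z)$. For the ``only if'' half: reading the full formulas of Lemma~\ref{HeisTransform} with $c_i = 0$, the element $X \cdot n(x,y,z)$ has $c_2$-coordinate $(x,u_3)$ and $c_3$-coordinate $(u_1,y) + (u_2^*,z) + \tr(z^* u_3 y)$; demanding that both vanish for all $u_j$ and specializing first $u_3 = 0$ and then $u_2 = 0$ forces $x \in V_3^\perp$, then $y \in V_1^\perp$, then $z^* \in V_2^\perp$, in that order. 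Finally, the last sentence of the lemma is immediate from the elementary analysis: if $x \in V_3 = V(a_3)$, $y \in V_1 = V(a_1)$ and $z^* \in V_2 = V(a_2)$, then each of the three factors is the identity on $V$, so $n(x,y,z)$ is too.

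The only real work is the octonion bookkeeping: reading off the coordinate transformations correctly from Lemma~\ref{HeisTransform}, handling the conjugation identities ($(ab)^* = b^* a^*$ and conjugation being an isometry of the norm form), and invoking the correct cyclic permutation of Lemma~\ref{lem:perp*} at each step. The one mildly delicate point is the trilinear term $\tr(z^* u_3 y)$ in the $c_3$-transformation, but it only enters the ``only if'' argument, where setting $u_3 = 0$ removes it entirely; it never appears in the factorized analysis, since in each of $n(0,y,0)$ and $n(0,0,z)$ one of $y,z$ is zero. Everything is uniform over the five $\G_2$-orbits of $(2,2,2)$ spaces, as the only facts used — isotropy of the $V_i$, the relations $V_j V_{j+1} = 0$, and Lemma~\ref{lem:perp*} — are all $\G_2$-invariant.
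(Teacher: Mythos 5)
Your proof is correct and uses the same two ingredients as the paper's argument — the coordinate formulas of Lemma \ref{HeisTransform} together with Lemma \ref{lem:perp*} — the only difference being that you organize the "immediate check" via the factorization $n(x,y;z)=n(0,0;z)n(0,y;0)n(x,0;0)$, which conveniently absorbs the cross term $x^*(a_3y)$ and the trilinear term $\tr(z^*a_3y)$. This is essentially the paper's proof, written out in full detail.
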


\begin{proof}
One checks immediately that if $x \in V(a_3),y\in V(a_1)^{\perp}$ and $z^* \in V(a_2)^\perp$, then $n(x,y,z)$ acts as $1$ on the $(2,2,2)$-space.  Also, one sees quickly that for $n(x,y,z)$ to stabilize a $(2,2,2)$-space, it is necessary that $x \in V(a_3)^\perp, y \in V(a_1)$ and $z^* \in V(a_2)$.  But now by Lemma \ref{lem:perp*}, these ``perp'' conditions are also sufficient.  This completes the proof.
\end{proof}

We now consider the $N$ stabilizers for the $(4,1,1)$-orbits.
\begin{proposition}\label{prop:411Stab} Suppose $V$ is one of the $(4,1,1)$-orbits.
\begin{enumerate}
\item Suppose $V$ is one of the orbits with $c_1 = c_2 = a_3 = 0$.  Then $n(x,y,z)$ stabilizes $V$ if and only if $V(a_2) \cdot x \subseteq V(a_1)^*$, and acts trivially on $V$ if and only if $V(a_2) \cdot x = 0$, $(V(a_1),y) = 0$, and $(V(a_2), z^*) = 0$.
\item Suppose $V$ is one of the orbits with $c_1 = c_3 = a_2 = 0$.  Then $n(x,y,z)$ stabilizers $V$ if and only if $y \in V(a_1)$ and $V(a_3)^* z \subseteq V(a_1)$.  The element $n(x,y,z)$ acts trivially on $V$ if and only if $x \in V(a_3)^\perp$, $y = 0$, and $z^* V(a_3) = 0$.
\item Suppose $V$ is one of the orbits with $c_2 = c_3 = a_1 = 0$.  Then $n(x,y,z)$ stabilizes $V$ if and only if $x \in V(a_3)$, $z^* \in V(a_2)$ and $V(a_3) \cdot y \subseteq V(a_2)^*$.  Such a $n(x,y,z)$ acts as the identity on $V$ if and only if $z= x= 0$ and $V(a_3) \cdot y = 0$.
\end{enumerate}
\end{proposition}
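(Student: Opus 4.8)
The plan is to compute everything from the coordinate transformation rules of Lemma~\ref{HeisTransform} together with a short list of elementary consequences of total singularity. By Proposition~\ref{prop:G2Norbits} we may take $V$ to be one of the explicit $(4,1,1)$ representatives listed there; each such $V$ is the direct sum of its intersections $V(a_i)=V\cap J(a_i)$ and $V(c_i)=V\cap J(c_i)$ with the coordinate subspaces, and the three parts of the proposition correspond precisely to which two of the $c_i$ and which single $a_j$ are zero on $V$. So I would fix a part, say part (2), where $c_1=c_3=a_2=0$ on $V$; a general $X\in V$ then has coordinates $(0,c_2,0;0,a_1,a_3)$ with $c_2\in F$ free, $a_1\in V(a_1)$ and $a_3\in V(a_3)$.

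First I would substitute these vanishing coordinates into the six rules of Lemma~\ref{HeisTransform}, obtaining the coordinates of $X\cdot n(x,y,z)$ as explicit (and genuinely non-associative) octonion expressions in $x,y,z$ and $c_2,a_1,a_3$. The requirement that $n(x,y,z)$ stabilize $V$ is then that, for all admissible $(c_2,a_1,a_3)$, the transformed coordinates in the vanishing slots vanish and the transformed $a_i$-coordinates lie in $V(a_i)$; the transformed free $c$-coordinate lands automatically in $F$ and so is unconstrained. Separating these identities by the independent parameters (the free $c$-coordinate and arbitrary elements of the two occurring $V(a_i)$'s) produces a finite list of linear conditions on $x,y,z$. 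For the ``acts trivially'' assertions one instead demands that every transformed coordinate equal the original; this is strictly stronger and, after the same separation, gives a shorter list.

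The list obtained this way is longer than the clean statement of the proposition, and the remaining work is to remove the redundant relations. Here I would use that $V$ is totally singular: evaluating $v^\#$ on the coordinate pieces of $V$ and $v_1\times v_2$ on pairs of pieces, and reading off the explicit formula for $\#$, shows that every $V(a_i)$ that occurs is an isotropic subspace of $\Theta$ and that the relevant product $V(a_i)\cdot V(a_j)$ vanishes (in part (2): $V(a_1),V(a_3)$ isotropic and $V(a_3)\cdot V(a_1)=0$; in part (1): $V(a_1),V(a_2)$ isotropic and $V(a_1)\cdot V(a_2)=0$; in part (3): $V(a_2),V(a_3)$ isotropic and $V(a_2)\cdot V(a_3)=0$). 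Isotropy kills the $n(\cdot)$ and $(\,\cdot\,,\,\cdot\,)$ terms arising from the transformation of the non-free $c$-coordinate; the vanishing of the relevant product, together with the trace identity $\tr(x_1(x_2x_3))=\tr((x_1x_2)x_3)$, kills the trace term in the $c_3$-rule and forces the non-associative terms of the shape $x^*(a_3 y)$ in the $a_1$-rule to vanish. What remains is exactly the stated condition, and the same two facts show conversely that the stated conditions are sufficient, closing each ``if and only if''. (Part (1) requires no such cleanup, as one checks directly.)

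I expect the only genuine difficulty to be bookkeeping. There are three cases, each with its own pattern of vanishing coordinates; the transformation rules involve honestly non-associative products, so one must neither reassociate nor silently drop a term; and one must keep straight which summand depends on which free parameter. No individual step is deep — the whole argument is organized around the two inputs, Lemma~\ref{HeisTransform} and the total-singularity relations $V(a_i)\cdot V(a_j)=0$ — but the computation is error-prone, and most of the care goes into not mislabeling coordinates.
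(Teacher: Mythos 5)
Your proposal is correct and follows essentially the same route as the paper: a case-by-case coordinate computation from Lemma~\ref{HeisTransform}, imposing that each transformed coordinate land in (resp.\ equal) the corresponding piece $V(a_i)$, $V(c_i)$, and then discarding the redundant conditions using the structural relations among the pieces (the paper phrases these via the explicit $\ell$, $\Ann_R(\ell)$, $\Ann_L(\ell)$ descriptions of the representatives, you via isotropy and $V(a_i)\cdot V(a_j)=0$ from total singularity, which amounts to the same thing). No gap.
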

\begin{proof} The proof of the first item is immediate from the formulas for the $N(\Theta)$ action.  For the second item, to see that $y \in V(a_1)$, look at how the $a_1$-coordinate in $V$ changes.  Since $y \in V(a_1)$, it follows that $V(a_3) y = 0$ automatically, and the rest of the stabilizer claim follows immediately.  The conditions for $n(x,y,z)$ to act as the identity are similarly immediate.

Finally, consider the third item, and the claim about the stabilizer.  Looking at the $a_2$-coordinate, one sees right away that $z^* \in V(a_2)$ and $V(a_3) y \subseteq V(a_2)^*$.  Looking at the $a_1$-coordinate, one sees $V(a_2) x = 0$, which happens if and only if $x \in V(a_3)$.  Now with these conditions, one checks easily that $n(x,y,z)$ does indeed stabilize $V$.

For $n(x,y,z)$ to act as the identity on $V$, one sees that $z=0$ by looking at the $a_2$-coordinate, and that $x=0$ by looking at the $a_3$-coordinate.  By looking again at the $a_2$-coordinate, one concludes that $V(a_3)\cdot y = 0$.  When these conditions are satisfied, it follows immediately that $n(x,y,z)$ acts trivially on $V$. This concludes the proof.
\end{proof}

\section{Eisenstein series and related objects}\label{sec:Eis}
\subsection{General notations}
For the rest part of this paper, $F$ is a number field, $\bar{F}$ is the algebraic closure of $F$, $\BA=\BA_F$ be the ring of adeles.

Let $G$ be a connected reductive algebraic group over $F$, $X(G)$ be the group of rational characters of $G$. We fix a maximal $F$-split torus $A_{0}$ of $G$. Let $P_{0}$ be a minimal parabolic subgroup of $G$ defined over $F$ containing $A_0$, $M_{0}$ be the Levi part of $P_{0}$ containing $A_{0}$ and $U_{0}$ be the unipotent radical of $P_{0}$. Let $\calF(P_{0})$ be the set of parabolic subgroups of $G$ containing $P_{0}$. Elements in $\calF(P_0)$ are called standard parabolic subgroups of $G$. We also use $\calF(A_0)$ to denote the set of parabolic subgroups of $G$ containing $A_0$ (these are the semi-standard parabolic subgroups).

For $P \in \calF(P_{0})$, we have the Levi decomposition $P = MU$ with $U$ be the unipotent radical of $P$ and $M$ be the Levi subgroup containing $A_{0}$. We use $A_{P} \sbs A_{0}$ to denote the maximal $F$-split torus of the center of $M$. Put
\[
 \ago_{0}^{*} = X(A_{0}) \otimes_{\Z}\R = X(M_{0}) \otimes_{\Z}\R
\]
and let $\ago_{0}$ be its dual vector space. Let $\Delta_{0} \sbs \ago_{0}^{*}$ be the set of simple roots
of $A_{0}$ acting on $U_{0}$. The subsets of $\Delta_{0}$ are in a natural bijection with $\calF(P_{0})$.
For $P\in \calF(P_0)$, we use $\Delta_{0}^{P}$ to denote the subset of $\Delta_0$ corresponding to $P$. In particular, we have $\Delta_{0}^{G} = \Delta_{0}$. Set $\ago_{P}$ to be the kernel of $\Delta_{0}^{P}$. If $P = P_{0}$, we write $\ago_{0} = \ago_{P_{0}}$ and similarly in other contexts.

The inclusions $A_{P} \sbs A_{0}$ and $M_{0} \sbs M$ identify $\ago_{P}$
as a direct factor of $\ago_{0}$, we use $\ago_{0}^{P}$ to denote its complement.
Similarly, $\ago_{P}^{*} = X(A_{P}) \otimes_{\Z}\R$ is a direct factor of $\ago_{0}^{*}$
and we use $\ago_{0}^{P,*}$ to denote its complement. The space $\ago_{0}^{P,*}$
is generated by $\Delta_{0}^{P}$.

Let $\Delta_{0}^{\vee} \sbs \ago_{0}^{G}$ be the set of simple coroots given by the theory
of root systems. For $\al \in \Delta_{0}$ we denote $\al^{\vee} \in \Delta_{0}^{\vee}$
the corresponding coroot. We define $\hDelta_{0} \sbs \ago_{0}^{G, *}$ to be the dual basis of
$\Delta_{0}^{\vee}$, i.e. the set of weights. In particular, we get a natural bijection
between $\Delta_{0}$ and $\hDelta_{0}$ which we denote by $\al \mapsto \varpi_{\al}$.
Let $\hDelta_{P} \sbs \hDelta_{0}$  be the set corresponding to $\Delta_{0} \smin \Delta_{0}^{P}$.

For any subgroup $H \sbs G$ let $H(\A)^{1}$ denote the common kernel of all continuous characters of $H(\A)$ into $\R_{+}^{*}$. Fix $K$ a maximal compact subgroup of $G(\A)$ adapted to $M_{0}$. We define the Harish-Chandra homomorphism $H_{P} : G(\A) \to \ago_{P}$ via the relation
\[
 \langle \chi, H_{P}(x) \rangle = |\chi(p)|_{\A}, \quad \forall \chi \in X(P) = \mathrm{Hom}(P,\mathbb{G}_m)
\]
where $x = pk$ is the Iwasawa decomposition $G(\A) = P(\A)K$ and $|\cdot |_{\A}$ is the absolute vaule on the ideles of $\A$. Let $A_{P}^{\infty}$ be the connected component of $\mathrm{Res}_{F/\Q}A_{P}(\R)$. Then $M(\A)^{1}$ is the kernel of $H_{P}$ restricted to $M(\A)$ and we have the direct product decomposition of commuting subgroups $M(\A) = A_{P}^{\infty} M(\A)^{1}$.

For any group $H$ we use $[H]$ to denote $H(F)\back H(\BA)$. Moreover, if $H$ is reductive, we use (assuming the compact subgroup of $H(\A)$ is clear from the context) $[H]^1$ to denote $H(F) \back H(\A)^{1}$.

\subsection{Haar measures}
We fix compatible Haar measure on $G(\BA)$, $G(\BA)^1$ and $A_{G}^{\infty}$. For all unipotent subgroups $U$ of $G$, we fix a Haar measure on $U(\A)$ so that $[U]$ is of volume one. On $K$ we also fix a Haar measure of volume $1$. For any $P\in \calF(A_0)$, let $\Delta_{P}^{\vee}$ be the set projections of $\{\al^{\vee}\}_{\al \in \Delta_{0} \smin \Delta_{0}^{P}}$ onto $\ago_{P}$. Then $\Delta_{P}^{\vee}$ is a basis of $\ago_{P}/\ago_{G}$. We use this basis to define a Haar measure on $\ago_{P}/\ago_{G}$. This choice induces a unique Haar measure on $A_{P}^{\infty}/A_{G}^{\infty}$ such that
\begin{equation}\label{eq:realInt}
 \int_{A_{P}^{\infty}/A_{G}^{\infty}} f(H_{P}(a)) \, da =  \int_{\ago_{P}/\ago_{G}} f(H) \, dH,
 \quad f \in C_{c}^{\infty}(\ago_{P}/\ago_{G}).
\end{equation}
Together with the measure on $A_{G}^{\infty}$, we get a Haar measure on $A_{P}^{\infty}$.

Let $\rho_{P} \in \ago_{P}^{*}$ be the half sum of the weights of the action of $A_{P}$ on $N_{P}$. The above choices induce a unique Haar measure on $M_P(\BA)^1$ such that
\[
 \int_{P(F) \bsl H(\A)}f(h) \, dh =
 \int_{K}\int_{[M]^{1}}   \int_{A_{P}^{\infty}} \int_{[U]}
 e^{\langle -2\rho_{P}, H_{P}(a) \rangle}
 f(uamk) \, du da dm dk
\]
for $f \in C_{c}^{\infty}(P(F) \bsl G(\A))$. We fix this measure on
$M(\A)^{1}$ as well.

\subsection{The computation of $\rho_P$ when $P$ is maximal}\label{ssec:rho}
Let $P \in \calF(P_{0})$ be a maximal parabolic subgroup corresponds to the simple root $\alpha$, i.e. $\{\al\} = \Delta_{0} \smin \Delta_{0}^{P}$.
Let $\varpi$ be the corresponding weight. We have $\rho_{P} \in \ago_{P}^{G,*}$. Since $P$ is maximal, $\ago_{P}^{G,*}$ is one dimensional. Hence there exists a constant $c \in \R$ such that $\rho_{P} = c \varpi$. We want to calculate this constant.

Write $\al^{\vee} = \underline{\al}^{\vee} + \sum_{\gamma \in \Delta_{0}^{P}} a_{\gamma^{\vee}} \gamma^{\vee}$
with respect to the direct sum decomposition $\ago_{0}^{G} = \ago_{P}^{G} \oplus \ago_{0}^{P}$.
It is known that $\rho_{0} = \rho_{P_0}=\sum_{\varpi \in \hDelta_{0}} \varpi$.
Hence
\[
 c = \langle \rho_{P}, \underline{\al}^{\vee} \rangle =
\langle \rho_0, \underline{\al}^{\vee} \rangle = 1 - \sum_{\gamma \in \Delta_{0}^{P}} a_{\gamma^{\vee}}.
\]
We are reduced to calculating the constants $a_{\gamma^{\vee}}$. Let $n = |\Delta_{0}|$ be the rank of $G$.
Let $C$ be the Cartan matrix of $G$, it is an $n \times n$ matrix with entries
$c_{\al, \beta^{\vee}} = \langle \al, \beta^{\vee} \rangle$ where $\al \in \Delta_{0}$
and $\beta^{\vee} \in \Delta_{0}^{\vee}$. Let $C_{\al}$ be the $(n-1) \times (n-1)$
matrix obtained from $C$ by removing the $\al$-row and $\al^{\vee}$-column
(the Cartan matrix of the root system of $M$).
If we denote $v_{\al} \in \R^{n-1}$ the column vector $(a_{\gamma^{\vee}})_{\gamma \in \Delta_{0}^{P}}$
and $w_{\al} \in \R^{n-1}$  the column vector
$(\langle \gamma, \al^{\vee} \rangle)_{\gamma \in \Delta_{0}^{P}}$, we clearly have
\[
 v_{\al} = C_{\al}^{-1}w_{\al}.
\]

We specialize to two cases that will be needed in later sections. \textbf{Let $G$ be the split, simply connected reductive group of type $\E_6$.} Let $\Delta_{0} = \{\al_{i}\}_{i \in \{1, \ldots, 6\}}$ be the set of simple roots whose Cartan matrix $C_{ij} = \langle \al_{i}, \al_{j}^{\vee} \rangle$ is of the form
\[
 C = \begin{pmatrix}
      2 & -1 & 0 & 0 & 0 & 0 \\
      -1 & 2 & -1 & 0 & 0 & 0 \\
      0 & -1 & 2 & -1 & 0 & -1 \\
      0 & 0 & -1 & 2 & -1 & 0 \\
      0 & 0 & 0 & -1 & 2 & 0 \\
      0 & 0 & -1 & 0 & 0 & 2
     \end{pmatrix}.
\]
Let $P$ be the standard parabolic subgroup corresponding to $\Delta_{0} \smin \{\al_{6}\}$.
Its Levi subgroup is then of type $A_{5}$.
We have that
\[
 C_{\al_{6}}^{-1} =
    \begin{pmatrix}
      5/6 & 2/3 & 1/2 & 1/3 & 1/6 \\
      2/3 & 4/3 & 1 & 2/3 & 1/3 \\
      1/2 & 1 & 3/2 & 1 & 1/2  \\
      1/3 & 2/3 & 1 & 4/3 & 2/3  \\
      1/6 & 1/3 & 1/2 & 2/3 & 5/6
     \end{pmatrix}, \quad
     w_{\al_{6}} =
         \begin{pmatrix}
      0\\
      0 \\
      -1 \\
      0 \\
      0
     \end{pmatrix}.
\]
Consequently, we get
\[
 \rho_{P} = \frac{11}{2} \varpi_{6}
\]
where $\varpi_{6} \in \hDelta_{0}$ corresponds to $\al_{6}$.

\begin{remark}
We know that $M \cong \{(\la, g) \in GL_{1} \times GL_{6}  \ | \ \la^{3} = \det g\}$.
Let $\la$ denote the character of $M$ corresponding to the projection on the first factor.
Let's fix this isomorphism so that $\la$ restricted to  $A_{P}$ acts via a positive powers on $U$.
Since $\E_6$ is simply connected, $\varpi_{6}$ is an indivisible character of $A_{0}$
and we conclude that $\la = \varpi_{6}$.
\end{remark}

\textbf{Let $G$ be the split reductive group of type $\G_2$.} Let $\Delta_{0} = \{\al_{1}, \al_{2}\}$ be the set of simple roots whose Cartan matrix $C_{ij} = \langle \al_{i}, \al_{j}^{\vee} \rangle$ is of the form
\[
 C = \begin{pmatrix}
      2 & -1 \\
      -3 & 2
     \end{pmatrix}.
\]
In particular, $\al_{2}$ is the long root. Let $P$ be the standard parabolic subgroup corresponding to $\Delta_{0} \smin \{\al_{2}\}$. Its Levi subgroup is of type $A_{1}$ and the unipotent radical is a two-step unipotent subgroup. Obviously $C_{\al_{2}}^{-1} = \frac{1}{2}$ and $w_{\al_{2}} = -1$ which imply that
\[
 \rho_{P} = \frac{3}{2} \varpi_{2}
\]
where $\varpi_{2} \in \hDelta_{0}$ corresponds to $\al_{2}$.

\begin{remark}
Note that $M \cong GL_{2}$
and we can fix this isomorphism so that the determinant character $\det$ acts via positive powers on $U$.
Through this identification we then have $\varpi_{2} = \det$.
\end{remark}

\subsection{Eisenstein series}\label{ssec:eis}
Let $P=MU$ be a parabolic subgroup of $G$. Given a cuspidal automorphic representation $\pi$ of $M(\A)$, let $\calA_{\pi}$ be the space of automorphic forms $\phi$ on $N(\A)M(F) \bsl G(\A)$ such that $M(\A)^1 \ni m \mapsto \phi(mg) \in L^{2}_{\pi}([M]^{1})$ for any $g \in G(\A)$, where $L^{2}_{\pi}([M]^{1})$ is the $\pi$-isotypic part of $L^{2}([M]^{1})$, and such that
\[
 \phi(ag) = e^{\langle \rho_{P}, H_{P}(a) \rangle} \phi(g), \quad \forall g \in G(\A),\;a\in A_{P}^{\infty}.
\]
Suppose that $P$ is a maximal parabolic subgroup. Let $\varpi \in \hDelta_{P}$ be the corresponding weight. We then define
\[
 E(g, \phi, s) = \sum_{\delta \in P(F) \bsl G(F)}
 \phi(\delta g)e^{\langle s\varpi, H_{P}(\delta g )\rangle}, \quad s \in \C, \
 g \in G(\A).
\]
The series converges absolutely for $s \gg 0$ and admits a meromorphic continuation to all $s\in \BC$.

Suppose moreover that $M$ is stable for the conjugation by the simple reflection in the Weyl group of $G$ corresponding to $P$. We have in this case the intertwining operator $M(s) : \calA_{\pi} \to \calA_{\pi}$ that satisfies $E(M(s)\phi, -s) = E(\phi, s)$ and
\[
E(g, \phi, s)_{P} = \phi(g)e^{\langle s \varpi, H_{P}(g) \rangle} +
 e^{\langle -s\varpi, H_{P}(g) \rangle }M(s)\phi(g) ,\quad g \in G(\A)
\]
where $E( \cdot , \phi, s)_{P}$ is the constant term of $E(\cdot , \phi, s)$ along $P$
\[
 E(g, \phi, s)_{P} :=  \int_{[U]}E(ug, \phi, s) \, du.
\]

When the Eisenstein series $E(g, \phi, s)$ has a pole at $s=s_0$, the intertwining operator also has a pole at $s=s_0$, we use $Res_{s=s_0}E(g, \phi, s)$ (resp. $Res_{s=s_0}M(s)$) to denote the residue of the Eisenstein series (resp. intertwining operator).
Recall that the Eisenstein series, their derivatives and residues
are of moderate growth. Moreover, for $s$ in the domain of holomorphy of $E(\phi, s)$
we have for all $X$ in the universal enveloping algebra of the complexification of the Lie algebra of $G$ a bound
\[
X \ast E(g, \phi, s) \le c(s) (\inf_{\gamma \in G(F)}\| \gamma x\|_{G})^{N}, \quad g \in G(\A)
\]
for some $N > 0$ and some locally bounded function $c$ on $\C$ where $\|x\|_{G}$ is a norm on $G(\A)$. This last fact allows for bounds uniform
in $s$ as long as it is confined to compact sets.

\subsection{Truncation operator}
We continue assuming that $P$ is maximal.
We identify the space $\ago_{P}^{G}$ with $\R$ so that
$T \in \R$ corresponds to an element whose pairing with $\varpi \in \ago_{P}^{*}$
is $T$. We will assume this isomorphism is measure preserving. Let $\htau_{P}$ be the characteristic function of
\[
 \{H \in \ago_{P} \ | \ \varpi(H) > 0 \ \forall \varpi \in \hDelta_{P}\}.
\]
Given a locally integrable function $F$ on $G(F) \bsl G(\A)$
we define its truncation as follows
\[
 \La^{T}F(g) = F(g) - \sum_{\delta \in P(F) \bsl G(F)}
 \htau_{P}(H_{P}(\delta g) - T) \int_{[U]}F(u\delta g) \, du, \quad g \in G(F) \bsl G(\A),
\]
where $T \in \R$ and the sum is actually finite.

\section{Computation of $\mathcal{P}_{\G_2} (\La^T E(\phi,s))$ and proof of Theorem \ref{main}}\label{sec:PG2}
In this section we compute formally the period $\mathcal{P}_{\G_2} (\La^T E(\phi,s))$ defined in section \ref{ssec:periodDef},
where $E(\phi,s)$ is an Eisenstein series
on $\E_6$ associated to the maximal parabolic subgroup
of type $A_5$ and a cuspidal representation of its Levi subgroup.
The computation is performed in sections \ref{ssec:eisSerExpl} through \ref{ssec:GRperiod} culminating in
Proposition \ref{prop:truncIdentity}.
This proposition is then used to prove Theorem \ref{main} in section \ref{ssec:proofMain}.
The results depend on Proposition \ref{prop:absConv} which will be proven in next section.

\subsection{The Eisenstein series}\label{ssec:eisSerExpl}
We first single out the parabolic $P$ that we use to define the Eisenstein series $E(\phi,s)$.  To do so, consider $\Theta$ as formed by the Cayley-Dickson construction out of $D = M_2(F)$ and $\gamma = 1$, in the notation of section \ref{subsubsec:CD}.  Then, define $\NullU \subseteq \Theta$ as the two-dimensional subspace consisting of elements $(x,y) = (0,\mm{*}{*}{0}{0})$.  It is clear that $\NullU \subseteq V_7$, and one checks immediately that $\NullU$ is null.  Thus, the six-dimensional space $V(\NullU) \subseteq J$, see \eqref{eqn:V(U)}, is totally singular.  We define $P = P(V(\NullU))$ to be its stabilizer inside $\E_6$.  From Proposition \ref{prop:A5parab}, $P$ has reductive quotient of type $A_5$.

We now explicitly describe a Levi subgroup $M$ of $P$. Set $D = M_2(F)$, so that $\Theta = D \oplus D$ as in the Cayley-Dickson construction.  To describe the Levi subgroup $M$ of $P$, it is convenient to write $J = H_3(\Theta)$ as a direct sum of two pieces, which corresponds to the direct sum $\Theta = D \oplus D$.  Namely, $J = H_3(D) \oplus D^3$, with this $D^3 = M_{2,6}(F)$ considered as $1 \times 3$ row vectors in $D$, or $2 \times 6$ matrices over $F$.  In this decomposition, if $(X,v) \in H_3(D) \oplus D^3$, then one finds $n((X,v)) = n(X) + vXv^*$.  Here the notation is as follows.  The $n$ on the left-hand side is the norm on $J$, the $n$ on the right-hand side is the norm cubic norm on $H_3(D)$ (given by the same formula as in \eqref{eqn:nX}, and $v^*$ is the column vector in $D^3$ given by applying transpose-conjugate to $v$.  Thus $vXv^*$ is an element of $D$ fixed by the conjugation $*$, so it is in $F$.

In this decomposition, the six-dimensional subspace $V(\NullU)$ becomes the set of $(X,v) = (0, v)$, $v \in M_{2,6}(F)$, where the bottom row of $v$ is $0$.  Now, recall from Proposition \ref{prop:A5parab} that $M \simeq \{ (\lambda, g) \in \GL_1 \times \GL_6: \lambda^3 = \det(g)\}$.  We let $M$ act on $J$ via
\begin{equation}\label{eqn:MactJ}(X, v) \mapsto \left(\lambda^{-1} g^* X g, \mm{1}{}{}{\lambda} v (\,^*g^{-1})\right).\end{equation}
Here $\,^*g$ is the transpose conjugate of $g$ considered as an element of $M_3(D) = M_6(F)$.  Note that this action of $M$ on $J$ preserves $V(\NullU)$, and one checks immediately that this action preserves the norm.  Thus $M \subseteq P \subseteq \E_6$, and we use this $M$ as a Levi subgroup of $P$. We have a natural homomorphism from $M$ to $\GL_6$ given by $(\lambda,g)\mapsto g$. Under this normalization, the modular character is given by $\delta_{P}((\lambda,g)) = |\lambda|^{11}$.

\subsection{First step}
Given a cuspidal automorphic representations $\pi$ of $\GL_6(\BA)$, we can view it as a cuspidal automorphic representation of $M(\BA)$ via the natural homomorphism $M\rightarrow \GL_6$ described above. For $\phi\in \calA_{\pi}$ and $s\in \BC$ let $E(\phi, s)$ be the corresponding
Eisenstein series on $\E_6$.

\begin{lemma}\label{lem:cvgLaTE}
For all $\phi \in \calA_{\pi}$ and $s\in \BC$
the integral defining the period $\mathcal{P}_{\G_2}(\La^{T}E(\phi,s))$ converges absolutely
and uniformly for $s$ in a compact subset of the domain of holomorphy of $E(\phi,s)$.
\end{lemma}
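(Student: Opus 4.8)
The plan is to reduce the convergence of $\mathcal{P}_{\G_2}(\La^{T}E(\phi,s))$ to a statement about the rapid decay of the truncated Eisenstein series $\La^T E(\phi,s)$ along the directions relevant to the non-reductive group $H = \G_2 N$, and then to exploit the quasi-affineness of the relevant quotient in the spirit of the appendix of \cite{B17}. First I would recall that $\La^T E(\phi,s)$ is, for $s$ in the domain of holomorphy, an automorphic form of uniform moderate growth in $s$ on compacta (from the bounds quoted at the end of section \ref{ssec:eis}) which is moreover rapidly decreasing on $[\E_6]^1$ in the sense that for every $N$ one has $|\La^T E(g,\phi,s)| \ll_{N} \|g\|_{\E_6}^{-N}$ after translating $g$ into a Siegel set; this is the standard Arthur property of the truncation operator applied to Eisenstein series, and it is uniform in $s$ on compact subsets of the holomorphy domain because the constant-term data is. The integral $\mathcal{P}_{\G_2}$ is an integral over $[N] \times [\G_2]$; since $[N]$ is compact the only issue is the non-compact integral over $[\G_2]$, and there one is integrating a function bounded by $\|g\|_{\E_6}^{-N}$ over $\G_2(F)\backslash\G_2(\A)$ embedded (through $N\rtimes\G_2 = H \hookrightarrow Q \hookrightarrow \E_6$) into $[\E_6]^1$.

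The key step is then to bound $\|h\|_{\E_6}$ from below by a suitable norm on the quotient $H(F)\backslash H(\A)$ (equivalently, to show that the embedded copy of $H$ is ``nicely'' closed). This is exactly the situation treated in the appendix of \cite{B17}: one needs the homogeneous space $H\backslash\E_6$, or rather the orbit map and the induced map on adelic quotients, to be quasi-affine, so that a norm on $\E_6(\A)$ restricts to (a function comparable to) a norm on $H(F)\backslash H(\A)$. Granting this, rapid decay of $\La^T E(\cdot,\phi,s)$ in the $\|\cdot\|_{\E_6}$-norm transfers to rapid decay along $\G_2(F)\backslash\G_2(\A)$ with respect to a norm on that quotient, and since a norm on the adelic quotient of a reductive group is integrable against $\|\cdot\|^{-N}$ for $N$ large (compare the convergence of $\int_{[G]^1}\|g\|^{-N}dg$ for reductive $G$), the period integral converges absolutely. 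Uniformity in $s$ on compacta follows because every estimate used — the moderate growth bound on $X\ast E(\phi,s)$, the truncation estimate, and the norm comparison — is uniform in $s$ there.

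I would organize the write-up as: (i) record the rapid-decay property of $\La^T E(\phi,s)$ together with its $s$-uniformity; (ii) state and cite (from \cite{B17}) the norm-comparison principle for quasi-affine quotients, and verify that $H\backslash \E_6$ (or the appropriate quotient entering the period) is quasi-affine — this uses the explicit realization of $\E_6$ acting on $J$ and the description of $H$ and $\xi$ from section \ref{ssec:periodDef}; (iii) combine (i) and (ii) to dominate the integrand by $\|r\|_{\G_2}^{-N}$ on $[\G_2]$ and conclude by integrability of negative powers of a norm on a reductive adelic quotient. The main obstacle I anticipate is step (ii): checking quasi-affineness of the relevant quotient and, more delicately, tracking how the character $\xi$ on $N$ interacts with the norm estimates — i.e. ensuring that the twist by $\xi$, which is the reason one cannot simply appeal to $L^1$ of the quotient, does not spoil the argument. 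In fact one expects here that $\xi$ only helps (it is a unitary twist on the compact piece $[N]$), so the substance is really the norm comparison for the $\G_2$-direction, and that is precisely where the hypothesis ``$s$ in the domain of holomorphy'' and the uniform moderate-growth bounds are brought to bear. The detailed verification is deferred to section \ref{sec:convergence}, where Proposition \ref{prop:absConv} is proved; here one only needs the qualitative consequence.
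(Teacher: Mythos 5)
Your proposal follows essentially the same route as the paper: rapid decay of $\La^{T}E(\phi,s)$ on $[\E_6]$ (Arthur's Lemma 1.4, valid here since the Eisenstein series is cuspidal from a maximal self-dual Levi), combined with the quasi-affineness of $\G_2N\backslash\E_6$ and the norm-descent statement Proposition A.1.1(ix) of \cite{B17}, with uniformity in $s$ coming from the uniformity of the decay estimates on compacta. The only small inaccuracies are that the domain-of-holomorphy hypothesis enters solely through the rapid-decay bound, not through the norm comparison (which is purely geometric), and that Proposition \ref{prop:absConv} addresses the finer orbit-by-orbit convergence after unfolding rather than the verification needed for this lemma.
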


\begin{proof}
Note that the function $x \mapsto \La^{T}E(x,\phi,s)$
equals the truncated Eisenstein series via the operator introduced in \cite{arthur}.
This is because $E(\phi,s)$ is a cuspidal Eisenstein series induced from a maximal, self-dual Levi subgroup
of $\E_6$.
It follows from Lemma 1.4 of \emph{loc. cit.}
that
the function $x \mapsto \La^{T}E(x,\phi,s)$
is rapidly decreasing on $[G]=[\E_6]$.
The lemma is thus a direct consequence of Proposition A.1.1(ix) of \cite{B17} together with the fact that $\G_2N\back \E_6$ is quasi-affine.
\end{proof}

Let $\calV$ be the flag variety as in the section \ref{sec:orbits}.
Identify $\calV/H$ with a set of representatives so that the subspace defining the parabolic subgroup $P$ is one of them.
Let $V \in \calV/H$.
Fix $\gamma_{V} \in \E_6(F)$ such that $V = V(\NullU) \gamma_{V}$.
Let $P(V)$ be the parabolic subgroup of $\E_6$ stabilizing $V$, $U(V)$ the unipotent radical of $P(V)$, and $M(V)$ the reductive quotient of $P(V)$.
By definition, the group $H_V$, the stabilizer of $V$ in $H$, is contained in $P(V)$.
We set $U_H(V) = U(V) \cap H_V$, and $M_H(V)$ the image of $H_V$ in $M(V)$.
We sometimes abuse notation and also write $M(V)$ for the Levi subgroup of $P(V)$ equal to $\gamma_{V}^{-1} M \gamma_{V}$,
where $M$ is the Levi subgroup of $P$ specified above.

Let $V \in \calV/H$.
Set
\[
I_{V}(\phi,s) := \int_{H_{V}(F) \bsl H(\A)}(1-\htau_{P}(H_{P}(\gamma_{V}h) - T))\phi(\gamma_{V}h)e^{\langle s\varpi, H_{P}(\gamma_{V}h) \rangle}\xi(h)\,dh
\]
and
\[
J_{V}(\phi,s) :=\int_{H_{V}(F) \bsl H(\A)}\htau_{P}(H_{P}(\gamma_{V}h) - T)M(s)\phi(\gamma_{V}h)e^{\langle -s\varpi, H_{P}(\gamma_{V}h) \rangle}\xi(h)\,dh.
\]
The proof of the following proposition will be given in the next section.
\begin{proposition}\label{prop:absConv}
For all $s \in \C$ such that $\Rel(s)$ is sufficiently large
and all $T \in \R$ sufficiently large, the integrals defining $I_{V}(\phi,s)$
and $J_{V}(\phi,s)$
converge absolutely.
\end{proposition}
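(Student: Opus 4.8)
The plan is to reduce the absolute convergence of $I_V(\phi,s)$ and $J_V(\phi,s)$ to a tractable estimate by unfolding the $H$-integral through the structure $H_V = M_H(V) U_H(V)$ established in Proposition \ref{prop:G2Norbits} and the stabilizer computations in section \ref{sec:orbits}. First I would fix $V \in \calV/H$ and $\gamma_V$, and note that since $\htau_P$ is a characteristic function, $|I_V(\phi,s)|$ and $|J_V(\phi,s)|$ are each bounded by the integral over $H_V(F)\bsl H(\A)$ of $|\phi(\gamma_V h)|$ (resp. $|M(s)\phi(\gamma_V h)|$) times $e^{\langle \Rel(s)\varpi, H_P(\gamma_V h)\rangle}$ (resp. $e^{\langle -\Rel(s)\varpi, H_P(\gamma_V h)\rangle}$), restricted to the locus where the truncation factor is nonzero. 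The key geometric input is that $\gamma_V^{-1} P \gamma_V = P(V)$, so $H_P(\gamma_V h)$ is controlled by the Iwasawa coordinates of $h$ relative to $P(V)\cap H$; in particular the factor $e^{\langle s\varpi, H_P(\gamma_V h)\rangle}$ is, up to the character by which $M_H(V)$ acts, a character evaluated on the $M_H(V)$-component of $h$. Because $\phi$ transforms under $A_P^\infty$ by $e^{\langle\rho_P,H_P\rangle}$ and is cuspidal along $M$, the function $m\mapsto \phi(\gamma_V m)$ on $[M_H(V)]$ (or the appropriate mixed quotient) is rapidly decreasing in the cuspidal directions.

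The main step is then to organize the integral over $H_V(F)\bsl H(\A)$ via the Iwasawa-type decomposition $H(\A) = U_H(V)(\A)\, M_H(V)(\A)\, (\text{compact})$ modulo the relevant rational points, carry out the $U_H(V)$-integral first — this is a unipotent integral over a compact quotient $[U_H(V)]$ once we account for the character $\xi$, and it is absolutely convergent trivially — and reduce to an integral over $[M_H(V)]$ (times a split central torus). On this last integral, the factor $e^{\langle \pm s\varpi, H_P\rangle}$ combines with $e^{\langle\rho_P,H_P\rangle}$ from the automorphy of $\phi$ to give a character $\chi_{s}$ on the central torus of $M_H(V)$, and the convergence question becomes whether $\int |\phi_0(m)| \chi_s(m)\, dm$ converges over this quotient, where $\phi_0$ is the restriction of $\phi$ (or $M(s)\phi$). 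For $\Rel(s)$ large, the exponent $\langle s\varpi, H_P(\cdot)\rangle$ pushes the integrand toward convergence on the directions transverse to $[M]^1$, exactly as in the standard argument that Eisenstein series converge for $\Rel(s)\gg 0$; in the cuspidal directions inside $[M_H(V)]$, rapid decrease of $\phi$ along $M$ (Gelfand–Piatetski-Shapiro) gives convergence regardless of $s$. One must check separately that the truncation condition $1-\htau_P(H_P(\gamma_V h)-T) \neq 0$ (resp. $\htau_P(H_P(\gamma_V h)-T)\neq 0$) confines $H_P(\gamma_V h)$ to the half-line where the relevant exponential is decaying — this is precisely the role of $T$ and why $T$ must be taken large; for $I_V$ the truncation cuts off the region where $e^{\langle s\varpi,H_P\rangle}$ grows, and for $J_V$ it cuts off where $e^{\langle -s\varpi,H_P\rangle}$ grows, so in both cases the combination of the truncation restriction and the largeness of $\Rel(s)$ makes the torus integral converge.

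The hard part, I expect, is not the central-torus direction (that is the familiar Eisenstein-series convergence bookkeeping) but rather making the unipotent unfolding rigorous when $H_V$ is non-reductive and $U_H(V)$ is a genuinely smaller subgroup of $U(V)$ than one might hope: one has to verify that $H_V(F)\bsl H(\A)$ fibers correctly over $[M_H(V)]$ with $U_H(V)$-fibers, that the character $\xi$ restricted to $U_H(V)$ behaves compatibly (it may be trivial or nontrivial on pieces of $U_H(V)$ depending on the orbit $V$, per Lemmas \ref{lem:222Stab} and Proposition \ref{prop:411Stab}), and that $\phi(\gamma_V u m k)$ admits a pointwise majorant of the form (gauge on $M_H(V)$)$\times$(bounded in $u,k$) uniformly — here one invokes the moderate-growth and rapid-decrease bounds for $\phi$ recalled in section \ref{ssec:eis}, together with the fact that $\G_2 N\bsl \E_6$ is quasi-affine so that the norm estimates of the appendix of \cite{B17} apply. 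Assembling these majorants over all $17$ orbit types is the bulk of the work, but each individual case is a routine (if lengthy) estimate once the fibration and the character bookkeeping are pinned down.
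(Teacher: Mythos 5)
There is a genuine gap, and it lies in the headline reduction. You claim that after fibering $H_V(F)\bsl H(\A)$ over $[M_H(V)]$ the unipotent integration takes place over the \emph{compact} quotient $[U_H(V)]$ and that convergence then reduces to a central-torus/cuspidality estimate on $[M_H(V)]$. That is not the structure of $I_V$ and $J_V$: the integral is over $H_V(F)\bsl H(\A)$, and the convergence burden is carried by the non-compact directions \emph{transverse} to $H_V$ inside $H$, namely quotients of the type $(N\cap P(V))(\A)\bsl N(\A)$ (for the main orbit this is the $6$-dimensional adelic space $\overline{V}(\A)$ of Lemma \ref{lem:helpComp}) and $H_{0,V}(\A)\bsl \G_2(\A)$ (huge for the open orbits, where $H_{0,V}$ is $\SL_3$ or even a solvable group and $H_V\cap N$ can be trivial, cf. Lemma \ref{lem:UVtriv}). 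On these directions a majorant that is merely ``bounded in $u,k$'', as in your last paragraph, is useless; one needs genuine decay with an explicit integrable rate, uniformly in the other variables, and this is exactly where the difficulty sits. The truncation factor and $e^{\langle \pm s\varpi, H_P(\gamma_V h)\rangle}$ do not help directly in the directions that $\gamma_V(\cdot)\gamma_V^{-1}$ sends into $U(V)(\A)M(V)(\A)^1$, since $H_P$ and the truncation are invariant there, so the decay has to be extracted from the rapid decrease of the truncated section, which is naturally expressed (Lemma \ref{lem:normBound} in the paper) as a negative power of $\min_{\delta\in P(F)}\|\delta\gamma_V h\|_G$.

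This exposes the second, more serious missing ingredient: to convert that majorant into something integrable over $H_V(F)\bsl H(\A)$ you must compare $\inf_{\delta\in P(V)(F)}\|\delta h\|_G$ with $\inf_{\delta\in H_V(F)}\|\delta h\|_H$, i.e.\ rule out that rational translates by elements of $P(V)(F)$ \emph{not} lying in $H_V(F)$ make the norm small along the quotient. Your proposal contains no mechanism for this rational-points comparison. In the paper this is Proposition \ref{norm}, proved via the quasi-affineness of $P(V)/H_V$ (Lemma \ref{quasi-affine}), the descent of norms to quasi-affine quotients from the appendix of \cite{B17}, and the ``good pair'' property $\inf_{H_V(F)}\ll\inf_{H_V(\bar F)}$, which must be verified orbit by orbit (Proposition \ref{prop:MainConv}); the $\SL_3\subset\G_2$ stabilizers require a dedicated argument with the root structure of $\G_2$ (Proposition \ref{G_2 good pair proposition}), and is not a routine estimate. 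Your appeal to quasi-affineness of $\G_2 N\bsl\E_6$ only covers the convergence of the full truncated period (Lemma \ref{lem:cvgLaTE}), not the orbit-wise integrals. So while an unfolding-plus-Siegel-set-estimates strategy could in principle be carried out, as written your reduction misdescribes where the non-compactness lives, and the step you label ``routine (if lengthy)'' is precisely the content of the paper's Section \ref{sec:convergence} that your argument does not supply.
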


Unfolding the Eisenstein series and taking the above proposition for granted we get
\begin{equation}\label{eq:periodDecomp}
\calP_{\G_2}(\La^{T}E(\phi,s)) = \sum_{V \in \calV/H} I_{V}(\phi,s) + J_{V}(\phi,s)
\end{equation}
which justifies the introduction of the integrals $I_{V}$ and $J_{V}$. In the following subsections
\ref{ssec:vanish1} and \ref{ssec:vanish2} we show that $I_{V}(\phi,s) = J_{V}(\phi,s) = 0$
unless $V = V(\NullU)$.

\subsection{Vanishing of most orbits}\label{ssec:vanish1} In this subsection, we show that $14$ of the $17$ integrals $I_V(\phi,s)$ and $J_V(\phi,s)$ vanish.  More specifically suppose that the character $\xi$ is nontrivial on $U_H(V) \cap N$.  It follows that the integrals $I_V(\phi,s)$ and $J_V(\phi,s)$ vanish.  In this subsection we check that $14$ of the $17$ integrals $I_V(\phi,s)$ and $J_V(\phi,s)$ vanish for this reason.

\begin{lemma}
Suppose that $V$ is one of the $(2,2,2)$ orbits (cf. \ref{prop:G2Norbits}), which is not the closed orbit, i.e., $V \neq V(\NullU)$.
Then the integrals $I_V(\phi,s)$ and $J_V(\phi,s)$ vanish.
\end{lemma}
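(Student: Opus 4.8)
The plan is to reduce, as indicated at the start of this subsection, to producing a subgroup of $U_H(V)\cap N$ on which $\xi$ is nontrivial. For completeness, here is why that suffices: for $u$ in $(U_H(V)\cap N)(\A)\subseteq U(V)(\A)=\gamma_V^{-1}U(\A)\gamma_V$, the factors $\phi(\gamma_V h)$, $e^{\langle s\varpi, H_P(\gamma_V h)\rangle}$ and the truncation factor are unchanged under $h\mapsto uh$ (left $U(\A)$-invariance of $\phi$ and of $H_P$ after conjugating by $\gamma_V$), so under $h\mapsto uh$ the integrand of $I_V$, and likewise of $J_V$, is merely multiplied by $\xi(u)$; unfolding these integrals along the unipotent subgroup $U_H(V)\cap N$ then produces the factor $\int_{[U_H(V)\cap N]}\xi(u)\,du$, which vanishes precisely when $\xi$ is nontrivial on $U_H(V)\cap N$.

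So fix a $(2,2,2)$ orbit with $V\neq V(\NullU)$ and write $V_i=V(a_i)=V\cap J(a_i)$, so that $(V_1,V_2,V_3)$ is a triple of isotropic two-planes of $\Theta$ with $V_j\cdot V_{j+1}=0$. I would work inside the explicit subgroup
\[
N^0_V:=\{\,n(x,y;z)\ :\ x\in V_3,\ y\in V_1,\ z^*\in V_2\,\},
\]
which is a subgroup of $N$ because $V_3\cdot V_1=0$, and which by Lemma~\ref{lem:222Stab} acts trivially on $V$. The first step is to check that $N^0_V\subseteq U_H(V)\cap N$: an element $n\in N^0_V$ lies in $H_V$ and, acting trivially on $V$, maps into the finite kernel of $M(V)\to\GL(V)$; since $n$ is unipotent its image in $M(V)$ is therefore trivial, so $n\in\ker(H_V\to M(V))=U_H(V)$.

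It then remains to see that $\xi|_{N^0_V}\neq 1$. For $x\in V_3$ and $y\in V_1$ one has $\xi(n(x,y;z))=\psi(\tr(x)+\tr(y))$, and restricting $\xi$ to the one-parameter subgroup $t\mapsto n(tx_0,0;0)$ (or $t\mapsto n(0,ty_0;0)$) gives the character $t\mapsto\psi(\tr(x_0)\,t)$ of $F\bsl\A$, which is nontrivial once $\tr(x_0)\neq 0$. So I would conclude by verifying, for each of the four non-closed $(2,2,2)$ orbits, that $V_1$ or $V_3$ is not contained in $V_7$. For the three orbits whose representative two-plane is of type (3), (4) or (5) in Lemma~\ref{G2D4orbits} this is immediate, since that plane — which may be taken as $V_1$ — is itself non-traceless. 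The one remaining orbit, with $V_1$ of type (2) (traceless but not null), is the main, though minor, obstacle: here $V_1\subseteq V_7$, so one must instead find a vector of nonzero trace in $V_3$. Taking $V_1=F e_1+F e_2$, a short computation in the Zorn model gives $V_2=\Ann_{R}(V_1)=F\epsilon_1+F e_3^*$ and then $V_3=\Ann_{R}(V_2)=F e_3^*+F\epsilon_2$ (and one checks $V_3\cdot V_1=0$, so this is indeed the triple), whence $\epsilon_2\in V_3$ has trace $1$. Equivalently: among the five $\G_2$-orbits of isotropic two-planes, the null traceless one is the only one whose associated triple $(V_1,V_2,V_3)$ has all three members traceless, and the corresponding orbit is exactly the closed orbit $V(\NullU)$; this is precisely what distinguishes it, and completes the argument.
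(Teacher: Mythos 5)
Your proposal is correct and follows essentially the same route as the paper: both reduce the vanishing to the nontriviality of $\xi$ on the subgroup of $N$ acting trivially on $V$ (via Lemma \ref{lem:222Stab}) and then observe that for every non-closed $(2,2,2)$ orbit the trace is nonzero on $V(a_1)$ or $V(a_3)$. The only difference is that you spell out details the paper leaves implicit — the unfolding argument, the inclusion of your subgroup in $U_H(V)\cap N$, and the explicit computation of the triple for the traceless-but-not-null orbit (finding $\epsilon_2\in V_3$) — all of which check out.
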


\begin{proof}
An element of $N$ is in $U_H(V) \cap N$ if and only if it acts trivially on $V$.  By Lemma \ref{lem:222Stab}, it follows that there is always an element $n(x,0;0)$ or $n(0,y;0)$ acting trivially on $V$ with either $\tr(x) \neq 0$ or $\tr(y) \neq 0$.  Thus the character $\xi$ is nontrivial on $U_H(V) \cap N$ and the lemma follows.
\end{proof}

\begin{lemma}
Suppose $V$ represents one of the $(4,1,1)$ (cf. \ref{prop:G2Norbits}) orbits with either $V(c_3) \neq 0$ or $V(c_2) \neq 0$, i.e., $V$ is of the form \eqref{item411R1}, \eqref{item411R3}, \eqref{item411L1} or \eqref{item411L2}.  Then the integrals $I_V(\phi,s)$ and $J_V(\phi,s)$ vanish.
\end{lemma}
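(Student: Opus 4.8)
The plan is to reduce the statement to the criterion recorded at the start of this subsection: if the character $\xi$ is nontrivial on $U_H(V)\cap N$, then $I_V(\phi,s) = J_V(\phi,s) = 0$. As in the proof of the preceding lemma, I would use that $U_H(V)\cap N$ is exactly the subgroup of those $n(x,y;z)\in N\simeq N(\Theta)$ which act trivially on $V$, and that $\xi(n(x,y;z)) = \psi(\tr(x)+\tr(y))$. Since $n(x,y;z)\mapsto \tr(x)+\tr(y)$ is an $F$-homomorphism $N\to\mathbb{G}_a$ with $\xi = \psi\circ(\tr(x)+\tr(y))$, it suffices to produce, for each of the four orbits \eqref{item411R1}, \eqref{item411R3}, \eqref{item411L1}, \eqref{item411L2}, a single element $n(x,y;z)$ acting trivially on $V$ with $\tr(x)+\tr(y)\neq 0$: then the restriction of this homomorphism to $U_H(V)\cap N$ is a nonzero $F$-morphism to $\mathbb{G}_a$, so $\psi\circ(\tr(x)+\tr(y))$ is a nontrivial character of $(U_H(V)\cap N)(\A)$ trivial on the $F$-points, which is what we need.

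The next step is to match each orbit to a case of Proposition \ref{prop:411Stab} by reading off from the matrix descriptions which of $c_1,c_2,c_3,a_1,a_2,a_3$ vanish on $V$. The orbits \eqref{item411R1} and \eqref{item411L2} satisfy $c_1 = c_2 = a_3 = 0$, so Proposition \ref{prop:411Stab}(1) applies: taking $x = z = 0$, the element $n(0,y;0)$ acts trivially on $V$ as soon as $y\in V(a_1)^\perp$, where $V(a_1) = \ell$ for \eqref{item411R1} and $V(a_1) = \Ann_L(\ell)$ for \eqref{item411L2}. The orbits \eqref{item411R3} and \eqref{item411L1} satisfy $c_1 = c_3 = a_2 = 0$, so Proposition \ref{prop:411Stab}(2) applies: taking $y = z = 0$, the element $n(x,0;0)$ acts trivially on $V$ as soon as $x\in V(a_3)^\perp$, where $V(a_3) = \ell$ for \eqref{item411R3} and $V(a_3) = \Ann_L(\ell)$ for \eqref{item411L1}.

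Finally, in all four cases the subspace $W\subseteq\Theta$ against which one pairs ($\ell$ or $\Ann_L(\ell)$) does not contain the identity element $1\in\Theta$: an isotropic line cannot contain $1$ since $n(1) = 1\neq 0$, and $\Ann_L(\ell)$ cannot contain $1$ since $1\cdot w = w$ for all $w$. As the form $(\,,\,)$ on $\Theta$ is nondegenerate and $V_7 = 1^\perp$, the condition $1\notin W$ forces $\tr$ not to vanish identically on $W^\perp$ (otherwise $W^\perp\subseteq V_7 = 1^\perp$, hence $1\in (W^\perp)^\perp = W$). Choosing $v\in W^\perp$ with $\tr(v)\neq 0$ and substituting it for $y$ (resp. $x$) in the element found above yields $n\in U_H(V)\cap N$ with $\xi(n) = \psi(\tr(v))$; thus $\xi$ is nontrivial on $U_H(V)\cap N$, and $I_V(\phi,s) = J_V(\phi,s) = 0$.

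I expect the only real obstacle to be the bookkeeping in the second step: correctly identifying, for each of the four matrix patterns, the vanishing coordinate indices, which of $a_1$ and $a_3$ plays the distinguished role in Proposition \ref{prop:411Stab}, and whether the relevant pairing is against $\ell$ or against $\Ann_L(\ell)$. There is no analytic content here, in contrast with the integrals that will be handled through Proposition \ref{prop:absConv}.
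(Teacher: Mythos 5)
Your proof is correct and takes essentially the same approach as the paper: for the orbits with $V(c_3)\neq 0$ (resp.\ $V(c_2)\neq 0$) use Proposition \ref{prop:411Stab} to see that $n(0,y;0)$ with $y\in V(a_1)^\perp$ (resp.\ $n(x,0;0)$ with $x\in V(a_3)^\perp$) acts trivially on $V$, and then show the trace is not identically zero on that perp space, so $\xi$ is nontrivial on $U_H(V)\cap N$. The only (minor) difference is the last step, where the paper observes that the perp space contains a $4$-dimensional isotropic subspace of $\Theta$, which cannot be traceless, while you argue directly from $1\notin V(a_i)$ and nondegeneracy of the form; both are valid.
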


\begin{proof}
Suppose first $V$ is such that $V(c_3) \neq 0$.  Then by Proposition \ref{prop:411Stab}, $n(0,y;0)$ acts trivially on $V$ so long as $y \in V(a_1)^\perp$.  But $V(a_1)^\perp$ contains a $4$-dimensional isotropic subspace, so the trace is not identically $0$ on it.  Hence $I_V(\phi,s)$ and $J_V(\phi,s)$ vanish.  Similarly, if $V$ is one of the orbits with $V(c_2) \neq 0$, then $n(x,0;0)$ acts trivially on $V$ so long as $x \in V(a_3)^\perp$, and one again gets vanishing for the same reason.
\end{proof}

\begin{lemma}
Suppose $V$ is one of the $(4,1,1)$ orbits with $V(c_1) \neq 0$.  If $V(a_3) = \ell$ is an isotropic line, then the integrals $I_V(\phi,s)$ and $J_V(\phi,s)$ vanish.
\end{lemma}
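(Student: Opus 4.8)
The plan is to show that $\xi$ restricts to a nontrivial character on $U_H(V)\cap N$; as in the preceding two lemmas this forces $I_V(\phi,s) = J_V(\phi,s) = 0$. First I would identify this subgroup. By Proposition~\ref{prop:G2Norbits}, a $(4,1,1)$ orbit with $V(c_1)\neq 0$ whose piece $V(a_3)$ is an isotropic line $\ell$ is the space~\eqref{item411L3}, with $\ell$ one of the two lines of Proposition~\ref{prop:G2lines}; in particular $V(a_2) = \Ann_{L}(\ell)$, and $c_2$, $c_3$, $a_1$ vanish identically on $V$. Hence Proposition~\ref{prop:411Stab}(3) applies: an element $n(x,y;z)\in N$ acts trivially on $V$ exactly when $x = z = 0$ and $V(a_3)\cdot y = \ell\cdot y = 0$, that is, $y\in\Ann_{R}(\ell)$. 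Since (as in the previous lemmas) $U_H(V)\cap N$ is precisely the group of elements of $N$ acting trivially on $V$, we get $U_H(V)\cap N = \{n(0,y;0): y\in\Ann_{R}(\ell)\}$, on which $\xi$ is the character $n(0,y;0)\mapsto\psi(\tr(y))$.

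The crux is to check that $\tr$ does not vanish identically on $\Ann_{R}(\ell)$. I would use that $\Ann_{R}(\ell) = \ell^*\Theta$ is four-dimensional, and that it is totally isotropic for the octonion norm, since $n(w^*t) = n(w^*)n(t) = 0$ for $w$ a generator of $\ell$ and any $t\in\Theta$. A four-dimensional totally isotropic subspace of the split eight-dimensional quadratic space $(\Theta,n)$ is maximal isotropic, so it equals its own orthogonal complement; since $w\cdot 1 = w\neq 0$, we have $1\notin\Ann_{R}(\ell)$, and therefore the linear form $y\mapsto(y,1) = \tr(y)$ is not identically zero on $\Ann_{R}(\ell)$. (Equivalently: $\Ann_{R}(\ell)$ is not contained in the seven-dimensional space $V_7 = 1^{\perp}$, whose Witt index is $3$; or one may just exhibit an element of nonzero trace, e.g.\ $\epsilon_1\in\Ann_{R}(Fe_1)$ and $\epsilon_2\in\Ann_{R}(F\epsilon_1)$ for the two lines in question.)

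It follows that $\xi$ restricts to a nontrivial character of the compact group $(U_H(V)\cap N)(F)\backslash(U_H(V)\cap N)(\A)$, which has total volume one. Since $U_H(V)\cap N\subseteq H_V$ and the parts of the integrands of $I_V$ and $J_V$ other than $\xi$ are left-invariant under $U_H(V)(\A)$, one factors each integral through $(U_H(V)\cap N)(F)\backslash(U_H(V)\cap N)(\A)$; the resulting inner integral of $\xi$ vanishes, so $I_V(\phi,s) = J_V(\phi,s) = 0$. The only step carrying real content is the structural fact that $\tr$ does not vanish on $\Ann_{R}(\ell)$; once that is established the rest is bookkeeping with Propositions~\ref{prop:G2Norbits} and~\ref{prop:411Stab} together with the standard unfolding.
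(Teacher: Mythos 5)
Your proof is correct and follows essentially the same route as the paper: apply Proposition \ref{prop:411Stab}(3) to see that the elements of $N$ acting trivially on $V$ are exactly $n(0,y;0)$ with $y \in \Ann_R(\ell)$, note that $\tr$ is not identically zero on this four-dimensional isotropic subspace, and conclude vanishing by integrating the nontrivial character $\xi$ over the compact quotient. The only difference is that you spell out the justification (maximal isotropic equals its own perpendicular, $1 \notin \Ann_R(\ell)$, plus explicit witnesses) for the non-vanishing of the trace, which the paper simply asserts.
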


\begin{proof}
From Proposition \ref{prop:411Stab}, the element $n(x,y;z)$ acts as $1$ on such a $V$ if and only if $x=z=0$ and $V(a_3) \cdot y = 0$.  If $V(a_3) = \ell$, then the set of such $y$ is $\Ann_{R}(\ell)$, which is a four-dimensional isotropic subspace of $\Theta$.  Thus the trace is nonzero on it, which implies that $I_V(\phi,s)$ and $J_V(\phi,s)$ vanish.
\end{proof}

\subsection{Formal vanishing of another two orbits}\label{ssec:vanish2}
In the previous subsection we showed that $14$ of the $17$ integrals $I_V(\phi,s)$ and $J_V(\phi,s)$ vanish.  In this subsection, we show that the integrals $I_V(\phi,s)$ and $J_V(\phi,s)$ also vanish for another $2$ orbits. More specifically, the $V$'s we consider are the two $(4,1,1)$ orbits with $V(c_1) \neq 0$ and $V(a_2)$ an isotropic line. We begin with the following lemma.

\begin{lemma}\label{lem:UVtriv}
Suppose $V$ is one of the $(4,1,1)$ orbits with $V(c_1)\neq 0$ and $V(a_2) = \ell$ an isotropic line.  Then the subgroup of $N$ acting as the identity on $V$ is trivial.
\end{lemma}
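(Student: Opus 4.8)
The plan is to reduce the statement, via Proposition \ref{prop:411Stab}(3), to a concrete assertion about annihilators in $\Theta$, and then to verify that assertion by a short computation in the Zorn model.

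First I would record that the two orbits in question are exactly the spaces of the form \eqref{item411R2}, i.e. those with $c_2 = c_3 = a_1 = 0$, with $V(a_3) = \Ann_{R}(\ell)$ four-dimensional and $V(a_2) = \ell$ for an isotropic line $\ell \subseteq \Theta$. This is precisely the setting of Proposition \ref{prop:411Stab}(3), which asserts that $n(x,y,z)$ acts as the identity on such a $V$ if and only if $x = z = 0$ and $V(a_3)\cdot y = 0$. Since $V(a_3) = \Ann_{R}(\ell)$, the condition $V(a_3)\cdot y = 0$ says exactly that $w\cdot y = 0$ for all $w \in \Ann_{R}(\ell)$, i.e. $y \in \Ann_{R}(\Ann_{R}(\ell))$. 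Hence the subgroup of $N$ fixing $V$ pointwise is $\{n(0,y;0) : y \in \Ann_{R}(\Ann_{R}(\ell))\}$, and the lemma is equivalent to the claim that $\Ann_{R}(\Ann_{R}(\ell)) = 0$ for every isotropic line $\ell \subseteq \Theta$.

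Next I would use that $\Ann_{R}$ is $\G_2$-equivariant: since $g(wx) = (gw)(gx)$ for $g \in \G_2$, one has $g\,\Ann_{R}(W) = \Ann_{R}(gW)$ for every subspace $W$, so $\Ann_{R}(\Ann_{R}(\ell))$ depends only on the $\G_2$-orbit of $\ell$. By Proposition \ref{prop:G2lines} there are only two such orbits, represented by the traceless line $F e_1$ (with $e_1 = \mm{0}{e_1}{0}{0}$) and the non-traceless line $F\epsilon_1 = \mm{F}{0}{0}{0}$. For each representative one computes $\Ann_{R}(\ell)$ directly from the multiplication formula on $\Theta$: one finds that $\Ann_{R}(Fe_1)$ is spanned by $\epsilon_1, e_1, e_2^*, e_3^*$, while $\Ann_{R}(F\epsilon_1)$ is spanned by $e_1^*, e_2^*, e_3^*, \epsilon_2$. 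One then computes the right annihilator of each of these two four-dimensional subspaces; pairing a hypothetical $y = \mm{a}{v}{\phi}{d}$ against the four listed generators in turn forces the Zorn coordinates of $y$ to vanish one after another, so in both cases $\Ann_{R}(\Ann_{R}(\ell)) = 0$, completing the proof.

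There is no essential difficulty here: all of the content is in Proposition \ref{prop:411Stab}(3), after which one is left with the purely linear fact that the right annihilator of the (four-dimensional) right annihilator of a null line in $\Theta$ is zero. The only care required is in bookkeeping the non-associative multiplication in the Zorn model, and in noting that $\G_2$-equivariance genuinely cuts the problem down to the two orbits of isotropic lines rather than forcing one to treat all null lines uniformly. Conceptually, the vanishing is a shadow of $\Spin_8$-triality — a nonzero $y \in \Ann_{R}(\Ann_{R}(\ell))$ would force $\Ann_{R}(\ell) = \Ann_{L}(Fy)$, but a maximal isotropic subspace of $\Theta$ arising as $\Ann_{R}$ of a line and one arising as $\Ann_{L}$ of a line lie in opposite rulings of the quadric $\{n = 0\}$ and hence cannot coincide — but the explicit check above is shorter and self-contained.
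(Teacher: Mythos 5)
Your proposal is correct and follows the paper's own route: both reduce via Proposition \ref{prop:411Stab}(3) to the assertion that the right annihilator of $\Ann_{R}(\ell)$ is zero (equivalently $\Ann_{R}(\Ann_{R}(\ell))=0$). The only difference is that the paper simply asserts this octonion fact, while you verify it explicitly by $\G_2$-equivariance and a Zorn-model computation for the two orbit representatives $Fe_1$ and $F\epsilon_1$, which checks out.
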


\begin{proof}
From Proposition \ref{prop:411Stab}, the element $n(x,y;z)$ acts as $1$ on such a $V$ if and only if $x=z=0$ and $V(a_3) \cdot y = 0$.   If $V(a_2) = \ell$, then $V(a_3) = \Ann_{R}(\ell)$.  But $\Ann_{R}(\ell)$ only has a \emph{left} annihilator (namely, $\ell$); its right annihilator is $0$.  Thus only the identity in $N$ acts trivially on $V$ in these cases.
\end{proof}

For the rest of this subsection, $V$ denotes a representative of one of the $(4,1,1)$ oribts with $V(c_1) \neq 0$ and $V(a_2) = \ell$ an isotropic line.  As we mentioned before, we extend the character $\xi$ on $N$ to $N \G_2$ by making it trivial on $\G_2$.  By Lemma \ref{lem:UVtriv}, the integrals $I_V(\phi,s)$ and $J_V(\phi,s)$ have an inner integral over $\left(M_H(V)(F)\backslash M_H(V)(\A)\right)^{1}$.  Hence in order to show the integrals $I_V(\phi,s)$ and $J_V(\phi,s)$ vanish, it suffices to show that the integral
\[K_V(\varphi) := \int_{\left(M_H(V)(F)\backslash M_H(V)(\A)\right)^{1}}{\xi(x)\varphi(x)\,dx}\]
vanishes for any cusp form $\varphi$ on $M(V)$.  The purpose of the rest of this subsection is to prove the vanishing of $K_V(\varphi)$. The following lemma computes the image of $N$ in $M_H(V)$.

\begin{lemma}\label{lem:4112S}
Suppose $V$ is as above, a $(4,1,1)$ orbit with $V(c_1) \neq 0$, and $V(a_2) = \ell$ an isotropic line.  Then $n(x,y;z)$ stabilizes $V$ if and only if $x \in \Ann_{R}(\ell)$, $z^* \in \ell$, and $y \in \Ann_{L}(\ell)$.
\end{lemma}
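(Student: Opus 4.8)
The plan is to derive Lemma \ref{lem:4112S} directly from Proposition \ref{prop:411Stab}(3), translating its conclusion into statements about annihilators. Indeed, by the orbit classification (see \eqref{item222}--\eqref{item411L3} and Proposition \ref{prop:G2Norbits}) such a $V$ is of the form \eqref{item411R2}; in the coordinate notation of Section \ref{sec:orbits} this means $V(c_2) = V(c_3) = V(a_1) = 0$, $V(a_3) = \Ann_R(\ell)$ and $V(a_2) = \ell$. Hence Proposition \ref{prop:411Stab}(3) applies, and it says that $n(x,y;z)$ stabilizes $V$ if and only if $x \in V(a_3) = \Ann_R(\ell)$, $z^* \in V(a_2) = \ell$, and $V(a_3)\cdot y = \Ann_R(\ell)\cdot y \subseteq V(a_2)^* = \ell^*$. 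The first two conditions are already those in the statement, so the lemma reduces to the octonionic equivalence
\[\Ann_R(\ell)\cdot y \subseteq \ell^* \qquad\Longleftrightarrow\qquad y \in \Ann_L(\ell).\]

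To establish this equivalence I would use that it is $\G_2$-equivariant in the pair $(\ell, y)$: for $g \in \G_2$ one has $\Ann_R(g\ell) = g\,\Ann_R(\ell)$, $\Ann_L(g\ell) = g\,\Ann_L(\ell)$, and $(g\ell)^* = g(\ell^*)$, the last because elements of $\G_2$ commute with the conjugation $x \mapsto x^*$ on $\Theta$. By Proposition \ref{prop:G2lines} there are only two $\G_2$-orbits of isotropic lines in $\Theta$, so it suffices to verify the equivalence for $\ell = F\mm{0}{v}{0}{0}$ (with $v \neq 0$) and for $\ell = F\epsilon_1$ in the Zorn model. For each of these I would compute, straight from the multiplication formula, the four-dimensional spaces $\Ann_R(\ell)$ and $\Ann_L(\ell)$ and the line $\ell^*$, then identify the subspace $\{y \in \Theta : \Ann_R(\ell)\cdot y \subseteq \ell^*\}$ and observe that it equals $\Ann_L(\ell)$. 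For instance, for $\ell = F e_1$ one gets $\ell^* = \ell$, $\Ann_R(\ell) = \{\mm{a}{t e_1}{\phi}{0} : a, t \in F,\ \phi \in F e_2^* + F e_3^*\}$, and both subspaces in question equal $\{\mm{0}{s e_1}{\psi}{e} : s,e \in F,\ \psi \in F e_2^* + F e_3^*\}$; the case $\ell = F\epsilon_1$ is shorter. As a conceptual shortcut for the implication ``$\Leftarrow$'', one may instead use $\Ann_R(\ell) = \ell^*\Theta$ and $\Ann_L(\ell) = \Theta\ell^*$ (Section \ref{sec:orbits}): if $v_0$ spans $\ell$ and $y = c v_0^* \in \Ann_L(\ell)$, the middle Moufang identity gives $(v_0^* a)(c v_0^*) = v_0^*(ac)v_0^*$, which lies in $v_0^*\Theta v_0^* = F v_0^* = \ell^*$ since $v_0^*$ is a rank-one element.

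I do not expect a genuine obstacle here: once Proposition \ref{prop:411Stab}(3) is in hand, Lemma \ref{lem:4112S} is a matter of bookkeeping. The only slightly delicate point is the octonionic equivalence above, where one must be careful to distinguish left from right annihilators and to keep track of the conjugation $x\mapsto x^*$; the reduction via $\G_2$-transitivity to two explicit isotropic lines makes the verification routine.
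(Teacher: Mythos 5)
Your proposal is correct and follows essentially the same route as the paper: the conditions on $x$ and $z^*$ are quoted from Proposition \ref{prop:411Stab}(3), and the remaining condition $V(a_3)\cdot y \subseteq V(a_2)^*$ is rewritten as $\Ann_R(\ell)\cdot y \subseteq \ell^*$ and identified with $y \in \Ann_L(\ell)$. The only difference is that the paper simply asserts this last identification, whereas you verify it (correctly) by $\G_2$-equivariance and a direct check on the two orbit representatives of isotropic lines.
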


\begin{proof}
The statements about $x$ and $z^*$ were proved in Proposition \ref{prop:411Stab}.  The condition on $y$ was determined to be $V(a_3) \cdot y \subseteq V(a_2)^*$, or, in other words, $\Ann_{R}(\ell) \cdot y \subseteq \ell^*$.  However, the subset of $y \in \Theta$ satisfying this condition is precisely $\Ann_{L}(\ell)$.  This completes the proof of the lemma.
\end{proof}

Suppose $x,y,z$ are as specified in Lemma \ref{lem:4112S}, i.e., $x \in \Ann_{R}(\ell)$, $z \in \ell^{*}$, and $y \in \Ann_{L}(\ell)$.  Then $n(x,y;z)$ acts on
\[V = V(a_1) \oplus V(a_3) \oplus V(a_2)= F \oplus \Ann_{R}(\ell) \oplus \ell\]
as matrices of the form
\[\left(\begin{array}{c|c|c} 1 & * & * \\ \hline 0 & 1_4 & * \\ \hline 0&0&1 \end{array}\right).\]
More precisely, recall that $N$ acts on the six-dimensional space $V$ on the right.  We order a basis of $V$ as $V = F \oplus \Ann_{R}(\ell) \oplus \ell$, for some ordered basis of $\Ann_{R}(\ell)$.  For $y \in \Ann_{L}(\ell)$, denote by $\phi_y \in \mathrm{Hom}(\Ann_{R}(\ell),\ell)$ the map $\phi_y(a_3) = (a_3 y)^*$.  Then, $n(x,y;z)$ acts on $V$ via the matrix
\[\left(\begin{array}{c|c|c} 1 & x & z^* \\ \hline 0 & 1_4 & \phi_y \\ \hline 0&0&1 \end{array}\right).\]
We now check that the integral $K_V(\varphi)$ vanishes for each of the two cases, $\ell$ being traceless or not.

\begin{lemma}
Suppose that the isotropic line $\ell$ is contained in $V_7$.  Then $K_V(\varphi)$ vanishes.
\end{lemma}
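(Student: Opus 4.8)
\noindent\emph{Proof proposal.} The plan is to identify $M_H(V)$ explicitly as a subgroup of $\GL(V)\cong\GL_6$, reduce $K_V(\varphi)$ to the vanishing of a Fourier coefficient of a $\GL_6$-cusp form along a unipotent subgroup, and then deduce that vanishing by a root-exchange argument.

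First I would pin down $M_H(V)$. By Proposition \ref{prop:G2Norbits}, Proposition \ref{prop:G2lines} and Lemma \ref{lem:4112S}, in the present (traceless) case $H_V = (H_V\cap\G_2)\ltimes(H_V\cap N)$ with $H_V\cap\G_2 = P(\ell)$ the Heisenberg parabolic of $\G_2$ stabilizing $\ell\subseteq V_7$, and $H_V\cap N = \{n(x,y;z)\colon x\in\Ann_{R}(\ell),\ z^*\in\ell,\ y\in\Ann_{L}(\ell)\}$. Using the matrix description $n(x,y;z)\mapsto\left(\begin{smallmatrix}1&x&z^*\\0&1_4&\phi_y\\0&0&1\end{smallmatrix}\right)$ on $V = Fe_{11}\oplus\Ann_{R}(\ell)\oplus\ell$, I would check: (i) by Lemma \ref{lem:UVtriv} the map $y\mapsto\phi_y$ is injective on $\Ann_{L}(\ell)$, hence (by a dimension count) an isomorphism onto $\mathrm{Hom}(\Ann_{R}(\ell),\ell)$, so $H_V\cap N$ maps onto the full unipotent radical $U$ of the $(1,4,1)$-parabolic of $\GL(V)$; (ii) $P(\ell)$ fixes $e_{11}$ and fixes $\ell\subseteq\Ann_{R}(\ell)$, and acts on $\Ann_{R}(\ell)$ and on the last copy of $\ell$ through the $\G_2$-action, so its image lies in the $(1,4,1)$-Levi. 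Therefore $M_H(V) = \overline{P(\ell)}\ltimes U$, where $\overline{P(\ell)}$ is the image of $P(\ell)$.

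Since $\xi$ is trivial on $\G_2$, it is trivial on $\overline{P(\ell)}$, while on $U$ it is $\xi(u) = \psi(\tr x+\tr y)$; because $\G_2$ preserves the trace on $\Theta$, this character is invariant under the image of $P(\ell)$, so it extends to a character of the unipotent group $\mathcal{U} := \overline{N(\ell)}\ltimes U$, trivial on the image $\overline{N(\ell)}$ of the unipotent radical $N(\ell)$ of $P(\ell)$. Writing $\overline{P(\ell)} = \overline{M(\ell)}\ltimes\overline{N(\ell)}$ with $\overline{M(\ell)}\cong\GL_2$, we have $M_H(V) = \overline{M(\ell)}\ltimes\mathcal{U}$, and since all rational characters of $M_H(V)$ factor through $\overline{M(\ell)}$,
\[
K_V(\varphi) = \int_{\overline{M(\ell)}(F)\bsl\overline{M(\ell)}(\A)^{1}}\left(\int_{[\mathcal{U}]}\xi(u)\varphi(um)\,du\right)dm .
\]
It thus suffices to prove that $\int_{[\mathcal{U}]}\xi(u)\varphi(ug)\,du = 0$ for every $g\in\GL_6(\A)$ and every cusp form $\varphi$ on $\GL_6(\A)$.

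For this last step I would use the root-exchange technique in the style of Ginzburg--Rallis--Soudry. One first works out the image $\overline{N(\ell)}$ of $N(\ell)$ inside the middle $\GL_4$-block (it preserves the flag $\ell\subseteq\Ann_{R}(\ell)\cap V_7\subseteq\Ann_{R}(\ell)$ and acts trivially on its graded pieces), and then exhibits root subgroups $X\subseteq\mathcal{U}$ on which $\xi$ is trivial together with complementary root subgroups $Y$ outside $\mathcal{U}$ so that, after Fourier-expanding $\varphi$ along $[Y]$ and applying the exchange-of-roots lemma, $\int_{[\mathcal{U}]}\xi(u)\varphi(ug)\,du$ is rewritten as a Fourier coefficient $\int_{[U_R]}\varphi(u'g)\,du'$ along the unipotent radical $U_R$ of a \emph{proper} parabolic $R\subsetneq\GL_6$ with \emph{trivial} character; this is a constant term of $\varphi$, so it vanishes by cuspidality. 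The hypothesis $\ell\subseteq V_7$ enters exactly here: it forces $\ell\subseteq\Ann_{R}(\ell)$, hence makes $\overline{N(\ell)}$ occupy precisely the root spaces needed for the exchange to complete $\mathcal{U}$ to such a $U_R$ (when $\ell$ is not traceless, $H_V\cap\G_2$ is the reductive group $\SL_3$, no such unipotent is available, and that case has to be handled by a different argument).

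The main obstacle is this last step: determining $\overline{N(\ell)}$ precisely enough — that is, computing the action of the Heisenberg parabolic of $\G_2$ on $\Ann_{R}(\ell)$ — and then arranging the correct sequence of root exchanges. The identification $M_H(V) = \overline{P(\ell)}\ltimes U$ and the reduction to the Fourier coefficient are routine bookkeeping once the orbit analysis of Section \ref{sec:orbits} and the measure normalizations of Section \ref{sec:Eis} are in place.
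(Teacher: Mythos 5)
Your reduction is essentially the paper's setup and is correct as far as it goes: with $\ell=Fe_3^*$, the stabilizer $H_V=P(\ell)\ltimes (H_V\cap N)$, the image of $H_V\cap N$ in $\GL(V)\cong\GL_6$ is the full $9$-dimensional unipotent radical of the $(1,4,1)$-parabolic (injectivity of $y\mapsto\phi_y$ follows, as you say, from $\Ann_R(\ell)$ having trivial right annihilator), the image of $P(\ell)$ sits in the $(1,4,1)$-Levi, and $K_V(\varphi)$ reduces to a unipotent period of a $\GL_6$-cusp form. (One terminological slip: $P(\ell)$ is \emph{not} the Heisenberg parabolic of $\G_2$; by Proposition \ref{prop:G2parab} the line stabilizer has a $3$-step unipotent radical, while the Heisenberg parabolic of Proposition \ref{prop:G2Ustab} stabilizes a null two-plane. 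This does not affect the structure of your argument.)

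The genuine gap is that the decisive step is not proved. You reduce everything to the vanishing of $\int_{[\mathcal U]}\xi(u)\varphi(ug)\,du$ with $\mathcal U=\overline{N(\ell)}\ltimes U$, and then propose an unspecified sequence of root exchanges whose feasibility hinges on "determining $\overline{N(\ell)}$ precisely enough" --- a computation you explicitly defer and label the main obstacle. But that computation \emph{is} the proof: once the action of $P(\ell)$ on $F\oplus\Ann_R(\ell)\oplus\ell$ is written out in the ordered basis $e_{11},\epsilon_2,e_1,e_2,e_3^*,e_3^*$, one finds (this is \eqref{eqn:Vsubgrp} in the paper) that $M_H(V)$ contains the entire $8$-dimensional unipotent radical of the $(4,2)$-parabolic of $\GL_6$ (rows $1$--$4$, columns $5$--$6$): five of those root spaces come from the parts of $U$ not carrying the character, and the remaining three, inside the middle $\GL_4$-block, come from the image of $N(\ell)$ mapping $\Ann_R(\ell)$ into $\ell$. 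Since $\xi$ depends only on the $(1,2)$- and $(5,6)$-entries, it is trivial on this block, so the integral over $[M_H(V)]^1$ has as inner integral the constant term of $\varphi$ along the unipotent radical of a maximal parabolic, which vanishes by cuspidality --- no Fourier expansion or exchange-of-roots lemma is needed. So while your plan is not obviously wrong, as written it leaves the actual vanishing unestablished, and the machinery you invoke is heavier than what the deferred computation already yields.
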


\begin{proof}
For concreteness, assume $\ell = e_3^* \in \Theta$.  Then $\Ann_{R}(\ell)$ is spanned by $\epsilon_2,e_1,e_2$ and $e_3^*$.  Recall that the parabolic subgroup $P(\ell) \subseteq \G_2$ stabilizing $\ell$ has Levi subgroup $\GL_2$.  With this ordered basis of $V$, one computes that $M_H(V)$ acts on $V$ as the subgroup of elements of the form
\begin{equation}\label{eqn:Vsubgrp}\left(\begin{array}{ccccc} 1 & x & * & * &* \\ & 1 & 0& * & * \\& & g & * & * \\ & & & \det(g) & x' \\ &&&& \det(g) \end{array}\right),\end{equation}
for $g \in \GL_2$.  The character $\xi$ on such an element is only a function of $x$ and $x'$.  Thus $K_V(\varphi)$ vanishes in this case by constant term of $\varphi$ along the upper right $4 \times 2$ block of \eqref{eqn:Vsubgrp}.
\end{proof}

We now consider the case in which the isotropic line is not contained in $V_7$, but instead spanned by $\epsilon_1$.  Then $K_V(\varphi)$ again vanishes.  To see this, first note that $\Ann_{R}(\ell)$ is spanned by $e_1^*, e_2^*,e_3^*$ and $\epsilon_2$.  Using these elements to form an ordered basis of $\Ann_{R}(\ell)$ and thus $V$, one finds that $M_H(V)$ acts on $V$ by elements of the form
\begin{equation}\label{eqn:411ep1} R' = \left\{r' = \left(\begin{array}{cccc} 1 & x & * &* \\ & 1 & 0 & x' \\ & & g & * \\ & &&1\end{array}\right) : g \in \SL_3\right\}.\end{equation}
Define a character $\xi'$ on such an element by $\xi'(r') =\psi(x + x')$.  Then the character $\xi$ on $N$ induces the character $\xi'$ on $R'$.

\begin{lemma}
Suppose $\varphi$ is a cusp form on $\GL_6(\BA)$.  Then the integral
\[\int_{R'(F)\backslash R'(\A)}{\xi'(r')\varphi(r')\,dr'}\]
vanishes.  Consequently, $K_V(\varphi) = 0$.
\end{lemma}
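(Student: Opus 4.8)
\emph{Set-up.} I would first unwind the group. By \eqref{eqn:411ep1}, $R'$ is the semidirect product $R'=U'\rtimes\SL_3$, where $U'$ is the unipotent radical of the standard parabolic of $\GL_6$ with Levi $\GL_1\times\GL_4\times\GL_1$ (block sizes $1,4,1$), the factor $\SL_3\hookrightarrow\GL_4$ acts on the three middle coordinates, $\xi'$ is trivial on $\SL_3$, and $\xi'|_{U'}$ is the character $\psi_{U'}(u)=\psi(u_{1,2}+u_{2,6})$, which $\SL_3$ fixes. Thus
\[
\int_{R'(F)\bsl R'(\A)}\xi'(r')\varphi(r')\,dr'=\int_{[\SL_3]}\Big(\int_{[U']}\psi_{U'}(u)\,\varphi(ug)\,du\Big)\,dg .
\]
In particular $R'$ lies in the mirabolic $P^{\mathrm{mir}}_6$ of $\GL_6$ (the elements of $R'$ have last row $e_6^{\,t}$), and $\xi'$ is unramified in the last-column directions except on the $(2,6)$-entry.

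\emph{Strategy.} Since this is a formal-vanishing step (absolute convergence being granted by Proposition \ref{prop:absConv}), I would exploit cuspidality through the Piatetski--Shapiro/Shalika expansion $\varphi(p)=\sum_{\gamma\in U_5(F)\bsl\GL_5(F)}W_\varphi(\diag(\gamma,1)p)$, valid for $p\in P^{\mathrm{mir}}_6(\A)$, and then unfold. Writing $r'=r'_5\cdot v(c)$ with $r'_5\in R'_5:=\{\left(\begin{smallmatrix}1&x&u_1\\ &1&0\\ &&g\end{smallmatrix}\right):g\in\SL_3\}\subset\GL_5$ and $v(c)$ in the ``last column'' $V\cong\mathbb{G}_a^5$, the inner integration over $[V]$ carries the $\psi(c_2)$-part of $\xi'$; substituting the expansion and moving the resulting upper-triangular piece past $\diag(h,1)$ using $W_\varphi(u\cdot)=\psi_6(u)W_\varphi(\cdot)$, one finds that only the cosets with $5$th row of $\gamma$ equal to $-e_2^{\,t}$ survive, yielding
\[
\int_{[R']}\xi'\varphi=\int_{[R'_5]}\psi\big(x(r'_5)\big)\sum_{\gamma\in U_5(F)\bsl C(F)}W_\varphi\big(\diag(\gamma r'_5,1)\big)\,dr'_5,\qquad C=\{\gamma\in\GL_5:\ \gamma_{5*}=-e_2^{\,t}\}.
\]
I would then iterate: integrating out the $u_1$-block (on which $\xi'$ is trivial) and finally the copy of $\SL_3$ inside $R'_5$, each step either producing a partial Whittaker sum over a shrinking Weyl-type index set or collapsing it. An alternative route is to combine the ``exchanging roots'' lemma (the abelian root groups $U_{(3,6)},U_{(4,6)},U_{(5,6)}$ of $U'$ can be traded for $U_{(2,3)},U_{(2,4)},U_{(2,5)}$, after which $U'$ becomes the unipotent radical of the \emph{proper} parabolic with Levi $\GL_1\times\GL_1\times\GL_3\times\GL_1$) with a Fourier expansion along $U_{(2,3)},U_{(2,4)},U_{(2,5)}$, using that $\SL_3(F)$ is transitive on the nontrivial characters there; the purely non-generic term is then a constant term along that proper parabolic and vanishes by cuspidality.

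\emph{The main obstacle.} The crucial point — and the reason the clean one-line argument used for the $(2,2,2)$-orbits and for the $\ell\subset V_7$ case does not apply here — is that the character $\psi(x+x')$ is supported on the $(1,2)$- and $(2,6)$-root spaces and the $(2,3)$-block of $R'$ vanishes, so no character-trivial normal subgroup of $R'$ is the unipotent radical of a proper parabolic of $\GL_6$. Concretely, $\int_{[U']}\psi_{U'}(u)\varphi(ug)\,du$ is \emph{not} identically zero in $g$, so the integration over $\SL_3$ must be used in an essential way. The real work is therefore the bookkeeping in the unfolding above: identifying the finite set of Weyl-type representatives surviving at each step, tracking how $\xi'$ and the auxiliary $\SL_3$-integral interact with them, and checking that in every surviving term the character against which $W_\varphi$ (or a lower constant term of $\varphi$) is integrated is incompatible with the standard Whittaker character — this incompatibility being forced precisely by the vanishing $(2,3)$-block together with the shape of $\xi'$. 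I expect this combinatorial/representative analysis to be the bulk of the proof, the inputs (Shalika expansion, root exchange, cuspidality along proper parabolics) being standard.
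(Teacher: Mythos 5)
Your set-up is accurate: $R'=U'\rtimes\SL_3$ with $U'$ the unipotent radical of the $(1,4,1)$-parabolic and $\xi'=\psi(u_{1,2}+u_{2,6})$, and your first unfolding step is correct — since the second row of $r_5'^{-1}$ is $e_2$, the $[V]$-integration does select exactly the cosets $\gamma$ whose fifth row is $-e_2$. You are also right that no character-trivial normal subgroup of $R'$ is a full unipotent radical of a proper parabolic, so the one-line constant-term argument used for the other orbits is unavailable and the $\SL_3$-integration must enter essentially. But the proof is not actually given: the decisive step — showing that every term surviving the iterated unfolding/Fourier expansion vanishes — is precisely what you defer ("I expect this combinatorial/representative analysis to be the bulk of the proof"). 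In the root-exchange variant you only dispose of the constant term; the non-constant terms, which are where all the work lies, are left untreated, and the group obtained by trading $U_{(3,6)},U_{(4,6)},U_{(5,6)}$ for $U_{(2,3)},U_{(2,4)},U_{(2,5)}$ is the unipotent radical of the $(1,1,4)$-parabolic, not of the $(1,1,3,1)$-parabolic, so even the intended intermediate object is not pinned down. As it stands this is a plausible plan, close in spirit to the actual argument, but not a proof.

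For comparison, here is what has to be (and in the paper is) done. After a change of basis putting the $\SL_3$-block in the upper-left corner, the twisted period over $[R']$ is reduced to the vanishing, for all right translates of $\varphi$, of the untwisted period over $[R''']$, where $R'''=\SL_3\ltimes N'''$ and $N'''$ is a $3$-dimensional column group on which $\SL_3$ acts by its standard representation. That vanishing is then proved by two successive Fourier expansions: first along a $3$-dimensional abelian column group, where the constant term is killed by cuspidality along the $(3,3)$ and $(2,4)$ parabolics and the non-constant terms are collapsed, via $\SL_3(F)$-transitivity on non-trivial characters, to a single integral over $P'_{2,1}(F)\backslash \SL_3(\A)$; then a second expansion along a $2$-dimensional group, where the two $\SL_2(F)$-orbits of characters are handled by cuspidality along the $(2,4)$ and $(1,5)$ parabolics respectively. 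Carrying out an analysis of exactly this kind (or the analogous bookkeeping of surviving representatives in your Shalika-expansion route, together with the verification that each surviving character is incompatible with the Whittaker character) is the content of the lemma; until it is done, the vanishing — and hence $K_V(\varphi)=0$ — has not been established.
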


\begin{proof}
By changing bases, this integral over $R'$ becomes an integral over the subgroup
\[R'' = \left\{\left(\begin{array}{cccc} g & 0 & 0 &* \\ *& 1 & x& * \\ & & 1& x' \\ & &&1\end{array}\right): g \in \SL_3\right\}.\]
Set
\[R''' = \left\{\left(\begin{array}{cccc} g & 0 & 0 &* \\ & 1 & 0& 0 \\ & & 1& 0 \\ & &&1\end{array}\right): g \in \SL_3\right\}.\]
Here the $*$ is a $3 \times 1$ block. In fact, one has that the integral of a cusp form $\varphi$ over $[R''']$ vanishes, from which the lemma follows.

To see that $\int_{[R''']}{\varphi(r)\,dr} = 0$, one proceeds as follows.  Denote by $N'$ the abelian unipotent group consisting of matrices of the form
\[n(x_1,x_2,x_3) := \left(\begin{array}{cccc} 1_3 & x_1 & x_2 & x_3 \\ & 1 & 0& 0 \\ & & 1& 0 \\ & &&1\end{array}\right).\]
Here $x_1, x_2, x_3$ are $3 \times 1$ column vectors.  Denote by $N''$ the subgroup of $N'$ consisting of those matrices with $x_1  = 0$, and denote by $N'''$ the subgroup of $N'$ consisting of those matrices with $x_1 = x_2 = 0$.

For $r_2 \in F^3$ a column vector, define
\[\varphi_{r_2}(g) = \int_{[N'']}{\psi(r_2 x_2)\varphi(n(0,x_2,x_3)g)\,dx_2\,dx_3}.\]

Then we have
\[\int_{[N''']}{\varphi(ng)\,dn} = \varphi_{0}(g) + \sum_{\gamma \in P'_{2,1}\backslash \SL_3(F)}{\varphi_{(0,0,1)}(\gamma g)}.\]
Here $P'_{2,1}$ is the subgroup of $\SL_3$ consisting of matrices of the form $\mm{g}{*}{0}{1}$ with $g \in \SL_2$ and $*$ a $2 \times 1$ column vector, and $\SL_3$ is embedded in $\GL_6$ as $g \mapsto \mm{g}{0}{0}{1_3}$.

By Fourier expanding $\varphi_{0}(g)$ along the $x_1$ coordinates, and letting $\SL_3$ act on the Fourier coefficients, one verifies immediately that $\int_{[\SL_3]}{\varphi_{0}(g)\,dg} = 0$, using the cuspidality of $\varphi$ along the unipotent radical of the $(3,3)$ and $(2,4)$ parabolics.  Thus, we are reduced to showing the vanishing of
\begin{equation}\label{eqn:Runfold}\int_{[\SL_3]}{\sum_{\gamma \in P'_{2,1}(F)\backslash \SL_3(F)}{\varphi_{(0,0,1)}(\gamma g)\,dg}} = \int_{P'_{2,1}(F)\backslash \SL_3(\A)}{\varphi_{(0,0,1)}(g)\,dg}.\end{equation}

To show the vanishing of the right-hand side of \eqref{eqn:Runfold}, Fourier expand $\varphi_{(0,0,1)}(g)$ along the two-dimensional unipotent subgroup consisting of matrices $n(x_1,0,0)$ where the last entry of $x_1$ is $0$.  Then, the $\SL_2(F)$ in $P'_{2,1}$ acts on these Fourier coefficients with two orbits, corresponding to the constant and nonconstant terms.  Now, one proceeds as above: the integral of the constant term over $[P_{2,1}']$ vanishes by the cuspidality of $\varphi$ along the unipotent radical of the $(2,4)$ parabolic.  The integral of the nonconstant terms vanish by the cuspidality of $\varphi$ along the unipotent radical of the $(1,5)$ parabolic.  This completes the proof of the lemma.\end{proof}

\subsection{The last orbit}\label{ssec:GRperiod}
Finally, we consider the integrals $I_V(\phi,s)$ and $J_V(\phi,s)$ associated to the orbit $V = V(\NullU)$.
We continue assuming that $\Rel(s)$ is large so that Proposition \ref{prop:absConv} holds.
 Recall that in this case, $\NullU = \{(0,\mm{*}{*}{0}{0})\} \subseteq \Theta$ in the notation of the Cayley-Dickson construction of $\Theta$ from $D = M_2(F)$.  Furthermore, the stabilizer $H_0 \cap P$ of $V(\NullU)$ inside $\G_2$ is the Heisenberg parabolic $P(\NullU)$, and we have $H_0 \cap P = (H_0 \cap M) (H_0 \cap U)$ with $H_0\cap M\simeq \GL_2$ being a Levi of $P(\NullU)$. $\GL_2\simeq H_0\cap M$ acts on $\Theta$ via
\begin{equation}\label{eqn:LeviAct}g.(x,y) = \left(gxg^{-1}, \mm{\det(g)}{}{}{1}yg^{-1}\right),\;g\in \GL_2,(x,y)\in \Theta\end{equation}
in the notation of the Cayley-Dickson construction.

\begin{lemma}\label{lem:helpComp} Set $\overline{\NullU} = \{(0,\mm{0}{0}{*}{*})\} \subseteq \Theta$, and define $\overline{V} = \{n(x,y;z): x,y, z \in \overline{\NullU}\}$.
\begin{enumerate}
\item The inclusion $\overline{V} \rightarrow N$ induces an isomorphism $\overline{V} \simeq (P \cap N)\backslash N$.
\item The character $\xi$ is trivial on $H \cap U$ and on $\overline{V}$.
\item Denote by $dv$ the Haar measure on $\overline{V}$.  Then for $x \in H_0\cap M \subseteq P(\Omega) \subseteq \G_2$, $x\bar{V}x^{-1}=\bar{V}$ with the Jacobian $\frac{d(x v x^{-1})}{dv} = |\det(x)|^{-3}=\delta_{P(\Omega)}(x)^{-1}$.
\end{enumerate}
\end{lemma}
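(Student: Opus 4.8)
The plan is to verify the three items in turn, working throughout in the Cayley--Dickson model $\Theta = D \oplus D$ with $D = M_2(F)$ and $\gamma = 1$, so that $\NullU = \{(0,\mm{*}{*}{0}{0})\}$ and $\overline{\NullU} = \{(0,\mm{0}{0}{*}{*})\}$; both lie in $V_7$, and a one-line computation with the Cayley--Dickson product shows both are null --- for $\overline{\NullU}$ this is the same check as the one showing $\NullU$ is null, and it also shows $\overline{V}$ is an abelian (vector) subgroup of $N$, its Haar measure being the additive one on $\overline{\NullU}^{3}$. Item (1) is a structural computation inside $N \simeq N(\Theta)$; items (2) and (3) then follow quickly from formulas already recorded.

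For item (1), I would first identify $P \cap N$ inside $N(\Theta)$. The space $V(\NullU)$ of \eqref{eqn:V(U)} is the $(2,2,2)$-orbit with $V(a_1) = V(a_2) = V(a_3) = \NullU$, so Lemma \ref{lem:222Stab} says $n(x,y;z)$ stabilizes it exactly when $x,y \in \NullU^{\perp}$ and $z^{*} \in \NullU^{\perp}$; since $\NullU \subseteq V_7$ we have $(\NullU^{\perp})^{*} = \NullU^{\perp}$, so $P \cap N = \{ n(x,y;z) : x,y,z \in \NullU^{\perp} \}$. Next I would record the linear-algebra fact $\Theta = \NullU^{\perp} \oplus \overline{\NullU}$: this is a dimension count ($\dim \NullU^{\perp} = 6$, $\dim \overline{\NullU} = 2$) together with $\NullU^{\perp} \cap \overline{\NullU} = 0$, the latter being the statement that the bilinear form on $\Theta$ restricts to a perfect pairing $\NullU \times \overline{\NullU} \to F$, which is immediate in coordinates. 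Then, given $n(x,y;z) \in N$, I would split $x = x' + \bar{x}$, $y = y' + \bar{y}$ and $z - x'\bar{y} = c' + \bar{z}$ along $\Theta = \NullU^{\perp} \oplus \overline{\NullU}$ and check, using the multiplication law of $N(\Theta)$, that $n(x,y;z) = n(x',y';c')\,n(\bar{x},\bar{y};\bar{z})$ with $n(x',y';c') \in P\cap N$ and $n(\bar{x},\bar{y};\bar{z}) \in \overline{V}$, and that this factorization is unique. All the maps in sight being polynomial, this says exactly that multiplication $(P\cap N) \times \overline{V} \to N$ is an isomorphism of varieties, i.e.\ $\overline{V} \cong (P\cap N)\backslash N$. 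The one point needing care is the Heisenberg cocycle: it is $z - x'\bar{y}$, not $z$, that must be decomposed, which is why the $z$-entry of the $(P\cap N)$-factor is the $\NullU^{\perp}$-projection of $z - x'\bar{y}$.

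For item (2), triviality of $\xi$ on $\overline{V}$ is immediate: $\overline{\NullU} \subseteq V_7$, so $\tr(x) = \tr(y) = 0$ for $n(x,y;z) \in \overline{V}$ and $\xi(n(x,y;z)) = \psi(0) = 1$. For $H \cap U$ (with $U = R_u(P)$), I would use that by Proposition \ref{prop:G2Norbits} the stabilizer $H_V = H \cap P$ is $(H_0 \cap P)(P\cap N)$ with $H_0 \cap P = P(\NullU)$, so any element of $H\cap U$ is $g\, n(x,y;z)$ with $g \in P(\NullU)$ and $x,y,z \in \NullU^{\perp}$, on which $\xi$ takes the value $\psi(\tr(x+y))$. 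Since $U$ acts trivially on $V(\NullU)$, examining the action of $g\,n(x,y;z)$ on $V(\NullU)$ through the diagonal $\G_2$-action and the formulas of Lemma \ref{HeisTransform} forces $g$ to act trivially on $\NullU$ and $x,y \in \Ann_{R}(\NullU) = \NullU$ (using that $\NullU$ is null); hence $\tr(x+y) = 0$ and $\xi$ is trivial on $H \cap U$.

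For item (3), let $x \in H_0 \cap M$. By \eqref{eqn:gconjN}, $x\,n(u_1,u_2;u_3)\,x^{-1} = n(xu_1, xu_2; xu_3)$, and \eqref{eqn:LeviAct} shows $x$ preserves $\overline{\NullU}$ (the factor $\mm{\det x}{}{}{1}$ acts on the zero top row) and acts on $\overline{\NullU} \simeq F^{2}$ by right multiplication by $x^{-1}$, so $\det(x|_{\overline{\NullU}}) = \det(x)^{-1}$. Hence $x\overline{V}x^{-1} = \overline{V}$, and under $\overline{V} \simeq \overline{\NullU}^{3}$ conjugation by $x$ is the linear map $(u_1,u_2,u_3) \mapsto (xu_1, xu_2, xu_3)$, whose Jacobian is $|\det(x|_{\overline{\NullU}})|^{3} = |\det(x)|^{-3}$. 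Finally, $P(\NullU)$ is the maximal parabolic of $\G_2$ with two-step unipotent radical, for which Section \ref{ssec:rho} gives $2\rho_{P(\NullU)} = 3\varpi_2$ with $\varpi_2 = \det$ on the $\GL_2$-Levi; hence $\delta_{P(\NullU)}(x) = |\det(x)|^{3}$ and the Jacobian equals $\delta_{P(\NullU)}(x)^{-1}$. The main obstacle is purely organizational --- keeping the Cayley--Dickson bookkeeping straight in item (1), especially the cocycle term, and pinning down $H\cap U$ in item (2); no step is conceptually difficult once Lemmas \ref{lem:222Stab} and \ref{HeisTransform}, Proposition \ref{prop:G2Norbits}, \eqref{eqn:gconjN}, \eqref{eqn:LeviAct}, and Section \ref{ssec:rho} are in hand.
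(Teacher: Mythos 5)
Your proof is correct and follows essentially the same route as the paper: Lemma \ref{lem:222Stab} together with the decomposition $\Theta = \NullU^{\perp}\oplus\overline{\NullU}$ for item (1), tracelessness of $\NullU$ and $\overline{\NullU}$ for item (2), and \eqref{eqn:gconjN}, \eqref{eqn:LeviAct} plus the computation of Section \ref{ssec:rho} for item (3). You merely supply more detail than the paper's three-line proof --- the explicit Heisenberg-cocycle factorization in item (1) and the treatment of a possible $\G_2$-component of $H\cap U$ in item (2), the latter being harmless since $\xi$ is trivial on $H_0$ --- so the underlying argument is the same.
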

\begin{proof} The stabilizer of $V(\NullU)$ inside of $N$ was computed in Lemma \ref{lem:222Stab} to be the elements $n(x,y;z)$ with $x,y,z \in \NullU^\perp \subseteq \Theta$.  With our choice of $\NullU$, $\NullU^\perp = \{(x,y): x \in M_2(F), y \in \mm{*}{*}{0}{0}\}$.  Thus, $H \cap N = \{n(x,y;z): x,y,z \in \NullU^{\perp}\}$.  The first item follows from this.

For the second item, again from Lemma \ref{lem:222Stab}, one has that the elements of $N$ that act trivially on $V(\NullU)$ are the $n(x,y;z)$ with $x,y,z \in \NullU$.  Thus $H \cap U = \{n(x,y;z): x,y,z \in \NullU\}$, and the second item is clear.

Finally, the third item follows immediately from \eqref{eqn:LeviAct}, \eqref{eqn:gconjN} and the last part of the paragraph \ref{ssec:rho}.
\end{proof}

Denote $I_1(\phi,s) = I_V(\phi, s)$ and $I_2(\phi, s) = J_V(\phi,s)$. Let
\begin{equation*}\label{eq:psii}
\psi_1(x) = (1-\htau_P(H_P(x)-T))\phi(x)e^{\langle s\varpi, H_P(x) \rangle}, \quad
\psi_2(x) = \htau_P(H_P(x)-T)M(s)\phi(x)e^{\langle -s\varpi, H_P(x) \rangle}.
\end{equation*}

As we discussed before, we have $H\cap P=(H_0\cap P)\times (N\cap P)$, $N=(N\cap P)\times \bar{V}$, and $H_0\cap P=(H_0\cap M)\times (H_0\cap N)$ with $\GL_2\simeq H_0\cap M$. Moreover, since both $P$ and $Q$ are semistandard parabolic subgroup and $M$ is a semistandard Levi subgroup, we also have $N\cap P=(N\cap M)\times (N\cap U)$. Combining with Lemma \ref{lem:helpComp}, for $i=1,2$, we have
\begin{eqnarray*}
I_i(\phi,s)&=&\int_{(H\cap P)(F)\back H(\BA)} \psi_i(h)\xi(h)dh  \\
&=&\int_{(H_0\cap P)(F)\back H_0(\BA)} \int_{(N\cap P)(F)\back N(\BA)} \psi_i(nh_0) \xi(n)dndh_0 \\
&=&\int_{K_{\G_2}} \int_{[H_0\cap M]} \int_{[H_0\cap U]} \int_{\bar{V}(\BA)} \int_{[N\cap M]} \int_{[N\cap U]} \psi_i(n_u n_m v u_h h k)
\xi(n_m) \delta_{P(\Omega)}(h)^{-1} \,d\mu \\
&=& \int_{K_{\G_2}} \int_{[H_0\cap M]}  \int_{\bar{V}(\BA)} \int_{[N\cap M]}  \psi_i(n_m v h k)\xi(n_m) \delta_{P(\Omega)}(h)^{-1} dn_m dv dh dk \\
&=&\int_{K_{\G_2}} \int_{\bar{V}(\BA)} \int_{[H_0\cap M]}  \int_{[N\cap M]}  \psi_i(n_m h v k)\xi(n_m) \delta_{P(\Omega)}(h)^{-2}  dn_m dh dv dk.
\end{eqnarray*}
Here $d \mu = dn_u dn_m dv du_h dh dk$.

\begin{lemma}\label{lem:VGR}
Under the isomorphism $M\simeq \{(g,\lambda)|\; g\in \GL_6,\lambda\in \GL_1,\;\det(g)=\lambda^3\}$ specified in the proof below, we have the following facts:
\begin{enumerate}
\item $H_0\cap M\simeq \{(diag(h,h,h),\det(h))\in \GL_6\times \GL_1|\; h\in \GL_2\}$;
\item $N\cap M\simeq \left\{\left(\begin{pmatrix}I_2&x&z\\0&I_2&y\\0&0&I_2\end{pmatrix},1\right)|\; x,y,z\in M_2\right\}$;
\item the character $\xi$ of $N\cap M$ is given by $\xi\left(\begin{pmatrix}I_2&x&z\\0&I_2&y\\0&0&I_2\end{pmatrix}\right)=\psi(\tr(x)+\tr(y))$.
\end{enumerate}
In particular, up to modulo the center, the integral $\int_{[H_0\cap M]}  \int_{[N\cap M]}$ gives us the period integral of the Ginzburg-Rallis model defined in the introduction.
\end{lemma}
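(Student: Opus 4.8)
The plan is to make the isomorphism $M \simeq \{(\lambda,g) \in \GL_1 \times \GL_6 : \lambda^3 = \det g\}$ explicit and then track the three objects $H_0 \cap M$, $N \cap M$ and the character $\xi$ through it. Concretely, I would fix the isomorphism to be the one determined by the action \eqref{eqn:MactJ} on the decomposition $J = H_3(D) \oplus D^3$ of section~\ref{ssec:eisSerExpl} (with $D = M_2(F)$, $\Theta = D\oplus D$, $D^3 = M_{2,6}(F)$), normalized so that $(\lambda,g)\mapsto g$ acts as $\GL_6 = \GL_3(D)$ on the row vectors $D^3$. With this choice, items (1)--(3) become explicit matrix identities, and the final assertion follows by comparison with the definition of $\calP_{GR}$ in the introduction.

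For item (1): by Proposition~\ref{prop:G2Norbits} and the discussion opening section~\ref{ssec:GRperiod}, $H_0\cap M\simeq\GL_2$ is the Levi factor of the Heisenberg parabolic $P(\NullU)\subseteq\G_2$, and $h\in\GL_2$ acts on $\Theta$ through \eqref{eqn:LeviAct}. Since $\G_2$ acts on the right of $J$ by $a_i\mapsto g^{-1}(a_i)$, writing $a_i=(b_i,d_i)\in D\oplus D$ one reads off that $h$ sends $b_i\mapsto h^{-1}b_ih$ on the $H_3(D)$-summand and $d_i\mapsto\mm{(\det h)^{-1}}{}{}{1}d_ih$ on the $D^3$-summand. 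I would then compare this with \eqref{eqn:MactJ}: using the identity $u^{*}=\det(u)u^{-1}$ for $u\in M_2$, the normalizing scalar $\lambda^{-1}=(\det h)^{-1}$ turns the conjugate-transpose action of $g=\diag(h,h,h)$ on the $H_3(D)$-summand into the conjugation $b_i\mapsto h^{-1}b_ih$, and the same $\lambda^{-1}$ together with the twist $\mm{1}{}{}{\lambda}$ reproduces \eqref{eqn:LeviAct} on the $D^3$-summand. As $\lambda^3=(\det h)^3=\det(\diag(h,h,h))$, the pair $(\lambda,g)=(\det h,\diag(h,h,h))$ lies in $M$, which is item (1); moreover the center of this $\GL_2$ surjects onto the split central torus $A_P$ of $M$.

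For items (2) and (3): from Lemma~\ref{lem:helpComp} (or directly Lemma~\ref{lem:222Stab}) one has $N\cap P=\{n(x,y;z):x,y,z\in\NullU^\perp\}$ with $\NullU^\perp=D\oplus\NullU$ inside $\Theta=D\oplus D$, and $N\cap U=\{n(x,y;z):x,y,z\in\NullU\}$; since $D\cap\NullU=0$, a dimension count shows that $\{n(x,y;z):x,y,z\in D\}$ is a complement to $N\cap U$ inside $N\cap P$ and hence represents $N\cap M$. For $x,y,z$ in this first Cayley--Dickson summand $D$, the matrix product \eqref{eqn:Ntheta} restricted to the $H_3(D)$-summand of $J$ takes place inside the associative algebra $M_3(D)$, and I would check that $n(x,y;z)$ acts there by conjugate-transpose $Y\mapsto {}^{*}R\, Y\, R$ with $R=\left(\begin{smallmatrix}I_2&x&z\\0&I_2&y\\0&0&I_2\end{smallmatrix}\right)\in\GL_3(D)$; a parallel computation on the $D^3$-summand then identifies $n(x,y;z)$ with the element $(\lambda,g)=(1,R)$ of $M$, giving item (2). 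For item (3), $\xi(n(x,y;z))=\psi(\tr(x+y))$ by section~\ref{ssec:periodDef}, and since the octonion trace of $(x,0)\in D\oplus D$ equals the reduced trace $\tr_D(x)=\tr(x)$, under the identification of item (2) the character $\xi$ on $N\cap M$ becomes $\left(\begin{smallmatrix}I_2&x&z\\0&I_2&y\\0&0&I_2\end{smallmatrix}\right)\mapsto\psi(\tr x+\tr y)$.

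Finally, substituting items (1)--(3) into the integral $\int_{[H_0\cap M]}\int_{[N\cap M]}$ occurring in the unfolded expression for $I_i(\phi,s)$ identifies its inner part with the Ginzburg--Rallis integral: the $[N\cap M]$-integral is $\int_{(M_2(F)\bsl M_2(\A))^3}$ against $\psi(\tr x+\tr y)$, and $[H_0\cap M]=\GL_2(F)\bsl\GL_2(\A)$ differs from the $\GL_2(F)\bsl\GL_2(\A)^1$ of $\calP_{GR}$ only by the archimedean split center $A_P^\infty$ of $M$ (onto which the center of $H_0\cap M\simeq\GL_2$ surjects), which after unfolding is carried by the factor $e^{\langle\pm s\varpi,H_P(\cdot)\rangle}$ of the Eisenstein series; this is the ``up to modulo the center'' in the statement. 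The main obstacle is item (1): matching the $\G_2$-side description of $H_0\cap M$ (a Levi of the Heisenberg parabolic acting on the octonions) with its $\GL_6$-side description coming from the $A_5$-parabolic of $\E_6$, while keeping careful track of the conjugate-transpose involution on $M_3(D)$ and of the two scalar twists $\lambda^{-1}$ and $\mm{1}{}{}{\lambda}$ appearing in \eqref{eqn:MactJ}.
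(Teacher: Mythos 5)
Your proposal is correct and follows essentially the same route as the paper: the isomorphism $M\simeq\{(\lambda,g)\}$ is pinned down via \eqref{eqn:MactJ}, item (1) is read off by matching \eqref{eqn:LeviAct} with $(\lambda,g)=(\det h,\diag(h,h,h))$, and items (2)--(3) come from the explicit Heisenberg action of Lemma \ref{HeisTransform}. The only cosmetic difference is that the paper computes the action of $N\cap P$ on the six-dimensional space $V(\NullU)$ and then applies the twist $m\mapsto {}^{*}m^{-1}$, whereas you verify directly that the complement subgroup $\{n(x,y;z):x,y,z\in D\}$ lies in the chosen Levi $M$ by computing its action on both summands of $J=H_3(D)\oplus D^3$; the two computations are equivalent.
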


\begin{proof} We first chose a basis of $V$.  To do so, fix the ordered basis $b_1 = (0,\mm{1}{0}{0}{0})$ and $b_2 = (0,\mm{0}{1}{0}{0})$ of $\NullU$.  We have
\[V=V(\NullU) = V(\NullU)(a_1) \oplus V(\NullU)(a_2) \oplus V(\NullU)(a_3) = \NullU \oplus \NullU \oplus \NullU.\]
This basis of $b_1, b_2$ of $\NullU$ thus gives an ordered basis $b_1^{(1)}, b_2^{(1)}, b_1^{(2)}, b_2^{(2)}, b_1^{(3)},b_2^{(3)}$ of $V(\NullU)$, with $b_j^{(k)}$ meaning the element $b_j$ considered in $V(\NullU)(a_k) = \NullU$.

The map $M \rightarrow \GL(V)\simeq \GL_6$ comes from the action of $M$ on $V=V(\Omega)$, together with the identification $\GL(V) \simeq \GL_6$ given by the above ordered basis of $V$.  More precisely, suppose an element $g \in M$ acts on the right of $V$ by an element $m = m(g)$ of $\GL(V)$.  Then, under the identification of $M$ with $\{(\lambda,h) \in \GL_1 \times \GL_6: \lambda^3 = \det(h)\}$ the image of $g$ in $\GL_6$ is $\,^*m^{-1}$; see \eqref{eqn:MactJ}.

With these identifications, the first item of the lemma is clear using \eqref{eqn:LeviAct}.  We now prove the second and third items of the lemma.

Thus suppose $x = (w_x,*), y = (w_y, *), z = (w_z,*) \in \NullU^\perp \subseteq \Theta$.  Via Lemma \ref{HeisTransform}, one computes that in the above ordered basis of $V$, $n(x,y;z)$ acts on $V$ via the matrix
\[\tilde{w}(x,y,z) = \left(\begin{array}{ccc} 1_2 & 0 & 0 \\ -w_x^*& 1_2 & 0 \\ * & -w_y^* & 1_2 \end{array}\right)\]
and that the image of $H_{V} \cap N$ inside $M_H(V)$ contains all matrices in the unipotent radical of the lower-triangular $(2,2,2)$ parabolic of $\GL(V)$. Here $w \mapsto w^*$ is the conjugation on the quaternion algebra $D = M_2(F)$.

Thus the image of $n(x,y;z) \in H \cap P$ in $\GL_6$ is
\[w(x,y,z) = \left(\begin{array}{ccc} 1_2 & w_x & * \\ 0 & 1_2 & w_y\\ 0 & 0 & 1_2 \end{array}\right).\]
Since $\xi(n(x,y;z)) = \psi(\tr(x+y)) = \psi(\tr(w_x+w_y))$, the lemma follows.\end{proof}

By the lemma above, we have
\begin{eqnarray}\label{eq:IiSemiDone}
I_i(\phi,s)&=&\int_{K_{\G_2}} \int_{\bar{V}(\BA)} \int_{[H_0\cap M]}  \int_{[N\cap M]}  \psi_i(n_m h v k)\xi(n_m) \delta_{P(\Omega)}(h)^{-2}  dn_m dh dv dk\\
&=&\int_{K_{\G_2}} \int_{\bar{V}(\BA)} \int_{[H_0\cap M]^1}  \int_{[N\cap M]}\int_{A_{P}^{\infty}}  \psi_i(an_m h v k)\xi(n_m) \delta_{P(\Omega)}(a)^{-2} da dn_m dh dv dk. \nonumber
\end{eqnarray}

\begin{proposition}\label{prop:truncIdentity}
For all $T \in \R$ sufficiently big we have the following equality of meromorphic functions on $\C$
\begin{align*}
\calP_{\G_2}(\La^{T}E(\phi, s))  &=
\dfrac{e^{(s-1/2)T}}{s-1/2}
\int_{K_{\G_2}}\int_{\overline{V}(\A)} \int_{[H_0\cap M]^1}  \int_{[N\cap M]} e^{\langle \frac{1}{2}\varpi, H_{P}(vk) \rangle} \phi( n_m h v k )\xi(n_m) \,dn_m dh dv dk  \quad + \\
& \dfrac{e^{(-s-1/2)T}}{-s-1/2}\int_{K_{\G_2}}\int_{\overline{V}(\A)}
\int_{[H_0\cap M]^1}  \int_{[N\cap M]} e^{\langle \frac{1}{2}\varpi, H_{P}(vk) \rangle} M(s)\phi(n_m h v k ) \xi(n_m) \, dn_m dh dv dk.
\end{align*}
\end{proposition}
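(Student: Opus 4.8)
The plan is to evaluate explicitly the two integrals that survive the orbit analysis of Sections \ref{ssec:vanish1} and \ref{ssec:vanish2}, by carrying out the innermost integral over $A_{P}^{\infty}$ in \eqref{eq:IiSemiDone}, and then to pass from $\Rel(s)\gg 0$ to all of $\C$ by meromorphic continuation. First, Lemma \ref{lem:cvgLaTE} makes $\calP_{\G_2}(\La^{T}E(\phi,s))$ well defined, and unfolding the Eisenstein series together with the constant-term formula $E(g,\phi,s)_{P}=\phi(g)e^{\langle s\varpi,H_{P}(g)\rangle}+e^{\langle-s\varpi,H_{P}(g)\rangle}M(s)\phi(g)$ gives $\La^{T}E(\phi,s)=\sum_{\delta\in P(F)\backslash\E_6(F)}\bigl(\psi_{1}-\psi_{2}\bigr)(\delta\,\cdot\,)$; unfolding against $\xi$ then yields the orbit decomposition \eqref{eq:periodDecomp}, and the vanishing results of Sections \ref{ssec:vanish1} and \ref{ssec:vanish2} collapse it to the single orbit $V=V(\NullU)$, i.e. for $\Rel(s)$ and $T$ large to $I_{1}(\phi,s)-I_{2}(\phi,s)$. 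Lemma \ref{lem:VGR} and the manipulations preceding \eqref{eq:IiSemiDone} have already put $I_{1},I_{2}$ in the shape \eqref{eq:IiSemiDone}, an iterated integral absolutely convergent for $\Rel(s)$ large; by Fubini it thus suffices to compute, for fixed $(k,v,h,n_{m})$, the inner integral $\int_{A_{P}^{\infty}}\psi_{i}(an_{m}hvk)\,\delta_{P(\Omega)}(a)^{-2}\,da$.

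This last integral is where the actual work lies. As $P$ is maximal, $A_{P}^{\infty}$ is one-dimensional; I would identify it with $\R$ via the measure-preserving map $a\mapsto t:=\langle\varpi,H_{P}(a)\rangle$. Using $\phi(ag)=e^{\langle\rho_{P},H_{P}(a)\rangle}\phi(g)$, the additivity $H_{P}(pg)=H_{P}(p)+H_{P}(g)$ for $p\in P(\A)$, and the fact that $\varpi$ annihilates $H_{P}(n_{m}h)$ --- the $\GL_{6}$-component of $n_{m}h\in M(\A)$ has norm-one determinant and $n_{m}$ is unipotent --- so that $H_{P}(n_{m}hvk)=H_{P}(vk)$, the $t$-dependence of $\psi_{i}(an_{m}hvk)\,\delta_{P(\Omega)}(a)^{-2}$ reduces to a single exponential $e^{ct}$ on the half-line cut out by $\htau_{P}$. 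With $\rho_{P}=\frac{11}{2}\varpi$ from Section \ref{ssec:rho} and $\delta_{P(\Omega)}(a)=e^{3t}$ --- which follows from $\rho_{P(\Omega)}=\frac{3}{2}\varpi_{2}$, $\varpi_{2}=\det$, and the explicit embedding $H_{0}\cap M\simeq\GL_{2}\hookrightarrow M$ of Lemma \ref{lem:VGR}(1) via \eqref{eqn:MactJ} --- the exponent collects to $\frac{11}{2}+s-6=s-\frac{1}{2}$ for $\psi_{1}$ and to $\frac{11}{2}-s-6=-s-\frac{1}{2}$ for $\psi_{2}$. Writing $u:=\langle\varpi,H_{P}(vk)\rangle$, this gives for $\Rel(s)>\frac{1}{2}$
\[
\int_{A_{P}^{\infty}}\psi_{1}(an_{m}hvk)\,\delta_{P(\Omega)}(a)^{-2}\,da=e^{su}\phi(n_{m}hvk)\int_{-\infty}^{T-u}e^{(s-1/2)t}\,dt=\frac{e^{(s-1/2)T}}{s-1/2}\,e^{\langle\frac{1}{2}\varpi,H_{P}(vk)\rangle}\phi(n_{m}hvk),
\]
and for $\Rel(s)>-\frac{1}{2}$
\[
\int_{A_{P}^{\infty}}\psi_{2}(an_{m}hvk)\,\delta_{P(\Omega)}(a)^{-2}\,da=e^{-su}M(s)\phi(n_{m}hvk)\int_{T-u}^{\infty}e^{(-s-1/2)t}\,dt=-\,\frac{e^{(-s-1/2)T}}{-s-1/2}\,e^{\langle\frac{1}{2}\varpi,H_{P}(vk)\rangle}M(s)\phi(n_{m}hvk).
\]
Substituting these into \eqref{eq:IiSemiDone}, pulling the scalar prefactors out of the remaining integral over $K_{\G_2}\times\overline{V}(\A)\times[H_{0}\cap M]^{1}\times[N\cap M]$, and forming $I_{1}-I_{2}$ yields, for $\Rel(s)$ and $T$ large, exactly the right-hand side of the proposition.

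It then remains to continue meromorphically. The left-hand side is meromorphic in $s$ because $E(\phi,s)$ is, and $\La^{T}$ and $\calP_{\G_2}$ preserve this. On the right-hand side the first term is a fixed scalar times $e^{(s-1/2)T}/(s-1/2)$ --- fixed because its defining integral does not involve $s$, and finite because $I_{1}(\phi,s)$ converges absolutely for $\Rel(s)\gg0$ by Proposition \ref{prop:absConv} --- while the second depends on $s$ only through the meromorphic family $M(s)$; so the right-hand side is meromorphic as well. Agreeing on $\{\Rel(s)\gg0\}$, the two sides agree on all of $\C$. I do not expect a single isolated obstacle: the real labor is the bookkeeping of modular characters and Harish--Chandra homomorphisms across the three parabolics $P\subseteq\E_6$, $Q\subseteq\E_6$, and $P(\Omega)\subseteq\G_2$. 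The one equality that genuinely has to come out right is the exponent $\frac{11}{2}-6=-\frac{1}{2}$ in the $A_{P}^{\infty}$-integral, since it is precisely this that produces a \emph{simple} pole at $s=\frac{1}{2}$ and thereby lets $\calP_{\G_2}$ detect $L(\frac{1}{2},\pi,\La^{3})$ in the proof of Theorem \ref{main}.
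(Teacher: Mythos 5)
Your proposal is correct and takes essentially the same route as the paper: reduce to $\Rel(s)\gg 0$, use \eqref{eq:periodDecomp} with the vanishing of the other orbits to isolate the closed orbit, start from \eqref{eq:IiSemiDone}, and evaluate the $A_{P}^{\infty}$-integral using $\rho_{P}=\tfrac{11}{2}\varpi$ and $\delta_{P(\Omega)}(a)=e^{\langle 3\varpi,H_{P}(a)\rangle}$ to produce the exponents $s-\tfrac12$ and $-s-\tfrac12$, then conclude by meromorphic continuation. The only (cosmetic) differences are that you integrate over $t\le T-u$ directly instead of the paper's change of variables $X+H_{P}(vk)\mapsto X$, and that you carry the truncation sign consistently as $I_{1}-I_{2}$, whereas the paper writes the decomposition as $I_{V}+J_{V}$ and absorbs the sign in the ``similarly'' computation of the second term; your bookkeeping lands on the same final identity.
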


\begin{proof}
It is enough to prove the equality when $Re(s)$ is large. Let $\Rel(s)$ be large.
Using the decomposition \eqref{eq:periodDecomp} and the vanishing results of section
\ref{ssec:vanish1} and \ref{ssec:vanish2}
we get $\calP_{\G_2}(\La^{T}E(\phi, s)) = I_1(\phi,s) + I_2(\phi,s)$.
We first study the term $I_1(\phi,s)$. Using the expression \eqref{eq:IiSemiDone} and the definition of the function $\psi_1$, we have
\begin{eqnarray*}
I_1(\phi,s)&=&\int_{K_{\G_2}} \int_{\bar{V}(\BA)} \int_{[H_0\cap M]^1}  \int_{[N\cap M]}\int_{A_{P}^{\infty}} (1-\hat{\tau}_P(H_P(avk)-T))\\ &&e^{\langle s\varpi, H_{P}(avk) \rangle} \delta_P(a)^{1/2}\delta_{P(\Omega)}(a)^{-2} \phi(n_m h v k)\xi(n_m)  da dn_m dh dv dk.
\end{eqnarray*}
By Lemma \ref{lem:VGR} together with the computation of the modular characters in section \ref{ssec:rho}, we have
$$\delta_P(a)=e^{\langle 11\varpi, H_{P}(a) \rangle},\;\delta_{P(\Omega)}(a)=e^{\langle 3\varpi, H_{P}(a) \rangle},\; a\in A_{P}^{\infty}.$$
Therefore, we get
\begin{eqnarray*}
I_1(\phi,s)&=&\int_{K_{\G_2}} \int_{\bar{V}(\BA)} \int_{[H_0\cap M]^1}  \int_{[N\cap M]}\int_{A_{P}^{\infty}} (1-\hat{\tau}_P(H_P(avk)-T))\\ &&e^{\langle s\varpi, H_{P}(avk) \rangle -\langle \frac{1}{2}\varpi, H_{P}(a) \rangle}  \phi(n_m h v k)\xi(n_m)  da dn_m dh dv dk.
\end{eqnarray*}
We can change the integral on $A_{P}^{\infty}$ to the integral on $\ago_P$, this implies that
\begin{eqnarray*}
I_1(\phi,s)&=&\int_{K_{\G_2}} \int_{\bar{V}(\BA)} \int_{[H_0\cap M]^1}  \int_{[N\cap M]}\int_{\ago_P} (1-\hat{\tau}_P(X+H_P(vk)-T))\\
&&e^{\langle s\varpi, X+H_{P}(vk) \rangle -\langle \frac{1}{2}\varpi, X \rangle}  \phi(n_m h v k)\xi(n_m)  dX dn_m dh dv dk\\
&=&\int_{K_{\G_2}} \int_{\bar{V}(\BA)} \int_{[H_0\cap M]^1}  \int_{[N\cap M]}\int_{\ago_P} (1-\hat{\tau}_P(X-T)) e^{\langle (s-\frac{1}{2})\varpi, X \rangle}\\
&&e^{\langle \frac{1}{2}\varpi, H_{P}(vk) \rangle} \phi(n_m h v k)\xi(n_m)  dX dn_m dh dv dk
\end{eqnarray*}
where the second equality is by the change of variables $X + H_P(vk) \mapsto X$. The integral on $\ago_P$ yields
$$\int_{\ago_P} (1-\hat{\tau}_P(X-T)) e^{\langle (s-\frac{1}{2})\varpi, X \rangle}dX=\int_{-\infty}^{T} e^{(s-\frac{1}{2})X}dX=\dfrac{e^{(s-1/2)T}}{s-1/2}.$$
This implies that
$$I_1(\phi,s)=\dfrac{e^{(s-1/2)T}}{s-1/2}
\int_{K_{\G_2}}\int_{\overline{V}(\A)} \int_{[H_0\cap M]^1}  \int_{[N\cap M]} e^{\langle \frac{1}{2}\varpi, H_{P}(vk) \rangle} \phi( n_m h v k )\xi(n_m) \,dn_m dh dv dk.$$
Similarly, we can also show that
$$I_2(\phi,s)=\dfrac{e^{(-s-1/2)T}}{-s-1/2}\int_{K_{\G_2}}\int_{\overline{V}(\A)}
\int_{[H_0\cap M]^1}  \int_{[N\cap M]} e^{\langle \frac{1}{2}\varpi, H_{P}(vk) \rangle} M(s)\phi(n_m h v k ) \xi(n_m) \, dn_m dh dv dk.$$
This finishes the proof of the proposition.
\end{proof}

\subsection{The proof of Theorem \ref{main}}\label{ssec:proofMain}
We are ready to prove Theorem \ref{main}. Taking the residue at $s=1/2$ of
the equality in Proposition \ref{prop:truncIdentity} we get
\begin{eqnarray*}
&&\int_{K_{\G_2}}\int_{\overline{V}(\A)} \int_{[H_0\cap M]^1}  \int_{[N\cap M]} e^{\langle \frac{1}{2}\varpi, H_{P}(vk) \rangle} \phi( n_m h v k )\xi(n_m) \,dn_m dh dv dk=\calP_{\G_2}(\La^{T}Res_{s=1/2}E(\phi, s))\\
&&  + e^{-T}\int_{K_{\G_2}}\int_{\overline{V}(\A)}\int_{[H_0\cap M]^1}  \int_{[N\cap M]}
e^{\langle \frac{1}{2}\varpi, H_{P}(vk) \rangle} Res_{s=1/2}M(s)\phi(n_m h v k ) \xi(n_m) \, dn_m dh dv dk.
\end{eqnarray*}
The interchange of the residue with truncation can be justified as in \cite{arthur2}, pages 47-48,
given that the period is absolutely convergent. By Lemma \ref{lem:VGR}, the inner integral $\int_{[H_0\cap M]^1}  \int_{[N\cap M]}$ is the Ginzburg-Rallis period. Hence if the Ginzburg-Rallis period is nonzero on the space of the representation $\pi$, one can always
find a $\phi \in \calA_{\pi}$ such that the integral
$$\int_{K_{\G_2}}\int_{\overline{V}(\A)} \int_{[H_0\cap M]^1}  \int_{[N\cap M]} e^{\langle \frac{1}{2}\varpi, H_{P}(vk) \rangle} \phi( n_m h v k )\xi(n_m) \,dn_m dh dv dk$$
does not vanish. This is a standard argument that can be proved as in \cite{ichYam}, Lemma 5.8. As a result we can always find $\phi\in \calA_{\pi}$ such that the left hand side (hence the right hand side) of the equality above is nonzero.
Since $Res_{s=1/2} E(\phi,s) \neq 0$ if and only if $Res_{s=1/2}M(s)\neq 0$ we must have that the former is non-zero for some $\phi$.
This completes the proof of Theorem \ref{main} under the assumption of Proposition \ref{prop:absConv}.

\begin{remark}
We expect that the period $\calP_{\G_2}(Res_{s=1/2}E(\phi, s))$ is absolutely convergent.
To put it differently, the period $\calP_{\G_2}$ is absolutely convergent on the space
of the square integrable representation $\Pi$ generated by $Res_{s=1/2}E(\phi, s)$.
It would then easily follow from
the identity above that $\calP_{\G_2}|_{\Pi} \neq 0$ if and only if $\calP_{GR}|_{\pi} \neq 0$.
Since we do not need this result to prove the relation between $\calP_{GR}$ and the central value
of the exterior cube $L$-function of $\GL_6$ we didn't pursue this goal here.
\end{remark}

\section{The proof of Proposition \ref{prop:absConv}}\label{sec:convergence}
In this section, we will prove that the integrals associated to each orbits are absolutely convergent. We first reduce the proof to some statement of norms on adelic variety. Then we introduce a notion of good pair and we show that it is enough to prove that $(H,H_V)$ is a good pair for all $V \in \calV/H$. Finally, we will show that $(H,H_V)$ is a good pair for all $V \in \calV/H$.

\subsection{Notations}\label{ssec:notNorm}
If $f_1$ and $f_2$ are two positive functions on a set $X$, we write
$$f_1\ll f_2$$
if there exists $C,d>0$ such that $f_1(x)\leq Cf_2(x)^d$ for all $x\in X$. We write
$$f_1\sim f_2$$
if $f_1\ll f_2$ and $f_2\ll f_1$.

If $X$ is an algebraic variety defined over $F$, we use $||\cdot ||_{X}$ to be the norm on $X(\BA_{\bar{F}})$. Here $X(\BA_{\bar{F}})=\cup_{E} X(\BA_{E})$ where $E$ runs over all the finite extension of $F$. We refer the readers to Appendix A of \cite{B17} for the definition and basic properties of the norms on adelic varieties.

\subsection{Some reduction}
Let $\| \cdot \|_G$ be the norm on $G(\A)$. We first study the majorization of constant terms.
\begin{lem}\label{lem:normBound}
For all $n \in \N$, there exist $c \in \R$ and $T_0\in \BR$ large such that for all $s \in \C$ and $T\in \BR$ with $Re(s) > c$ and $T>T_0$, we have:
\[
  (1-\htau_{P}(H_{P}(x) - T))\phi(x)e^{\langle s\varpi, H_{P}(x) \rangle} \leq
  (\min_{\delta \in P(F)}\|\delta x\|)^{-n}
\]
and
\[
  \htau_{P}(H_{P}(x) - T)M(s)\phi(x) e^{\langle -s\varpi, H_{P}(x) \rangle}\leq
  (\min_{\delta \in P(F)}\|\delta x\|)^{-n}
\]
for all $x \in G(\A)$.
\end{lem}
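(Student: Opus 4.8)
The plan is to exploit two ingredients: the rapid decrease of the truncated Eisenstein series away from the "cuspidal direction" and the comparison between various norms on $G(\A)$ and on the quotient $P(F)\backslash G(\A)$. First I would recall from section \ref{ssec:eis} that $\phi \in \calA_\pi$ is built from a cusp form on $[M]^1$, hence is rapidly decreasing on $M(F)U(\A)A_P^\infty\backslash G(\A)$ in the sense that for every $N$ there is a constant so that $|\phi(x)|\le c\,e^{\langle \rho_P,H_P(x)\rangle}(\inf_{\gamma\in M(F)}\|\gamma m_P(x)\|_M)^{-N}$, where $m_P(x)$ is the Levi part in the Iwasawa decomposition. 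Combined with $\delta_P(a)=e^{\langle 11\varpi,H_P(a)\rangle}$ and the formula $\rho_P=\tfrac{11}{2}\varpi$ from section \ref{ssec:rho}, the factor $\phi(x)e^{\langle s\varpi,H_P(x)\rangle}$ carries the exponential $e^{\langle (s+11/2)\varpi,H_P(x)\rangle}$ in the $A_P^\infty$ direction together with a rapidly decreasing factor in the Levi directions orthogonal to $A_P$. The cutoff $1-\htau_P(H_P(x)-T)$ forces $\varpi(H_P(x))\le T$, so that $e^{\langle s\varpi,H_P(x)\rangle}\le e^{sT}$ when $\Rel(s)>0$; this bounds the one potentially-growing exponential by a constant (depending on $s$ and $T$ but not on $x$). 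What remains is then bounded by a negative power of $\inf_{\gamma\in P(F)}\|\gamma x\|$ by the standard comparison $\inf_{\gamma\in P(F)}\|\gamma x\|_G \sim \inf_{\gamma\in M(F)}\|\gamma m_P(x)\|_M \cdot (\text{height in the }A_P\text{-direction, which is bounded below})$, i.e.\ a norm on the quasi-affine variety $P(F)\backslash G$ is dominated, up to $\ll$, by the Levi-norm appearing in the rapid-decrease estimate. Choosing $N$ large enough in the rapid-decrease bound then yields the first inequality for $\Rel(s)>c$ with $c$ depending only on $n$; the $T$-dependence is absorbed into the (suppressed) constant, so any $T>T_0$ works.

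For the second inequality the argument is the same after one observes that $M(s)\phi \in \calA_\pi$ as well (the intertwining operator, being meromorphic in $s$ and holomorphic for $\Rel(s)$ large, sends $\calA_\pi$ to $\calA_\pi$ with norm bounds locally uniform in $s$), so $M(s)\phi$ enjoys the identical rapid-decrease estimate with constants locally bounded in $s$. Here the cutoff is the \emph{opposite} one, $\htau_P(H_P(x)-T)$, which forces $\varpi(H_P(x))>T$, hence $e^{\langle -s\varpi,H_P(x)\rangle}\le e^{-sT}$ for $\Rel(s)>0$ — again a constant independent of $x$ — and the remaining factor is controlled exactly as before.

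The main obstacle, and the point where care is required, is the precise form of the norm comparison $\inf_{\gamma\in P(F)}\|\gamma x\|_G \ll \bigl(e^{\langle\rho_P,H_P(x)\rangle}\,(\inf_{\gamma\in M(F)}\|\gamma m_P(x)\|_M)^{-1}\bigr)^{?}$ — i.e.\ verifying that the rapid decrease of the cusp form in the Levi variable, transported through the Iwasawa decomposition, really dominates a negative power of an honest norm on the quasi-affine quotient $P(F)\backslash G$, rather than only on a partial quotient. This is exactly the kind of estimate established in Appendix A of \cite{B17}: one uses that $P(F)\backslash G$ is quasi-affine, fixes a closed embedding into affine space, and notes that matrix coefficients of the embedding are, up to $\ll$, products of a bounded power of the $A_P$-height and the pullback of a norm on $M(F)\backslash M$. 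I would cite Proposition A.1.1 of \cite{B17} for these norm manipulations and thereby keep the proof short; the only genuinely new input is matching the exponent $11/2$ of $\rho_P$ against the shift $s\varpi$, which is immediate from section \ref{ssec:rho}.
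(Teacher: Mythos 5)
There is a genuine gap, and it sits exactly where the hypothesis $\Rel(s)>c$ with $c$ depending on $n$ has to be used. After writing $x=uamk$ and $\phi(x)=e^{\langle\rho_P,H_P(x)\rangle}\phi(mk)$ with $\rho_P=\tfrac{11}{2}\varpi$, the rapid decrease of the cusp form only controls the Levi directions inside $M(\A)^1$; all control in the $A_P^{\infty}$-direction must come from the exponential factors. You bound $e^{\langle s\varpi,H_P(x)\rangle}$ by $e^{\Rel(s)T}$ on the support of $1-\htau_{P}(H_{P}(x)-T)$ and then discard it, claiming ``what remains'' is dominated by a negative power of $\inf_{\delta\in P(F)}\|\delta x\|$. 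But that norm is not bounded in the $A_P$-direction: on the region $\varpi(H_P(x))\to-\infty$ (which the cutoff does not exclude) it grows like a positive power of $e^{-\varpi(H_P(x))}$, while the factor you have kept, $e^{\langle\rho_P,H_P(x)\rangle}$, decays only at the fixed rate $\tfrac{11}{2}$. Hence for $n$ large the remaining expression is \emph{not} $\le(\inf_{\delta\in P(F)}\|\delta x\|)^{-n}$, no matter how large you take $N$ in the rapid-decrease estimate (that $N$ only buys decay in $\inf_{\gamma\in M(F)}\|\gamma m\|_M$, not in $H_P$). The discarded factor $e^{\Rel(s)\varpi(H_P(x))}$, with $\Rel(s)$ large in terms of $n$, is precisely what supplies the missing decay in that direction; throwing it away is why your argument only ever invokes $\Rel(s)>0$ even though the statement requires $c=c(n)$ — an internal inconsistency that signals the gap.

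The same defect is worse for the second inequality: on the support of $\htau_{P}(H_{P}(x)-T)$ one has $\varpi(H_P(x))>T$, and after you replace $e^{\langle-s\varpi,H_P(x)\rangle}$ by the constant $e^{-\Rel(s)T}$, the remaining factor $e^{\langle\rho_P,H_P(x)\rangle}\,(M(s)\phi)(mk)$ grows exponentially (rate $\tfrac{11}{2}$) as $\varpi(H_P(x))\to+\infty$, while the right-hand side decays; so ``controlled exactly as before'' is false. The correct (and intended) argument keeps both exponentials together, so that on the relevant support the $A_P$-exponent is $(\Rel(s)+\tfrac{11}{2})\varpi(H_P(x))$ with $\varpi(H_P(x))\le T$, respectively $(-\Rel(s)+\tfrac{11}{2})\varpi(H_P(x))$ with $\varpi(H_P(x))>T$, and then chooses $\Rel(s)>c(n)$ so that these exponents dominate the growth of $\inf_{\delta\in P(F)}\|\delta x\|$ in the $A_P$-direction, the Levi directions being handled by the rapid decrease of the cusp form (and of $M(s)\phi$, which indeed lies in $\calA_{\pi}$ with bounds locally uniform for $\Rel(s)$ large, as you say). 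The rest of your outline — Iwasawa decomposition, the norm comparison via the quasi-affine quotient as in the appendix of \cite{B17}, and the identification $\rho_P=\tfrac{11}{2}\varpi$ — is consistent with the paper, which simply invokes the rapid decrease of cusp forms; but as written your treatment of the $A_P^{\infty}$-direction would not yield the stated bounds.
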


\begin{proof}
This follows from the fact that cusp forms are rapidly decreasing.
\end{proof}

\begin{prop}
For all $V \in \calV/H$, there exists $D>0$ such that the integral
\begin{equation}\label{period integral}
\int_{H_V(F)\back H(\BA)} (\inf_{\delta\in H_V(F)} ||\delta h||_{H})^{-d}  dh
\end{equation}
is absolutely convergent for all $d>D$.
\end{prop}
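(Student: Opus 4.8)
My plan is to recast the Proposition as the statement that the pair $(H,H_V)$ is \emph{good} for every $V\in\calV/H$, and to deduce goodness from a few closure properties together with the explicit stabilizers of Section~\ref{sec:orbits}. Here I call a pair $(G',H')$ --- with $H'\sbs G'$ a closed subgroup of a linear algebraic $F$-group --- \emph{good} if, writing $\sigma_{G',H'}(g):=\inf_{\delta\in H'(F)}\|\delta g\|_{G'}$, the integral $\int_{H'(F)\bsl G'(\A)}\sigma_{G',H'}(g)^{-d}\,dg$ converges for all $d$ large, and moreover does so locally uniformly when $H'$ and the norm are varied by conjugation by an element of $G'(\A)$ or of a fixed compact group; the uniformity is needed so that goodness can be propagated along fibrations. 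With this language, the Proposition is exactly the assertion that $(H,H_V)$ is good for all $V$, so the work is to isolate a workable list of properties of goodness and then check them against the stabilizer computations already made.

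The reductive input comes from \cite[Appendix~A]{B17}: when $G'$ is reductive and $H'\bsl G'$ is quasi-affine, one has $\sigma_{G',H'}(g)\sim\|\overline g\|_{H'\bsl G'}$, the adelic balls $\{x:\|x\|_{H'\bsl G'}\le R\}$ have polynomially bounded volume, and $(G',H')$ is good. To this I would add two elementary building blocks, which together cover the parabolic-type stabilizers that occur: (i) if $H'\bsl G'$ is projective, then $(H'\bsl G')(\A)$ is compact, $H'(F)\bsl G'(\A)$ carries a convergent height zeta function by the Northcott property, and $(G',H')$ is good; (ii) if there is a closed subgroup $H'\sbs K'\sbs G'$ with $(G',K')$ good and $K'/H'$ quasi-affine with polynomially bounded adelic ball volumes, then $(G',H')$ is good, by fibering $H'(F)\bsl G'(\A)$ over $K'(F)\bsl G'(\A)$ and integrating over the fibers, which have the shape $\Gamma\bsl (K'/H')(\A)$ for lattices $\Gamma$ and on which $\sigma_{G',H'}$ grows like the intrinsic norm on $K'/H'$.

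The heart of the argument is the descent from $H=\G_2\ltimes N$ to $\G_2$. Using that $H_V=(H_V\cap\G_2)\ltimes(H_V\cap N)$ by Proposition~\ref{prop:G2Norbits}, so that $H_V\bsl H$ is an affine-space bundle over $\G_2/(H_V\cap\G_2)$ with fibers $(H_V\cap N)\bsl N$, I would integrate in stages, writing $\sigma=\sigma_{H,H_V}$:
\[
\int_{H_V(F)\bsl H(\A)}\sigma(h)^{-d}\,dh=\int_{(H_V\cap\G_2)(F)\bsl\G_2(\A)}\int_{g^{-1}(H_V\cap N)(F)g\bsl N(\A)}\sigma(gn)^{-d}\,dn\,dg.
\]
Since $N$ is the two-step unipotent ``Heisenberg group of $\Theta$'' and $H_V\cap N$ is cut out by the explicit linear conditions of Lemma~\ref{lem:222Stab} and Proposition~\ref{prop:411Stab}, the quotient $(H_V\cap N)\bsl N$ is a successive extension of affine spaces, and applying the estimates of \cite[Appendix~A]{B17} layer by layer should bound the inner integral by a negative power of the height on $(H_V\cap\G_2)\bsl\G_2$, locally uniformly in $g$ and in the compact variable $k\in K_{\G_2}$. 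Plugging this into the outer integral reduces goodness of $(H,H_V)$ to goodness of $(\G_2,H_V\cap\G_2)$.

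It then remains to run through the cases: by Propositions~\ref{prop:G2Norbits} and~\ref{prop:G2Ustab}, up to conjugacy $H_V\cap\G_2$ is one of $\SL_3$, $P(v)$, $P(\NullU)$, $M(e_3^*)U(e_3^*)'$, or $\GL_1 U^0(e_1,e_3^*)$. The group $\SL_3$ is reductive and $\G_2/\SL_3$ is affine, so $(\G_2,\SL_3)$ is good by the reductive input; $P(v)$ and $P(\NullU)$ are parabolic, so good by block (i); and $M(e_3^*)U(e_3^*)'$ and $\GL_1 U^0(e_1,e_3^*)$ each sit in a parabolic $K'$ of $\G_2$ (obtained by keeping the full unipotent radical but possibly shrinking the Levi) with $K'/H'$ a vector group, resp.\ a product of a $\PGL_2$ and a vector group, both quasi-affine, so these are good from block (i) by one application of block (ii). The main obstacle I anticipate is precisely the unipotent descent of the third paragraph: because $N$ is non-abelian and its commutator pairing is intertwined with the $\G_2$-action via \eqref{eqn:gconjN}, carrying the locally uniform bounds through the tower of affine-space fibrations --- and verifying, case by case from the orbit descriptions of Section~\ref{sec:orbits}, that each $(H_V\cap N)\bsl N$ really is such a tower, with no stray factor that would spoil integrability --- is where the explicit computations of Section~\ref{sec:orbits} are indispensable, and it is for this that the definition of ``good pair'' must be set up with just enough uniformity built in for the induction to close.
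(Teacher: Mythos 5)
Your proposal misses the one observation that makes this proposition essentially trivial, and the program you substitute for it is not carried to completion. Since $\bigl(\inf_{\delta\in H_V(F)}\|\delta h\|_{H}\bigr)^{-d}=\sup_{\delta\in H_V(F)}\|\delta h\|_{H}^{-d}\le\sum_{\delta\in H_V(F)}\|\delta h\|_{H}^{-d}$, the integral in question is dominated by $\int_{H_V(F)\back H(\BA)}\sum_{\delta\in H_V(F)}\|\delta h\|_{H}^{-d}\,dh=\int_{H(\BA)}\|h\|_{H}^{-d}\,dh$, which converges for $d$ large by Proposition A.1.1(vi) of \cite{B17}. This is the paper's proof; it uses nothing about $H_V$ beyond its being a subgroup of $H(F)$, so no orbit-by-orbit analysis, no descent from $H=\G_2\ltimes N$ to $\G_2$, and no uniformity bookkeeping is needed for this statement.

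By contrast, your plan is an unfinished program with concrete soft spots. You yourself flag the unipotent descent (propagating locally uniform bounds through the tower of fibrations coming from the non-abelian $N$) as an anticipated obstacle rather than proving it; and your building block (i) is not justified as stated: for $H'\subset G'$ parabolic, compactness of $(H'\back G')(\A)$ and a ``Northcott'' argument say nothing directly about the convergence of $\int_{H'(F)\back G'(\A)}\sigma_{G',H'}(g)^{-d}\,dg$, where $\sigma_{G',H'}(g)=\inf_{\delta\in H'(F)}\|\delta g\|_{G'}$, because that quotient is non-compact of infinite volume and one must compare the growth of $\sigma_{G',H'}$ along $A_{H'}^{\infty}$ and along the cusps of the Levi against the factor $e^{\langle -2\rho, H(a)\rangle}$ appearing in the quotient measure; the intrinsic norm on the projective quotient is bounded, so the comparison $\sigma\sim\|\cdot\|_{H'\back G'}$ you lean on elsewhere is unavailable there. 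Finally, the notion of ``good pair'' you introduce is not the one this section actually needs: the delicate statement is Proposition \ref{norm}, comparing $\inf_{\delta\in P(V)(F)}\|\delta h\|_G$ with $\inf_{\delta\in H_V(F)}\|\delta h\|_H$, and for that the paper's ``good pair'' condition is the comparison $\inf_{\gamma\in H_V(F)}\|\gamma h\|_H\ll\inf_{\gamma\in H_V(\bar F)}\|\gamma h\|_H$ between $F$-points and $\bar F$-points infima — an issue your framework does not address.
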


\begin{proof}
Since
\begin{eqnarray*}
\int_{H_V(F)\back H(\BA)} (\inf_{\delta\in H_V(F)} ||\delta h||_{H})^{-d}  dh\leq \int_{H_V(F)\back H(\BA)} \sum_{\delta \in H_V(F)} ||\delta h||_{H}^{-d} dh=\int_{H(\BA)} ||h||_{H}^{-d}dh,
\end{eqnarray*}
we only need to show that the integral
$$\int_{H(\BA)} ||h||_{H}^{-d}dh$$
is absolutely convergent for all $d>D$. This just follows from Proposition A.1.1(vi) of \cite{B17}.
\end{proof}

Combining the lemma and the proposition above, in order to prove Proposition \ref{prop:absConv}, it is enough to prove the following proposition.
\begin{prop}\label{norm}
For all $V \in \calV/H$, we have
$$\inf_{\delta\in P(F)}||\delta \gamma_V h||_G \sim  \inf_{\delta\in H_V(F)} || \gamma_V \delta h||_H$$
for all $h\in H(\BA)$. This is equivalent to show that for all $V \in \calV/H$, we have
$$\inf_{\delta\in P(V)(F)}||\delta  h||_G \sim  \inf_{\delta\in H_V(F)} || \delta h||_H$$
for all $h\in H(\BA)$. Recall that $P(V)=\gamma_{V}^{-1}P\gamma_V$ is the parabolic subgroup of $\E_6$ stabilizing $V$ and $H_V=H\cap P(V)$.
\end{prop}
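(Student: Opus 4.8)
The inequality ``$\ll$'' in Proposition \ref{norm} is immediate: $H_V=H\cap P(V)\subseteq P(V)$ and $H\subseteq\E_6$ are closed subgroups, so the restriction of $\|\cdot\|_{\E_6}$ to $H(\BA)$ is $\sim\|\cdot\|_H$ by the formalism of Appendix A of \cite{B17}, whence $\inf_{\delta\in P(V)(F)}\|\delta h\|_{\E_6}\le\inf_{\delta\in H_V(F)}\|\delta h\|_{\E_6}\sim\inf_{\delta\in H_V(F)}\|\delta h\|_H$. So the whole content of Proposition \ref{norm} is the reverse bound
\[
 \inf_{\delta\in H_V(F)}\|\delta h\|_H \ll \inf_{\delta\in P(V)(F)}\|\delta h\|_{\E_6},\qquad h\in H(\BA),
\]
and this is in fact also exactly what is needed (together with Lemma \ref{lem:normBound} and the integrability proposition preceding \ref{norm}) for Proposition \ref{prop:absConv}. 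The engine for it is the basic estimate of \emph{loc. cit.}: if $X$ is a \emph{quasi-affine} $G$-variety over $F$ and $x\in X(F)$ has stabiliser $G_x$, then $\|xg\|_X\sim\inf_{\gamma\in G_x(F)}\|\gamma g\|_G$ for $g\in G(\BA)$. The plan is to package the two ingredients needed to run this on a given orbit — quasi-affineness of $H_V\backslash H$, and the comparison of its intrinsic norm with the Harish--Chandra coordinate $H_P$ that controls $\inf_{\delta\in P(V)(F)}\|\delta h\|_{\E_6}$ — into a single ``good pair'' condition on $(H,H_V)$, the point being that this condition is stable under dévissage: one may peel off the unipotent radical $N$ of $H$ and the unipotent radical of $H_V$ and reduce to the reductive cores.

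To carry this out I would use that the stabilisers $H_V$ have been made completely explicit in Section \ref{sec:orbits}. By Proposition \ref{prop:G2Norbits}, $H_V=(H_V\cap\G_2)(H_V\cap N)$ for each of the $17$ representatives, with $H_V\cap N$ the explicit subgroup of $N(\Theta)$ furnished by Lemma \ref{lem:222Stab} and Proposition \ref{prop:411Stab}, and $H_V\cap\G_2$ one of the groups of Propositions \ref{prop:G2lines} and \ref{prop:G2Ustab} — a parabolic of $\G_2$, the copy of $\SL_3$, or $\GL_1$ (resp.\ $\GL_2$) times a unipotent group. Working through the three families — the five $(2,2,2)$ orbits, the six right $(4,1,1)$ orbits, the six left $(4,1,1)$ orbits — and using the explicit actions of $H$ on $J$ and on $\wedge^6 J$ from Section \ref{sec:exceptional}, one exhibits $H_V\backslash H$ as a locally closed subvariety of affine space (through the coordinates of $\wedge^6 J$), which gives quasi-affineness, and reads off the comparison with $H_P$ via the Iwasawa decomposition $\E_6(\BA)=P(\BA)K$. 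Granted the dévissage, the norm comparison itself then reduces to the reductive cores, i.e. the action of $\G_2$ on isotropic lines and on isotropic two-planes of $\Theta$, where the stabilisers are parabolic subgroups of $\G_2$ or the $\SL_3$ and one applies the basic estimate directly.

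The step I expect to be the real obstacle is precisely this reverse bound, because apart from the distinguished orbit $V=V(\NullU)$ the orbits $H_V\backslash H$ are only \emph{locally closed} in $\calV$, never closed: one cannot argue by inclusion of closed subvarieties and must genuinely invoke quasi-affineness to control how $\inf_{\delta\in P(V)(F)}\|\delta h\|_{\E_6}$ degenerates as $Vh$ approaches the boundary of its $H$-orbit. Interlocked with this is the matching of torus directions — along the $\GL_1$, $\GL_2$ or $\SL_3$ inside $H_V\cap\G_2$ one must line up $H_P$ with the intrinsic norm on $H_V\backslash H$, and here the modular-character computations of Section \ref{ssec:rho} (the values of $\rho_P$, $\delta_P$ and $\delta_{P(\NullU)}$) enter, just as in the formal computation of Section \ref{sec:PG2}. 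The hardest cases will be the two $(4,1,1)$ orbits with $V(c_1)\neq 0$ and $V(a_2)=\ell$ an isotropic line (those analysed in Section \ref{ssec:vanish2}): there $H_V\cap N$ is trivial by Lemma \ref{lem:UVtriv} and $M_H(V)$ is the full $\GL_2$ or $\SL_3$, so there is no unipotent slack to absorb and the estimate must be done by hand.
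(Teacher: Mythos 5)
Your outline has the right general flavor (norms on adelic points as in \cite{B17}, quasi-affineness, a ``good pair'' reduction, case-by-case over the $17$ orbits), but it assumes away the step that is actually the content of the paper's proof. Your ``engine'' is stated with the infimum over the \emph{rational} points of the stabiliser, $\|xg\|_X\sim\inf_{\gamma\in G_x(F)}\|\gamma g\|_G$. What the formalism really provides in the mixed situation at hand (where $\gamma$ runs over $P(V)(F)$ but $h$ runs over $H(\A)\not\subseteq P(V)(\A)$) is a set-theoretic section of $P(V)\to P(V)/H_V$ with polynomially bounded norms defined only over $\bar F$; running the argument therefore produces elements of $H_V(\bar F)$, i.e.\ the unconditional estimate is $\inf_{\gamma\in H_V(\bar F)}\|\gamma h\|_H\ll\inf_{\delta\in P(V)(F)}\|\delta h\|_G$ (this is Proposition \ref{norm descent} and Remark \ref{norm descent remark}). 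The passage back from $\bar F$-points to $F$-points of $H_V$ is precisely the paper's ``good pair'' condition $\inf_{\gamma\in H_V(F)}\|\gamma h\|_H\ll\inf_{\gamma\in H_V(\bar F)}\|\gamma h\|_H$, and it is not formal: it is verified orbit by orbit (Proposition \ref{prop:MainConv}), easily whenever $H_{0,V}$ admits a direct complement inside a Levi of $\G_2$ (Proposition \ref{good pair prop}, Corollary \ref{good pair cor}), but for the six orbits with $H_{0,V}\simeq\SL_3$ it requires the dedicated Proposition \ref{G_2 good pair proposition}, since $\SL_3$ is not a Levi of $\G_2$ and has no complement; that proof uses the decomposition $U_{\alpha}=U_0U_1$ (with $U_1$ not a group), Lemma \ref{U1 part}, and a two-regime comparison of $\inf\|\nu\gamma nh_0\|$ against $\|u_1\|$. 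Your dévissage ``to the reductive cores plus the basic estimate'' cannot reach these cases, and your notion of good pair (quasi-affineness of $H_V\backslash H$ plus comparison with $H_P$) is not the condition that does this work.

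The comparison mechanism you propose is also misplaced. You want to bound the intrinsic norm of $H_V\backslash H$ (embedded through $\wedge^6J$) by $\inf_{\delta\in P(V)(F)}\|\delta h\|_G$ ``via the Iwasawa decomposition'', with the modular characters of section \ref{ssec:rho} entering; but the ambient flag variety $\calV=P\backslash\E_6$ is projective, so its norms are bounded and the orbit embedding there yields nothing, and $\inf_{\delta\in P(V)(F)}\|\delta h\|_G$ is not the norm of a point of any quasi-affine $\E_6$-variety you introduce. The paper instead exploits the quotient on the \emph{other} side: $G/H=\G_2N\backslash\E_6$ is quasi-affine (already used in Lemma \ref{lem:cvgLaTE}), $P(V)/H_V\hookrightarrow G/H$ is a locally closed immersion by Chevalley (Lemma \ref{quasi-affine}), and for $\delta\in P(V)(F)$, $h\in H(\A)$ one has $\|\delta H\|_{G/H}\ll\|\delta h\|_G$ because $h$ collapses in $G/H$; no $\rho_P$ or modular-character input occurs in the convergence argument. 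Finally, your identification of the hard orbits is off: of the two $(4,1,1)$ orbits with $V(c_1)\neq0$ and $V(a_2)=\ell$, one has $H_{0,V}=P(\ell)$ (containing a $\GL_2$ Levi, hence easy by Corollary \ref{good pair cor}); triviality of $H_V\cap N$ is irrelevant to the good-pair property (Proposition \ref{good pair prop}(3) applies with $U_P$ trivial), and the genuine difficulty is governed solely by the reductive part being $\SL_3$.
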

The goal of the rest part of this section is to prove Proposition \ref{norm}.

\subsection{Some abstract theory}
In this subsection, let $G$ be a linear algebraic group defined over $F$, $H$ and $P$ are two closed subgroups of $G$ also defined over $F$ with $H_P=H\cap P$. Assume that $G/H$ is quasi-affine. We use $i$ to denote the natural embedding $P/H_P\rightarrow G/H$. We say $(H,H_P)$ is a good pair if
\begin{equation}\label{good pair}
\inf_{\gamma\in H_P(F)}||\gamma h||_H \ll \inf_{\gamma\in H_P(\bar{F})}||\gamma h||_H
\end{equation}
for all $h\in H(\BA)$.

\begin{lem}\label{quasi-affine}
$i(P/H_P)$ is open in its closure $\overline{i(P/H_P)}$. In particular, this implies that $P/H_P$ is quasi-affine since we have assumed that $G/H$ is quasi-affine.
\end{lem}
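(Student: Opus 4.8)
The plan is to identify the image of $i$ with a single $P$-orbit on the variety $G/H$ and then invoke the classical fact that orbits of a linear algebraic group acting on a variety are locally closed.

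First I would recall that, because $H$ is closed in $G$, the quotient $G/H$ exists as a variety over $F$ on which $G$ --- and hence also $P$ --- acts on the left; by hypothesis $G/H$ is moreover quasi-affine. Let $x_0 \in G/H$ be the image of the identity coset. Its stabilizer in $P$ is $P \cap H = H_P$, and the $P$-orbit $P x_0$ is precisely $i(P/H_P)$: in characteristic zero the orbit map is separable, so it induces an isomorphism of varieties $P/H_P \xrightarrow{\sim} P x_0$, which is exactly the map $i$. (The paper already calls $i$ an embedding, so this identification is implicit; the point of the lemma is the locally-closed nature of $P x_0$.)

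Next I would apply the standard result from the theory of algebraic transformation groups that every orbit of a linear algebraic group acting on a variety is open in its Zariski closure. Applied to the $P$-action on $G/H$ and the orbit $P x_0$, this says exactly that $i(P/H_P) = P x_0$ is open in $\overline{i(P/H_P)}$, which is the first assertion.

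For the second assertion I would use two elementary permanence properties: a closed subvariety of a quasi-affine variety is quasi-affine, and an open subvariety of a quasi-affine variety is quasi-affine (an open subset of an open subset of an affine variety is open in that affine variety). Since $G/H$ is quasi-affine, so is its closed subvariety $\overline{i(P/H_P)}$, and hence so is the open subvariety $i(P/H_P)$, which is isomorphic to $P/H_P$. I do not expect a genuine obstacle here: the content is entirely the locally-closedness of orbits together with these permanence properties, and the only facts that merit a remark are the existence of $G/H$ as a variety (valid since $H$ is closed) and the identification of $P/H_P$ with the orbit $P x_0$ (automatic in characteristic zero).
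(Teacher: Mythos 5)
Your proof is correct and takes essentially the same route as the paper: after identifying $i(P/H_P)$ with the $P$-orbit of the base point in $G/H$, you quote the locally-closedness of orbits, whereas the paper simply unpacks the standard proof of that fact (Chevalley's theorem yields an open subset $U$ of the closure, and translating $U$ by $P$ makes the whole image open in its closure). Your additional remarks on the permanence of quasi-affineness under closed and open subvarieties correctly fill in the ``in particular'' step that the paper leaves implicit.
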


\begin{proof}
By Theorem 1.9.5 of \cite{S98}, $i(P/H_P)$ contains an open subset $U$ of its closure $\overline{i(P/H_P)}$. This implies that $i(P/H_P)=\bigcup_{p\in P} p\cdot U$ is open in $\overline{i(P/H_P)}$.
\end{proof}

\begin{prop}\label{norm descent}
Assume that $(H,H_P)$ is a good pair. Then for all $h\in H(\BA)$, we have
$$\inf_{\gamma \in P(F)} ||\gamma h||_G \sim \inf_{\gamma\in H_P(F)} ||\gamma h||_H.$$
\end{prop}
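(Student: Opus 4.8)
The plan is to prove the equivalence $\sim$ by establishing the two inequalities separately; only one of them uses the good pair hypothesis and Lemma~\ref{quasi-affine}.

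The inequality $\inf_{\gamma \in P(F)} ||\gamma h||_G \ll \inf_{\gamma\in H_P(F)} ||\gamma h||_H$ is immediate: restricting the infimum on the left to $\gamma\in H_P(F)\subseteq P(F)$ gives $\inf_{\gamma\in P(F)}||\gamma h||_G \le \inf_{\gamma\in H_P(F)}||\gamma h||_G$, and since $H$ is a closed subgroup of $G$ the norms $||\cdot||_G$ and $||\cdot||_H$ are equivalent on $H(\BA)$ by the basic properties of adelic norms recalled in the appendix of \cite{B17}; hence the right-hand side above is $\sim \inf_{\gamma\in H_P(F)}||\gamma h||_H$.

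For the reverse inequality $\inf_{\gamma\in H_P(F)} ||\gamma h||_H \ll \inf_{\gamma\in P(F)}||\gamma h||_G$, I would first invoke the good pair hypothesis, which gives $\inf_{\gamma\in H_P(F)}||\gamma h||_H \ll \inf_{\gamma\in H_P(\bar F)}||\gamma h||_H$; since enlarging the group over which an infimum is taken only decreases it, it then suffices to establish the purely geometric estimate
\[
\inf_{\gamma\in H_P(\bar F)}||\gamma h||_H \ \ll\ \inf_{\gamma\in P(\bar F)}||\gamma h||_G, \qquad h\in H(\BA).
\]
This is where Lemma~\ref{quasi-affine} enters. The idea is to interpret the left-hand side as the norm of the image of $h$ in a quotient variety. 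Under the canonical identification $H_P\backslash H \cong P\backslash (PH)$, where $PH\subseteq G$ is the double coset variety — a $(P\times H)$-orbit inside the affine group $G$, hence quasi-affine — the infimum $\inf_{\gamma\in H_P(\bar F)}||\gamma h||_H$ is, up to $\sim$, the norm of the image $Ph$ of $h$. By Lemma~\ref{quasi-affine}, $PH/H\cong P/H_P$ is a locally closed quasi-affine subvariety of $G/H$ that is open in its closure; combining this with the norm estimates of the appendix of \cite{B17} — in particular the comparison between the intrinsic norm of a quasi-affine locally closed subvariety and the norm induced from an ambient quasi-affine variety — and with the functoriality of norms under the morphism $G\to P\backslash G$ (which sends $\gamma h$ to $Ph$ for every $\gamma\in P(\bar F)$, as $h\in H$), one arrives at the displayed bound.

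I expect this geometric estimate to be the main obstacle. For a locally closed immersion only one of the two inequalities between the intrinsic norm of the subvariety and the norm restricted from the ambient space holds for free, so obtaining the other requires controlling the ``distance to the boundary'' of the quasi-affine orbit — which is exactly what Lemma~\ref{quasi-affine} makes available, since the orbit is open in its closure and the closure, being closed in a quasi-affine variety, has complement cut out by regular functions of controlled growth. The role of the good pair hypothesis is complementary rather than geometric: the quasi-affine norm machinery naturally yields infima over $\bar F$-points, and ``good pair'' is precisely the input needed to return to infima over $F$-points. The remaining parts of this section, which verify that $(H,H_V)$ is a good pair for every orbit $V\in\calV/H$, supply this input in the cases at hand.
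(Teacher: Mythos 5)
Your outer skeleton --- the trivial inequality, and the reduction, via the good-pair hypothesis and monotonicity of infima, to the $\bar{F}$-points estimate $\inf_{\gamma\in H_P(\bar{F})}\|\gamma h\|_H\ll\inf_{\gamma\in P(\bar{F})}\|\gamma h\|_G$ --- agrees with the paper (this $\bar{F}$-version is in effect Remark \ref{norm descent remark}). The gap is in your proof of that estimate: you move the problem to the other side of the double quotient, working with $H_P\backslash H\cong P\backslash(PH)\subseteq P\backslash G$, and neither of the two steps you need there is available. First, the claim $\inf_{\gamma\in H_P(\bar{F})}\|\gamma h\|_H\sim\|Ph\|_{H_P\backslash H}$ is the descent property of norms along a quotient map (Proposition A.1.1(ix) of \cite{B17}), and it requires the \emph{quotient} $H_P\backslash H$ to be quasi-affine; quasi-affineness of the orbit $PH$ inside $G$ says nothing about its quotient by $P$, and the standing hypotheses do not supply it. For instance, take $G=\SL_2\times\SL_2$, $H$ the diagonal $\SL_2$, $P=B\times\SL_2$: then $G/H\cong\SL_2$ is affine, so the hypotheses of the section hold, yet $H_P\backslash H\cong B\backslash\SL_2\cong\mathbb{P}^1$, and for $h=(x,x)$ with $x$ a lower-triangular unipotent adelic matrix having one large entry, the product formula forces $\inf_{\gamma\in H_P(\bar{F})}\|\gamma h\|_H$ to be unbounded while the image of $h$ in $\mathbb{P}^1(\BA)$ has bounded norm; so the claimed equivalence fails. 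Second, Lemma \ref{quasi-affine} concerns the $P$-orbit $i(P/H_P)\subseteq G/H$, i.e.\ the side where the quasi-affine hypothesis lives; it says nothing about the $H$-orbit $P\backslash(PH)$ inside $P\backslash G$, and in the application of this paper $P\backslash G$ is the projective flag variety $\calV$, so there is no ambient quasi-affine variety for the comparison you invoke, and the direction you need (intrinsic norm $\ll$ ambient norm) is exactly the one that is not automatic.

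The paper's proof stays entirely on the $G/H$ side, where quasi-affineness is available. Since $P/H_P$ is quasi-affine (Lemma \ref{quasi-affine}), the proof of Proposition A.1.1(ix) of \cite{B17} produces a set-theoretic section $s:(P/H_P)(\bar{F})\to P(\bar{F})$ of $pr_1:P\to P/H_P$ with $\|s(x)\|_G\ll\|x\|_{P/H_P}$; openness of $i(P/H_P)$ in its closure inside the quasi-affine $G/H$ gives $\|\delta\|_{P/H_P}\sim\|i(\delta)\|_{G/H}$; and since $pr_2(\gamma)=pr_2(\gamma h)$ for $h\in H(\BA)$, one gets $\|pr_1(\gamma)\|_{P/H_P}\ll\|\gamma h\|_G$. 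Then for every $\gamma\in P(F)$ (the same works for $P(\bar{F})$) the element $s(pr_1(\gamma))^{-1}\gamma$ lies in $H_P(\bar{F})$ and $\|s(pr_1(\gamma))^{-1}\gamma h\|_H\ll\|\gamma h\|_G$, which is precisely the estimate you reduced to; the good-pair hypothesis then converts $H_P(\bar{F})$ into $H_P(F)$, as in your reduction. So keep your reduction and your use of the good-pair hypothesis, but run the geometric core through the norm-controlled section of $P\to P/H_P$ and the embedding $P/H_P\hookrightarrow G/H$, not through $P\backslash G$.
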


\begin{proof}
One direction is trivial, we only need to show that
$$\inf_{\gamma\in P(F)}||\gamma h||_G \gg \inf_{\gamma\in H_P(F)} ||\gamma h||_H.$$
Since $P/H_P$ is quasi-affine, by the proof of Proposition A.1.1(ix) of \cite{B17}, there exists a set theoretic section $s:(P/H_P)(\bar{F}) \rightarrow P(\bar{F})$ of the projection map $pr_1:P\rightarrow P/H_P$ such that
$$||s(x)||_G\ll ||x||_{P/H_P}$$
for all $x\in (P/H_P)(\bar{F})$. We will use $pr_2$ to denote the projection map $G\rightarrow G/H$ and use $i$ to denote the natural embedding $P/H_P\rightarrow G/H$. Then for all $\gamma\in P(F)$, we have $i(pr_1(\gamma))=pr_2(\gamma)$. By Proposition A.1.1(ii), (iv) of \cite{B17} and Lemma \ref{quasi-affine} above, we have
\begin{equation}\label{norm descent 1}
||\delta||_{P/H_P}\sim ||i(\delta)||_{G/H}
\end{equation}
for all $\delta\in P/H_P(\bar{F})$. Then by applying Proposition A.1.1(ii) of \cite{B17} again, we have
$$||pr_1(\gamma)||_{P/H_P} \sim ||i(pr_1(\gamma))||_{G/H}=||pr_2(\gamma)||_{G/H} \ll ||\gamma h||_G$$
for all $\gamma\in P(F)$ and $h\in H(\BA)$. This implies that
$$\inf_{\gamma'\in H_P(\bar{F})}||\gamma'h||_G\leq ||s(pr_1(\gamma))^{-1}\gamma h||_G\ll ||s(pr_1(\gamma))||_G ||\gamma h||_G\ll ||pr_1(\gamma)||_{P/H_P} ||\gamma h||_H\ll ||\gamma h||_H$$
for all $(\gamma,h)\in P(F)\times H(\BA)$. If we take $\inf$ over $\gamma$, we get
$$\inf_{\gamma\in P(F)}||\gamma h||_G \gg \inf_{\gamma\in H_P(\bar{F})} ||\gamma h||_H.$$
Since $(H,H_P)$ is a good pair, we have
$$\inf_{\gamma\in P(F)}||\gamma h||_G \gg \inf_{\gamma\in H_P(F)} ||\gamma h||_H.$$
This proves the proposition.
\end{proof}

\begin{rmk}\label{norm descent remark}
If we only want to prove
$$\inf_{\gamma\in P(\bar{F})}||\gamma h||_G \sim \inf_{\gamma\in H_P(\bar{F})} ||\gamma h||_H$$
for all $h\in H(\BA_{\bar{F}})$, we don't need to assume that $(H,H_P)$ is a good pair.
\end{rmk}

The following corollary is a direct consequence of the proposition above, it reduces the proof of Proposition \ref{norm} to the proof of some good pair arguments.

\begin{cor}\label{reduction to good pair}
In order to prove Proposition \ref{norm}, it is enough to show that $(H,H_V)$ are good pairs for all $V \in \calV/H$.
\end{cor}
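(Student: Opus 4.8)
The plan is to deduce the corollary directly from Proposition \ref{norm descent}, applied one orbit at a time. Recall that, in the reformulation given in the statement of Proposition \ref{norm}, it suffices to prove
\[
\inf_{\delta\in P(V)(F)}||\delta h||_G \sim \inf_{\delta\in H_V(F)}||\delta h||_H
\]
for every $V \in \calV/H$ and every $h \in H(\BA)$, where $P(V)=\gamma_V^{-1}P\gamma_V$ is the parabolic of $\E_6$ stabilizing $V$ and $H_V = H\cap P(V)$. I would apply the abstract framework of the previous subsection with $G = \E_6$, $H = \G_2 N$, and, for a fixed $V$, with $P = P(V)$ in the role of the subgroup ``$P$'', so that $H_P = H_V$.

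To invoke Proposition \ref{norm descent} one must check its standing hypothesis that $G/H$ is quasi-affine. But $\G_2 N \backslash \E_6$ is quasi-affine; this was already used in the proof of Lemma \ref{lem:cvgLaTE}. Granting this, Proposition \ref{norm descent} asserts precisely that, as soon as $(H,H_V)$ is a good pair in the sense of \eqref{good pair}, the displayed equivalence holds. Running over the finitely many (seventeen, by Proposition \ref{prop:G2Norbits}) orbit representatives $V$, this yields Proposition \ref{norm} in full, and hence, combined with Lemma \ref{lem:normBound} and the preceding convergence estimate for the majorizing integral, also Proposition \ref{prop:absConv}. Thus the corollary itself requires no argument beyond citing Proposition \ref{norm descent} orbit by orbit.

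The real content, and the main obstacle, is therefore deferred to the subsequent subsections, where one must verify the good-pair inequality $\inf_{\gamma\in H_V(F)}||\gamma h||_H \ll \inf_{\gamma\in H_V(\bar{F})}||\gamma h||_H$ for each of the seventeen orbits. I expect this to rely on the explicit description $H_V = (H_V\cap \G_2)(H_V\cap N)$ from Proposition \ref{prop:G2Norbits}, together with the identifications of the $\G_2$-stabilizers in Proposition \ref{prop:G2Ustab} and of the $N$-stabilizers in Proposition \ref{prop:411Stab}. The delicate point is that the groups $H_V$ are in general neither parabolic nor reductive (they can be unipotent extensions of $\GL_1$, $\GL_2$, $\SL_3$, or of parabolics of $\G_2$), so the descent of the infimum from $\bar{F}$-points to $F$-points cannot simply be quoted from a standard reference; it must be established case by case, presumably by constructing explicit set-theoretic sections and exploiting the smoothness/triviality of the relevant fibrations, in the spirit of Appendix A of \cite{B17}.
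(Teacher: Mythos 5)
Your proposal is correct and matches the paper's own treatment: the corollary is exactly the orbit-by-orbit application of Proposition \ref{norm descent} (with $P=P(V)$, $H_P=H_V$, and quasi-affineness of $\G_2 N\backslash\E_6$ as the standing hypothesis), which the paper dismisses as ``a direct consequence of the proposition above.'' Your closing remarks about the case-by-case verification of the good-pair property correctly identify where the real work lies, namely in the subsequent subsections, but are not needed for the corollary itself.
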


For the rest part of this subsection, we will discuss some abstract theory for good pairs.

\begin{prop}\label{good pair prop}
\begin{enumerate}
\item $(H,H)$ is a good pair.
\item If there exists a closed subvariety $H'$ of $H$ defined over $F$ such that the morphism
$$H_P\times H'\rightarrow H:\; (h_1,h_2)\mapsto h_1h_2 $$
is an isomorphism, then $(H,H_P)$ is a good pair.
\item If $H=H_0\ltimes U$ with $H_0$ reductive and $U$ unipotent, and $H_P=H_{0,P}\ltimes U_P$ with $H_{0,P}=H_0\cap H_P$ and $U_P=U\cap H_P$. Assume that there exists a closed subgroup $H_1$ of $H_0$ such that $H_0=H_{0,P}\times H_1$, then $(H,H_P)$ is a good pair.
\item If $H=H_0\ltimes U$ with $H_0$ reductive and $U$ unipotent. Assume that there exists a parabolic subgroup $Q=L\ltimes N$ of $H_0$ such that $H_{P}\subset Q\ltimes U$ and $(Q\ltimes U,H_P)$ is a good pair, then $(H,H_P)$ is a good pair.
\end{enumerate}
\end{prop}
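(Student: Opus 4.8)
The plan is to prove the four assertions in the order stated, each reducing to the preceding one(s) (with (4) reducing to its own hypothesis), so that the only genuinely new input is the base case~(1). For~(1), one observes that for any linear algebraic group $H$ over $F$ and any $h\in H(\BA)$ the two functions $\inf_{\gamma\in H(F)}||\gamma h||_H$ and $\inf_{\gamma\in H(\bar F)}||\gamma h||_H$ are equivalent: one inequality is immediate from $H(F)\subseteq H(\bar F)$, and the reverse (the fact that the norm descended to the quotient may be computed using rational points) is among the basic properties of norms on adelic varieties recorded in the appendix of \cite{B17}. This is exactly the assertion that $(H,H)$ is a good pair.

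For~(2), I would use the hypothesized isomorphism $H_P\times H'\to H$, $(h_1,h_2)\mapsto h_1h_2$, to write every $h\in H(\BA)$ uniquely as $h=\gamma_h h'$ with $\gamma_h\in H_P(\BA)$ and $h'\in H'(\BA)$; the same holds over $\bar F$. For $\gamma\in H_P$ one has $\gamma h=(\gamma\gamma_h)h'$, and the compatibility of norms with isomorphisms of varieties and with products (Proposition A.1.1 of \cite{B17}) gives $||\gamma h||_H\sim ||\gamma\gamma_h||_{H_P}\,||h'||_{H'}$ with implied constants independent of $\gamma$. Taking the infimum over $\gamma\in H_P(F)$, respectively over $\gamma\in H_P(\bar F)$, the factor $||h'||_{H'}$ drops out, and the good-pair inequality for $(H,H_P)$ becomes precisely the good-pair inequality for $(H_P,H_P)$ applied to the point $\gamma_h$, which is part~(1).

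For~(3), the strategy is to produce a closed $F$-subvariety $H'\subseteq H$ with $H_P\times H'\to H$ an isomorphism and then to apply~(2). Since $U_P$ is a closed subgroup of the unipotent group $U$, there is a closed $F$-subvariety $U'\subseteq U$ for which multiplication $U_P\times U'\to U$ is an isomorphism of varieties; I would then take $H'$ to be the image of $H_1\times U'$ under multiplication in $H$, which is a closed subvariety isomorphic to $H_1\times U'$ because $H_0=H_{0,P}\times H_1$ and $H_0\cap U=\{1\}$. It then remains to check that $H_P\times H'\to H$ is an isomorphism of varieties: writing $H=H_0\ltimes U$ and using that $H_{0,P}$ and $H_1$ commute and that conjugation by $U$ acts by variety automorphisms, this multiplication map is a composite of variety isomorphisms, so~(2) applies. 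For~(4), I would use the Iwasawa decomposition $H_0(\BA)=Q(\BA)K_0$ for a maximal compact $K_0\subseteq H_0(\BA)$, so that any $h\in H(\BA)$ is $h=q\kappa u$ with $q\in Q(\BA)$, $\kappa\in K_0$, $u\in U(\BA)$; then for $\gamma\in H_P$ one has $\gamma h=(\gamma q\,(\kappa u\kappa^{-1}))\kappa$, and since $\kappa$ ranges over a fixed compact set and conjugation by it preserves norms up to uniform equivalence, $||\gamma h||_H\sim ||\gamma q u''||_H$ where $u'':=\kappa u\kappa^{-1}\in U(\BA)$ does not depend on $\gamma$. Crucially $\gamma q u''\in(Q\ltimes U)(\BA)$ since $H_P\subseteq Q\ltimes U$; taking infima over $H_P(F)$ and over $H_P(\bar F)$ and comparing $||\cdot||_H$ with $||\cdot||_{Q\ltimes U}$ along the closed immersion $Q\ltimes U\hookrightarrow H$, the good-pair inequality for $(H,H_P)$ follows from the assumed one for $(Q\ltimes U,H_P)$.

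The main obstacle I anticipate is bookkeeping rather than conceptual. In~(2) and~(4) one must be careful that every ``$\sim$'' holds with implied constants that are \emph{uniform} in the variable over which one subsequently takes an infimum, since this is what legitimizes passing to the infimum; and in~(3) one must untwist the semidirect product carefully to see that the relevant multiplication map is an isomorphism of varieties and not merely a bijection on points. All of this should follow mechanically from the properties of norms on adelic varieties and their quotients established in the appendix of \cite{B17}, which were already used above in the proof of Proposition~\ref{norm descent}.
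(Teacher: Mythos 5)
Your handling of parts (1), (2) and (4) is essentially the paper's own argument: (1) is the citation of Proposition A.1.1(ix) of \cite{B17}, (2) is the same decomposition $h=\gamma_h h'$ with the norm splitting uniformly and the base case applied to $(H_P,H_P)$, and (4) is the same Iwasawa-plus-compactness reduction to the assumed good pair $(Q\ltimes U,H_P)$. The paper disposes of (3) with the single phrase ``direct consequence of (2)'', so the explicit construction of the complement $H'$ is the one place where you supply new detail --- and that construction, as written, does not work.

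You take $H'$ to be the image of $H_1\times U'$ under multiplication, i.e.\ the set of products $h_1u'$, and claim $H_P\times H'\rightarrow H$ is an isomorphism. But $(h_{0,P}u_P)(h_1u')=(h_{0,P}h_1)\,\big((h_1^{-1}u_Ph_1)\,u'\big)$, so for each fixed $h_1$ the $U$-component of this map is $(u_P,u')\mapsto (h_1^{-1}u_Ph_1)u'$, and nothing in the hypotheses makes $H_1$ normalize $U_P$: the subgroups $h_1^{-1}U_Ph_1$ and $U'$ need not multiply bijectively onto $U$. Concretely, take $H_{0,P}=\{1\}$, $H_0=H_1=\GL_1$ acting on $U=\mathbb{G}_a^{2}$ with eigenvectors $e_1+e_2$ and $e_1-e_2$ of weights $1$ and $0$, and $U_P$, $U'$ the lines through $e_1$, $e_2$; the element $h_1=-1$ conjugates $U_P$ onto $U'$, so over that fiber the map has image only $U'$ in the $U$-direction and is neither injective nor surjective. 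The repair is simply to reverse the order of the factors: set $H'=U'\cdot H_1$ (products $u'h_1$). Then $(h_{0,P}u_P)(u'h_1)=(h_{0,P}h_1)\cdot h_1^{-1}(u_Pu')h_1$, and since $(u_P,u')\mapsto u_Pu'$ is an $F$-isomorphism onto $U$ and conjugation by $h_1^{-1}$ is an automorphism of $U$ depending algebraically on $h_1$, the map $H_P\times H'\rightarrow H\cong H_0\times U$ is an isomorphism of $F$-varieties (and $H'$ is closed), so (2) applies. With that one correction your proof is complete and coincides in substance with the paper's.
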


\begin{proof}
(1) has been proved in Proposition A.1.1(ix) of \cite{B17}. For (2), let $h\in H(\BA)$. Then we can write $h$ as $h_1h_2$ with $h_1\in H_P(\BA)$ and $h_2\in H'(\BA)$. We have
$$||h||_H\sim ||h_1||_{H_P} ||h_2||_{H'}.$$
Then for all $\gamma\in H_P(\bar{F})$, we have
$$||\gamma h||_H\sim ||\gamma h_1||_{H_P} ||h_2||_{H'},$$
which implies that
$$\inf_{\gamma\in H_P(\bar{F})}||\gamma h||_H\sim \inf_{\gamma\in H_P(\bar{F})}||\gamma h_1||_{H_P} ||h_2||_{H'}\gg \inf_{\gamma\in H_P(F)}||\gamma h_1||_{H_P} ||h_2||_{H'}$$
$$\sim \inf_{\gamma\in H_P(F)}||\gamma h_1h_2||_H=\inf_{\gamma\in H_P(F)}||\gamma h||_H.$$
Here $\inf_{\gamma\in H_P(\bar{F})}||\gamma h_1||_{H_P} \gg \inf_{\gamma\in H_P(F)}||\gamma h_1||_{H_P}$ follows from (1). This proves (2).
\\
\\
(3) is a direct consequence of (2). For (4), we just need to use the Iwasawa decomposition $H(\BA)=Q(\BA)U(\BA)K$ together with the fact that elements of $K$ will not change the norm.
\end{proof}

\begin{cor}\label{good pair cor}
Let $H=H_0\ltimes U$ with $H_0$ reductive and $U$ unipotent. Assume that $H_P=H_{P,0}\ltimes U_P$ with $H_{P,0}=H_0\cap H_P$ and $U_P=U\cap H_P$. Moreover, assume that there exists a parabolic subgroup $Q=L\ltimes N$ of $H_0$ such that $H_{P,0}\subset Q$ and $H_{P,0}=L_P\ltimes N_P$ with $L_P=L\cap H_{P,0}$ and $N_P=N\cap H_{P,0}$. Finally, we assume that there exists a closed subgroup $L_1$ of $L$ such that $L=L_P\times L_1$. Then $(H,H_P)$ is a good pair.
\end{cor}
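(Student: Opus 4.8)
The plan is to obtain this as a formal consequence of parts (3) and (4) of Proposition~\ref{good pair prop}; no new estimate is needed, only a structural identification of the group $Q\ltimes U$ and of $H_P$ inside it.

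The first step is to use Proposition~\ref{good pair prop}(4) to pass from $H$ to $Q\ltimes U$. Since $H_{P,0}\subset Q$ and $U_P\subset U$, we have $H_P=H_{P,0}\ltimes U_P\subset Q\ltimes U$, so applying part (4) with the parabolic subgroup $Q=L\ltimes N$ of $H_0$ reduces us to showing that $(Q\ltimes U,H_P)$ is a good pair.

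The second step is to recognize $Q\ltimes U$ as a group of the form ``reductive $\ltimes$ unipotent'' and to check that $H_P$ decomposes compatibly, so that Proposition~\ref{good pair prop}(3) applies. Because $U$ is normal in $H$, the unipotent radical $N$ of $Q$ normalizes $U$ and meets it trivially, so $NU:=N\ltimes U$ is a subgroup of $H$; it is an extension of the unipotent group $N$ by the unipotent group $U$, hence unipotent, it is normal in $Q\ltimes U$, it satisfies $L\cap NU=1$, and $Q\ltimes U=L\cdot NU$. Thus $Q\ltimes U=L\ltimes NU$ with $L$ reductive (a Levi subgroup of $Q$, hence reductive since $H_0$ is). On the subgroup side, combining $H_{P,0}=L_P\ltimes N_P$ with $H_P=H_{P,0}\ltimes U_P$ gives $H_P=L_P\ltimes(N_P\ltimes U_P)$, and using $L\cap NU=1$ one identifies $L\cap H_P=L_P$ and $NU\cap H_P=N_P\ltimes U_P$.

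With these identifications, apply Proposition~\ref{good pair prop}(3) to the pair $(Q\ltimes U,H_P)=(L\ltimes NU,\ L_P\ltimes(N_P\ltimes U_P))$: its hypothesis is exactly the existence of a closed subgroup of the reductive part $L$ that complements $L_P=L\cap H_P$, and this is provided by the assumed decomposition $L=L_P\times L_1$. Hence $(Q\ltimes U,H_P)$ is a good pair, and Proposition~\ref{good pair prop}(4) then gives that $(H,H_P)$ is a good pair. The only step that requires any care --- and it is entirely routine group theory, following from the normality of $U$ in $H$ and the semidirect product $H_0=L\ltimes N$ --- is the verification that $Q\ltimes U=L\ltimes(N\ltimes U)$ and that $H_P$ is compatible with this decomposition; once that is in place the corollary is immediate.
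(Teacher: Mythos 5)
Your proposal is correct and follows essentially the same route as the paper: the paper also reduces, via the Iwasawa decomposition (which is exactly the content of Proposition~\ref{good pair prop}(4)), to showing that $(L\ltimes(N\ltimes U),\,L_P\ltimes(N_P\ltimes U_P))$ is a good pair, and then concludes by Proposition~\ref{good pair prop}(3) using $L=L_P\times L_1$. The only difference is that you spell out the routine verifications (that $N\ltimes U$ is unipotent and normal, and that $L\cap H_P=L_P$, $(N\ltimes U)\cap H_P=N_P\ltimes U_P$) which the paper leaves implicit.
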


\begin{proof}
By the Iwasawa decomposition, it is enough to show that $(L\ltimes (N\ltimes U),H_P)=(L\ltimes (N\ltimes U),L_P\ltimes (N_P\ltimes U_P))$ is a good pair. This follows from Proposition \ref{good pair prop}(3).
\end{proof}

\subsection{The good pair $(\G_2,\SL_3)$}
In this subsection, $G=G_0\ltimes N$ where $G_0=\G_2$ and $N$ is some unipotent group. Let $H=H_0\ltimes N'$ be a subgroup of $G$ with $N'$ being a subgroup of $N$ and $H_0=\SL_3\subset \G_2=G_0$. Our goal is to prove the following proposition.

\begin{prop}\label{G_2 good pair proposition}
The pair $(G,H)$ is a good pair.
\end{prop}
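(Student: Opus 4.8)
The plan is to deduce the statement from two structural facts about the pair $(G,H)$: that the quotient $G/H$ is quasi-affine, and that $H$ is a \emph{special} linear algebraic group in the sense of Serre (every $H$-torsor over an $F$-scheme is Zariski-locally trivial). Granting these, $(G,H)$ is a good pair by the norm argument of the appendix of \cite{B17}, run exactly as in the proof of Proposition \ref{norm descent} above, except that one uses a finite Zariski-open cover with \emph{local} sections in place of a single global section. This is forced upon us: since $\SL_3$ is contained in no proper parabolic of $\G_2$, is not normal in $\G_2$, and is not complemented by a subgroup, none of the purely abstract criteria in Proposition \ref{good pair prop} applies directly, and moreover the torsor $\G_2\to\G_2/\SL_3$ has no global section.

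First I would verify that $H=H_0\ltimes N'$ is special: $H_0=\SL_3$ is special because $H^1$ of any $F$-scheme with coefficients in $\SL_n$ is trivial; $N'$ is unipotent, hence an iterated extension of copies of $\mathbb{G}_a$ and therefore special; and an extension of a special group by a special group is special. Consequently the $H$-torsor $G\to G/H$ is Zariski-locally trivial, and $H^1(F',H)=1$ for every finite extension $F'/F$. Next I would check that $G/H$ is affine, in particular quasi-affine. By Proposition \ref{prop:G2lines}, $\SL_3$ is the stabilizer in $\G_2$ of the line $F\epsilon_1\subseteq\Theta$; since $\G_2$ fixes $1$ and preserves the trace, it fixes the vector $\epsilon_1$, so $\G_2/\SL_3$ is the $\G_2$-orbit of $\epsilon_1$ in $\Theta$ --- equivalently the orbit of the anisotropic vector $\epsilon_1-\epsilon_2\in V_7$ --- which is a closed, hence affine, subvariety of $\Theta$ (alternatively this is Matsushima's criterion). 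Now $HN=H_0N=\SL_3\ltimes N$, so $G/H\to G/HN=\G_2/\SL_3$ is a fibration with fibre $HN/H\cong N/N'$, an affine space in characteristic $0$; because $\SL_3$ is special this fibration is Zariski-locally trivial, hence an affine morphism, so $G/H$ is affine over the affine variety $\G_2/\SL_3$.

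With these in hand I would finish as follows. Cover $G/H$ by finitely many Zariski-open sets $W_i$ over which $G\to G/H$ has a regular section $s_i$; as in the proof of Proposition A.1.1(ix) of \cite{B17} one may arrange $\|s_i(x)\|_G\ll\|x\|_{G/H}$ on $W_i$. Embedding $G/H$ in an affine space and exploiting that each $G/H\smin W_i$ is a proper closed subvariety, together with the surjectivity of $H(F')\to(G/H)(F')$ for all finite $F'/F$, one lifts a near-optimal $\bar F$-approximation of the image of a given $g\in G(\BA)$ in $(G/H)(\BA_{\bar{F}})$ to an element of $H(F)$ of controlled norm, obtaining
\[
\inf_{\gamma\in H(F)}\|\gamma g\|_G\ \ll\ \inf_{\gamma\in H(\bar F)}\|\gamma g\|_G\qquad\text{for all }g\in G(\BA),
\]
which is the assertion that $(G,H)$ is a good pair. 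I expect the main obstacle to be precisely this last step: coordinating the place-by-place choice of chart $W_i$ with a single global lift in $H(F)$, i.e.\ transcribing the \cite{B17}-appendix mechanism to the $H$-action on $G$ rather than a group acting on itself. A secondary delicate point is the quasi-affineness of $G/H$ for an arbitrary $\SL_3$-stable subgroup $N'\subseteq N$, which rests essentially on the Zariski-local triviality of $\G_2\to\G_2/\SL_3$, i.e.\ on $\SL_3$ being special.
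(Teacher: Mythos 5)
Your two structural verifications (that $H=\SL_3\ltimes N'$ is special and that $G/H$ is affine) are correct, but they are peripheral: quasi-affineness is nowhere the bottleneck, and the step you yourself flag as the ``main obstacle'' is exactly the content of the proposition and is left unproved. The good-pair inequality $\inf_{\gamma\in H(F)}\|\gamma g\|_G\ll\inf_{\gamma\in H(\bar{F})}\|\gamma g\|_G$ is a rationality statement about adelic points: given $g\in G(\BA)$ whose $H(\bar{F})$-orbit contains a translate of small norm, one must produce a \emph{single} rational $\gamma\in H(F)$ doing polynomially as well. Zariski-local triviality of $G\to G/H$ only gives, at each place $v$, a chart $W_{i(v)}$ and a factorization $g_v=h_v\,s_{i(v)}(x_v)$ with $h_v\in H(F_v)$; since the chart varies with the place, these local data do not assemble into an element of $H(\BA)$, let alone of $H(F)$, and the vanishing of $H^1(F',H)$ for field extensions $F'$ does not address this adelic gluing problem (there is nothing to lift: $g$ itself already lifts its image, and ``lifting a near-optimal $\bar{F}$-approximation of the image to an element of $H(F)$'' is not a well-defined operation). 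Note the contrast with the situation where the torsor has a global $F$-section with norm control: then one writes $g=h\,s(x)$ with $h\in H(\BA)$ and reduces to the pair $(H,H)$, which is in essence Proposition \ref{good pair prop}(2)--(3); you correctly observe that no such section or complement exists for $\SL_3\subset\G_2$, but your local-section substitute does not produce the required global rational translate, so the argument has a genuine gap precisely at its decisive step. You would in effect need the general principle ``quasi-affine quotient plus special subgroup implies good pair'', which is not in \cite{B17} and whose proof would have to solve the same chart-coordination problem you defer.

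For comparison, the paper's proof avoids any such general principle and exploits the specific position of $\SL_3$ in $\G_2$: writing $P_\alpha=M_\alpha U_\alpha$ for the $\alpha$-parabolic of $\G_2$, one factors $U_\alpha=U_0U_1$ with $U_0$ the center and $U_1$ a closed subvariety that is \emph{not} a subgroup, the key observation being that $M_\alpha U_0$ is a parabolic subgroup of $\SL_3$. Iwasawa decomposition then writes any $g$ as $n h_0 u_1 k$ with $h_0\in\SL_3(\BA)$, $u_1\in U_1(\BA)$; Lemma \ref{U1 part} gives the multiplicativity $\|hu_1\|_G\sim\|h\|_G\|u_1\|_G$, and a two-case bootstrap (according to whether $\inf_{\gamma,\nu}\|\nu\gamma nh_0\|_G$ is small or large compared with a fixed power of $\|u_1\|_G$) reduces the assertion to the already-established good pair $(\SL_3\ltimes N,\SL_3\ltimes N')$ from Proposition \ref{good pair prop}(3). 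So while your diagnosis that the abstract criteria of Proposition \ref{good pair prop} do not apply directly is accurate, the concrete mechanism that replaces them in the paper is missing from your proposal.
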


We need some preparation. First we recall the embedding of $\SL_3$ into $\G_2$. The positive roots of $\G_2$ is $\{\alpha,\beta,\alpha+\beta,\alpha+2\beta,\alpha+3\beta,2\alpha+3\beta\}$ with $\alpha,\beta$ be the two simple roots. The positive roots of $\SL_3$ is $\alpha_0,\beta_0,\alpha_0+\beta_0$. We embed $\SL_3$ into $\G_2$ via
$$\alpha_0\rightarrow \alpha,\;\beta_0\rightarrow \alpha+3\beta,\;\alpha_0+\beta_0\rightarrow 2\alpha+3\beta.$$

Let $P_{\alpha}=M_{\alpha}U_{\alpha}$ be the maximal parabolic subgroup of $\G_2$ associated to $\alpha$. Then $M_{\alpha}\simeq \GL_2$ and $U_{\alpha}$ contains the roots $\{\beta,\alpha+\beta,\alpha+2\beta,\alpha+3\beta,2\alpha+3\beta\}$. Let $U_0$ be the center of $U_{\alpha}$ which is generated by $\{\alpha+3\beta,2\alpha+3\beta\}$. Then we can write $U_{\alpha}$ as $U_0U_1$ where $U_1$ is a closed subvariety of $U_{\alpha}$ generated by $\{\beta,\alpha+\beta,\alpha+2\beta\}.$ \textbf{Note that $U_1$ is not a group!} Moreover, we know that $M_{\alpha}U_0$ is a parabolic subgroup of $\SL_3$.

\begin{lem}\label{U1 part}
For all $h\in \SL_3(\BA_{\bar{F}})$ and $u_1\in U_1(\BA)$, we have
$$||hu_1||_G\sim  ||h||_G ||u_1||_G.$$
\end{lem}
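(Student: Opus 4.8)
The plan is to prove the two comparisons $||hu_1||_G\ll||h||_G||u_1||_G$ and $||hu_1||_G\gg||h||_G||u_1||_G$ separately, the first by the general functoriality of adelic norms and the second by identifying the multiplication map $m\colon\SL_3\times U_1\to\G_2$, $(h,u_1)\mapsto hu_1$, with an isomorphism onto a locally closed subvariety of $\G_2$. As a first step I would reduce to $G=\G_2$: since $\G_2$ is a closed subgroup of $G=\G_2\ltimes N$ and $\SL_3$, $U_1$ are closed subvarieties of $\G_2$, Proposition A.1.1 of \cite{B17} gives $||x||_G\sim||x||_{\G_2}$ for $x\in\G_2(\BA_{\bar F})$, as well as $||h||_{\G_2}\sim||h||_{\SL_3}$ and $||u_1||_{\G_2}\sim||u_1||_{U_1}$. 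It thus suffices to show $||hu_1||_{\G_2}\sim||h||_{\SL_3}||u_1||_{U_1}$, and the bound $\ll$ is immediate, since $m$ is a morphism and $||(h,u_1)||_{\SL_3\times U_1}\sim||h||_{\SL_3}||u_1||_{U_1}$.

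For the bound $\gg$ I would argue as follows. Because $U_0\subseteq\SL_3$ (its two roots $\alpha+3\beta,2\alpha+3\beta$ are roots of $\SL_3$) and $U_\alpha=U_0U_1$, one has $\SL_3\cdot U_1=\SL_3\cdot U_\alpha$, and since moreover $M_\alpha\subseteq\SL_3$ this equals $\SL_3\cdot P_\alpha$. This set is locally closed in $\G_2$, being the preimage of the $\SL_3$-orbit of the base point of $\G_2/P_\alpha$. Next, $\SL_3\cap P_\alpha=M_\alpha U_0=:S$ is the maximal parabolic of $\SL_3$ with Levi $M_\alpha\cong\GL_2$ and unipotent radical $U_0$, so the multiplication map $\SL_3\times P_\alpha\to\SL_3\cdot P_\alpha$ is the geometric quotient by the free $S$-action $s\cdot(h,p)=(hs^{-1},sp)$. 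Using $U_\alpha=U_0U_1$ one gets $P_\alpha=M_\alpha U_\alpha=S\cdot U_1$ with $S\times U_1\xrightarrow{\sim}P_\alpha$; hence $\SL_3\times U_1$ is a section of this $S$-action, the map $S\times(\SL_3\times U_1)\to\SL_3\times P_\alpha$ is an isomorphism, and therefore the induced map $\SL_3\times U_1\to\SL_3\cdot P_\alpha$, which is exactly $m$, is an isomorphism of varieties. Feeding this into Proposition A.1.1 of \cite{B17} yields $||hu_1||_{\G_2}\sim||(h,u_1)||_{\SL_3\times U_1}\sim||h||_{\SL_3}||u_1||_{U_1}$, and combined with the first paragraph this proves the lemma.

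The main obstacle is the variety-level bookkeeping in the second paragraph: one must check that $U_\alpha=U_0U_1$ is an honest product decomposition of varieties — this is where the identification of $U_0$ with the center of $U_\alpha$ and the root-coordinate description of $U_1$ are essential — and then that $m$ is genuinely an \emph{isomorphism} onto the locally closed set $\SL_3\cdot P_\alpha$, not merely a set-theoretic bijection, so that the norm equivalences of \cite{B17} (statements about morphisms and locally closed immersions of $F$-varieties) apply. Once that is established the rest is a routine use of the functoriality properties of norms, noting that all the groups in sight are defined over $F$, so that these properties cover the points $h\in\SL_3(\BA_{\bar F})$, $u_1\in U_1(\BA)$.
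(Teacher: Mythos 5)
Your strategy is genuinely different from the paper's, and the decisive step has a gap as written. The problematic point is the final appeal to Proposition A.1.1 of \cite{B17} to get $\|hu_1\|_{\G_2}\sim\|(h,u_1)\|_{\SL_3\times U_1}$: what this requires is the inequality $\|z\|_{\SL_3\cdot P_\alpha}\ll\|z\|_{\G_2}$ for \emph{adelic} points $z$ of the subvariety $\SL_3\cdot P_\alpha$, and for a subvariety that is merely locally closed this is not among the norm properties and is false in general (model case: $\mathbb{G}_m\subset\mathbb{A}^1$, where an idele equal to a high power of a uniformizer at one place and to $1$ elsewhere has bounded $\mathbb{A}^1$-norm but arbitrarily large $\mathbb{G}_m$-norm). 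The two-sided comparisons along locally closed immersions that this paper does extract from \cite{B17} (in the proof of the norm-descent proposition) are applied only at $\bar{F}$-rational points, where height/product-formula arguments are available; your $h\in\SL_3(\BA_{\bar{F}})$ and $u_1\in U_1(\BA)$ are genuinely adelic, so your chain only delivers the trivial direction $\|hu_1\|_{\G_2}\ll\|h\|\,\|u_1\|$. A secondary, more minor issue: the assertion that $\SL_3\times P_\alpha\to\SL_3\cdot P_\alpha$ \emph{is} the geometric quotient is exactly what needs proof; your factorization $P_\alpha\simeq S\times U_1$ does give bijectivity of $m$ onto $\SL_3\cdot P_\alpha$, but promoting a bijective morphism to an isomorphism of varieties needs an extra input (characteristic zero plus smoothness/normality of the image via Zariski's main theorem, or an explicit local inverse using local triviality of $\G_2\to\G_2/P_\alpha$).

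The gap is reparable, and the repair comes from a fact you already record: $S=\SL_3\cap P_\alpha=M_\alpha U_0$ is a \emph{parabolic} subgroup of $\SL_3$. Hence $\SL_3/S$ is complete, so the $\SL_3$-orbit of the base point of $\G_2/P_\alpha$ is closed, and therefore $\SL_3\cdot P_\alpha$ is a \emph{closed} subvariety of $\G_2$; for closed immersions the adelic norm equivalence is legitimate, and with that observation (plus the isomorphism point above) your argument becomes a correct alternative proof. By contrast, the paper's proof never needs any global statement about $\SL_3\cdot P_\alpha$: it reduces by the Iwasawa decomposition of $\SL_3$ relative to the parabolic $M_\alpha U_0$ to $h=mu_0$, and then factors norms successively using that $P_\alpha=M_\alpha\ltimes U_\alpha$ is a (closed) parabolic subgroup of $\G_2$, that $U_\alpha=U_0U_1$ is a product decomposition of varieties, and that $M_\alpha U_0$ is a parabolic of $\SL_3$ --- that is, it only ever compares norms along closed subgroups and closed subvarieties, which is exactly the point your route must recover by proving closedness of $\SL_3\cdot P_\alpha$.
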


\begin{proof}
By the Iwasawa decomposition, it is enough to consider the case when $h=mu_0$ with $m\in M_{\alpha}(\BA_{\bar{F}})$ and $u_0\in U_0(\BA_{\bar{F}})$. Since $U_{\alpha}=U_0U_1$, $P_{\alpha}=M_{\alpha}U_{\alpha}$ is a parabolic subgroup of $G_0$ and $M_{\alpha}U_0$ is a parabolic subgroup of $H_0$, we have
$$||hu_1||_G=||mu_0u_1||_G\sim ||m_0||_G ||u_0u_1||_G\sim ||m_0||_G ||u_0||_G ||u_1||_G\sim ||m_0u_0||_G ||u_1||_G=||h||_G ||u_1||_G.$$
This proves the lemma.
\end{proof}

Now we are ready to prove Proposition \ref{G_2 good pair proposition}. For $g\in G(\BA)$, we want to show that
$$\inf_{\gamma\in H(\bar{F})} ||\gamma g||_G\gg \inf_{\gamma\in H(F)} ||\gamma g||_G.$$
By the Iwasawa decomposition, it is enough to consider the case when $g=nmu_0u_1$ with $n\in N(\BA),m\in M_{\alpha}(\BA), u_0\in U_0(\BA)$ and $u_1\in U_1(\BA)$. Since $M_{\alpha}U_0\in \SL_3=H_0$, we can write $g$ as $nh_0u_1$ with $n\in N(\BA),h_0\in H_0(\BA)$ and $u_1\in U_1(\BA)$. In order to prove Proposition \ref{G_2 good pair proposition}, it is enough to prove the following proposition.

\begin{prop}
For all $n\in N(\BA),h_0\in H_0(\BA)$ and $u_1\in U_1(\BA)$, we have
\begin{equation}\label{G_2 1}
\inf_{\gamma \in H_0(\bar{F}),\nu \in N'(\bar{F})} ||\nu\gamma nh_0u_1||_G \gg
\inf_{\gamma \in H_0(F),\nu\in N'(F)} ||\nu\gamma nh_0u_1||_G.
\end{equation}
\end{prop}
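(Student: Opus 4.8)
The plan is to first peel off the factor $u_1$ with the help of Lemma \ref{U1 part}, and then to recognize what is left as the good pair $(H_0 \ltimes N, H_0 \ltimes N')$, which is handled by the abstract theory. A tool used throughout is the following elementary estimate: since $G = G_0 \ltimes N$, the multiplication map $N \times G_0 \to G$ is an isomorphism of varieties, so by the basic properties of norms on adelic varieties (Proposition A.1.1 of \cite{B17}) we have, uniformly in $n_1$ and $g_1$,
\[
||n_1 g_1||_G \sim ||n_1||_N \, ||g_1||_{G_0}, \qquad n_1 \in N(\BA_{\bar F}),\; g_1 \in G_0(\BA_{\bar F}).
\]

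Fix $n \in N(\BA)$, $h_0 \in H_0(\BA)$, $u_1 \in U_1(\BA)$, and let $\gamma \in H_0(\bar F)$, $\nu \in N'(\bar F)$ be arbitrary. Using that $N$ is normal in $G$, I would rewrite $\nu \gamma n h_0 u_1 = (\nu\,\gamma n \gamma^{-1})\,((\gamma h_0) u_1)$, where $\nu\,\gamma n \gamma^{-1} \in N(\BA_{\bar F})$ and $(\gamma h_0) u_1 \in G_0(\BA_{\bar F})$, the latter because $\gamma h_0 \in \SL_3(\BA_{\bar F})$ and $u_1 \in U_1(\BA) \subseteq G_0(\BA)$. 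Applying the estimate above, then Lemma \ref{U1 part} to the $G_0$-factor $(\gamma h_0) u_1$, and then the estimate above once more, one gets, uniformly in $\gamma$ and $\nu$,
\[
||\nu \gamma n h_0 u_1||_G \sim ||\nu\,\gamma n\gamma^{-1}||_N\, ||\gamma h_0||_{G_0}\, ||u_1||_{G_0} \sim ||\nu \gamma n h_0||_G \cdot ||u_1||_{G_0}.
\]
Since $||u_1||_{G_0} \geq 1$ is a constant independent of $\gamma$ and $\nu$, and $\sim$ is preserved by taking infima, the inequality \eqref{G_2 1} reduces to
\[
\inf_{\gamma \in H_0(\bar F),\, \nu \in N'(\bar F)} ||\nu \gamma n h_0||_G \ \gg\ \inf_{\gamma \in H_0(F),\, \nu \in N'(F)} ||\nu \gamma n h_0||_G .
\]

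To finish, note that $H_0 \ltimes N = \SL_3 \ltimes N$ is a closed subgroup of $G$, so $||\cdot||_G \sim ||\cdot||_{H_0 \ltimes N}$ on $(H_0 \ltimes N)(\BA_{\bar F})$ and $n h_0 \in (H_0 \ltimes N)(\BA)$; hence it is enough to prove the last displayed inequality with $||\cdot||_{H_0 \ltimes N}$ in place of $||\cdot||_G$. This is exactly the statement that $(H_0 \ltimes N,\, H_0 \ltimes N')$ is a good pair, which is Proposition \ref{good pair prop}(3) with reductive part $H_0 = \SL_3$, unipotent part $N$, subgroup $H_0 \ltimes N'$ (whose reductive and unipotent parts are $H_0$ and $N'$), and trivial complement $H_1 = \{1\}$; here one uses $(H_0 \ltimes N')(\bar F) = \{\nu \gamma : \gamma \in H_0(\bar F),\, \nu \in N'(\bar F)\}$ and similarly over $F$. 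This proves \eqref{G_2 1}, and with it Proposition \ref{G_2 good pair proposition}. The delicate point is the first step: one must commute $\gamma$ past $n$ — producing $\gamma n\gamma^{-1}$, still an adelic point of $N$ over a finite extension of $F$ — before splitting the word into an $N$-factor and a $G_0$-factor, so that Lemma \ref{U1 part} can be applied to the $G_0$-factor $(\gamma h_0)u_1$; and one must check that all the $\sim$-estimates remain uniform in the minimization variables $\gamma,\nu$, so that passing to the infimum is legitimate. After $u_1$ is removed in this way, the remaining assertion is a formal consequence of Proposition \ref{good pair prop}(3), since $\SL_3$ is the entire reductive part of $\SL_3 \ltimes N$.
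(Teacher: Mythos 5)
Your argument is correct, and it reaches the conclusion by a genuinely more direct route than the paper. The paper only extracts the one-sided bound $\inf_{\gamma,\nu}\|\nu\gamma n h_0 u_1\|_G \gg \|u_1\|_G$ (its \eqref{G_2 2}) and then runs a dichotomy according to whether $\inf_{\gamma\in H_0(F),\nu\in N'(F)}\|\nu\gamma n h_0\|_G$ is polynomially bounded by $\|u_1\|_G$ (the alternative \eqref{G_2 3}), patching the two cases together with the crude inequalities \eqref{eq:g21} and invoking the good pair $(H_0\ltimes N, H_0\ltimes N')$ twice. You instead prove the stronger two-sided, uniform equivalence $\|\nu\gamma n h_0 u_1\|_G \sim \|\nu\gamma n h_0\|_G\,\|u_1\|_G$: the factorization $\|n_1 g_1\|_G\sim\|n_1\|_N\,\|g_1\|_{G_0}$ is legitimate with constants uniform in $n_1,g_1$, since multiplication gives an isomorphism of varieties $N\times G_0\simeq G$ and norms are compatible with isomorphisms and products (Proposition A.1.1 of \cite{B17}; the paper uses the same device in the proofs of Lemma \ref{U1 part} and Proposition \ref{good pair prop}(2)), and Lemma \ref{U1 part} applies to $(\gamma h_0)u_1$ because $\gamma h_0\in \SL_3(\BA_{\bar F})$. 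Since all implied constants are independent of $\gamma,\nu,n,h_0,u_1$ and $\|u_1\|_G\ge 1$, the equivalence survives passage to both infima, and \eqref{G_2 1} reduces in one step to the good pair property of $(H_0\ltimes N,H_0\ltimes N')$, i.e. Proposition \ref{good pair prop}(3), which is the same final input the paper uses. What your route buys is the elimination of the case analysis and of the second appeal to the good pair property; what the paper's route buys is that it never needs the multiplicative equivalence for the mixed word, only the domination of the $G_0$-factor together with the generic bounds \eqref{eq:g21}. Both proofs rest on the same two ingredients (Lemma \ref{U1 part} and Proposition \ref{good pair prop}(3)), so the difference is in the analytic bookkeeping rather than the underlying ideas, but your bookkeeping is cleaner and complete as written.
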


\begin{proof}
For all $\gamma \in H_0(\bar{F})$ and $\nu\in N'(\bar{F})$, we have
$$||\nu\gamma nh_0u_1||_G = ||(\nu \gamma n\gamma^{-1})\cdot (\gamma h_0) u_1||_G \gg ||(\gamma h_0) u_1||_G.$$
Combine with Lemma \ref{U1 part}, we have
$$||\nu\gamma nh_0u_1||_G \gg ||u_1||_G.$$
This implies that
\begin{equation}\label{G_2 2}
\inf_{\gamma\in H_0(\bar{F}),\nu\in N'(\bar{F})} ||\nu\gamma nh_0u_1||_G \gg ||u_1||_G
\end{equation}
for all $n\in N(\BA),h_0\in H_0(\BA)$ and $u_1\in U_1(\BA)$.

From the properties of norms it follows that there exists $N_0, C_0 > 0$ such that for all $g_1, g_2 \in G(\A_{\bar F})$ we have
\begin{equation}\label{eq:g21}
C_0^{-1}\|g_1\|^{1/N_0}\|g_2\|^{-1} \le \|g_1 g_2\| \le C_0 \|g_1\|^{N_0} \|g_2\|^{N_0}.
\end{equation}
Fix $N_1$ and $C_1$ sufficiently larger than $N_0$ and $C_0$ respectively.
For a given $n\in N(\BA),h_0\in H_0(\BA)$ and $u_1\in U_1(\BA)$, if
$$\inf_{\gamma\in H_0(F),\nu\in N'(F)} ||\nu\gamma nh_0||_G\leq C_1||u_1||_{G}^{N_1}$$
then \eqref{G_2 1} follows from \eqref{G_2 2}. If not, then we have
\begin{equation}\label{G_2 3}
\inf_{\gamma\in H_0(F),\nu\in N'(F)} ||\nu\gamma nh_0||_G\geq C_1||u_1||_{G}^{N_1}.
\end{equation}
Since $(H_0\ltimes N, H_0\ltimes N')$ is a good pair by Proposition \ref{good pair prop}(3),
we also have
$$\inf_{\gamma\in H_0(\bar{F}),\nu\in N'(\bar{F})} ||\nu\gamma nh_0||_G\geq C_1'||u_1||_{G}^{N_1'}$$
for some $C_1',N_1'>0$ large. This and \eqref{eq:g21} imply that
\begin{equation}\label{G_2 4}
\inf_{\gamma\in H_0(\bar{F}),\nu\in N'(\bar{F})} ||\nu\gamma nh_0||_G \le C_3 \inf_{\gamma\in H_0(\bar{F}),\nu\in N'(\bar{F})} ||\nu\gamma nh_0u_1||_{G}^{N_3}
\end{equation}
for some $C_3, N_3 \ge 0$. On the other hand, using \eqref{eq:g21} and \eqref{G_2 3} we have
\begin{equation}\label{G_2 5}
C_4\inf_{\gamma\in H_0(F),\nu\in N'(F)} ||\nu\gamma nh_0u_1||_{G}^{N_4}\leq \inf_{\gamma\in H_0(F),\nu\in N'(F)} ||\nu\gamma nh_0||_G
\end{equation}
for some $C_4, N_4 \ge 0$. Here $C_3,C_4,N_3,N_4$ only depend on $C_0,C_1,C_1',N_0,N_1,N_1'$, not on $n,h_0,u_1$. Invoking once more the fact that
$(H_0\ltimes N, H_0\ltimes N')$ is a good pair,
the inequality  \eqref{G_2 1} follows from \eqref{G_2 4} and \eqref{G_2 5}. This finishes the proof of the proposition and hence the proof of Proposition \ref{G_2 good pair proposition}.
\end{proof}

\subsection{The proof of Proposition \ref{norm}}
In this subsection, we finish the proof of Proposition \ref{norm} and hence the proof of Proposition \ref{prop:absConv}. We go back to the usual notations: $G=\E_6, H=H_0\ltimes N$ with $H_0=\G_2$,
$\{\gamma_V\}_{V \in \calV/H}$ are representatives of the double coset $P\back G/H$. For $V \in \calV/H$, we have defined $P(V)=\gamma_{V}^{-1}P\gamma_V$ and $H_V=H\cap P(V)$. By Corollary \ref{reduction to good pair}, in order to prove Proposition \ref{norm}, it is enough to prove the following proposition.

\begin{prop}\label{prop:MainConv}
For all $V \in \calV/H$, $(H,H_V)$ is a good pair.
\end{prop}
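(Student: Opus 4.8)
The plan is to go through the seventeen orbits $V\in\calV/H$ one at a time and establish the good pair property using the abstract results of the preceding subsections.  Throughout we use Proposition \ref{prop:G2Norbits}: for each $V$ one has $H_V=H_{V,0}\ltimes(N\cap H_V)$, where $H_{V,0}:=H_V\cap\G_2$ is one of the subgroups of $\G_2$ described in Propositions \ref{prop:G2lines} and \ref{prop:G2Ustab} and $N\cap H_V$ is a subgroup of $N$ normalized by $H_{V,0}$ (the product is semidirect since $\G_2\cap N=1$ in $\E_6$).  The orbits split into those with $H_{V,0}\simeq\SL_3$, to be treated by Proposition \ref{G_2 good pair proposition}, and those with $H_{V,0}$ contained in a proper parabolic subgroup of $\G_2$, to be treated by Corollary \ref{good pair cor}.

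Six orbits fall in the first family, namely the three right and three left $(4,1,1)$-orbits whose defining isotropic line is the non-traceless line $F\epsilon_1$ of Proposition \ref{prop:G2lines}.  For these $H_V=\SL_3\ltimes(N\cap H_V)$, with $\SL_3$ the stabilizer of $F\epsilon_1$ acting componentwise on the Zorn model.  As an $\SL_3$-module, $V_7$ decomposes as $V_3\oplus V_3^\vee$ plus a one-dimensional trivial summand, which identifies this $\SL_3$ as the long-root subgroup of $\G_2$; in particular it is $\G_2(F)$-conjugate to the $\SL_3$ fixed in Proposition \ref{G_2 good pair proposition}.  Since $\G_2(F)\subseteq H(F)$ normalizes $N$, the two copies of $\SL_3\ltimes(N\cap H_V)$ differ by an $H(F)$-conjugation, and as the good pair property is invariant under such conjugations, Proposition \ref{G_2 good pair proposition} applies and gives that $(H,H_V)$ is a good pair.

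For each of the remaining eleven orbits the plan is to exhibit a parabolic subgroup $Q_0=L_0\ltimes N_{Q_0}$ of $\G_2$ with $H_{V,0}\subseteq Q_0$, $H_{V,0}=(L_0\cap H_{V,0})\ltimes(N_{Q_0}\cap H_{V,0})$, and $L_0\cap H_{V,0}$ a direct factor of $L_0$; Corollary \ref{good pair cor}, applied with $H_0=\G_2$, $U=N$ and $Q=Q_0$, then yields the good pair property.  For the six $(4,1,1)$-orbits with $H_{V,0}=P(v)$ a maximal parabolic of $\G_2$, and for the two $(2,2,2)$-orbits whose $\G_2$-stabilizer is the Heisenberg parabolic $P(Fe_1+Fe_3^*)$ or the group $M(e_3^*)U(e_3^*)'\subseteq P(e_3^*)$, one takes $Q_0$ to be the ambient maximal parabolic, so that $L_0\cap H_{V,0}=L_0\simeq\GL_2$ and the complementary factor is trivial.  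For the $(2,2,2)$-orbit of $V_2(open)$, where $H_{V,0}=\GL_1\cdot U^0(e_1,e_3^*)$, one takes $Q_0$ to be a Borel subgroup $B=T\ltimes U$ of $\G_2$ with $\GL_1\subseteq T$ and $U^0\subseteq U$ (possible since $\GL_1$ is central in the Levi $M(e_1,e_3^*)$ and $U^0$ lies in the unipotent radical of the Heisenberg parabolic); then $L_0=T$ and $L_0\cap H_{V,0}=\GL_1$ is the cocharacter $t\mapsto\diag(t,1,t^{-1})$ of $\SL_3\subseteq\G_2$, i.e.\ the coroot of the highest root of $\G_2$, which is indivisible, so $\GL_1$ is a direct factor of $T$.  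The last two orbits, the $(2,2,2)$-orbits whose $\G_2$-stabilizers are the ``not traceless, one-sided null'' planes $V_2$ (these two orbits are interchanged by $V_2\mapsto V_2^*$ and so have a common stabilizer), are handled likewise: $V_2\cap V_7$ is an isotropic line $\ell\subseteq V_7$, the stabilizer of $V_2$ lies in $P(\ell)$, and reading off its structure from \cite{jiang} one checks that its reductive part is a direct factor of $M(\ell)\simeq\GL_2$.

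Assembling the three families proves Proposition \ref{prop:MainConv}.  I expect the effort to be structural rather than analytic: it lies in describing each $\G_2$-stabilizer $H_{V,0}$ precisely enough (especially for the $(2,2,2)$-orbits not recorded in Proposition \ref{prop:G2Ustab}) to recognize its reductive part as a direct factor of a Levi of a suitable parabolic of $\G_2$, the one genuinely nontrivial point being the indivisibility of the coroot of the highest root of $\G_2$, which is what makes the relevant $\GL_1$ a direct factor of a maximal torus.
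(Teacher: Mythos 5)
Your proof is correct and follows essentially the same route as the paper: the six orbits with $\G_2$-stabilizer $\SL_3$ are handled by Proposition \ref{G_2 good pair proposition}, the orbits whose $\G_2$-stabilizer contains the full Levi $\GL_2$ of a maximal parabolic are handled by Corollary \ref{good pair cor}, and the open $(2,2,2)$-orbit is handled via a Borel subgroup with the $\GL_1$ split off the maximal torus (your indivisibility remark supplies the detail the paper leaves implicit). The only cosmetic difference is that you treat the two one-sided-null $(2,2,2)$-orbits as a separate case requiring a further check in \cite{jiang}, whereas Proposition \ref{prop:G2Ustab}(2) already identifies their stabilizer as $M(e_3^*)U(e_3^*)'$, so they fall under the same Levi-containing case as the others.
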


\begin{proof}
For each $V\in \calV/H$, by the computation in Section \ref{sec:orbits}, we have $H_V=H_{0,V}\ltimes N_V$ with $H_{0,V}=H_0\cap P(V)$ and $N_V=N\cap P(V)$.
The groups $H_{0,V}$ are described in Proposition \ref{prop:G2Norbits}.
We know that for $10$ out of the $17$ orbits, $H_{0,V}$ is contained in a maximal parabolic subgroup of $H_0=\G_2$ and $H_{0,V}$ contains the Levi subgroup of this parabolic subgroup (which is isomorphic to $\GL_2$). Hence we know that $(H,H_V)$ is a good pair for these $10$ orbits by Corollary \ref{good pair cor}. For the other $6$ orbits, $H_{0,V}$ is isomorphic to $\SL_3$. Then we know that $(H,H_V)$ is a good pair for these $6$ orbits by Proposition \ref{G_2 good pair proposition}. Finally, for the last orbit, $H_{0,V}=T_VN_{0,V}$ is contained in the Borel subgroup $B=TN_0$ of $\G_2$ with $T_V=T\cap H_{0,V}$ and $N_{0,V}=N_0\cap H_{0,V}$. Moreover, there exists a subtorus $T_0\subset T$ such that $T=T_0\times T_V$. Then we know that $(H,H_V)$ is a good pair for this orbit by Corollary \ref{good pair cor}. This finishes the proof of the proposition and hence the proof of Proposition \ref{norm} and \ref{prop:absConv}.
\end{proof}

\bibliography{G2_period_bib}
\bibliographystyle{amsalpha}
\end{document}